\documentclass{article}
\usepackage{graphicx} %
\usepackage{xparse}

\usepackage{latexsym,amssymb,amsmath,amsfonts,amsthm}
\numberwithin{equation}{section}
\usepackage{enotez,hyperref,float}
\usepackage{forest}
\usepackage{thmtools, thm-restate}
\usepackage{mathtools}

\usetikzlibrary{arrows.meta, positioning}
\usetikzlibrary{decorations.markings}

\usepackage{color}
\usepackage{epsfig}
\usepackage{enumerate}
\usepackage{comment}

\usepackage{svg}

\newtheorem{theorem}{Theorem}

\newtheorem{corollary}[theorem]{Corollary}

\newtheorem{lemma}{Lemma}

\graphicspath{ {./assets/} }

\newtheorem{proposition}{Proposition}
\newtheorem*{proposition*}{Proposition}
\newtheorem{remark}{Remark}

\newcommand{\gsimas}[2]{\overset{{#1}\to {#2}}{\gtrsim}}
\newcommand{\lsimas}[2]{\overset{{#1}\to {#2}}{\lesssim}}
\newcommand{\simas}[2]{\overset{{#1}\to {#2}}{\sim}}
\newcommand{\ggas}[2]{\overset{{#1}\to {#2}}{\gg}}
\newcommand{\llas}[2]{\overset{{#1}\to {#2}}{\ll}}
\newcommand{\ordas}[2]{\overset{{#1}\to {#2}}{\asymp}}

\def\beq{ \begin{equation} }
\def\eeq{ \end{equation} }

\def\square{\vcenter{\vbox{\hrule height .4pt
  \hbox{\vrule width .4pt height 5pt \kern 5pt
        \vrule width .4pt} \hrule height .4pt}}}

\def\sqz{\kern-0.2em}

\newcommand{\tabeq}[3]{%
  \begin{aligned}[t]
    \makebox[#1][l]{$#2$} &= #3
  \end{aligned}%
}

\usepackage{graphicx}
\graphicspath{%
    {converted_graphics/}%
    {/}%
}

\newtheorem{assumptionstar}{Assumption}

\usepackage[
backend=biber,
style=numeric,
sorting=none
]{biblatex}
\usepackage{authblk}

\title{The Site Frequency Spectrum in an Exponentially-Growing Population with Selection}
\author[1,2,3]{Armaan Ahmed}
\author[4]{Jasmine Foo}
\author[5]{Einar Bjarki Gunnarsson}
\author[6]{Kevin Leder}

\affil[1]{Department of Applied Mathematics \& Statistics, Johns Hopkins University}
\affil[2]{Department of Mathematics, Johns Hopkins University}
\affil[3]{Department of Statistics, University of California, Berkeley}
\affil[4]{Department of Mathematics, University of Minnesota}
\affil[5]{Division of Applied Mathematics, Science Institute, University of Iceland}
\affil[6]{Department of Industrial and Systems Engineering, University of Minnesota}

\usepackage{nicematrix}
\newcommand{\red}[1]{\textcolor{red}{#1}}

\providecommand{\classification}[2]
{
  \small	
  \textbf{\textit{#1---}} #2
}

\begin{document}

\maketitle
\begin{abstract}
  We consider a supercritical two-type continuous-time linear birth-death process with mutation and selection, in which wild-type individuals give rise to mutant offspring with a larger net growth rate. In this setting, we investigate the site frequency spectrum (SFS) of driver mutations, describing the number of driver mutations present at any given frequency in the population.
  First, we derive exact moments for the SFS and establish asymptotic power laws at large times and frequencies. %
  Then, strong laws of large numbers for the driver SFS are proven by constructing suitable $L^2$-approximations.  
  These results apply both to the case when all driver clones have the same selective advantage and when the selective advantage is random. 
  Finally, we allow the frequency to vary with time to examine the number of ``intermediate'' and ``large'' driver clones, identifying
  a cutoff 
  frequency at which there are order 1 number of mutant clones. 
  Overall, our results provide quantitative insights into how selection shapes the site frequency spectrum both at small and large frequencies, which can in principle be leveraged to construct estimators of relevant evolutionary parameters, including the selective advantage of driver mutations.
\end{abstract}

\classification{Keywords}{Site Frequency Spectrum, Birth-Death Process, Cancer Evolution}

\classification{MSC2020}{Primary: 60J80. Secondary: 92D10, 92D25, 60F15}

\section{Introduction}

Human cells accumulate mutations throughout the lifetime of a person due to various factors such as DNA replication errors, chronic inflammation, and decreased DNA repair efficacy \cite{takeshima2019}. 
So-called `driver' mutations confer a fitness advantage to a cell, enabling clonal expansion and accumulation of additional mutations, leading to premalignant or cancerous cell populations \cite{bielas2006,hanahan2022}. In addition to these driver mutations, neutral and deleterious mutations accumulate in developing tumors, which can occur at enhanced rates compared to normal cells \cite{lawrence2014,mcfarland2013}. Together, these various mutations define the genetic tumor heterogeneity and contribute towards the total phenotypic tumor heterogeneity. Understanding the genetic tumor heterogeneity remains an important goal as it can provide clinically-relevant information regarding the mechanisms of tumor formation, tumor classification, prediction of therapeutic response, etc. \cite{predicting,salvadores2019,youn2013} 

\begin{figure}[H]
    \centering
    \def\svgwidth{0.8\linewidth}
    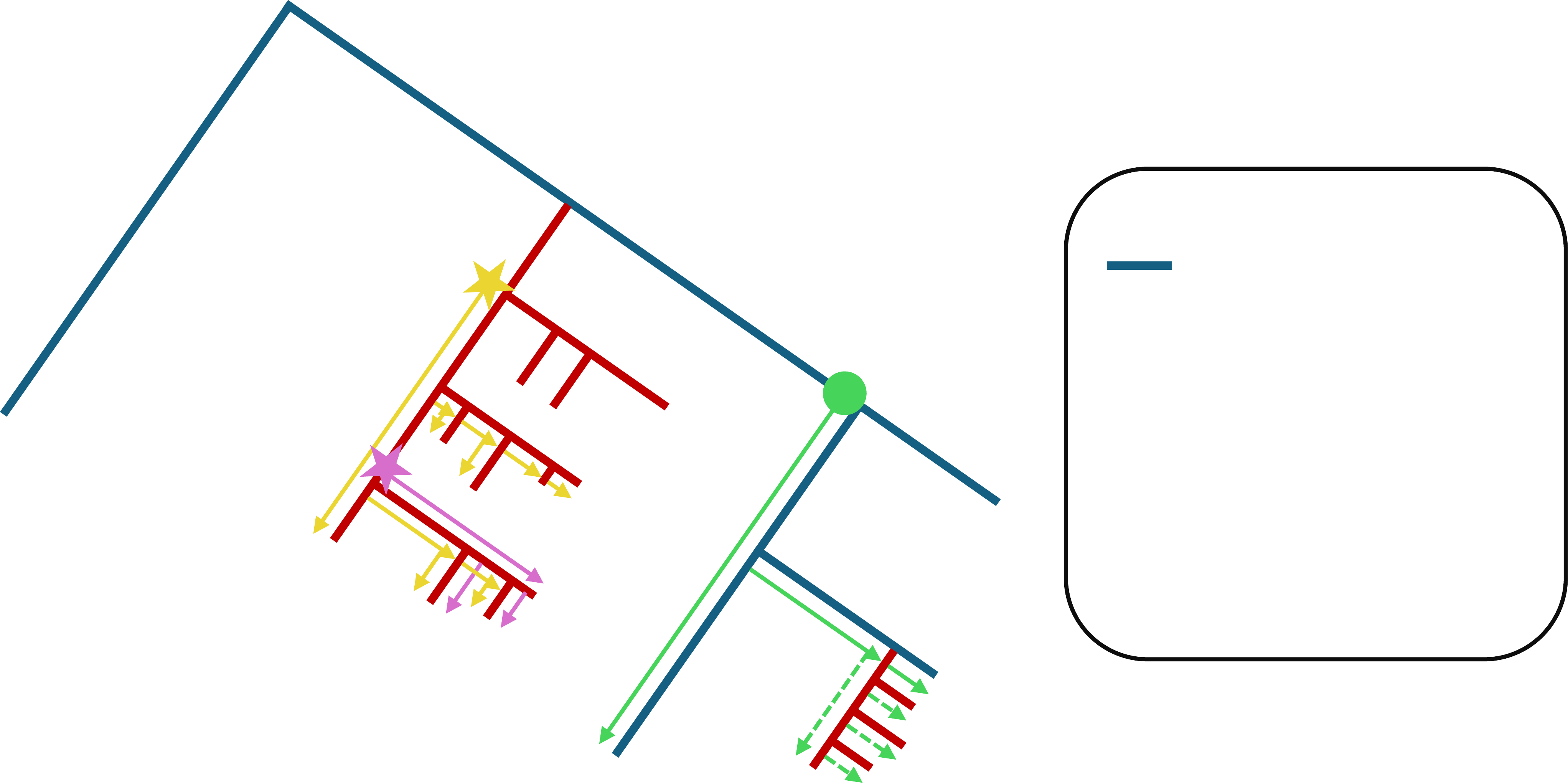
    \caption{Example diagram of the two-type population with neutral mutations. Each color represents a distinct mutation, and arrows indicate how the mutation is transferred through the population.}\label{fig:example_mutation}
\end{figure}

A simple summary metric of the genetic heterogeneity of a tumor is the site frequency spectrum (SFS), denoted $S_j(t)$, counting the number of distinct mutations found in $j$ tumor cells (hereafter ``at frequency $j$'') at time $t$. 
The SFS can be used to estimate evolutionary parameters of a phylogeny (mutation, birth, death rates) and to provide evidence for either neutral or selective evolution \cite{tung,gunnarsson2021,tanaka2023,gunnarsson2025}.

\begin{table}[H]
    \centering
    \begin{NiceTabular}{|w{c}{3cm}|c|c|}
\hline
{\diagbox{\textbf{Initiated by}}{\textbf{Present in}}}& \raisebox{-1.5ex}{Type-0} & \raisebox{-1.5ex}{Type-1} \\[3ex]
\hline\hline \\[-2ex]
Type-0 & $S_j^{p,0}(t)$ & $S_j^h(t),\red{S_j^d(t)}$ \\[1ex]
Type-1 & $\emptyset$ & $S_j^{p,1}(t)$ \\[1ex] 
\hline 
\end{NiceTabular}
\caption{Mutations found at time $t$ in Type-0 and Type-1 cells are stratified based on whether they are neutral or selective (in \red{red}), the type of cell in which they originated. Details of each of these contributions to the SFS are present in the main text.}\label{table:SFS_components}
\end{table}

In this work, we characterize the behavior of the SFS in the setting of a two-type continuous-time linear birth-death process. The initial population consists of Type-0 (wild-type) cells which can give rise to Type-1 cells through the acquisition of a selectively advantageous driver mutation.  In the setting where each cell can acquire at most one driver mutation, the SFS can be decomposed as indicated by Table \ref{table:SFS_components}. $S_j^d(t)$ counts the number of driver clonal lineages of size $j$ at time $t$, $S_j^h(t)$ counts the number of ``hitchhiker'' mutations (the neutral mutations present in type-1 cells that first arose in some wild-type cell) at frequency $j$ at time $t$, $S_j^{p,0}(t)$ counts the number of type-0 ``passenger'' mutations (the neutral mutations present in type-0 cells) at frequency $j$ at time $t$, and $S_j^{p,1}(t)$ counts the number of type-1 ``passenger'' mutations (the neutral mutations that first arise in type-1 cells) at frequency $j$ at time $t$. An example diagram with all these mutation types is provided in Figure \ref{fig:example_mutation}. Our ultimate goal is to develop a comprehensive understanding of all contributions to the SFS in a population with selection. In this work, we ignore neutral mutation accumulation and instead focus on characterizing the behavior of the driver SFS ($S^d_j(t)$). Accordingly in order to simplify the notation, we will drop the superscript and write $S_j(t)$ instead of $S_j^d(t)$ for the remainder of the manuscript.

The setting of purely neutral evolution, in which the genetic heterogeneity arises solely from the accumulation of passenger mutations, has been well-studied. In this scenario, every cell in the tumor-initiating population already contains all driver mutations.
In prior works, Gunnarsson et al.~examined continuous-time linear birth-death processes with neutral mutation accumulation. They found that, conditional on non-extinction, at large fixed times $t$ and large frequencies $j$, the number of mutations at frequency $j$ ($S_j^p(t)$) follows a $1/j^2$ power law, both in expectation and almost surely \cite{gunnarsson2021,gunnarsson2025}. Cheek and Antal examined the mean neutral site frequency spectrum in the large-time and large-population small-mutation limit, by categorizing the subpopulations of cells either carrying or not carrying a particular mutation into two types, also finding a $1/j^2$ law \cite{cheek2018}. A special feature of \cite{cheek2018} is that they did not make the infinite-sites assumption. Other authors have obtained similar results, including cumulative versions capturing the number of mutations found in a \textit{proportion larger than $f \in (0,1)$} of cells, in various birth-death, semi-deterministic, or fully deterministic exponential growth models \cite{durrett2013,bozic2016,williams2016,OHTSUKI201743}. In constant-size Moran models with neutral mutations, the SFS follows a $1/j$ law in expectation \cite{durrett_probability_2008}. In both exponentially-growing and fixed-size population models, characterizing $S_j^p(t)$ allows one to estimate the neutral mutation rate, obtain the probability of extinction of the tumor (which with the net growth rate enables the decoupling of birth from death rates), and test treatment response \cite{gunnarsson2021,gunnarsson2025,stein2025,durrett_probability_2008}.

In the setting where the tumor cells can acquire further driver mutations, there are many studies examining the bulk dynamics of the total driver-mutant population (arrival times of mutants, asymptotic total mutant population size, etc.) \cite{iwasa2006,durrett2010,nicholson2023}. However, the dynamics of \textit{each distinct} clonal lineage are what matter in  quantifying $S_j(t)$. In an early study, Lea and Coulson examined a semi-deterministic two-type model of wild-type and mutant bacteria growth \cite{lea1949,zheng1999}. The wild-type population grew deterministically over time, seeding as a Poisson point process mutant clones growing according to a linear birth-death process. They calculated the joint distribution of the total number of mutant cells and the number of mutations that occurred by time $t$, which reveals the average size of all clonal lineages produced by time $t$. Under the infinite sites assumption, Cheek and Antal also provide results concerning the site frequency spectrum and the largest clone under selective evolution in the large time-small mutation or large detection size-small mutation regime \cite{cheek2018}. In their scaling regime, only $\mathcal{O}(1)$ clones contribute to the mutant population whereas in our large time or detection size regime infinitely many clones contribute. Consequently, they obtain Poisson limit theorems whereas we obtain law of large numbers limit theorems for the SFS. This is discussed in Section \ref{sec:lln_SFS} in more detail. Tung and Durrett recently examined a cumulative version of the SFS, the \textit{relative} SFS, describing the number of clones that constitute a proportion larger than $f\in (0,1)$ of the total mutant population \cite{tung}. They find that in the large-time limit, the expected number of clones with frequency $>f$ follows a $f^{-\lambda_0/\lambda_1}$ power-law where $\lambda_0$ (or $\lambda_1$) is the net growth rate of the wild-type (or mutant) population. We will also obtain results for the relative SFS and compare them to the results of Tung and Durrett in Section \ref{sec:convergence_relative_SFS}.

There are comparatively fewer studies that examine frequencies of hitchhiker mutants ($S_j^h(t)$). Braverman et al.~using a coalescent model examines the effect of hitchhikers on the genetic heterogeneity in a bacterial population \cite{braverman1995}. More recently, Bonnet and Leman examine $S_j^h$ in a two-type continuous time linear birth-death process with rare mutants under rescue dynamics (in which the wild-type population is subcritical and the mutant population is supercritical) \cite{bonnet2024}. This is relevant when, for example, modeling susceptible-resistance cancer cell dynamics under treatment. In this setting, they find $\mathbb{E}[S_j^h(t_N)]\lesssim N^{1-\alpha}$ as $N\to\infty$ where $N$ denotes the initial sensitive population, $t_N\propto \log N$ is the characteristic timescale of extinction of the sensitive type, and $0<\alpha\leq 1$ is a parameter related to the rarity of mutations.

\subsection{Outline}

In this paper we study the behavior of the driver SFS ($S_j(t)$) in a two-type continuous-time linear birth-death process, under the assumption that cells can accumulate at most one driver mutation. In Sections \ref{sec:exact_moments_SFS} and \ref{sec:mean_j_scaling}, we obtain exact and asymptotic (in time and frequency) expressions for moments of the SFS,  when the selective advantage of each driver clone is deterministic or random. In Section \ref{sec:lln_SFS}, we extend analyses from Gunnarsson et al. and Harris (\cite{gunnarsson2025} and \cite{harris1963}, respectively) to obtain almost sure convergence results for $S_j(t)$ in the large time and detection size limits. We also provide asymptotic limit theorems when letting the frequency $j(t)$ vary with time in Section \ref{sec:SFSVary}.  In Section \ref{sec:tail_SFS}, we investigate the shape of the tail of the SFS at fixed detection sizes. We then go on to study the frequencies of the largest clones in Section \ref{sec:large_frequencies}. We provide limit theorems for the relative SFS in Section \ref{sec:convergence_relative_SFS}. Finally, the size of the largest mutant clone is examined in Section \ref{sec:smallmut_largeclone}. We further compare some of our results with results obtained from Durrett's semideterministic model and show agreement in the small mutation limit.

\section{Models and notation}

\subsection{Two-Type Population Model}
\label{sec:model_notation}

\begin{figure}[H]
    \centering 

    \begin{minipage}{0.45\textwidth}
        \centering
        \renewcommand{\arraystretch}{1.2}
        \begin{tabular}{cl}
            \hline\hline
            \multicolumn{2}{c}{\textbf{Model Parameters}} \\
            \hline
            \textbf{Symbol} & \textbf{Description}      \\
            \hline
            $a$           & Type 0 per-capita birth rate    \\
            $d$           & Type 0 per-capita death rate     \\
            $v$             & Probability of mutation per birth     \\
            $\mu$           & Birth-independent per-capita mutation rate   \\
            $b$             & Type 1 per-capita increase in birth rate   \\
            \hline\hline
        \end{tabular}
    \end{minipage}
    \hfill 
    \begin{minipage}{0.45\textwidth}
        \begin{align*}
            0&\underset{d}{\to}\emptyset\\
            0&\underset{a}{\to}0,\text{Bernoulli}(v)\\
            0&\underset{\mu}{\to}1\\
            1&\underset{d}{\to}\emptyset\\
            1&\underset{a+b}{\to}1,1
        \end{align*}
    \end{minipage}

    \begin{minipage}{\textwidth}
      \centering
      \begin{forest}
  red node/.style={content={#1}, draw=red, fill=red!20},
  for tree={
    grow=east,
    circle,
    draw=blue,
    fill=blue!20,
    thick,
    l sep=5mm,
    s sep=4mm,
    minimum size=0.5cm,
    edge={-{Stealth[]}, shorten >=2pt},
  }
  [
    [
      [, content={\Large$\emptyset$}, draw=none,fill=none]
    ]
    [
      [
        [
          [[, content={\Large$\emptyset$}, draw=none,fill=none]]
          []
        ]
        [
          []
          [[, red node={\small B}, for children={red node={2}}
            [\small B]
            [\small B]]]
        ]
      ]
      [, red node={\small A}, for children={red node={2}}
        [\small A]
        [\small A]]
    ]
  ]
\end{forest}
    \end{minipage}
    \caption{Parameter descriptions (top left) and model scheme (top right). An example graphical representation of the model is shown in the bottom panel. Blue circles represent type-0 cells and red circles represent type-1 cells. $\emptyset$ denote death events and the differently labeled type-1 cells indicate which clone they belong to. The leaves (cells with no out-going arrows) are the cells currently alive.}
    \label{fig:model}
\end{figure}

Consider a continuous-time Markov branching process with two cell types, denoted type-0 and type-1 \cite{athreya1972}. Each type-0 cell independently divides at exponential rate $a$ and dies at rate $d$,  and the total type-0 population count at time $t$ is denoted $Z_0(t)$. Each type-1 cell  divides at rate $a+b$ and dies at rate $d$, where $b>0$ reflects a selective advantage. The total type-1 population count at time $t$ is denoted $Z_1(t)$. Unless otherwise specified, we begin with the initial condition $(Z_0(0),Z_1(0))=(1,0)$.

Type-0 cells produce type-1 cells through two mechanisms of driver mutation acquisition, (i) {\it replication-dependent:} at each type-0 division exactly one of the two offspring is type-1 with probability $v$, otherwise both are type-0 \footnote{Here, we are making a simplification and ignoring the possibility that both offspring are type-1 cells. This scenario occurs with probability $\mathcal{O}_{v\to 0^+}(v^2)$.}, and (ii) {\it replication-independent:} each type-0 cell mutates to type-1 at rate $\mu$, independently of division \cite{abascal2021}. Thus, the effective birth rate of type-0 cells is $a(1-v)$ and the net growth rate is $\lambda_0 \equiv a(1-v)-d-\mu$. The net growth rate of type-1 cells is $\lambda_1 \equiv a+b-d$. Throughout, we assume supercriticality of the type-$0$ population, namely $\lambda_0>0$. See Figure \ref{fig:model} for a schematic of this model. Observe that clone A is founded by a mutant produced in a replication-dependent fashion and clone B is founded by a mutant produced in a replication-independent fashion. 

In some sections we will assume that driver mutations are replication-dependent ($\mu=0$) and take the small mutation limit $v\to 0^+$. In this case, the notation $\lambda_{0,v}$ will be used to explicitly denote the dependence of $\lambda_0$ on $v$. Denote the probabilities of extinction of a type-0 and of a type-1 clone, each starting from one cell of the respective type, as $p_0=\frac{d+\mu}{a(1-v)}$ and $p_1=\frac{d}{a+b}$, respectively.  These follow from the classical extinction probability formula for supercritical birth-death processes.  For $i\in \{0,1\}$, define $q_i=1-p_i$ as the probability that a clone started by a type-$i$ cell is established, i.e.~survives forever. Where necessary, we will condition on non-extinction, and thus define $\Omega_i^\infty:=\{\exists N\text{ s.t. }t\geq N\text{ implies }Z_i(t)>0\}$ for $i\in \{0,1\}$ as the event that the type-$i$ population does not go extinct. Remark that $ \Omega_0^\infty\subseteq \Omega_1^\infty$ (up to null sets).

Let $S_j(t)$ denote the number of type-1 driver clones of size $j$ at time $t$. $S_j(t)$ is the driver SFS, labeled $S_j^d(t)$ in Table \ref{table:SFS_components}; we usually call it `the SFS' in this paper. Here, we make the infinite sites assumption, so for example the mutations associated with clones A and B in Figure \ref{fig:model} are distinct\footnote{Before defining $S_j(t)$ more carefully, we clarify the relation between mutations at a given frequency versus clones at a given size. Firstly, a type-1 clone is the tree of ancestors rooted at a type-1 founder cell (i.e., a type-1 cell whose parent is a type-0 cell). Associated to each clone is a selective mutation that its founder cell acquired. It may be that different clones have acquired the same selective mutation. If clones $A$ and $B$ in Figure \ref{fig:model} both acquired the same mutation, then at the time of detection clones $A$ and $B$ would both have size $2$ whereas the frequency of the mutation would be $2+2=4$. We avoid this by making the infinite sites assumption, so the frequency of the mutation associated with clone $i$ at time $t$ is the same as the number of cells in clone $i$ at time $t$.}. Define $T_i$ as the initiation time of the $i$th driver clone produced, where $T_i=\infty$ if the $i$th clone is never produced, and define $M_t=\#\{i\in\mathbb{N}:T_i\leq t\}$ as the number of mutant lineages ever produced by time $t$. 
We let $Z_1^{(i)}(s)$ denote the size of the $i$th driver clone $s$ time units after it was produced, and note that 
\[
S_j(t)=\sum_{i:T_i\leq t} \mathbf{1}_{\{Z_{1}^{(i)}(t-T_i)=j\}}.
\]
As our branching process is reducible conditional on the trajectory of the type-0 population, $s\to Z_1^{(i)}(s)$ are i.i.d linear birth-death processes with birth rate $a+b$ and death rate $d$. If $s<0$, $Z_1^{(i)}(s)\equiv 0$.
We will also examine the {\it relative driver SFS}, the number of driver clones above a certain frequency: $R_f(t)=\sum_{j:j>Z_1(t)f}S_j(t)$ for $0<f<1$.

We will investigate features of the driver SFS at a fixed time $t$ and at the random time when the population reaches a fixed detection size $n$, $\tau_n=\inf \{t:Z_0(t)+Z_1(t)=n\}$. On $\Omega_1^\infty$, $\tau_n < \infty$ almost surely. Fixed time results are relevant when considering population growth in a controlled setting. Fixed size results are more relevant in the clinical setting, as a tumor may be readily detected upon reaching a particular size. 

Certain limit theorems are known for $Z_0(t),Z_1(t),\{Z_1^{(i)}(t)\}_{i\in\mathbb{N}}$. For supercritical birth-death processes, it is well-known that $e^{-\lambda_0 t}Z_0(t)$ converges almost surely and in $L^2$ as $t \to \infty$.
The limiting random variable $Y \equiv \lim_{t \to \infty} e^{-\lambda_0 t}Z_0(t)$ is 0 with probability $p_0$, and on the event of survival it is exponential with rate $q_0$ (for example see Theorem 1 in \cite{durrettbranching2015}). Similarly, $Y_1^{(i)}\equiv\lim_{t\to\infty}e^{-\lambda_1 t}Z_1^{(i)}(t)$ exists almost surely and in $L^2$. The $Y_1^{(i)}$ are i.i.d. random variables which are 0 with probability $p_1$ and exponential with rate $q_1$ with probability $q_1$. For the total type-1 population, $W\equiv\lim_{t\to\infty}e^{-\lambda_1 t}Z_1(t)=\sum_{i\in\mathbb{N}}e^{-\lambda_1 T_i}Y_1^{(i)}$ exists almost surely on the event of nonextinction (see Appendix Section \ref{app:W_representation}).

{\bf \em Random Fitness Advance Model.}
We will also consider a variant of the above model in which we allow each type-1 clone to have random fitness advances. In particular, consider a random perturbation to the birth rate of the $i$-th type-1 clone, $B_i$, where $B_j\sim \mathbb{Q}$ are i.i.d.~for all $j\in \mathbb{N}$ for some distribution $\mathbb{Q}$, and the $B_j$'s are independent of the type-0 process $Z_0(t)$.  Conditional on $B_i$, the birth rate of the $i$-th type-1 clone is $a+B_i$ while the death rate remains $d$. %
We refer to this modified model as the random fitness advance model, and we denote its SFS by $S_j^B$.
We note that a semi-deterministic version of this model was also studied by the second and fourth authors in various previous works \cite{durrettfoo2010, durrettgenetics2010, foo2014escape}. The following bounded support assumption on $\mathbb{Q}$ is sometimes used in our results for this random fitness model:

\begin{assumptionstar}
    $\mathbb{Q}$ has density $h$ with support $[0,b_{\max}]$ which is left-continuous at $b_{\max}$, $h(b_{\max})>0$, and $h$ is bounded.\label{assumption:random}
\end{assumptionstar}

A summary of all the important notation is provided in Table \ref{table:notation}.
\begin{table}[H]
\renewcommand{\arraystretch}{1.5}
  \centering  
  \begin{tabular}{|c p{6cm} p{6cm}|}
\hline
\textbf{Symbol} & \textbf{Description} & \textbf{Mathematical Description}\\
\hline
$Z_0(t)/Z_1(t)$ & Count of total type-0/type-1 population at time $t$ & \rule[0.5ex]{1cm}{0.4pt}\\
$\Omega_i^\infty$ & Extinction event for the type-$i$ population, where $i\in \{0,1\}$, population & $\tabeq{0.9cm}{\Omega_i^\infty}{\{\omega:\exists T\geq 0,\\&t\geq T\Longrightarrow Z_i(t)=0\}}$\\
$T_i$ & Timing of $i$th mutation event & \rule[0.5ex]{1cm}{0.4pt}\\
$M_t$ & Counting process of number of mutation events in $[0,t]$ & $\tabeq{0.9cm}{M_t}{\#\{i\in \mathbb{N}:T_i\leq t\}}$\\
$Z_1^{(i)}(s)$ & Count of cells in the $i$th type-1 clone, $s$ time units after it was produced & $\tabeq{0.9cm}{Z_1(t)}{\sum_{T_i\leq t}Z_1^{(i)}(t-T_i)\\
       &= \sum_{i\leq M_t}Z_1^{(i)}(t-T_i)}$\\
$Y/Y_1^{(i)}/W$ & Scaled limit of the type-0 population/ $i$th type-1 clone / type-1 population &  $\tabeq{0.9cm}{Y}{\lim_{t\to\infty}e^{-\lambda_0 t}Z_0(t)}$\newline$\tabeq{0.9cm}{Y_1^{(i)}}{\lim_{t\to\infty}e^{-\lambda_1 t}Z_1^{(i)}(t)}$\newline$\tabeq{0.9cm}{W}{\lim_{t\to\infty} e^{-\lambda_1 t}Z_1(t)\\&=\sum_{i \in \mathbb{N}}Y_1^{(i)}e^{-\lambda_1 T_i}}$\\
$\tau_n$ & First time the total population reaches size $n$ & $\tabeq{0.9cm}{\tau_n}{\inf\{t\geq 0:Z_0(t)+Z_1(t)=n\}}$\\
$S_j(t)/S_{\geq j}(t)$ & Number of type-$1$ clones composed of $j$/$\geq j$ cells at time $t$ & $\tabeq{0.9cm}{S_j(t)}{\sum_{T_i\leq t}\mathbf{1}_{Z_1^{(i)}(t-T_i)=j}}$\newline$\tabeq{0.9cm}{S_{\geq j}(t)}{\sum_{k\geq j}S_k(t)}$\\
$S_j^B(t)$ & Number of type-$1$ clones composed of $j$ cells at time $t$, where each clone has a random fitness advancement with the law of $B$ & \rule[0.5ex]{1cm}{0.4pt}\\
$R_f(t)$ & Number of clones making up a proportion larger than $f\in (0,1)$ of the type-$1$ population at time $t$&$\tabeq{0.9cm}{R_f(t)}{\sum_{j>fZ_1(t)}S_j(t)}$\\
\hline
\end{tabular}
\caption{Notation used throughout the paper}\label{table:notation}
\end{table}

{\bf \em Asymptotic notation.} Letting $\ell\in \mathbb{R}\cup \{\pm \infty\}$ and $g,h:\mathbb{R}\to \mathbb{R}_{\geq 0}$, we use the following asymptotic notation throughout the paper:

\begin{align*}
    g\simas{t}{\ell}h&\Longleftrightarrow \lim_{t\to\ell}\frac{g(t)}{h(t)}=1\\
    g\lsimas{t}{\ell} h&\Longleftrightarrow \limsup_{t\to\ell}\frac{g(t)}{h(t)}<\infty\\
    g\gsimas{t}{\ell} h&\Longleftrightarrow h\lsimas{t}{\ell} g\\
    g\llas{t}{\ell} h&\Longleftrightarrow \lim_{t\to \ell}\frac{g(t)}{h(t)}=0\\
    g\ggas{t}{\ell} h&\Longleftrightarrow h\llas{t}{\ell} g\\
    g\ordas{t}{\ell} h&\Longleftrightarrow g\lsimas{t}{\ell} h\text{ and }g\gsimas{t}{\ell} h
\end{align*}

Additionally, the Landau-O notation is used to specify in-line error terms. That is, letting $g:\mathbb{R}\to \mathbb{R}$ and $h:\mathbb{R}\to \mathbb{R}_{\geq 0}$ be real-functions, we say:
\begin{align*}
    g=\mathcal{O}_{t\to \ell}(h)&\Longleftrightarrow |g|\lsimas{t}{\ell}h\\
    g=o_{t\to \ell}(h)&\Longleftrightarrow |g|\llas{t}{\ell}h
\end{align*}

Usually, we examine large-time asymptotics ($\ell = \infty$), in which case we drop $t\to\infty$ from the notation as long as the limit is clear.

\section{Results}

\subsection{Simulations reveal characteristics of the SFS under selection}
To motivate our theoretical studies, we first present numerical simulations of the SFS behavior when mutation acquisition is replication-dependent and the driver fitness advantage $b$ is deterministic.
Unless otherwise specified, the set of parameter values in Table \ref{table:default_params} is used throughout all simulations.

\begin{table}[H]
  \centering
  \begin{tabular}{|c|p{2.5cm}|}
\hline
\textbf{Parameter} & \textbf{Default Value} \\
\hline
$a$ & 0.1\\
$d$ & 0.001\\
$v$ & 0.01\\
$\mu$ & 0\\
$b$ & 0.04\\
\hline
\end{tabular}
\caption{Default simulation parameter values}\label{table:default_params}
\end{table}

Empirically, our simulations demonstrate how selection modifies the shape of the SFS 
observed at a fixed time $t$ (Figure \ref{fig:SFS_characteristics}).  In particular, Figure \ref{fig:SFS_characteristics} demonstrates that as the selective advantage $b$ increases, the size of the largest clone, $\max\{j:S_j(t)\neq 0\}$, grows on average, the frequency of clones at the small $j$-end of the spectrum decreases, and the slope of the SFS at intermediate $j$ increases.

\begin{figure}[H]
	\centering

  \begin{minipage}{0.5\textwidth}
    \centering
    \includegraphics[width=\textwidth]{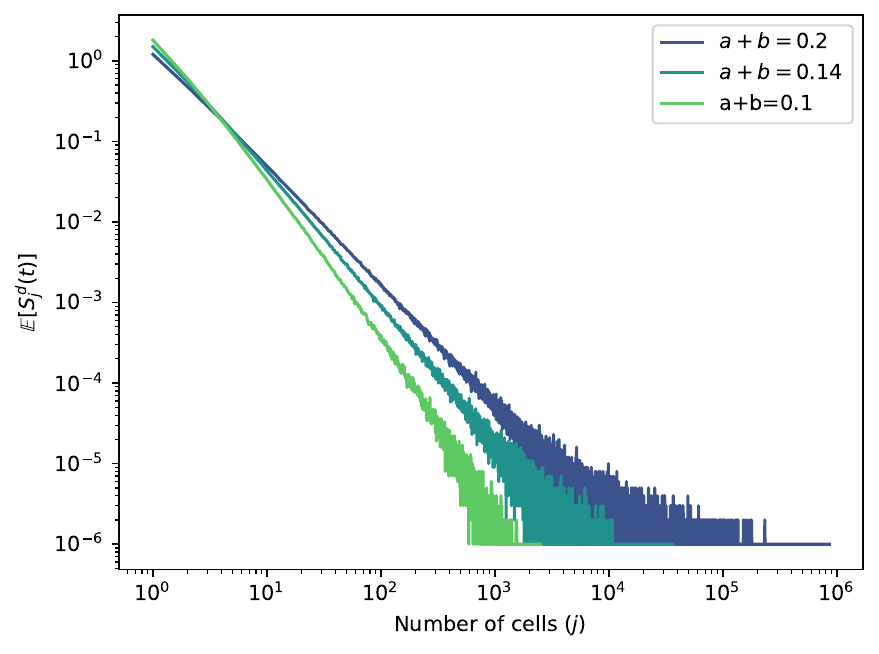}
  \end{minipage}
  \hfill
  \begin{minipage}{0.45\textwidth}
    \centering
      \begin{tabular}{|c|p{2cm}|}
\hline
\textbf{Features of SFS} & \textbf{Effect from Increased Selection $b$} \\
\hline
Tail of the SFS & Extends\\
\hline
Frequency at small $j$ & Decreases\\
\hline
Slope of SFS & Increases\\ 
\hline
\end{tabular}
  \end{minipage}
	\caption{Impact of selection on SFS at fixed time. (left) Simulations using $t=60$ are run, and the mean for each indicated selection value is plotted. (right) Table of impacts of selection on the shape features of the SFS.}\label{fig:SFS_characteristics}
\end{figure}

Intuitively, the small-frequency end of the SFS is occupied by mutants produced close to the detection time $t$, for example during the time interval $[t-\Delta t,t)$. As the selection strength $b$ increases, the number of mutants produced during $[t-\Delta t,t)$ remains the same, but the number of associated clones that remain at a small size decreases. Thus, the frequency of mutants at small $j$ decreases as $b$ increases. Conversely, the mid-to-large end of the SFS is occupied by more mutant lineages as the selection strength increases, thus modifying the slope. To better understand these effects of selection, we will establish limit theorems studying the shape of the SFS in the following sections. 

\subsection{Shape of the driver site frequency spectrum under selection}\label{sec:time_freq_scaling}

\subsubsection{Exact moments of the driver SFS at fixed time}\label{sec:exact_moments_SFS}

We first consider moments of the driver SFS at a fixed time, both under a deterministic selective advantage and in the random fitness advance setting.

\begin{restatable}{proposition}{SFSmean}\label{prop:SFS_mean}
  Consider the two-type process $(Z_0, Z_1)$ starting from initial condition $(1, 0)$. Denote $P_j(s)=\mathbb{P}(Z_1^{(1)}(s)=j|Z_1^{(1)}(0)=1)$. The exact representations of the mean and second moment of $S_j(t)$ are (for fixed time $t$ and fixed $j$):
  \begin{align}
    \mathbb{E}[S_j(t)]&=\int_0^t (av+\mu)\mathbb{E}[Z_0(s)]P_j(t-s)\,ds, \text{ and}\label{eq:mean_SFS}\\
    \mathbb{E}[{S_j(t)\choose 2}]&=\int_0^t \int_s^t ((\mu+av)^2 \mathbb{E}[Z_0(r)Z_0(s)]-\mu(\mu+av)\mathbb{E}[Z_0(r)])P_j(t-s)P_j(t-r)\,dr\,ds.\label{eq:choose_SFS}
  \end{align}
  Note $\mathbb{E}[Z_0(s)]=\exp(\lambda_0 s)$ from our initial condition. Denote $P_j(s|b)$ as the transition kernel of a single type-1 clone with fitness advantage $b$. Under the random fitness advance model:
  \begin{align}
    \mathbb{E}[S_j^B(t)]&=\int_0^t (av+\mu)\mathbb{E}[Z_0(s)]\mathbb{E}[P_j(t-s|B)]\,ds,\text{ and}\label{eq:mean_SFS_random_fitness}\\
    \mathbb{E}[{S_j^B(t)\choose 2}]&=\int_0^t \int_s^t ((\mu+av)^2 \mathbb{E}[Z_0(r)Z_0(s)]-\mu(\mu+(a+b)v)\mathbb{E}[Z_0(r)])\mathbb{E}[P_j(t-s|B)]\mathbb{E}[P_j(t-r|B)]\,dr\,ds.\label{eq:choose_SFS_random_fitness}
  \end{align}
\end{restatable}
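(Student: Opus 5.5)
The approach is to condition on the type-0 process and to treat the mutation times $\{T_i\}$ as a point process $M$ driven by $Z_0$. Let $\mathcal{F}_t$ be the filtration generated by the type-0 process and the mutation events. Replication-dependent mutations occur at rate $avZ_0(s-)$ and replication-independent ones at rate $\mu Z_0(s-)$, so $M$ has $\mathcal{F}$-compensator $\int_0^t (av+\mu)Z_0(s-)\,ds$. The clone trajectories $Z_1^{(i)}$ are i.i.d.\ and independent of $(Z_0,M)$. With $f(s)=P_j(t-s)$ we have
\[
\mathbb{E}\bigl[S_j(t)\mid Z_0,M\bigr]=\sum_{i:T_i\le t}f(T_i)=\int_0^t f(s)\,dM_s .
\]
Taking expectations and using that $f$ is deterministic and $M$ has intensity $(av+\mu)Z_0(s-)$ gives \eqref{eq:mean_SFS}, since $\mathbb{E}[Z_0(s)]=e^{\lambda_0 s}$ (Kolmogorov forward equation for the mean).

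For the factorial moment, note that $\binom{S_j(t)}{2}=\sum_{i<k}\mathbf 1_{\{Z_1^{(i)}(t-T_i)=j\}}\mathbf 1_{\{Z_1^{(k)}(t-T_k)=j\}}$. Conditional independence of distinct clones then gives
\[
\mathbb{E}\Bigl[\tbinom{S_j(t)}{2}\Bigr]=\mathbb{E}\int_0^t\int_{(s,t]} f(s)f(r)\,dM_r\,dM_s .
\]
I would evaluate this in two steps.

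\textbf{Step 1: the inner integral.} Apply the compensator of $dM_r$ for $r>s$, conditioning on $\mathcal{F}_s$ (which contains the event of a mutation at $s$):
\[
\mathbb{E}\Bigl[\int_{(s,t]}f(r)\,dM_r\,\Big|\,\mathcal{F}_s\Bigr]=(av+\mu)\int_s^t f(r)\,Z_0(s)e^{\lambda_0(r-s)}\,dr .
\]
This uses the Markov property $\mathbb{E}[Z_0(r)\mid\mathcal{F}_s]=Z_0(s)e^{\lambda_0(r-s)}$.

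\textbf{Step 2: the correction at the outer point.} The key point is that $Z_0(s)$ here is the \emph{post-jump} value at the outer mutation time. A replication-independent mutation converts a type-0 cell, so $Z_0(s)=Z_0(s-)-1$. A replication-dependent mutation leaves the type-0 count unchanged. Hence
\[
\mathbb{E}\bigl[Z_0(s)\,dM_s\bigr]=\bigl((av+\mu)\mathbb{E}[Z_0(s)^2]-\mu\,\mathbb{E}[Z_0(s)]\bigr)\,ds .
\]
Multiply by $(av+\mu)e^{\lambda_0(r-s)}$ and use $\mathbb{E}[Z_0(r)Z_0(s)]=e^{\lambda_0(r-s)}\mathbb{E}[Z_0(s)^2]$ and $e^{\lambda_0(r-s)}\mathbb{E}[Z_0(s)]=\mathbb{E}[Z_0(r)]$. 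This yields exactly the integrand of \eqref{eq:choose_SFS}, including the $-\mu(\mu+av)\mathbb{E}[Z_0(r)]$ term.

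\textbf{Random fitness model.} Here the pairs $(Z_1^{(i)},B_i)$ are i.i.d.\ and independent of $(Z_0,M)$. Conditioning on $(Z_0,M)$ and integrating out $B_i$ replaces $P_j(t-s)$ by $\mathbb{E}[P_j(t-s\mid B)]$ in both computations, and the argument is otherwise identical. This gives \eqref{eq:mean_SFS_random_fitness}--\eqref{eq:choose_SFS_random_fitness}. The coefficient of the $\mathbb{E}[Z_0(r)]$ term should come out as in the deterministic case, because the $B_i$ do not affect type-0 dynamics.

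The main obstacle is making Step 2 rigorous. This requires justifying the second-order product density of $M$ by writing the double stochastic integral with predictable integrands. One must separate the two mutation marks (replication-dependent versus replication-independent) as distinct point processes $M^{v}$ and $M^{\mu}$, each with its own compensator, and track the post-jump value of $Z_0$ for each. Integrability holds because $\mathbb{E}[M_t^2]<\infty$, which follows from finite second moments of the birth-death process $Z_0$.
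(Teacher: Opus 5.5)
Your proposal is correct and follows essentially the same route as the paper. Both condition on the type-0 process, compute the one- and two-point intensities of the mutation times, and correct for a replication-independent mutation lowering $Z_0$ by one at the earlier mutation time; your compensator/strong-Markov formulation is a more careful version of the paper's infinitesimal $\mathcal{M}^b_{[s,s+dt]}\sqcup\mathcal{M}^f_{[s,s+dt]}$ argument. You are also right that the random-fitness second moment has coefficient $\mu(\mu+av)$, exactly as in the deterministic case. The $(a+b)v$ in the printed statement looks like a typo, since the type-0 dynamics do not involve $B$ and the paper's own proof simply conditions on $B_1,B_2$.
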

\begin{proof}
  See Appendix Section \ref{app:SFS_mean}.
\end{proof}

The expression for the expected value of the SFS is quite intuitive: contributions to the SFS at time $t$ can be discretized into small bins $[s,s+dt]$. A mutant clone is generated at rate $(av+\mu) Z_0(s)$ at time $s$, and if it forms and grows to size $j$ in the remaining $t-s$ time units, then it contributes towards the SFS, $S_j(t)$. The exact mean (Eq. \ref{eq:mean_SFS}) is visualized in the left panel of Figure \ref{fig:driver_muts}.  For the second moment, we consider the mean of a more combinatorially friendly random variable $S_j(t)\choose 2$, and consider contributing \textit{pairs} of mutants that form on disjoint intervals $[s_1,s_1+dt],[s_2,s_2+dt]$ and that both reach size $j$ at time $t$.

{\bf \em Selection decreases the small-frequency end of the mean driver SFS.} We note that $P_j(t|b)$ is explicitly calculable (see Eq. \ref{eq:P_j}). Using this and assuming $a+b>d$, we can show that $\partial_b P_1(t|b)<0$ for all $t>0$. This implies that the small end of the mean SFS is decreasing in $b$ (using Eq. \ref{eq:mean_SFS}). An analog holds in the random fitness model: if $B_1$ stochastically dominates $B_2$ (for all $x\in\mathbb{R}_{\geq 0}$, $\mathbb{P}(B_1\leq x)\leq \mathbb{P}(B_2\leq x)$), then for all $t>0$, $\mathbb{E}[P_1(t|B_2)]\geq \mathbb{E}[P_1(t|B_1)]$ implying $\mathbb{E}[S_1^{B_2}(t)]\geq \mathbb{E}[S_1^{B_1}(t)]$ (using Eq. \ref{eq:mean_SFS_random_fitness}).

This effect is robust to conditioning on the population being non-zero at the time of detection ($Z_0(t)+Z_1(t)>0$). If $S_j(t)>0$, then automatically $Z_0(t)+Z_1(t)>0$, so
\begin{align}
  \mathbb{E}[S_j(t)|Z_0(t)+Z_1(t)>0]=\frac{\mathbb{E}[S_j(t)]}{1-\mathbb{P}(Z_0(t)=0,Z_1(t)=0)}.\label{eq:SFS_conditional}
\end{align}
Since the extinction probability $\mathbb{P}(Z_0(t)=0,Z_1(t)=0)$ by time $t>0$ is decreasing in $b$, $\mathbb{E}[S_1(t)|Z_0(t)+Z_1(t)>0]$ is still decreasing in $b$. This suggests that we can use the small-frequency end of the driver SFS to estimate the fitness increase $b$, which is discussed further in the Discussion.

\subsubsection{Asymptotic behavior of the mean driver SFS}\label{sec:mean_j_scaling}

\begin{figure}[H]
	\centering
	\includegraphics[width=\textwidth]{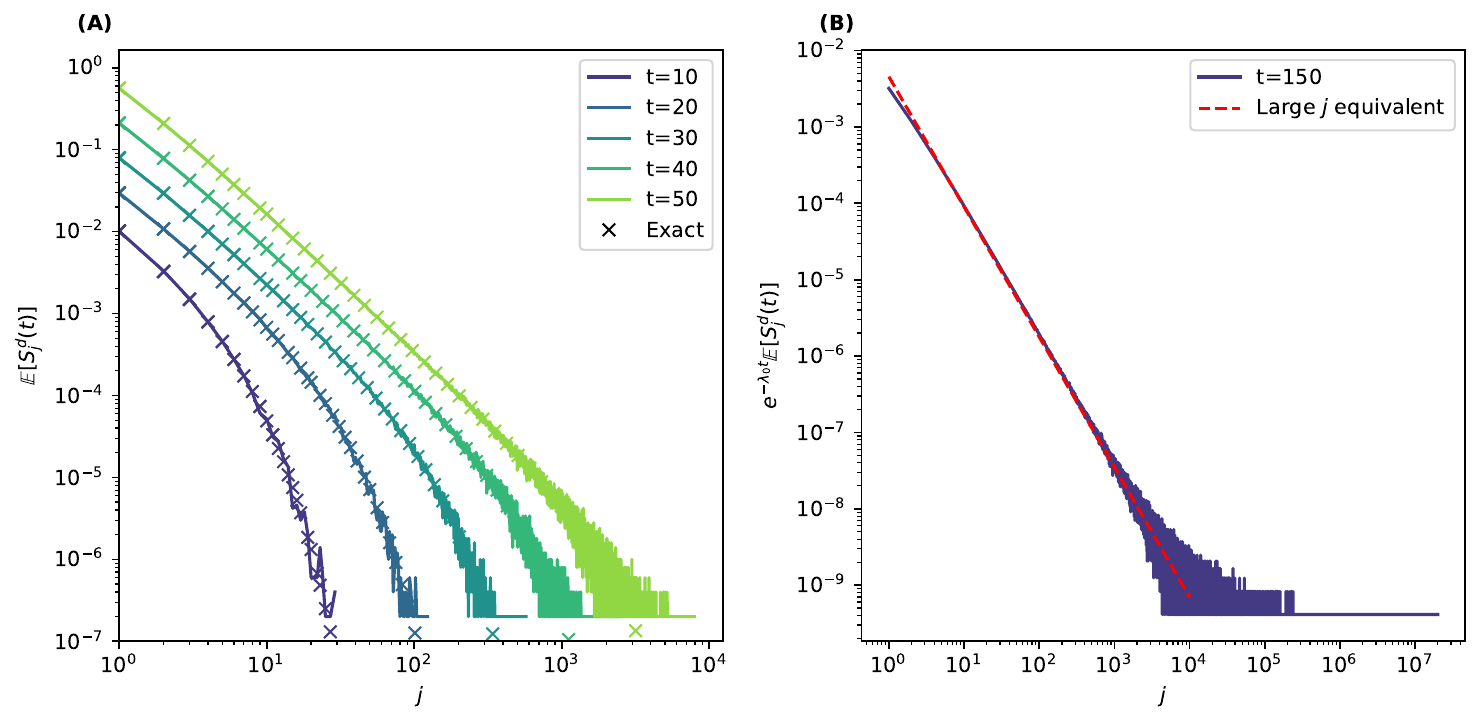}
	\caption{(left) $5\cdot 10^6$ simulations (deterministic mutant fitness model) are performed to obtain empirical means (lines), and compared with exact means (x-markers). (right) $10^3$ simulations are performed to obtain the rescaled mean driver SFS at time $t=150$. The `large $j$ equivalent' line refers to the $j^{-\lambda_0/\lambda_1-1}$ term in Eq. \ref{eq:large_j_equivalent} in Proposition \ref{prop:SFS_mean_asymptotics}.}\label{fig:driver_muts}
\end{figure}

We can use Proposition \ref{prop:SFS_mean} to obtain large-time ($t\to\infty$) and large-frequency ($j \to \infty$) scaling behavior of the site frequency spectrum: %
\begin{restatable}{proposition}{SFSmeanasymptotics}
  \begin{align}
    \lim_{t\to\infty}e^{-\lambda_0 t}\mathbb{E}[S_j(t)]&=(av+\mu)\int_0^\infty e^{-\lambda_0 s}P_j(s)\,ds\nonumber\\
    &=(av+\mu)\frac{q_1}{\lambda_1}\frac{\Gamma(j)\Gamma(\lambda_0/\lambda_1+1)}{\Gamma(j+\lambda_0/\lambda_1+1)}\sum_{n=0}^\infty \frac{(\lambda_0/\lambda_1)_n(j)_n}{(j+1+\lambda_0/\lambda_1)_n n!}p_1^n\nonumber\\
    &=(av+\mu)\frac{\Gamma(\lambda_0/\lambda_1+1)}{\lambda_1 j^{\lambda_0/\lambda_1+1}}(1-p_1)^{1-\lambda_0/\lambda_1}+\mathcal{O}_{j\to\infty}\left(\frac{1}{j^{\lambda_0/\lambda_1+2}}\right).\label{eq:large_j_equivalent}
  \end{align}
  $\Gamma$ denotes the gamma function and $(x)_n:=\Gamma(x+n)/\Gamma(x)$. We also have the following rate of convergence in time for fixed $j$:
  \[\left(\lim_{t\to\infty}e^{-\lambda_0 t}\mathbb{E}[S_j(t)]\right)-e^{-\lambda_0s}\mathbb{E}[S_j(s)]\overset{s\to\infty}{\sim}(av+\mu)\frac{q_1}{\lambda_0/\lambda_1+1}e^{-(\lambda_0+\lambda_1)s},\]
  and under the random fitness advance model with Assumption \ref{assumption:random}:
  \[\lim_{t\to\infty}e^{-\lambda_0t}\mathbb{E}[S_j^B(t)]\overset{j\to\infty}{\sim} \frac{C}{j^{\frac{\lambda_0}{\lambda_1(b_{\max})}+1}\log j}\]
  where $C$ is a function of the parameters of the model which can be explicitly determined.

  \label{prop:SFS_mean_asymptotics}
\end{restatable}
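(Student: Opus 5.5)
Starting from Proposition \ref{prop:SFS_mean} with $\mathbb{E}[Z_0(s)]=e^{\lambda_0 s}$, I will substitute $u=t-s$ to get
\[
e^{-\lambda_0 t}\mathbb{E}[S_j(t)]=(av+\mu)\int_0^t e^{-\lambda_0 u}P_j(u)\,du .
\]
Since $0\le P_j\le 1$ and $\lambda_0>0$, monotone convergence gives the first equality, i.e.\ the limit $(av+\mu)\int_0^\infty e^{-\lambda_0 u}P_j(u)\,du$.

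For the closed form I will use the classical explicit kernel of a linear birth-death process started from one cell. For $j\ge1$,
\[
P_j(u)=(1-\alpha(u))(1-\beta(u))\beta(u)^{j-1},\qquad \beta(u)=\frac{(a+b)(e^{\lambda_1 u}-1)}{(a+b)e^{\lambda_1u}-d},
\]
and $(1-\alpha)(1-\beta)=\lambda_1^2 e^{\lambda_1 u}/((a+b)e^{\lambda_1 u}-d)^2$. I will change variables to $x=\beta(u)\in[0,1)$. Writing $e^{-\lambda_1 u}$ in terms of $x$ gives $e^{-\lambda_1 u}=\frac{1-x}{1-p_1x}$. One checks that $du$ combined with the prefactor $(1-\alpha)(1-\beta)$ is exactly $\frac{dx}{\lambda_1(1-p_1x)}$ up to constants involving $q_1$. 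The integral then becomes
\[
\frac{q_1}{\lambda_1}\int_0^1 x^{j-1}(1-x)^{\rho}(1-p_1x)^{-\rho}\,dx, \qquad \rho=\lambda_0/\lambda_1 .
\]
This is Euler's integral representation of ${}_2F_1(\rho,j;j+\rho+1;p_1)$ times $B(j,\rho+1)$, which is the stated series. I will track the exact powers of $(1-p_1x)$ carefully here, since that is where constants could slip.

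For large $j$ I will apply Laplace's method at $x=1$, setting $x=e^{-y/j}$. Then $(1-x)^\rho\approx (y/j)^\rho$ and $(1-p_1x)^{-\rho}\to q_1^{-\rho}$, so the integral is $\Gamma(\rho+1)j^{-\rho-1}q_1^{-\rho}(1+O(1/j))$. Multiplying by $q_1/\lambda_1$ gives $q_1^{1-\rho}$, as claimed. The $O(j^{-\rho-2})$ error comes from first-order Taylor terms around $x=1$, with the rest of the integral exponentially small.

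For the rate of convergence in time, the difference equals $(av+\mu)\int_s^\infty e^{-\lambda_0u}P_j(u)\,du$. For large $u$ we have $P_j(u)\sim q_1^2 e^{-\lambda_1 u}/(a+b)\cdot$(a constant), since $\beta^{j-1}\to1$. Integrating gives $\propto e^{-(\lambda_0+\lambda_1)s}/(\lambda_0+\lambda_1)$, and I will match the stated constant using the explicit $P_j$.

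The random fitness part is the main obstacle. Write $F(j,b)$ for the leading term above, with $\rho(b)=\lambda_0/\lambda_1(b)$. The quantity to evaluate is $\int_0^{b_{\max}} h(b)F(j,b)\,db$, whose integrand behaves like $j^{-\rho(b)-1}$. Since $\rho$ is decreasing in $b$, the mass concentrates near $b_{\max}$. Writing $j^{-\rho(b)}=\exp(-\rho(b)\log j)$ and linearizing $\rho(b)\approx\rho(b_{\max})+|\rho'(b_{\max})|(b_{\max}-b)$, the Laplace method in the variable $(b_{\max}-b)\log j$ gives
\[
h(b_{\max})F\text{-prefactor}(b_{\max})\,j^{-\rho(b_{\max})-1}\big/\bigl(|\rho'(b_{\max})|\log j\bigr),
\]
using left-continuity and positivity of $h$ at $b_{\max}$. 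The delicate point is that the $O(j^{-\rho(b)-2})$ error from the fixed-$b$ expansion must hold uniformly in $b\in[0,b_{\max}]$, so that it can be swapped with the $b$-integral. I plan to get this from the integral representation with $b$-uniform constants on the compact set, using boundedness of $h$ to dominate the remaining region.
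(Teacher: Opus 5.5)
Your treatment of the first and third claims is sound and follows the paper closely.

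One small slip: $(1-\alpha)(1-\beta)\,du$ equals exactly $\frac{q_1}{\lambda_1}\,dx$, with no extra factor $(1-p_1x)^{-1}$. The factor $(1-p_1x)^{-\rho}$ comes entirely from $e^{-\lambda_0u}=\bigl(\frac{1-x}{1-p_1x}\bigr)^{\rho}$, so the integral you end up with is still correct.

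For the large-$j$ expansion your route differs from the paper's. The paper applies Watson's lemma to the Beta prefactor and a dominated-convergence/Weierstrass-product bound to the hypergeometric series. Your substitution $x=e^{-y/j}$ turns the integral into $\frac1j\int_0^\infty e^{-y}(1-e^{-y/j})^{\rho}(1-p_1e^{-y/j})^{-\rho}\,dy$. It then gives the leading term and the $O(j^{-\rho-2})$ error at once from the global bounds $|(1-e^{-w})^{\rho}-w^{\rho}|\le Cw^{\rho+1}$ and $|(1-p_1e^{-w})^{-\rho}-q_1^{-\rho}|\le Cw$. These constants are uniform over $b\in[0,b_{\max}]$, since $\lambda_1(b)\ge a-d>0$ and $p_1(b)\le d/a<1$. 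So your route also supplies the $b$-uniformity of the error term that the paper's random-fitness argument uses tacitly. Your Laplace step at $b_{\max}$ then gives the same constant as the paper's change of variables $\ell=\rho(b)-\rho(b_{\max})$: $h(b_{\max})$ times the fixed-$b$ prefactor at $b_{\max}$, divided by $|\rho'(b_{\max})|$.

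The genuine problem is the rate of convergence in time. There you leave the constant open and plan to ``match the stated constant'', and that cannot be done. From the explicit kernel,
\[
1-\alpha(u)=\frac{q_1}{1-p_1e^{-\lambda_1u}}\to q_1,\qquad 1-\beta(u)=\frac{q_1e^{-\lambda_1u}}{1-p_1e^{-\lambda_1u}},\qquad \beta(u)^{j-1}\to1 .
\]
So $P_j(u)\sim q_1^2e^{-\lambda_1u}$, with no factor $1/(a+b)$, and hence
\[
(av+\mu)\int_s^\infty e^{-\lambda_0u}P_j(u)\,du\;\sim\;(av+\mu)\frac{q_1^2}{\lambda_0+\lambda_1}\,e^{-(\lambda_0+\lambda_1)s}.
\]
The statement instead claims $(av+\mu)\frac{q_1}{\lambda_0/\lambda_1+1}e^{-(\lambda_0+\lambda_1)s}=(av+\mu)\frac{q_1\lambda_1}{\lambda_0+\lambda_1}e^{-(\lambda_0+\lambda_1)s}$. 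This is too large by the factor $\lambda_1/q_1=a+b$. It is even dimensionally inconsistent: it is a rate, while the left side is a count.

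The paper arrives at the wrong constant because, when it rewrites the tail as $(av+\mu)\int_{1-r(s)}^1\bigl(\frac{1-y}{1-p_1y}\bigr)^{\rho}y^{j-1}\,dy$, it drops the prefactor $q_1/\lambda_1$ from its own representation of $\int_0^\infty e^{-\lambda_0s}P_j(s)\,ds$. Its estimate of that $y$-integral, $\sim\frac{q_1}{\rho+1}e^{-(\lambda_0+\lambda_1)s}$, is correct, and restoring the prefactor reproduces the formula displayed above.

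So your method is the right one, but carried out explicitly it proves the corrected asymptotic with constant $(av+\mu)q_1^2/(\lambda_0+\lambda_1)$, not the one in the statement. Compute the constant rather than deferring it to a match that will not occur.
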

\begin{proof}
  See Appendix Section \ref{app:SFS_mean_asymptotics}.
\end{proof}
Eq.~\ref{eq:large_j_equivalent} indicates that mutations at large frequencies at large times follow a power law $j^{-\lambda_0/\lambda_1-1}$, as illustrated by the right panel in Figure \ref{fig:driver_muts}.
When $\lambda_0=\lambda_1$, we obtain a $j^{-2}$ power law, characteristic of a single-type model with neutral mutations.
When $\lambda_0 < \lambda_1$, there are more mutations at large frequencies than in the neutral model, and the number of mutations increases as the fitness advantage $b$ increases, which is consistent with the increase in slope we observed in Figure \ref{fig:SFS_characteristics}.
Under the random fitness advance model, mutations at large frequencies come from the clones with the largest fitness advance $b_{\max}$, which introduces a log-correction to the power law due to the rarity of clones with near-maximal fitness. 

\subsubsection{Large time and size asymptotics for the driver SFS}\label{sec:lln_SFS}

We now extend the analysis from Gunnarsson et al.~\cite{gunnarsson2025} to obtain a.s.~results for a scaled version of the SFS with selection, both at fixed detection time and size. 
The fixed-time limit (Eq.~\ref{eq:driver_lln_time}) is influenced by the limiting random variable $Y = \lim_{t \to \infty} e^{-\lambda_0t} Z_0(t)$ for the wild-type process that generates the mutant clones.
The fixed-size limit (Eq.~\ref{eq:driver_lln_size}) is further influenced by the limiting random variable $W = \lim_{t \to \infty} e^{-\lambda_1t} Z_1(t)$ for the type-1 process, which controls the time it takes the population to reach size $n$.
Both limits have a deterministic factor that comes from the first-moment limit in Eq.~\ref{eq:large_j_equivalent}.

\begin{restatable}{theorem}{ConvergenceResults} \label{prop:convergence_results}Conditional on $\Omega_0^\infty$ and for all $j\geq 1$:
  \begin{align}
    &e^{-\lambda_0 t}S_j(t)\overset{\substack{L^2\\a.s.}}{\to}(av+\mu)Y\int_0^\infty e^{-\lambda_0 s}P_j(s)\,ds=Y\lim_{s\to\infty}e^{-\lambda_0 s}\mathbb{E}[S_j(s)],\label{eq:driver_lln_time}\\
    &n^{-\lambda_0/\lambda_1}S_j(\tau_n)\overset{\substack{a.s.}}{\to}(av+\mu)YW^{-\lambda_0/\lambda_1}\int_0^\infty e^{-\lambda_0 s}P_j(s)\,ds=YW^{-\lambda_0/\lambda_1}\lim_{s\to \infty}e^{-\lambda_0 s}\mathbb{E}[S_j(s)].\label{eq:driver_lln_size}
  \end{align}
  Under the random fitness advance model with Assumption \ref{assumption:random}:
  \begin{align*}
    &e^{-\lambda_0 t}S_j^B(t)\overset{\substack{L^2\\a.s.}}{\to}(av+\mu)Y\int_0^\infty e^{-\lambda_0 s}\mathbb{E}[P_j(s|B)]\,ds=Y\lim_{s\to\infty}e^{-\lambda_0 s} \mathbb{E}[S_j^B(s)].
  \end{align*}
\end{restatable}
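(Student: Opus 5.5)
The strategy is to approximate $S_j(t)$ by a compensator-type quantity driven by the type-0 path, following the $L^2$-approximation scheme of Gunnarsson et al.~\cite{gunnarsson2025}. Define
\[
A_j(t)=(av+\mu)\int_0^t Z_0(s)P_j(t-s)\,ds .
\]
Conditional on the type-0 trajectory $\mathcal{F}_0=\sigma(Z_0(s),s\ge 0)$, mutant clones are seeded at rate $(av+\mu)Z_0(s)$. For replication-independent mutations this is a Cox process. For replication-dependent ones the seeding times are part of the type-0 birth events, which is why Proposition \ref{prop:SFS_mean} carries the correction term $-\mu(\mu+av)\mathbb{E}[Z_0(r)]$. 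Each seeded clone evolves independently, so $S_j(t)$ is a sum of conditionally independent indicators. Two facts follow from Proposition \ref{prop:SFS_mean}: $\mathbb{E}[S_j(t)]=\mathbb{E}[A_j(t)]$, and both $\mathbb{E}[S_j(t)^2]$ and $\mathbb{E}[A_j(t)^2]$ are expressed through $\mathbb{E}[Z_0(r)Z_0(s)]$. The first step is to show
\[
\mathbb{E}\big[(S_j(t)-A_j(t))^2\big]\lesssim e^{\lambda_0 t},
\]
using the exact second moment \eqref{eq:choose_SFS} and the explicit form $\mathbb{E}[Z_0(r)Z_0(s)]=e^{\lambda_0 r}(\text{const}\cdot e^{\lambda_0 s}+O(1))$. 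In the resulting expansion the $e^{2\lambda_0 t}$ terms cancel exactly, leaving a variance of order $e^{\lambda_0 t}$. Consequently $e^{-\lambda_0 t}(S_j(t)-A_j(t))\to 0$ in $L^2$ at rate $e^{-\lambda_0 t/2}$.

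The second step treats $e^{-\lambda_0 t}A_j(t)$ directly. Substituting $u=t-s$ gives
\[
e^{-\lambda_0 t}A_j(t)=(av+\mu)\int_0^t e^{-\lambda_0 u}\big(e^{-\lambda_0(t-u)}Z_0(t-u)\big)P_j(u)\,du .
\]
Since $e^{-\lambda_0 s}Z_0(s)\to Y$ almost surely and $\sup_s e^{-\lambda_0 s}Z_0(s)<\infty$ almost surely, dominated convergence with the integrable majorant $e^{-\lambda_0 u}$ yields almost sure convergence to $(av+\mu)Y\int_0^\infty e^{-\lambda_0 u}P_j(u)\,du$. The $L^2$ convergence follows the same way, using Doob's inequality for the martingale $e^{-\lambda_0 s}Z_0(s)$ to get $\mathbb{E}[\sup_s e^{-2\lambda_0 s}Z_0(s)^2]<\infty$. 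Combined with the first step, this gives $L^2$ convergence of $e^{-\lambda_0 t}S_j(t)$.

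The third step, upgrading to almost sure convergence for $S_j$, is the main obstacle. Along a lattice $t_k=k\delta$, Chebyshev's inequality and Borel--Cantelli apply because the error bounds $e^{-\lambda_0 t_k}$ are summable, so convergence holds almost surely along $t_k$. To interpolate between lattice points, I would use monotonicity. For $t\in[t_k,t_{k+1}]$, $S_j(t)$ is sandwiched between counts of clones that are ``born before $t_{k+1}$ and of size $j$ at some time in $[t_k,t_{k+1}]$'' and clones ``born before $t_k$ and of size exactly $j$ throughout $[t_k,t_{k+1}]$''. The means of these bounding quantities differ from $\mathbb{E}[A_j(t_k)]$ by $O(\delta)e^{\lambda_0 t_k}$. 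Each bounding quantity satisfies the same first- and second-moment bounds, so it converges almost surely along the lattice; letting $\delta\to 0$ then closes the gap. This mirrors Harris's argument \cite{harris1963}.

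For the fixed-size statement \eqref{eq:driver_lln_size}, note that on $\Omega_0^\infty$ we have $\tau_n\to\infty$ and $n=Z_0(\tau_n)+Z_1(\tau_n)\sim We^{\lambda_1\tau_n}$, because $\lambda_1>\lambda_0$ and so type 1 dominates. Hence $e^{\lambda_0\tau_n}\sim (n/W)^{\lambda_0/\lambda_1}$. Composing the almost sure limit from the fixed-time result with $\tau_n$ then gives the claim. The random-fitness case proceeds identically with $P_j(u)$ replaced by $\mathbb{E}[P_j(u|B)]$. Conditioning on $\mathcal{F}_0$, clones remain independent with this averaged kernel, so the moment formulas \eqref{eq:mean_SFS_random_fitness} and \eqref{eq:choose_SFS_random_fitness} give the same variance bound; Assumption \ref{assumption:random} keeps every rate bounded.
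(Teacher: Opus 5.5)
Your proposal is correct, and its almost-sure step takes a genuinely different route from the paper's.

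\textbf{How the paper does it.} For the almost-sure statement the paper never works with the non-monotone $S_j$ directly. It writes $S_j(t)=\sum_k S_{j,+}^k(t)-\sum_k S_{j,-}^k(t)$, where the pieces count clones that have entered or left size $j$ at least $k$ times, so each piece is non-decreasing in $t$. It then proves $L^2$ bounds of order $\theta^k e^{\lambda_0 t}$ for each piece, using the geometric bound of Lemma \ref{lem:uniform_bound_kth_entrance}. Next it replaces $Z_0(s)$ by $Ye^{\lambda_0 s}$ using the exact distance of Lemma \ref{lem:L2_distance}. Finally it applies the Harris-type criterion of Proposition \ref{prop:convergence_tool}: for monotone $X_t$, the condition $\int_0^\infty e^{-\lambda t}\mathbb{E}|X_t-Y_t|\,dt<\infty$ transfers a.s.\ convergence from $Y_t$ to $X_t$ along all real $t$.

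\textbf{How you do it, and why it closes.} You get convergence along $t_k=k\delta$ by Chebyshev and Borel--Cantelli, then interpolate with the window sandwich $L_k\le S_j(t)\le U_k$. This works. $U_k$ and $L_k$ are again sums of conditionally independent indicators whose kernels depend only on $t_k-s$, so your first two steps apply to them verbatim. For fixed $j$ one has $\mathbb{E}[U_k-L_k]\le C_j\delta e^{\lambda_0 t_k}$. The gap consists of clones seeded during the window plus clones that jump into or out of size $j$ during it, and such jumps occur at rate at most $(a+b+d)(j+1)$. Hence the $\limsup$ and $\liminf$ lie within $O(\delta)Y$ of the target, and you let $\delta\to0$.

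\textbf{Trade-offs.} Your route removes the infinite decomposition and the $\theta^k$ lemma. Your pathwise dominated-convergence treatment of $e^{-\lambda_0 t}A_j(t)$ also makes the auxiliary $\widehat S$ and Lemma \ref{lem:L2_distance} unnecessary. The paper's route buys a reusable criterion, which it applies again to $S_{\ge j}$ and to $\widetilde R_x$. Your fixed-size step, $ne^{-\lambda_1\tau_n}\to W>0$ on $\Omega_0^\infty$, is equivalent to the paper's use of Lemma \ref{lem:large_size_time}.

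\textbf{Two corrections.}

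First, you have the Cox structure reversed. A replication-dependent mutation leaves $Z_0$ unchanged, so given the type-0 path these mutations form a Poisson process of rate $avZ_0(s)$. Replication-independent mutations remove a type-0 cell and are thinned type-0 loss events. That is why the correction term in \eqref{eq:choose_SFS} carries the factor $\mu$. The slip is harmless for your bound, since that term is $O(e^{\lambda_0 t})$.

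Second, and more substantively, $\mathbb{E}[(S_j(t)-A_j(t))^2]$ cannot be obtained from Proposition \ref{prop:SFS_mean} alone. You also need the cross moment $\mathbb{E}[S_j(t)A_j(t)]$, i.e.\ $\mathbb{E}[Z_0(r)\,dM_s]$, which that proposition does not supply. The clean fix is the paper's Proposition \ref{prop:L2_driver}: condition on $\mathcal{B}_t$ to split the error into two terms.
\begin{itemize}
\item The conditional-variance term satisfies $\mathbb{E}[\mathbb{V}(S_j(t)\mid\mathcal{B}_t)]\le\mathbb{E}[S_j(t)]$.
\item The compensator term satisfies $\mathbb{E}[(\int_0^t P_j(t-s)\,dR_s)^2]=\int_0^t(av+\mu)e^{\lambda_0 s}P_j(t-s)^2\,ds$, by It\^o's isometry with $[R]_t=M_t$.
\end{itemize}
Both terms are $O(e^{\lambda_0 t})$, which confirms the cancellation you claimed.
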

\begin{proof}
  See Appendix Section \ref{app:SFS_convergence}.
\end{proof}

Eq.~\ref{eq:driver_lln_time} is proved by decomposing $S_j(t)$ into the difference of two monotone processes and devising a succession of $L^2$-approximations to each monotone process. Almost-sure convergence is then established using Proposition \ref{prop:convergence_tool} of the Appendix. Eq.~\ref{eq:driver_lln_size} follows from Eq.~\ref{eq:driver_lln_time} by remarking that on $\Omega_0^\infty$, $|\tau_n-\frac{1}{\lambda_1}\log\left(n/W\right)|\ll 1$, so $\exp(-\lambda_0 \tau_n)\sim n^{-\lambda_0/\lambda_1}W^{\lambda_0/\lambda_1}$ (formally proved using Lemma \ref{lem:large_size_time} of the Appendix). A corresponding fixed detection-size result for the random fitness advance model would require a non-trivial a.s.~convergence result for $\gamma(t)Z_1^B(t)$ for some $\gamma(t)$, which is outside the scope of this manuscript\footnote{Evidence from a semideterministic approximation of the model \cite{durrettfoo2010} and Proposition \ref{prop:scaling} suggests $\gamma(t)=t^{\frac{\lambda_1(b_{\max})}{\lambda_0}}e^{-\lambda_1(b_{\max})t}$, where $\lambda_1(b)=a+b-d$ denotes the type-1 growth rate with fitness advancement $b$.}.

In a single-type neutral model, Gunnarsson et al.~\cite{gunnarsson2025} found that the fixed-size limit of the SFS is deterministic, which was attributed to the fact that there is no variability in the population size at time $\tau_n$.
Here, the two-type dynamics of mutation and selection lead to a random limit even in the fixed-size case.

Before moving on, we provide a brief comment concerning how Theorem \ref{prop:convergence_results} compares with Proposition 4.5 of Cheek and Antal \cite{cheek2018}. If we use a superscript $(n)$ momentarily to denote the parameters of our model under their large detection size-small mutation regime, their proposition says:
\[S_j^{(n)}(\tau_n^{(n)})\Longrightarrow S_j^\ast,\]
weakly where $S_j^\ast\sim \text{Poisson}(\alpha_j)$. $\alpha_j$ is the rate of the Poisson in Proposition 4.5 of \cite{cheek2018}. Hence  $S_j^{(n)}(\tau_n^{(n)})\asymp 1$ in their limiting regime whereas $S_j(\tau_n)\asymp n^{\lambda_0/\lambda_1}$ in our limiting regime, conditional on $\Omega_0^\infty$. By their remark 4.4, $\alpha_j\ordas{j}{\infty}j^{-1-\lambda_0/\lambda_1}$, matching the behavior $\lim_{s\to\infty} e^{-\lambda_0 s}\mathbb{E}[S_j(s)]\ordas{j}{\infty} j^{-1-\lambda_0/\lambda_1}$ for the scaled SFS under our large detection-size/time regime.

{\bf \em Normalized driver SFS.} We next prove that the fraction of mutations present at a given frequency $j$ converges to a deterministic value almost surely. 
Let $S_{\geq j}(t)$ denote the number of mutations present in $j$ or more cells at time $t$ for $j\in \mathbb{N}$. $S_{\geq 1}(t)$ thus represents the total number of driver clones present at time $t$. Clearly $S_{\geq j}(t)=\sum_{k=j}^\infty S_k(t)$ at large times follows a $j^{-\lambda_0/\lambda_1}$ law in mean; in the following proposition we characterize its asymptotic behavior (which has an analogous proof to that of Theorem \ref{prop:convergence_results}).

\begin{restatable}{proposition}{RelativeType1AS}
  Conditional on $\Omega_0^\infty$, for all $k\geq 0$:
  \begin{align*}
    &e^{-\lambda_0 t}S_{\geq j}(t)\overset{\substack{L^2\\a.s.}}{\to}(av+\mu)Y\int_0^\infty e^{-\lambda_0 s}\sum_{k=j}^\infty P_k(s)\,ds\\
    &n^{-\lambda_0/\lambda_1}S_{\geq j}(\tau_n)\overset{\substack{a.s.}}{\to}(av+\mu)YW^{-\lambda_0/\lambda_1}\int_0^\infty e^{-\lambda_0 s}\sum_{k=j}^\infty P_k(s)\,ds
  \end{align*}

  Consequently, a.s.~on $\Omega_0^\infty$ the fraction of total mutations present at a frequency $j$ is asymptotically:
  \begin{align}
    \lim_{t\to\infty}\frac{S_j(t)}{S_{\geq 1}(t)}=\lim_{n\to\infty}\frac{S_j(\tau_n)}{S_{\geq 1}(\tau_n)}=\frac{\int_0^\infty e^{-\lambda_0 s}P_j(s)\,ds}{\int_0^\infty e^{-\lambda_0 s}\left(1-P_0(s)\right)\,ds}=\frac{\frac{\Gamma(j)\Gamma(\lambda_0/\lambda_1+1)}{\Gamma(j+\lambda_0/\lambda_1+1)}{}_2{F}_1(\lambda_0/\lambda_1,j;j+\lambda_0/\lambda_1+1;p_1)}{\frac{\lambda_1}{\lambda_0}{}_2F_1(1,\lambda_0/\lambda_1;\lambda_0/\lambda_1+1;p_1)},\label{eq:normalized_SFS_limit}
  \end{align}
where ${}_2F_1$ is the hypergeometric function.
\label{prop:fraction_as}
\end{restatable}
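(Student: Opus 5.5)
The plan is to follow the proof of Theorem \ref{prop:convergence_results} essentially verbatim, with the single-frequency indicator $\mathbf{1}_{\{Z_1^{(i)}(t-T_i)=j\}}$ replaced by $\mathbf{1}_{\{Z_1^{(i)}(t-T_i)\geq j\}}$. The first step is to redo Proposition \ref{prop:SFS_mean} with $P_j(s)$ replaced by $P_{\geq j}(s):=\sum_{k\geq j}P_k(s)=\mathbb{P}(Z_1^{(1)}(s)\geq j)$. The conditioning argument on the type-0 trajectory is identical, giving
\[
\mathbb{E}[S_{\geq j}(t)]=\int_0^t(av+\mu)e^{\lambda_0 s}P_{\geq j}(t-s)\,ds,
\]
and the analogous double integral for $\mathbb{E}[\binom{S_{\geq j}(t)}{2}]$. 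Since $P_{\geq j}\leq 1$, dominated convergence gives $e^{-\lambda_0 t}\mathbb{E}[S_{\geq j}(t)]\to(av+\mu)\int_0^\infty e^{-\lambda_0 s}P_{\geq j}(s)\,ds<\infty$. The second-moment formula, combined with $\mathbb{E}[Z_0(r)Z_0(s)]=O(e^{\lambda_0(r+s)})$, yields an $L^2$ bound of order $e^{2\lambda_0 t}$.

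Next I would prove $L^2$ convergence of $e^{-\lambda_0 t}S_{\geq j}(t)$. Conditional on the type-0 path, clones born in $[0,t]$ are a Cox-type family with i.i.d.\ growth. So I would compare $e^{-\lambda_0 t}S_{\geq j}(t)$ with the conditional mean $e^{-\lambda_0 t}(av+\mu)\int_0^t Z_0(s)P_{\geq j}(t-s)\,ds$, whose conditional variance is $O(e^{-\lambda_0 t})$. I would then replace $Z_0(s)$ by $Ye^{\lambda_0 s}$, using the $L^2$ convergence of $e^{-\lambda_0 s}Z_0(s)$ and dominated convergence in $s\mapsto t-s$. For replication-dependent mutations the conditional intensity picture needs the same care as in the theorem; I take that decomposition from Appendix \ref{app:SFS_convergence} as given.

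To upgrade to almost sure convergence, write $S_{\geq j}(t)=A(t)-D(t)$. Here $A(t)$ counts clones that have ever reached size $j$ by time $t$, and $D(t)$ counts those that reached size $j$ and have since dropped below it. Both are nondecreasing in $t$. Each satisfies the same $L^2$ scheme, with limit means of the form $(av+\mu)Y\int_0^\infty e^{-\lambda_0 s}\mathbb{P}(\cdot)\,ds$. Proposition \ref{prop:convergence_tool} then applies to each: $L^2$ convergence along a lattice $t_k=k\delta$ with summable errors, plus monotonicity and $e^{\lambda_0\delta}\to1$, gives a.s.\ convergence. The difference gives the first display. This monotone-splitting step is the main obstacle: the rate of the $L^2$ error must be verified as summable along the lattice, say $O(e^{-c t})$. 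I expect this to follow as in the theorem, because the error terms are controlled by $\mathbb{E}|e^{-\lambda_0 s}Z_0(s)-Y|^2=O(e^{-\lambda_0 s})$.

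The fixed-size statement then follows exactly as Eq.~\ref{eq:driver_lln_size}. Since $\tau_n\to\infty$ a.s.\ on $\Omega_0^\infty$, we have $e^{-\lambda_0\tau_n}S_{\geq j}(\tau_n)\to$ the same limit, and Lemma \ref{lem:large_size_time} gives $e^{-\lambda_0\tau_n}n^{\lambda_0/\lambda_1}\to W^{\lambda_0/\lambda_1}$. For the ratio, apply the first display and Theorem \ref{prop:convergence_results} and divide; $Y>0$ on $\Omega_0^\infty$, so $(av+\mu)Y$ (and $W$ in the fixed-size case) cancels. Note $P_{\geq1}=1-P_0$. The closed forms come from the hypergeometric series already in Proposition \ref{prop:SFS_mean_asymptotics} for the numerator. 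For the denominator, use the explicit $1-P_0(s)=\frac{q_1e^{\lambda_1 s}}{e^{\lambda_1 s}-p_1}$, expand in powers of $p_1e^{-\lambda_1 s}$, and integrate termwise to get $\frac{q_1}{\lambda_0}\,{}_2F_1(1,\lambda_0/\lambda_1;\lambda_0/\lambda_1+1;p_1)$. The common factor $q_1/\lambda_1$ cancels in the ratio, which produces Eq.~\ref{eq:normalized_SFS_limit}.
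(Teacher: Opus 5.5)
\textbf{Gap: the monotone decomposition.} Your decomposition $S_{\geq j}(t)=A(t)-D(t)$ into two nondecreasing processes is false for $j\geq 2$. A clone can reach size $j$, drop to $j-1$, and later climb back to size $\geq j$; only $j=1$ is safe, because $0$ is absorbing.
\begin{itemize}
\item If $D(t)$ counts clones that have \emph{ever} fallen below $j$ after reaching it (which is what makes $D$ nondecreasing), such a re-entering clone sits in $D(t)$. Then $A(t)-D(t)$ counts only clones that never dipped below $j$ after first reaching it, and undercounts $S_{\geq j}(t)$.
\item If $D(t)$ counts clones \emph{currently} below $j$, then $D$ is not monotone and Proposition \ref{prop:convergence_tool} cannot be applied to it.
\end{itemize}
Either way, your a.s.\ step as written proves convergence of the wrong quantity.

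\textbf{What is missing.} You need bookkeeping of repeated crossings. The paper redefines $\tau^i_{j,\pm}(k)$ as the $k$-th entry into, respectively exit from, $\{j,j+1,\dots\}$. It then writes $S_{\geq j}(t)=\sum_{k\geq1}S^k_{\geq j,+}(t)-\sum_{k\geq1}S^k_{\geq j,-}(t)$. Each summand is nondecreasing in $t$ and, given $\mathcal{B}_t$, a sum of independent indicators. The paper then uses the geometric bound of Lemma \ref{lem:uniform_bound_kth_entrance}: here a re-entry requires first hitting $j-1$ from $j$, which has probability $p_1<1$. This gives $L^1$ errors of order $\theta^{k/2}\sqrt{t}\,e^{\lambda_0 t/2}$, summable in $k$, so Proposition \ref{prop:convergence_tool} applies to the two infinite sums.

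Equivalently, you could take $A$ and $D$ to be the \emph{total numbers of entries and exits} over all clones. These are monotone and their difference is exactly $S_{\geq j}(t)$. You would then need second-moment control of the per-clone crossing counts, which again comes from that geometric tail.

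\textbf{What is correct.} The remaining steps are correct and consistent with the paper:
\begin{itemize}
\item the direct $L^2$ argument for $S_{\geq j}$, which needs no monotonicity;
\item your lattice-plus-monotonicity interpolation in place of Proposition \ref{prop:convergence_tool}. The scaled errors are $O(te^{-\lambda_0 t})$ in $L^2$, so the summability you flagged as the main obstacle is routine; the real subtlety is the re-entry issue above;
\item the transfer to $\tau_n$ via Lemma \ref{lem:large_size_time};
\item the closed forms, including $1-P_0(s)=q_1e^{\lambda_1 s}/(e^{\lambda_1 s}-p_1)$ and $q_1\sum_{n\geq0}p_1^n/(\lambda_0+n\lambda_1)=\frac{q_1}{\lambda_0}\,{}_2F_1(1,\lambda_0/\lambda_1;\lambda_0/\lambda_1+1;p_1)$.
\end{itemize}
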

\begin{proof}
  See Appendix Section \ref{app:SFS_convergence}.
\end{proof}

The limit is deterministic since the random factors $Y$ and $Y W^{-\lambda_0/\lambda_1}$ cancel, at fixed time and fixed detection size, respectively. The limit depends on the model parameters only through $\lambda_0/\lambda_1$ and $p_1$, which may enable estimation of the fitness effect $b$ from data (see Discussion).

\subsubsection{Driver SFS asymptotics as the frequency varies with time}\label{sec:SFSVary}

When we derived the $j^{-\lambda_0/\lambda_1-1}$ power law for mutations at large frequencies in Proposition \ref{prop:SFS_mean_asymptotics}, we first let $t\to\infty$ and then $j\to\infty$. This represents examining the contribution of clones of size order 1 at large times to the SFS, and then examining the large-frequency-end of the spectrum when ``frequency'' denotes the absolute number of cells that make up the clone. However, different notions of frequency are possible. For example the largest clones are of size $Z_1^{(i)}(s)\asymp e^{\lambda_1 s}$, so that to understand their contribution to the SFS, the number of clones of exponential size -- rather than of size order $1$ -- should be investigated. Thus, we now let $j$ depend on time and study the SFS over the joint limit of $(t,j(t))$ as $t\to\infty$. In fact, in Section \ref{sec:large_frequencies}, an important part of our analysis involves studying the limit of a cumulative version of the SFS given by clones that grow at (or faster than) $j(t)\sim xe^{\lambda_1 t}$. When clones are on the order $1\ll j(t)\ll \exp(\lambda_1 t)$, they are of ``intermediate'' size. Intriguingly, we observe segregated behavior of the SFS depending on whether $j(t)$ grows faster or slower than $\exp\left(\lambda_0\lambda_1/(\lambda_0+\lambda_1)t\right)$:

\begin{restatable}{theorem}{SFSSparsity}\label{thm:SFSSparsity}
    Suppose that $j:\mathbb{R}_{\geq 0}\to \mathbb{N}$ is non-decreasing. Then, we have the following trichotomy:
    \begin{enumerate}
        \item If $j(t)\gg\exp(t\lambda_0\lambda_1/(\lambda_0+\lambda_1))$, $S_{j(t)}(t)\to 0$ in $L^1$;
        \item If $j(t)\sim C\exp(t\lambda_0\lambda_1/(\lambda_0+\lambda_1))$, then conditional on $\Omega_0^\infty$, $S_{j(t)}(t)$ converges in distribution to a random variable with characteristic function:
        \begin{align}
            \mathbb{E}[e^{YL(e^{i\theta}-1)}|\Omega_0^\infty]=\frac{q_1}{q_1-L\left(e^{i\theta}-1\right)}\label{eq:char_function_poisson}
        \end{align}
        where $L=\lim_{t\to\infty} \mathbb{E}[S_{j(t)}(t)]$ (and $L$ exists). \ref{eq:char_function_poisson} is the characteristic function of a mixture of a Poisson random variable with rate $YL|\Omega_0^\infty$;
        \item If $j(t)\ll\exp(t\lambda_0\lambda_1/(\lambda_0+\lambda_1))$, then:
        \[e^{-\lambda_0 t}j(t)^{(\lambda_1+\lambda_0)/\lambda_1}S_{j(t)}(t)\overset{L^1}{\to} YL_{j(\cdot)}\]
        where $L_{j(\cdot)}=\lim_{t\to\infty}e^{-\lambda_0 t}j(t)^{(\lambda_1+\lambda_0)/\lambda_1}\mathbb{E}\left[S_{j(t)}(t)\right]$ (and $L_{j(\cdot)}$ exists).
    \end{enumerate}
    Define $j_k^-(t)=\min\{j:S_j(t)\leq k\}$ and $j_k^+(t)=\max\{j:S_j(t)\geq k\}$. Then:
    \begin{align*}
        &\lim_{k\to\infty}\liminf_{t\to\infty}\frac{\log j_k^-(t)}{t}=\lim_{k\to\infty}\limsup_{t\to\infty}\frac{\log j_k^-(t)}{t}\\
        =&\lim_{k\to\infty}\liminf_{t\to\infty}\frac{\log j_k^+(t)}{t}=\lim_{k\to\infty}\limsup_{t\to\infty}\frac{\log j_k^+(t)}{t}=\frac{\lambda_0\lambda_1}{\lambda_0+\lambda_1}
    \end{align*}
    almost surely on $\Omega_0^\infty$.
\end{restatable}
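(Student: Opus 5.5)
The plan is to work conditionally on the type-0 trajectory. Given $\mathcal{F}_0=\sigma(Z_0(s),s\ge0)$, mutant clones are founded at the points of a Cox process with intensity $(av+\mu)Z_0(s)\,ds$, and each clone independently has size $j$ at time $t$ with probability $P_j(t-s)$. So, conditionally, $S_{j(t)}(t)$ is Poisson with mean
\[
\Lambda_t=(av+\mu)\int_0^t Z_0(s)P_{j(t)}(t-s)\,ds .
\]
For the replication-dependent part this is exact after thinning births of type-0 cells. I will first carry out the argument for $\mu>0$, $v=0$, and then note that the same mean and variance computations from Proposition~\ref{prop:SFS_mean} cover the general case.

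The first step is an asymptotic analysis of $\mathbb{E}[S_{j}(t)]=(av+\mu)\int_0^t e^{\lambda_0(t-u)}P_j(u)\,du$ with $j=j(t)$ large. The explicit geometric form $P_j(u)\approx q_1^2e^{-\lambda_1 u}(1-q_1e^{-\lambda_1u})^{j-1}$ (Eq.~\ref{eq:P_j}) gives, for large $j$, $P_j(u)\approx q_1^2 e^{-\lambda_1 u}\exp(-jq_1e^{-\lambda_1 u})$. Substituting $x=jq_1e^{-\lambda_1u}$ gives
\[
\mathbb{E}[S_j(t)]\approx (av+\mu)\,e^{\lambda_0 t}\,\frac{q_1}{\lambda_1}\,(jq_1)^{-1-\lambda_0/\lambda_1}\int_{jq_1e^{-\lambda_1 t}}^{\infty}x^{\lambda_0/\lambda_1}e^{-x}\,dx .
\]
When $j\ll e^{\lambda_1 t}$, the lower limit tends to $0$, so $\mathbb{E}[S_j(t)]\asymp e^{\lambda_0 t}j^{-(\lambda_0+\lambda_1)/\lambda_1}$ and $L_{j(\cdot)}$ exists, being the Gamma-function constant of Proposition~\ref{prop:SFS_mean_asymptotics}. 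This quantity is $o(1)$, order $1$, or $\to\infty$ exactly when $j$ is $\gg$, $\sim C$, or $\ll$ the threshold $\exp(t\lambda_0\lambda_1/(\lambda_0+\lambda_1))$. Since that threshold is $\ll e^{\lambda_1 t}$, the regime $j\gtrsim e^{\lambda_1 t}$ only produces even smaller means. Part~1 then follows from Markov's inequality because $S\ge0$ and the mean tends to $0$.

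Next I show that $\Lambda_t$ is asymptotically $Y\,\mathbb{E}[S_{j(t)}(t)]$. Write
\[
e^{-\lambda_0 t}\Lambda_t=(av+\mu)\int_0^t e^{-\lambda_0 s}Z_0(s)\,e^{-\lambda_0 u}P_j(u)\,du,\qquad s=t-u .
\]
The kernel $e^{-\lambda_0u}P_{j}(u)$, normalized, concentrates near $u\approx\lambda_1^{-1}\log j$, so $s\approx t-\lambda_1^{-1}\log j\to\infty$ in all intermediate regimes, since $j\ll e^{\lambda_1t}$. Because $e^{-\lambda_0s}Z_0(s)\to Y$ a.s. and in $L^2$, dominated convergence (with $L^1$ control from $\sup_s e^{-\lambda_0 s}Z_0(s)\in L^2$ by Doob) gives $\Lambda_t/\mathbb{E}[S_{j(t)}(t)]\to Y$ in $L^1$. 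Some care is needed to show that the kernel mass at small $s$ is negligible.

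With this in hand the remaining cases follow.
\begin{itemize}
\item \textbf{Part~2.} The conditional characteristic function is $\exp(\Lambda_t(e^{i\theta}-1))\to\exp(YL(e^{i\theta}-1))$. Taking expectations with $Y\mid\Omega_0^\infty\sim\mathrm{Exp}(q_0)$ gives the mixed Poisson law. Note that the displayed formula then reads with $q_0$ rather than $q_1$ unless the normalization of $L$ absorbs it; I would verify this against the paper's convention.
\item \textbf{Part~3.} Write $S-\Lambda_t$ plus $\Lambda_t$. The conditional variance equals $\Lambda_t$, so
\[
\mathbb{E}\bigl|S-\Lambda_t\bigr|\le\mathbb{E}[\Lambda_t]^{1/2}=o(\mathbb{E}[\Lambda_t]),
\]
because the mean diverges. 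After the normalization $e^{-\lambda_0t}j^{(\lambda_0+\lambda_1)/\lambda_1}$ this yields $L^1$ convergence to $YL_{j(\cdot)}$.
\end{itemize}

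The almost sure statement about $j_k^\pm$ is the main obstacle, because the first three parts give only distributional or $L^1$ convergence along a continuum of $t$. I would discretize time, $t_m=m\delta$, and use the monotone-in-$j$ cumulative counts $S_{\ge j}(t)$.

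For the upper bound, take $j=e^{(\rho+\epsilon)t}$ with $\rho=\lambda_0\lambda_1/(\lambda_0+\lambda_1)$. Then $\mathbb{E}[S_{\ge j}(t)]\lesssim e^{-c\epsilon t}$, which is summable over $t_m$. Between grid points, clones grow by at most a random factor controlled by Borel–Cantelli applied to growth over $[t_m,t_{m+1}]$. Hence eventually no clone exceeds $e^{(\rho+2\epsilon)t}$, so $\limsup\log j_k^+/t\le\rho$ for every $k$, and $j_k^-\le j_1^+ +1$ gives the same bound for $j_k^-$.

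For the lower bound, at $j=e^{(\rho-\epsilon)t}$ the conditional Poisson mean is $\Lambda_t\asymp Ye^{c\epsilon t}$. Poisson concentration (Chernoff) conditional on $\mathcal{F}_0$ gives $\mathbb{P}(S_{j}(t)<k)\le e^{-c'e^{c\epsilon t}}$ on $\{Y>\eta\}$, which is summable. The same holds for every $j'$ in the range $[e^{(\rho-2\epsilon)t},e^{(\rho-\epsilon)t}]$ via a union bound over polynomially-exponentially many $j'$; this interval-uniform control is what pins down $j_k^-$ from below. Interpolation between grid times uses the fact that $S_j$ changes slowly relative to $e^{c\epsilon t}$. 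Letting $\eta\to0$, which is allowed since $Y>0$ on $\Omega_0^\infty$, gives $\liminf\ge\rho-2\epsilon$ for both $j_k^\pm$. I expect the interpolation between grid times and the uniformity over the $j$-range to be the hardest step.
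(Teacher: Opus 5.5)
\textbf{The upper bound for $j_k^+$ fails.} Write $\rho=\lambda_0\lambda_1/(\lambda_0+\lambda_1)$ and take $j=e^{(\rho+\epsilon)t}$. The cumulative count is not small there: $\mathbb{E}[S_{\ge j}(t)]\asymp e^{\lambda_0 t}j^{-\lambda_0/\lambda_1}=e^{(\rho-\epsilon\lambda_0/\lambda_1)t}\to\infty$. The threshold for cumulative counts is $e^{\lambda_1 t}$, not $e^{\rho t}$ (Corollary~\ref{cor:scale_sum_SFS}). Clones of size of order $e^{\lambda_1 t}$ are always present (Proposition~\ref{prop:largest_clone_convergence}), and $j_1^+(t)$ is the largest clone size. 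Hence $\log j_1^+(t)/t\to\lambda_1>\rho$, so your claims that eventually no clone exceeds $e^{(\rho+2\epsilon)t}$, and that $\limsup_t\log j_k^+(t)/t\le\rho$ for every $k$, are false. By the last part of Corollary~\ref{cor:SFSSparsity}, that limsup exceeds $\rho$ for each fixed $k$, so the $\lim_{k\to\infty}$ in the statement is essential.

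Above $e^{\rho t}$ there are many clones, but they are spread over so many sizes that each exact size carries $O(1)$ of them. A first-moment bound on $S_{\ge j}$ cannot see this. The missing idea is a per-size bound via a high factorial moment:
\begin{enumerate}
\item Conditionally on the type-0 history (and mutation times), $S_j(s)$ is a sum of independent indicators with mean $\Lambda\le C\,\Xi\,e^{\lambda_0 s}j^{-1-\lambda_0/\lambda_1}$, where $\Xi=\sup_z e^{-\lambda_0 z}M_z$ has all moments.
\item Hence $\mathbb{P}(S_j(s)\ge k+1)\le\mathbb{E}[\Lambda^{k+1}]/(k+1)!\lesssim(e^{\lambda_0 s}j^{-1-\lambda_0/\lambda_1})^{k+1}$.
\item Summed over $j\ge e^{(\rho+\varepsilon)n}$, this is $\lesssim e^{(\rho+\varepsilon-(k+1)\varepsilon(\lambda_0+\lambda_1)/\lambda_1)n}$, which is summable only when $k\gtrsim 1/\varepsilon$.
\item For continuous time, the paper uses that once $S_j$ reaches $k+1$ it stays there for an exponential time of rate $(k+1)(a+b+d)j$. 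This gives $\mathbb{P}(S_j(n)\le k,\ \sup_{[n,n+1)}S_j\ge k+1)\lesssim kj\int_n^{n+2}\mathbb{P}(S_j(s)\ge k+1)\,ds$; the extra factor $j$ is absorbed by enlarging $k$.
\end{enumerate}
Then transfer to $j_k^-$ via $j_k^-\le j_{k+1}^++1$, not $j_k^-\le j_1^++1$.

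\textbf{The lower bound has the right idea but three steps need repair.} A doubly exponential conditional-Poisson lower-tail bound beating an exponential union bound is exactly the paper's idea.
\begin{enumerate}
\item The event $j_k^-(t)>J$ requires $S_{j'}(t)>k$ for every $j'\le J$. So the union must run over all $1\le j'\le e^{(\rho-\epsilon)t}$, not just $[e^{(\rho-2\epsilon)t},e^{(\rho-\epsilon)t}]$. This is harmless, since smaller sizes have larger means.
\item More seriously, $S_j$ does not change slowly between grid times $t_m=m\delta$. A clone of size $j\approx e^{(\rho-\epsilon)t}$ undergoes about $(a+b+d)j\delta$ events per step, so the set of clones at size $j$ is renewed many times. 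You need an exponentially fine grid; the paper uses $\asymp e^{2\lambda_1 n}$ subintervals of $[n,n+1)$. Show $S_j(t_i)\ge 2k$ at every grid point, and that on $\{Z_1\le e^{(\lambda_1+\beta)n}\}$ fewer than $k$ type-1 events occur in any subinterval. The doubly exponential bound pays for this larger union.
\item The event $\{Y>\eta\}$ alone does not bound $\Lambda_t$ from below. You need a lower bound on $e^{-\lambda_0 s}Z_0(s)$ over the relevant $s$. For example, intersect with $\{\inf_{s\ge S_0}e^{-\lambda_0 s}Z_0(s)>\eta/2\}$ and take a countable union over $\eta,S_0$. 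The paper instead uses $\inf_s$ with thresholds $n^{-4}$ and Lemma~\ref{lem:large_deviation_bound}.
\end{enumerate}

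For parts 1--3, conditioning on $Z_0$ to get an exact Poisson law is cleaner than the paper's four-term decomposition over $\mathcal{B}_\infty$. Your remark on the constant is correct: given $\Omega_0^\infty$, $Y\sim\mathrm{Exp}(q_0)$, so the limit is $q_0/(q_0-L(e^{i\theta}-1))$ and the $q_1$ in the display is a typo. However, the Poisson structure holds for $\mu=0$, not for ``$\mu>0$, $v=0$''. Replication-independent mutations are a thinning of type-0 death times, which $Z_0$ determines. For $\mu>0$, the moments in Proposition~\ref{prop:SFS_mean} do not give joint convergence with $Y$. Condition on the mutation times instead, and compare the Bernoulli product with a Poisson (error $\le 2\sum_{T_i\le t}P_j(t-T_i)^2$). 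Then control $\sum_{T_i\le t}P_j(t-T_i)-(av+\mu)\int_0^t Z_0(s)P_j(t-s)\,ds$ through the martingale $R_t$, as the paper does.
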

\begin{proof}
  See Appendix Section \ref{app:SFSSparsity}
\end{proof}

Point 1 follows since $S_j(t)\geq 0$ and by showing that the exact expression for the mean of the SFS (Eq.~\ref{eq:mean_SFS}) tends to $0$ for the specified $j(t)$. Point 2 is proven by examining the characteristic function of $S_{j(t)}(t)$ and applying the succession of $L^2$ approximations used to prove Theorem \ref{prop:convergence_results}. Point 3 also uses this same succession of approximations. The quantity $j_k^-(t)$ represents the {\em first} frequency that {\em few} ($\leq k$) clones occupy, and $j_k^+(t)$ represent the {\em last} frequency that {\em many} ($\geq k$) clones occupy. To prove the last part of the theorem, we first note that $j_k^-(t)\leq j_k^+(t)+1$, so it suffices to prove that for every $\varepsilon>0$, $\lambda_0\lambda_1/(\lambda_0+\lambda_1)-\varepsilon\leq \liminf_{t\to\infty}\log j_k^-(t)/t$ and $\limsup_{t\to\infty}\log j_k^+(t)/t\leq \lambda_0\lambda_1/(\lambda_0+\lambda_1)+\varepsilon$ for $k$ sufficiently large, almost surely. We then use a Borel-Cantelli argument to show that for sufficiently large $k$ there are more than $k$ clones occupying each of the sizes $[1,\exp((\lambda_0\lambda_1/(\lambda_0+\lambda_1)-\varepsilon)t)]\cap \mathbb{N}$ and fewer than $k$ clones occupying each of the sizes $[\exp((\lambda_0\lambda_1/(\lambda_0+\lambda_1)+\varepsilon)t),\infty)\cap \mathbb{N}$ for all large $t$.

Points 1--3 show that $e^{\frac{\lambda_0\lambda_1}{\lambda_0+\lambda_1}t}$ sets a threshold for the ``sparsity'' of mutations at a particular frequency $j(t)$. Many mutations occupy larger frequencies and few occupy smaller frequencies, asymptotically in $L^1$. This is also seen via simulations (Figure \ref{fig:SFS_heatmap}). Intuitively, this threshold represents a balance between two competing forces that affect the size of $S_{j(t)}(t)$: production of driver clones and their growth to size $j(t)$ contributes towards $S_{j(t)}(t)$, while a change in the size of a clone with $j(t)$ members detracts from $S_{j(t)}(t)$. When $j(t)\sim C\exp(t\lambda_0\lambda_1/(\lambda_0+\lambda_1))$, these forces counter-balance one another and yield a Poisson limit theorem.
The last result in Theorem \ref{thm:SFSSparsity} says that almost surely on $\Omega_0^\infty$, there is a sequence of numbers $\varepsilon_k\to 0^+$ and a sequence of times $(t_k)_k$ with $t_k \to \infty$ so that $t\geq t_k$ implies:
\[j_k^-(t),j_k^+(t)\in \left[\exp\left(\left(\frac{\lambda_0\lambda_1}{\lambda_0+\lambda_1}-\varepsilon_k\right)t\right),\exp\left(\left(\frac{\lambda_0\lambda_1}{\lambda_0+\lambda_1}+\varepsilon_k\right)t\right)\right].\]
This result is an almost sure version of point 2 in that the transition from having many clones of a particular frequency to few clones of a particular frequency occurs at sizes $j(t)$ growing on order $\exp\left(\frac{\lambda_0\lambda_1}{\lambda_0+\lambda_1}t\right)$.

We may then get results at fixed detection sizes as well as almost sure variants of points 1 and 3 of Theorem \ref{thm:SFSSparsity}.
From the asymptotics of $j_k^-,j_k^+$ above, we obtain the following two results almost surely on $\Omega_0^\infty$.
\begin{restatable}{corollary}{SFSSparsityCor}\label{cor:SFSSparsity}
 \begin{enumerate}
        \item The scaled time limits of $j_k^-,j_k^+$ can be extended to a fixed detection-size:
        \begin{align*}
        &\lim_{k\to\infty}\liminf_{n\to\infty}\frac{\log j_k^-(\tau_n)}{\log n}=\lim_{k\to\infty}\limsup_{n\to\infty}\frac{\log j_k^-(\tau_n)}{\log n}\\
        =&\lim_{k\to\infty}\liminf_{n\to\infty}\frac{\log j_k^+(\tau_n)}{\log n}=\lim_{k\to\infty}\limsup_{n\to\infty}\frac{\log j_k^+(\tau_n)}{\log n}=\frac{\lambda_0}{\lambda_0+\lambda_1}
    \end{align*}
    almost surely on $\Omega_0^\infty$.
    \item For every $\varepsilon>0$ if $j_{\varepsilon}(t)\overset{t\to\infty}{\lesssim} \exp\left(\left(\frac{\lambda_0\lambda_1}{\lambda_0+\lambda_1}-\varepsilon\right)t\right)$ and $J_{\varepsilon}(t)\overset{t\to\infty}{\gtrsim} \exp\left(\left(\frac{\lambda_0\lambda_1}{\lambda_0+\lambda_1}+\varepsilon\right)t\right)$, then:
    \begin{align*}
        \lim_{t\to\infty}S_{j_\varepsilon(t)}(t)&=\infty & \lim_{n\to\infty} S_{j_\varepsilon(\tau_n)}(\tau_n)&=\infty\\
        \limsup_{t\to\infty} S_{J_\varepsilon(t)}(t)&\overset{\varepsilon\to 0^+}{\lesssim}\varepsilon^{-1} & \limsup_{n\to\infty}S_{J_\varepsilon(\tau_n)}(\tau_n)&\overset{\varepsilon\to 0^+}{\lesssim}\varepsilon^{-1}\\
        \liminf_{t\to\infty}S_{J_\varepsilon(t)}(t)&=0.
    \end{align*}
    almost surely on $\Omega_0^\infty$. If in fact $J_\varepsilon(t)\ordas{t}{\infty}\exp\left(\left(\frac{\lambda_0\lambda_1}{\lambda_0+\lambda_1}+\varepsilon\right)t\right)$ and $t\mapsto J_\varepsilon(t)$ is monotone, then:
    \begin{align*}
        \limsup_{t\to\infty} S_{J_\varepsilon(t)}(t)\ordas{\varepsilon}{0^+}\varepsilon^{-1}
    \end{align*}
    on $\Omega_0^\infty$. The asymptotics terms are uniform over the sample points in $\Omega_0^\infty$ (up to null sets).
    \end{enumerate}
\end{restatable}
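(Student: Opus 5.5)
Part 1 reduces to the time-indexed statement of Theorem \ref{thm:SFSSparsity} through the random time change $t=\tau_n$. On $\Omega_0^\infty$ we have $\tau_n\to\infty$ almost surely. By Lemma \ref{lem:large_size_time}, $\tau_n-\frac{1}{\lambda_1}\log(n/W)\to 0$ almost surely, so $\tau_n/\log n\to 1/\lambda_1$. Fix $k$. Since $\tau_n$ runs through every large time, any almost-sure bound of the form $\limsup_{t\to\infty}\log j_k^\pm(t)/t\le c_k$ gives
\[
\limsup_{n\to\infty}\frac{\log j_k^\pm(\tau_n)}{\log n}=\limsup_{n\to\infty}\frac{\log j_k^\pm(\tau_n)}{\tau_n}\cdot\frac{\tau_n}{\log n}\le \frac{c_k}{\lambda_1}.
\]
The same holds for the $\liminf$. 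Letting $k\to\infty$ and using $c_k\to\lambda_0\lambda_1/(\lambda_0+\lambda_1)$ gives the limit $\lambda_0/(\lambda_0+\lambda_1)$. The only point to note is that a statement holding for all large real $t$ on an event of full measure transfers to the random sequence $\tau_n$.

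For part 2, divergence, I use the Borel--Cantelli statement underlying the last part of Theorem \ref{thm:SFSSparsity}. For each $\varepsilon$ and $k$, almost surely for all large $t$ every size in $[1,e^{(\lambda_0\lambda_1/(\lambda_0+\lambda_1)-\varepsilon)t}]$ is occupied by more than $k$ clones. If $j_\varepsilon(t)\lesssim e^{(\cdot-\varepsilon)t}$, then for large $t$ we have $j_\varepsilon(t)\le e^{(\cdot-\varepsilon/2)t}$. Applying the statement with $\varepsilon/2$ and each $k$ gives $S_{j_\varepsilon(t)}(t)>k$ eventually, so the limit is $\infty$. Composing with $\tau_n$ gives the fixed-size version.

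The upper bound of order $\varepsilon^{-1}$ is the main obstacle, because it requires quantifying how $k$ depends on $\varepsilon$. My plan is to revisit the Borel--Cantelli step. On a time grid $t_m=m\delta$, control $\mathbb{P}(S_J(t)\ge k)$ for $J\ge e^{(c+\varepsilon)t}$, where $c=\lambda_0\lambda_1/(\lambda_0+\lambda_1)$. Conditional on $Z_0$, the count $S_J(t)$ is a sum of independent indicators (a Poisson-type mixture), with mean of order $Ye^{\lambda_0 t}J^{-(\lambda_0+\lambda_1)/\lambda_1}\lesssim Ye^{-\varepsilon(\lambda_0+\lambda_1)t/\lambda_1}$. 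A Chernoff or Poisson tail then bounds the probability by roughly $(Y e^{-\varepsilon' t})^k/k!$, where $\varepsilon'=\varepsilon(\lambda_0+\lambda_1)/\lambda_1$. I also need a union over the $J$ values and over the clone changes inside each grid interval; only the roughly $e^{\lambda_1 t}$ sizes actually reachable by clones matter. This makes the sum over $m$ finite once $k\varepsilon'>\lambda_1+\text{const}$, that is, for $k\asymp\varepsilon^{-1}$. This yields $\limsup\lesssim\varepsilon^{-1}$ with constants independent of $\omega$, which gives the claimed uniformity.

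For $\liminf S_{J_\varepsilon(t)}(t)=0$, the mean bound above tends to zero. Along a sparse sequence $t_m$ I would use Fatou's lemma together with a conditional second Borel--Cantelli argument (conditioning on the type-0 path, using the independence of clones founded in disjoint windows) to get $S_{J_\varepsilon(t_m)}(t_m)=0$ infinitely often. The matching lower bound $\limsup\gtrsim\varepsilon^{-1}$ for monotone $J_\varepsilon\asymp e^{(c+\varepsilon)t}$ needs the reverse estimate. The approach is to find infinitely many times at which at least $\asymp\varepsilon^{-1}$ clones pass through the moving size $J_\varepsilon(t)$ simultaneously. Since a clone of size $\approx J$ founded at time $s$ has size near $e^{\lambda_1(t-s)}Y_1$, and the founding rate is $e^{\lambda_0 s}$, the number of clones crossing level $J_\varepsilon(t)$ during a unit time window is of order $Ye^{\lambda_0 t}J^{-\lambda_0/\lambda_1}$, which decays only like $e^{-\varepsilon\lambda_0 t/\lambda_1}$. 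A second-moment and Borel--Cantelli argument over windows would then show that clumps of $k\asymp\varepsilon^{-1}$ clones coincide infinitely often. I expect this lower bound to be the most delicate step.
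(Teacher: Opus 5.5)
Part 1 and the first three assertions of part 2 are correct and follow the paper's route. For part 1 you transfer along the subsequence $\tau_n$ and use $\tau_n/\log n\to 1/\lambda_1$; only $\tau_n\to\infty$ is needed, not that it ``runs through every large time''. Divergence follows from eventual occupancy of every size below $e^{(c-\varepsilon/2)t}$, where $c=\lambda_0\lambda_1/(\lambda_0+\lambda_1)$.

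For $\limsup\lesssim\varepsilon^{-1}$ you do not need to revisit the Borel--Cantelli step. Proposition~\ref{prop:intermediate_j_k_plus} already yields a deterministic $k>C_1/\varepsilon$, and there the supremum over each unit interval is handled by a holding-time (local-time) bound rather than a grid. For $\liminf=0$, Fatou's lemma alone gives $\liminf_m S_{J_\varepsilon(t_m)}(t_m)=0$ a.s., because the means tend to $0$; no second Borel--Cantelli argument is needed.

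The genuine gap is the last claim: $\limsup_t S_{J_\varepsilon(t)}(t)\gtrsim\varepsilon^{-1}$ for monotone $J_\varepsilon\asymp e^{(c+\varepsilon)t}$.

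First, your heuristic is miscomputed. For $J=e^{(c+\varepsilon)t}$ one has $Ye^{\lambda_0 t}J^{-\lambda_0/\lambda_1}=Ye^{(c-\varepsilon\lambda_0/\lambda_1)t}$, which grows. The decay $e^{-\varepsilon\lambda_0 t/\lambda_1}$ you state corresponds instead to the scale $J\approx e^{(\lambda_1+\varepsilon)t}$ of Corollary~\ref{cor:scale_sum_SFS}. If crossings really decayed, even pairwise coincidences at the exact size $J(t)$ would be summably rare. So the sketch as written points to a bounded $\limsup$, not one of order $\varepsilon^{-1}$.

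The correct picture is as follows. About $e^{(c-\varepsilon\lambda_0/\lambda_1)t}$ clones cross per unit time, and each sits at the exact size $J(t)$ for a time of order only $1/J(t)$. Hence $\mathbb{E}[S_{J(t)}(t)]$ is of order $e^{-\varepsilon(\lambda_0+\lambda_1)t/\lambda_1}$. The $J$-weighted time during which at least $k$ clones sit there then has mean of order $e^{((c+\varepsilon)-k\varepsilon(\lambda_0+\lambda_1)/\lambda_1)t}$. This grows exactly when $k$ is below roughly $\lambda_0\lambda_1^2/((\lambda_0+\lambda_1)^2\varepsilon)$.

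Second, ``a second-moment and Borel--Cantelli argument over windows'' omits what makes this rigorous. You do not say which quantity receives the second-moment bound, where independence across windows comes from, or how to control the strong correlation between nearby times. That correlation is real: a clone at size $J(s)$ at time $s$ is typically still near $J(t)$ shortly afterwards.

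The paper's Proposition~\ref{prop:limsup_theta} handles each of these points:
\begin{enumerate}
\item It keeps only replication-dependent clones founded in $[n,n+1)$, which are conditionally independent Poisson families given $\mathcal{F}_\infty$.
\item It observes them on $[\alpha n,\alpha(n+1))$ with $\alpha=\lambda_1/(\lambda_1-c-\varepsilon)$.
\item It applies Paley--Zygmund to the weighted local time $\int J(t)\mathbf{1}\{S^{\prime,[n,n+1)}_{J(t)}(t)\ge k_\varepsilon\}\,dt$.
\item It shows variance $\ll$ mean$^2$ by splitting $t-s$ at $\varepsilon_n=e^{-3\lambda_0 n/8}$ and bounding $P_{J(s)\to J(t)}(t-s)$ with the Chernoff estimate of Lemma~\ref{lem:MGF_bound}.
\item A conditional second Borel--Cantelli lemma then finishes the argument.
\end{enumerate}
This is also where monotonicity of $J_\varepsilon$ and the upper bound $J_\varepsilon\lesssim e^{(c+\varepsilon)t}$ are used; your sketch uses neither hypothesis.
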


\begin{proof}
    See Appendix Section \ref{app:SFSSparsity}.
\end{proof}

We remark that on $\Omega_0^\infty$, for $\varepsilon>0$ small enough such that $\frac{\lambda_0\lambda_1}{\lambda_0+\lambda_1}+\varepsilon<\lambda_1$, $\limsup_{t\to\infty}S_{J_\varepsilon(t)}(t)\geq 1$ because the production times of clones $T_i$ tend to $\infty$ as $i\to\infty$ and established clones grow at order $e^{\lambda_1 t}$. So, $S_{J(t)}(t)$ has no almost sure limit. For large $t$, the oscillations of $S_{J_\varepsilon(t)}(t)$ vary within $[0,\varepsilon^{-1}]$, up to constant factors, which increases as $\varepsilon\to 0^+$. By the last part of Corollary \ref{cor:SFSSparsity}, the upper bound of the range for these oscillations is tight (up to order).

\subsubsection{Tail of the driver SFS at fixed detection size}\label{sec:tail_SFS}
We next investigate tail characteristics of the driver SFS at fixed detection size.  As remarked in \cite{gunnarsson2021} (and as seen in Figure 1 of \cite{cheek2018}), the fixed-size SFS for a neutrally evolving population forms a point mass at $j=n$. This remains true in the case with selection (Figure \ref{fig:det_time}A).  Interestingly, the size of the point mass is asymptotically independent of the detection size $n$.
In the following we show that $S_n(\tau_n)$, conditioned on $\tau_n<\infty$, is a Bernoulli-distributed random variable.

\begin{figure}[H]
	\centering
	\includegraphics[width=\textwidth]{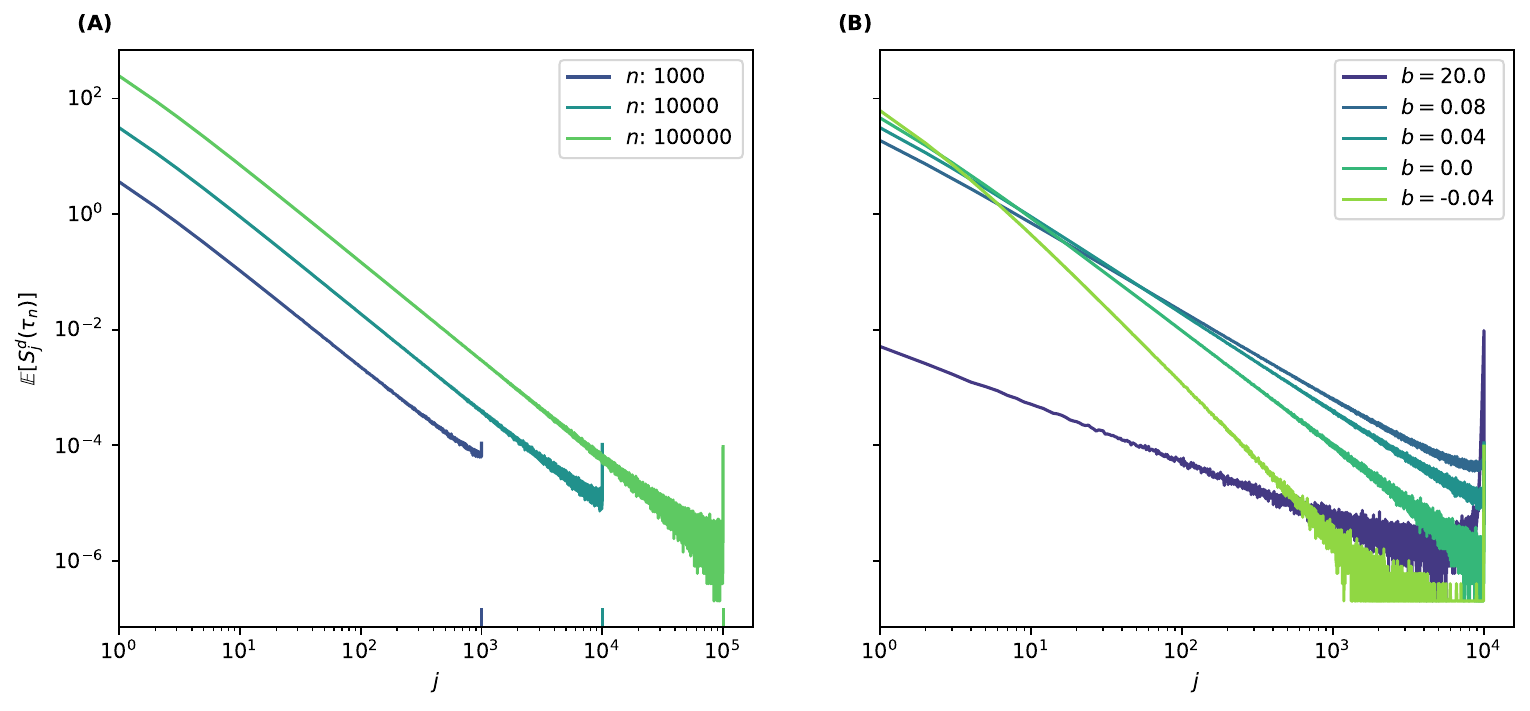}
	\caption{Site frequency spectrum at fixed detection size. \textbf{(A)} $\mathbb{E}[S_j(\tau_n)|\tau_n<\infty]$ are plotted for $n\in \{10^3,10^4,10^5\}$. The colored tick markers indicate the stopping size. \textbf{(B)} $\mathbb{E}[S_j(\tau_n)|\tau_n<\infty]$ is plotted for varying selection amounts using the stopping size $n=10^4$.}\label{fig:det_time}
\end{figure}

\begin{restatable}{lemma}{TauNSFS}
    Conditional on $\Omega_1^\infty = \cap_n \{\tau_n<\infty\}$, $S_n(\tau_n)$ converges almost surely to a Bernoulli random variable with success probability $\frac{\mathbb{P}(\text{There is only 1 established mutation}|\Omega_0^{\infty,c})(1-\pi_0)}{\pi_1}$ where $\pi_i=\mathbb{P}(\Omega_i^\infty)$. All probabilities are explicitly calculable.\label{lem:tau_n_SFS}
\end{restatable}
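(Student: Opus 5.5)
The plan is to identify the almost sure limit pathwise as an indicator, and then compute its probability. The key observation is that $S_n(\tau_n)\in\{0,1\}$ always. At $\tau_n$ the total population is exactly $n$, so a clone of size $n$ must contain every living cell. Hence
\[
S_n(\tau_n)=\mathbf{1}\{Z_0(\tau_n)=0 \text{ and exactly one clone is alive at } \tau_n\}.
\]
I would show that, on $\Omega_1^\infty$, this indicator is eventually constant in $n$ and equals $\mathbf{1}_E$, where
\[
E:=\Omega_0^{\infty,c}\cap\{\text{exactly one type-1 clone is established}\}.
\]

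\textbf{Step 1 (case $\Omega_0^\infty$).} On $\Omega_0^\infty$ we have $\tau_n\to\infty$, and $Z_0(t)>0$ for all large $t$ by definition of $\Omega_0^\infty$. Therefore $Z_0(\tau_n)>0$ for all large $n$, and $S_n(\tau_n)=0$ eventually. Here I use $\Omega_0^\infty\subseteq\Omega_1^\infty$ up to null sets, as remarked in Section \ref{sec:model_notation}.

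\textbf{Step 2 (case $\Omega_0^{\infty,c}\cap\Omega_1^\infty$).} On this event the type-0 lineage dies at an a.s. finite time $T^\ast$. Only finitely many clones $M_{T^\ast}<\infty$ are ever produced, and at least one of them is established. Each non-established clone dies in finite time. Each established clone satisfies $Z_1^{(i)}(s)\to\infty$, since $e^{-\lambda_1 s}Z_1^{(i)}(s)\to Y_1^{(i)}>0$ a.s. on survival. Let $\sigma$ be the last time at which either the type-0 population or a non-established clone is alive; $\sigma<\infty$ a.s.
\begin{itemize}
\item If exactly one clone is established, then for $n$ larger than the total population on $[0,\sigma]$ we get $\tau_n>\sigma$. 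The whole population at $\tau_n$ is that single clone, so $S_n(\tau_n)=1$.
\item If $k\ge 2$ clones are established, each one is eventually at least $1$, so no single clone can account for all $n$ cells at $\tau_n$ for large $n$. Hence $S_n(\tau_n)=0$.
\end{itemize}
Combining Steps 1 and 2, $S_n(\tau_n)\to\mathbf{1}_E$ a.s. on $\Omega_1^\infty$. Its conditional success probability is
\[
\frac{\mathbb{P}(E)}{\pi_1}=\frac{\mathbb{P}(\text{one established}\mid\Omega_0^{\infty,c})(1-\pi_0)}{\pi_1},
\]
using $E\subseteq\Omega_1^\infty$.

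\textbf{Step 3 (explicit computation).} This is the only genuinely computational step and the one I expect to be the main obstacle. Let $N$ be the number of established clones, and set $f(s)=\mathbb{E}[s^N;\Omega_0^{\infty,c}]$. A first-step analysis on the initial type-0 cell gives the quadratic equation
\[
(a+d+\mu)f=a(1-v)f^2+avf\,(p_1+q_1s)+\mu(p_1+q_1s)+d,
\]
whose terms correspond to the following events:
\begin{itemize}
\item $a(1-v)f^2$: a division into two type-0 cells.
\item $avf\,(p_1+q_1s)$: a division producing one type-0 cell and one new clone, which is established with probability $q_1$.
\item $\mu(p_1+q_1s)$: a replication-independent switch to type 1.
\item $d$: a death.
\end{itemize}
I would take the root of this quadratic with $f(1)=1-\pi_0=p_0$. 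Then $\mathbb{P}(N=1,\Omega_0^{\infty,c})=f'(0)$, obtained by implicit differentiation at $s=0$. The value $f(0)=\mathbb{P}(N=0,\Omega_0^{\infty,c})$ is the smaller root of the quadratic at $s=0$.

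Finally, $\pi_1=1-f(0)$, since the type-1 population dies out exactly when no clone establishes, which in turn forces type-0 extinction. The care needed here is in selecting the correct root of the quadratic and checking that the limits $\tau_n\to\infty$ are legitimate on each event.
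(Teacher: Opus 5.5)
Your proposal is correct. Steps 1 and 2 are the same pathwise identification the paper makes: the paper simply asserts that on $\Omega_1^\infty$, having exactly one established clone is a.s. equivalent to $S_n(\tau_n)=1$ for all large $n$, and you supply the details.

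The genuine difference is in Step 3. The paper conditions on $\Omega_0^{\infty,c}$ and uses that the conditioned type-0 process is subcritical with the rates swapped (birth $d+\mu$, death $a(1-v)$). Given $\mathcal{F}_\infty$, the established replication-independent clones are then a $q_1\mu/(d+\mu)$-thinning of the $D$ type-0 deaths, and the established replication-dependent clones are Poisson with mean $avq_1\int_0^\infty Z_0(s)\,ds$. The paper writes the numerator through $\partial_z G$ and $\partial_\theta G$ of $G(z,\theta)=\mathbb{E}[z^D e^{-\theta\int_0^\infty Z(s)\,ds}]$, which solves its own quadratic, and obtains $\pi_1$ from a separate first-step quadratic.

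Your single function $f(s)=\mathbb{E}[s^N;\Omega_0^{\infty,c}]$ handles both at once. At $s=0$ your quadratic is exactly the paper's $\mathcal{Q}$, so $f(0)=1-\pi_1$. Differentiating at $s=0$ gives
\[
f'(0)=\frac{q_1(avf(0)+\mu)}{a+d+\mu-avp_1-2a(1-v)f(0)}=\frac{q_1(av(1-\pi_1)+\mu)}{\sqrt{(a+d+\mu-avp_1)^2-4a(1-v)(d+\mu p_1)}},
\]
so the success probability is $f'(0)/\pi_1$.

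Your route is more elementary: it needs no conditioned-process duality, no appeal to the alternative representation, and only one quadratic. It also yields the whole law of $N$ on extinction. The paper's route instead makes visible the conditional Poisson/thinning structure it exploits elsewhere.

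The root selection you flag closes in one line. For $s<1$ the quadratic evaluated at $x=1$ equals $-(av+\mu)q_1(1-s)<0$, while at $x=0$ it is $\mu(p_1+q_1s)+d\ge 0$. Hence the larger root exceeds $1$, and since $0\le f(s)\le p_0<1$, $f(s)$ must be the smaller root; at $s=1$ the roots are $p_0$ and $1$. The same observation shows the roots are distinct, so the square root in the display above is strictly positive.
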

\begin{proof}
  See Appendix Section \ref{app:tau_n_SFS}.
\end{proof}
The intuition for Lemma \ref{lem:tau_n_SFS} is that at time $\tau_n$, $S_n(\tau_n) \in \{0,1\}$ as at most one type-1 clone can have reached size $n$; asymptotically, the event $\{S_n(\tau_n)=1\}$ agrees with the event that the type-0 population goes extinct and there is a single established type-1 clone.  The probability of this event is independent of $n$ since it is a property of the early competition dynamics of the population. Each of the probabilities in the lemma can be computed by using standard first-step analysis. Note in Figure \ref{fig:det_time} that the mass immediately prior to $n$ goes to zero; this is discussed further in the next section.

In Figure \ref{fig:det_time} we observe significant mass near the end of the tail at fixed detection size and large selective advantage $b$ (Figure \ref{fig:det_time}B). This is because for sufficiently large $b$, upon forming the first type-1 mutant clone at time $T_1$, the events that occur at times $t \in [T_1, \tau_n]$ will mostly be composed of births within that clone, so the SFS at time $\tau_n$ provides a snapshot of the type-0 population at time $T_1$. On the other hand, if $b$ is more moderate, the time between first mutation and detection is longer, and the type-0 population will have time to evolve away from its state at $T_1$, which dampens the mass in the tail. We formalize this intuition with the following proposition.
\begin{restatable}{proposition}{Freeze}
    Let $n(b):\mathbb{R}\to \mathbb{N}$ be non-decreasing and $n(b)\geq 2$.  Also define $\tau_n^{(0)}=\inf\{t\geq 0:Z_0(t)=n\}$. We have the following dichotomy:
    \begin{itemize}
        \item If $\log n(b)\llas{b}{\infty}b$, then $\mathbb{P}(\tau_{n(b)}<\infty,Z_0(\tau_{n(b)})=Z_0(T_1)|T_1\leq \tau_{n(b)}^{(0)},T_1<\infty)\to 1$ as $b\to\infty$. 
        \item If $\log n(b)\ggas{b}{\infty}b$, then $\mathbb{P}(\tau_{n(b)}<\infty,Z_0(\tau_{n(b)})=Z_0(T_1)|T_1\leq \tau_{n(b)}^{(0)},T_1<\infty)\to \mathbb{P}(Z_0(T_1)=0|T_1<\infty)$ as $b\to\infty$.
    \end{itemize}
\end{restatable}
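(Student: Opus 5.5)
The idea is to condition on the state at the first mutation time $T_1$ and then compare two clocks after $T_1$. One clock is the time for the first type-1 clone to grow to size of order $n(b)$; with $\lambda_1 = a+b-d$ this takes time about $\log n(b)/b$. The other clock is the waiting time until the next type-0 event, which is of order $1/(a+d+\mu)$ when $Z_0(T_1)$ is of order one. Note that the conditioning event $\{T_1\le \tau^{(0)}_{n(b)},\,T_1<\infty\}$ does not depend on $b$: the type-0 process and the mutation clock do not involve $b$. This conditioning gives $Z_0(T_1)\le n(b)$, and the conditional law of $Z_0(T_1)$ is fixed except for the truncation at $n(b)$. Since $n(b)$ is non-decreasing, this law converges to the law of $Z_0(T_1)$ given $T_1<\infty$. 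That limit law is tight, so on an event of probability close to $1$ we may assume $Z_0(T_1)\le K$ for a fixed large $K$.

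\emph{Case $\log n(b)\ll b$.} Fix $K$ and work on $\{Z_0(T_1)=k\}$ with $1\le k\le K$; the case $k=0$ is excluded because a mutation requires a living type-0 cell. Two things must hold. First, no type-0 event (birth, death or mutation) occurs during $[T_1,T_1+\delta_b]$, where $\delta_b\to 0$. This has probability $e^{-k(a+d+\mu)\delta_b}\to 1$. Second, the first type-1 clone reaches total size $n(b)$ before time $T_1+\delta_b$. Since type-0 is frozen on this window, $Z_0+Z_1$ equals $k$ plus the size of that clone. The clone is a birth-death process with birth rate $a+b$ and death rate $d$. Its extinction probability is $d/(a+b)\to 0$, and when it survives it reaches size $n$ in time roughly $\log n/(a+b)$ plus an $O(1/b)$ fluctuation. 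This can be shown from the Yule-type bound: the number of births needed is about $n$, and the waiting time to go from size $m$ to $m+1$ is at most $\mathrm{Exp}((a+b)m)$. Summing, the expected time to reach size $n$ is at most $H_n/(a+b)\approx \log n/b$.

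Choose $\delta_b = \sqrt{\log n(b)/b}$ when $\log n(b)\ge 1$. Then $\delta_b\to 0$, while $\delta_b\gg \log n(b)/b$. By Markov's inequality the clone reaches size $n(b)$ within $\delta_b$ with probability tending to $1$. When this happens, $\tau_{n(b)}<\infty$ and $Z_0(\tau_{n(b)})=Z_0(T_1)$. Finally, let $K\to\infty$.

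\emph{Case $\log n(b)\gg b$.} The target is the event $\{Z_0(T_1)=0\}$. This event has probability zero at $T_1$ itself, so I read the statement with $Z_0(T_1)$ meaning the type-0 count just after the mutation event; the mass at $0$ then comes from replication-independent mutation of the last type-0 cell. I would recheck this reading against the paper's conventions. The converse inclusion $\{Z_0(T_1)=0\}\subseteq\{Z_0(\tau_{n(b)})=Z_0(T_1)\}$ is automatic, up to the event that $\tau_{n(b)}<\infty$, and that event has conditional probability tending to $1$ on $\{Z_0(T_1)=0\}$ because the type-1 clone survives with probability $q_1\to 1$. So it remains to show that on $\{Z_0(T_1)=k\}$ with $1\le k\le K$, the probability that type-0 experiences no event before $\tau_{n(b)}$ tends to $0$. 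The first type-0 event occurs at time $T_1+E$ with $E\sim\mathrm{Exp}(k(a+d+\mu))$, a distribution independent of $b$. Meanwhile $\tau_{n(b)}-T_1$ is at least the time for the first clone to reach size roughly $n(b)-K$, since until the first type-0 event no other clone exists. This time is at least a sum of independent $\mathrm{Exp}((a+b+d)m)$ variables over $m\le n$, which concentrates around $\log n/(a+b+d)\to\infty$. Its variance is bounded by $\sum_m 1/(bm)^2=O(b^{-2})$, so Chebyshev's inequality gives the concentration. Hence $\mathbb{P}(E>\tau_{n(b)}-T_1)\to 0$.

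The main obstacle is the lower bound on the hitting time in the second case. One has to rule out the clone reaching $n$ too quickly, and also handle the fact that a type-0 event destroys the equality only if it is not reversed. A type-0 birth followed by a death would restore the count to $Z_0(T_1)$. To deal with this, I would show that after the first type-0 event, the type-0 process and the timing of $\tau_{n(b)}$ are asymptotically independent. Because $\tau_{n(b)}-T_1\to\infty$, the probability $\mathbb{P}(Z_0(T_1+s)=k)$ at a large time $s$, conditional on an event occurring, tends to $0$. The reason is that a supercritical birth-death process from $k$ either dies out or diverges, so it returns to $k$ only with vanishing probability. This needs care but follows from $Z_0(t)\to\{0,\infty\}$ almost surely.
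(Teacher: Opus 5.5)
\textbf{Gap in the second regime.} The step you flag is where the argument fails. Showing that some type-0 event occurs before $\tau_{n(b)}$ does not give $Z_0(\tau_{n(b)})\neq k$. Your remedy, asymptotic independence of the type-0 path and $\tau_{n(b)}$, is not proved and is doubtful. The type-0 path seeds further clones and counts toward the total, so $\tau_{n(b)}$ depends on it. The pointwise fact $\mathbb{P}(Z_0(s)=k)\to 0$ for deterministic $s$ does not transfer to such a dependent random time.

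You also derived $\tau_{n(b)}-T_1\to\infty$ only on the event that no type-0 event has occurred, so you cannot use it for this step.

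The missing idea is a pathwise, $b$-free control, which is what the paper uses. Restart at $T_1$ by the strong Markov property and let $L_k=\sup\{t:Z_0(t)=k\}$. For $k\ge 1$ this is a.s.\ finite because $Z_0\to\{0,\infty\}$, and its law does not involve $b$. Then $\{Z_0(\tau_{n(b)})=k\}\subseteq\{\tau_{n(b)}\le L_k\}$, so for every $M$,
\[
\mathbb{P}\bigl(Z_0(\tau_{n(b)})=k\bigr)\le \mathbb{P}(L_k>M)+\mathbb{P}(\tau_{n(b)}\le M).
\]
The last term vanishes when $\log n(b)\gg b$. Either note that the total population is dominated by a Yule process of per-capita rate $a+b$, or follow the paper and apply Markov's inequality to the number of type-1 births by time $M$, whose mean is $e^{O(b)}\ll n(b)$. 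This handles returns to level $k$ automatically and makes your first-type-0-event comparison unnecessary.

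\textbf{Smaller issues in the first regime.} This part follows the paper's approach, but the growth-time bound is wrong as stated. When $d>0$, the passage time of the clone from $m$ to $m+1$ is not dominated by $\mathrm{Exp}((a+b)m)$. The unconditional expected hitting time of $n$ is infinite, since the clone dies out with probability $p_1>0$. Two fixes work:
\begin{enumerate}
\item Condition on survival and compare with the immortal-lineage Yule process of rate $\lambda_1$, which gives $\mathbb{E}[\sigma_n\mid\text{survival}]\le H_{n-1}/\lambda_1$.
\item Use, as the paper does, the explicit law of $Z_1^{(1)}(t)$ at $t_b=\log(b\,n(b))/\lambda_1\to 0$.
\end{enumerate}
Also, contrary to your first case, $Z_0(T_1)=0$ is possible through replication-independent mutation of the last type-0 cell. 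This is harmless there: the equality is then automatic, and $\mathbb{P}(\tau_{n(b)}<\infty)\ge q_1\to 1$. Choose $\delta_b=b^{-1/2}$ when $n(b)$ stays bounded.
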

\begin{proof}
  See Appendix Section \ref{app:freeze}.
\end{proof}

There is a slight asymmetry in these results, in that for slow growing detection thresholds, the probability above tends to $\mathbb{P}(Z_0(T_1)=0|T_1<\infty)$ as opposed to zero. This is for the degenerate reason that the type-0 population once extinct remains extinct thereafter. Indeed the event $\{Z_0(T_1)=0\}$ conditional on $T_1<\infty$ occurs with positive probability, if there is a single type-0 cell in the population just before time $T_1$, and it produces a type-1 cell via replication-free mutation at time $T_1$. But the above proposition says that as $b\to\infty$, if the detection threshold grows slower than exponentially in $b$, with high probability the type-0 population remembers its position at $T_1$ at time $\tau_{n(b)}$. Otherwise, with high probability the type-0 population forgets its position at $T_1$ at time $\tau_{n(b)}$. Hence, as the selection amount is made large and the detection threshold kept constant in Figure \ref{fig:det_time}B, the tail of the SFS can reflect the size of the wild-type population at the time of the first type-1 clone.

\subsection{Frequencies of large clones}\label{sec:large_frequencies}
To gain further insights into the behavior of the SFS at the largest frequencies, we now investigate the number of ``large families'' of driver clones using the {\em relative} driver SFS, which gives the number of distinct clones above a certain relative frequency $f\in (0,1)$, $R_f(t)=\sum_{1\leq i\leq M(t)}\mathbf{1}_{Z_1^{(i)}(t)>Z_1(t)f}=\sum_{j:j>Z_1(t)f}S_j(t)$. This version of the SFS has previously been studied by Tung and Durrett \cite{tung}.

\subsubsection{Constructing an approximation to the relative site frequency spectrum}
To investigate $R_f(t)$, we first examine a simpler quantity counting the number of ``large families'' of mutations.  Specifically, define $\widetilde{R}_x(t)=\sum_{j> xg(t)}S_j(t)$ for $x>0$ as a cumulative SFS counting clones of size $> xg(t)$ at time $t$. This type of quantity has previously been studied by Bonnet and Leman \cite{bonnet2024}.

Under a deterministic fitness advance, it should be intuitively clear that $g(t)=e^{\lambda_1 t}$ appropriately sets the scale of large mutants. 
With this scaling, $\widetilde{R}_x(t)$ is a semi-deterministic version of $R_f(t)$, in the sense $|\widetilde{R}_{fW}(t)-R_f(t)|\llas{t}{\infty}1$ since $Z_1(t)\sim We^{\lambda_1 t}$.
In the random fitness advance model, then under Assumption \ref{assumption:random}, in fact $g(t)=t^{-\frac{\lambda_1(b_{\max})}{\lambda_0}}e^{\lambda_1(b_{\max})t}$ sets the scale, in the following sense:

\begin{restatable}{proposition}{Scaling}
  Fix $x>0$. With a deterministic fitness advance and defining $g(t)=e^{\lambda_1 t}$:
  \[\lim_{t\to\infty}\mathbb{E}\left[\sum_{j> xg(t)}S_j(t)\right]=\frac{(av+\mu) q_1^{1-\lambda_0/\lambda_1}x^{-\lambda_0/\lambda_1}}{\lambda_1}\Gamma(\frac{\lambda_0}{\lambda_1},q_1 x).\]
With a random fitness advance under Assumption \ref{assumption:random} and defining $g_2(t)=t^{-\frac{\lambda_1(b_{\max})}{\lambda_0}}e^{\lambda_1(b_{\max})t}$ (here, $\lambda_1(b')=a+b'-d$ denotes the type-1 net growth rate with fitness advancement $b'$):
  \[\lim_{t\to\infty}\mathbb{E}\left[\sum_{j> xg_2(t)}S_j^B(t)\right]\]
  converges to a finite strictly positive limit.\label{prop:scaling}
\end{restatable}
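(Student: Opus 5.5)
The plan is to start from the exact first-moment formula of Proposition \ref{prop:SFS_mean}, summed over $j>xg(t)$. Writing $\bar P_{>m}(s)=\mathbb{P}(Z_1^{(1)}(s)>m)$ and substituting $u=t-s$,
\[
\mathbb{E}\Big[\sum_{j>xg(t)}S_j(t)\Big]=(av+\mu)\int_0^t e^{\lambda_0 (t-u)}\,\bar P_{>xg(t)}(u)\,du .
\]
For the deterministic case, I would use the explicit geometric form of the linear birth-death law. Conditional on being nonzero at time $u$, $Z_1^{(1)}(u)$ is geometric with success parameter $\beta(u)=\lambda_1/\big((a+b)(e^{\lambda_1u}-1)+\lambda_1\big)\cdot(\ldots)$. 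Concretely,
\[
\bar P_{>m}(u)=\mathbb{P}(Z_1^{(1)}(u)>0)\,(1-\beta(u))^{m},
\]
with $\mathbb{P}(Z_1^{(1)}(u)>0)\to q_1$ and $\beta(u)\sim q_1e^{-\lambda_1u}$ as $u\to\infty$.

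Next I would change variables to $w=e^{-\lambda_1 (t-u)}$, equivalently $s=t-u$, so that $e^{\lambda_0 s}$ is the weight and $xg(t)\beta(t-s)\to xq_1e^{\lambda_1 s}$. The integrand then converges pointwise to
\[
q_1 e^{\lambda_0 s}\exp(-xq_1 e^{\lambda_1 s}), \qquad s\in[0,\infty).
\]
Note that $s=t-u$ is the clone's mutation time, which is exactly the right parametrization. Dominated convergence is justified by $(1-\beta)^m\le e^{-m\beta}$ together with a lower bound $\beta(t-s)\ge c\,e^{-\lambda_1(t-s)}$. The latter gives a dominating function $e^{\lambda_0 s}\exp(-c x e^{\lambda_1 s})$, which is integrable. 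Substituting $y=xq_1e^{\lambda_1 s}$ gives
\[
\frac{q_1}{\lambda_1}(xq_1)^{-\lambda_0/\lambda_1}\int_{xq_1}^\infty y^{\lambda_0/\lambda_1-1}e^{-y}\,dy,
\]
which is precisely the stated upper incomplete gamma expression.

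For the random fitness case, the same representation holds with $\mathbb{E}_B$ inside:
\[
(av+\mu)\int_0^t e^{\lambda_0 s}\int_0^{b_{\max}}h(b')\,\bar P_{>xg_2(t)}(t-s\mid b')\,db'\,ds .
\]
I would write $\lambda_1(b')=\lambda_1(b_{\max})-\delta$. The tail $\bar P_{>m}(t-s\mid b')$ is approximately $q_1(b')\exp\big(-m\,q_1(b')\,e^{-\lambda_1(b')(t-s)}\big)$. With $m=xg_2(t)$, the exponent equals
\[
x\,q_1\,t^{-\lambda_1(b_{\max})/\lambda_0}\,e^{\lambda_1(b_{\max})s+\delta(t-s)} .
\]
The natural scalings are $\delta=\eta/t$ and $s=\frac{\lambda_1(b_{\max})}{\lambda_0\lambda_1(b_{\max})}\log t\cdot(\ldots)$; more precisely $s=\sigma+\frac{1}{\lambda_1(b_{\max})}\cdot\frac{\lambda_1(b_{\max})}{\lambda_0}\log t$, i.e. $s=\sigma+\lambda_0^{-1}\log t$. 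Under this shift the factor $e^{\lambda_0 s}=t\,e^{\lambda_0\sigma}$ is compensated by $d\delta=d\eta/t$. Near $b_{\max}$, $h(b')\to h(b_{\max})>0$ by left-continuity, and the exponent becomes $x q_1(b_{\max})e^{\lambda_1(b_{\max})\sigma+\eta}$ up to $o(1)$ errors, using $\delta s=O(\log t/t)\to0$. Hence the limit is
\[
(av+\mu)\,h(b_{\max})\,q_1(b_{\max})\int_{\mathbb{R}}\int_0^\infty e^{\lambda_0\sigma}\exp\big(-xq_1(b_{\max})e^{\lambda_1(b_{\max})\sigma+\eta}\big)\,d\eta\,d\sigma ,
\]
which is finite and positive. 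Doing the $\eta$ integral first gives $E_1(\cdot)$, which decays like $e^{-\lambda_1\sigma}$ as $\sigma\to\infty$ (beating $e^{\lambda_0\sigma}$ only via the double exponential) and grows logarithmically as $\sigma\to-\infty$, where $e^{\lambda_0\sigma}$ kills it.

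The main obstacle is the domination in this second case. We need uniform bounds, over $b'\in[0,b_{\max}]$ and $s\in[0,t]$, showing that contributions from $\delta\gg 1/t$ and from $s$ far from $\lambda_0^{-1}\log t$ are negligible. Boundedness of $h$ and the geometric tail bound $\bar P_{>m}\le \exp(-m\,c\,e^{-\lambda_1(b')(t-s)})$, with $c$ uniform in $b'$ (since $q_1(b')$ is bounded below when $a>d$, or after restricting to $b'$ near $b_{\max}$ with the remainder trivially negligible), should give an integrable envelope in $(\sigma,\eta)$. Then dominated convergence finishes the argument.
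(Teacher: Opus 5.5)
Your proposal is correct. In the deterministic case it is essentially the paper's argument: work in the mutation-time variable $s$, take the pointwise limit, and apply dominated convergence. The only difference is that the paper gets the pointwise limit from $e^{-\lambda_1 u}Z_1^{(1)}(u)\to Y_1^{(1)}$ and the envelope $x^{-1}e^{(\lambda_0-\lambda_1)s}$ from Markov's inequality, whereas you use the explicit geometric tail and the sharper envelope $e^{\lambda_0 s}\exp(-xq_1e^{\lambda_1 s})$.

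In the random fitness case your route is genuinely different.

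\emph{The paper's route.} It first shows that replacing $e^{-\lambda_0 t}\mathbb{E}[S^B_j(t)]$ by its $t=\infty$ limit costs only $\int_0^\infty e^{-\lambda_0 u}\mathbb{E}[\mathbb{P}(Z_1(t+u)\geq xg_2(t)\mid B)]\,du\to 0$. It then inserts the large-$j$ asymptotic $C/(j^{\nu+1}\log j)$, with $\nu=\lambda_0/\lambda_1(b_{\max})$, from Proposition \ref{prop:mean_SFS_random_large_freq}, which is itself a Laplace-type argument in $b$. Finally it sums the tail through $E_1$, obtaining $Cx^{-\nu}/\lambda_0$.

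\emph{Your route.} You perform one joint rescaling $s=\sigma+\lambda_0^{-1}\log t$, $b_{\max}-b'=\eta/t$, followed by a single dominated convergence. Evaluating your double integral with $\int_0^\infty A^{\nu-1}E_1(A)\,dA=\Gamma(\nu)/\nu$ gives $(av+\mu)h(b_{\max})q_1(b_{\max})^{1-\nu}\Gamma(\nu)x^{-\nu}/\lambda_0$. This coincides exactly with the paper's constant.

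\emph{What each buys.} The paper's route is modular, since it reuses the per-$j$ asymptotics. However, it tacitly needs the $\mathcal{O}(j^{-\phi(b)-1})$ remainder in that asymptotic to be uniform in $b$. Yours is self-contained and avoids that issue. It also shows which clones matter: mutation times near $\lambda_0^{-1}\log t$ and fitness within $O(1/t)$ of $b_{\max}$. This explains the factor $t^{-\lambda_1(b_{\max})/\lambda_0}$ in $g_2$.

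\emph{The envelope.} This is the one place needing care. Your tail bound gives the exponent $xq_1(0)e^{\lambda_1(b_{\max})\sigma+\eta(1-s/t)}$, where $q_1(0)=(a-d)/a>0$ because $\lambda_0>0$. Its decay in $\eta$ degenerates as $s\to t$, so a single envelope from this bound does not directly work. Split the domain instead. On $\{s\leq t/2\}$, use the integrable envelope $e^{\lambda_0\sigma}\exp(-xq_1(0)e^{\lambda_1(b_{\max})\sigma+\eta/2})$. Bound the region $\{s>t/2\}$ crudely, up to constants, by $t\,b_{\max}\|h\|_\infty\int_{\sigma>t/2-\lambda_0^{-1}\log t}e^{\lambda_0\sigma}\exp(-xq_1(0)e^{\lambda_1(b_{\max})\sigma})\,d\sigma$, which tends to $0$.
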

\begin{proof}
  See Appendix Section \ref{app:scaling}.
\end{proof}

Interestingly, the scaled version $t^{\frac{\lambda_1(b_{\max})}{\lambda_0}}e^{-\lambda_1(b_{\max})t}Z_1^{B}(t)$ converges in distribution under a semideterministic version of our model (see Durrett et al.~\cite{durrettfoo2010}). 

\subsubsection{Almost sure convergence of the relative site frequency spectrum and small frequency limit}\label{sec:convergence_relative_SFS}
In what follows, we restrict ourselves to the model with deterministic fitness advance $b$ and set $g(t)\equiv e^{\lambda_1 t}$ in defining $\widetilde{R}_x(t)$ to study ``large families''. 
We can get a simple (but relatively opaque) convergence result for $R_f$ (and $\widetilde{R}_x$) at large times:
\begin{restatable}{proposition}{FrequencyLimit}
  Let $f\in (0,1)$ and $x>0$. Then the following a.s.~convergence result holds conditional on $\Omega_0^\infty$:
  \begin{align}
    &\lim_{t\to\infty} R_f(t)=\sum_{i\in \mathbb{N}}\mathbf{1}_{e^{-\lambda_1 T_i}Y_1^{(i)}>Wf}\label{eq:frequency_limit}\\
    &\lim_{t\to\infty} \widetilde{R}_x(t)=\sum_{i\in \mathbb{N}}\mathbf{1}_{Y_1^{(i)}> xe^{\lambda_1 T_i}}.
  \end{align}

  Furthermore, the limits are a.s.~finite.\label{prop:frequency_limit}
\end{restatable}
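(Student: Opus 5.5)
We use the representation $R_f(t)=\sum_{i\le M_t}\mathbf{1}\{e^{-\lambda_1 t}Z_1^{(i)}(t-T_i)>f\,e^{-\lambda_1 t}Z_1(t)\}$. For each fixed $i$, $e^{-\lambda_1 t}Z_1^{(i)}(t-T_i)\to e^{-\lambda_1 T_i}Y_1^{(i)}$ almost surely, and $e^{-\lambda_1 t}Z_1(t)\to W$ almost surely. The plan has two parts. First, show that each summand converges. Second, show that only finitely many indices can ever contribute, uniformly in large $t$, so that limit and sum may be interchanged.

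\textbf{Termwise convergence.} For fixed $i$ the indicator converges to $\mathbf{1}\{e^{-\lambda_1T_i}Y_1^{(i)}>Wf\}$ except on the event $\{e^{-\lambda_1T_i}Y_1^{(i)}=Wf\}$.

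We check that this event is null for each $i$. On $\{Y_1^{(i)}>0\}$, condition on everything except $Y_1^{(i)}$. The variable $W-e^{-\lambda_1T_i}Y_1^{(i)}$ is then fixed, and the equality becomes
\[
e^{-\lambda_1T_i}Y_1^{(i)}(1-f)=f\bigl(W-e^{-\lambda_1T_i}Y_1^{(i)}\bigr).
\]
Since $Y_1^{(i)}$ is exponential (hence atomless) on $\{Y_1^{(i)}>0\}$ and independent of the other clones given the type-0 path, this has probability zero.

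On $\{Y_1^{(i)}=0\}$ the clone is extinct, so its indicator is eventually $0$. The limit indicator is also $0$, because on $\Omega_0^\infty$ we have $W>0$ (infinitely many clones are produced, and almost surely one is established).

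The same argument handles $\widetilde R_x$, with $W$ replaced by the constant $x$.

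\textbf{Uniform truncation (the main obstacle).} We must rule out large-index clones being counted for arbitrarily large $t$. It suffices to show that almost surely there is a random $N$ such that for all large $t$, no $i>N$ satisfies $Z_1^{(i)}(t-T_i)>fZ_1(t)$.

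Since $fZ_1(t)\ge \tfrac f2We^{\lambda_1 t}$ for large $t$, it is enough to show that
\[
\sup_{s\ge 0} e^{-\lambda_1 s}Z_1^{(i)}(s)\,e^{-\lambda_1 T_i}<\tfrac f2 W \quad\text{for all but finitely many } i.
\]
Write $V_i=\sup_{s}e^{-\lambda_1 s}Z_1^{(i)}(s)$. By Doob's inequality for the martingale $e^{-\lambda_1 s}Z_1^{(i)}(s)$, the $V_i$ are i.i.d. with finite second moment, independent of $(T_i)$. Since $M_t\sim c\,Ye^{\lambda_0 t}$ on $\Omega_0^\infty$, we have $T_i\ge \lambda_0^{-1}\log i-C$ for large $i$. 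Hence
\[
\sum_i \mathbb{P}\bigl(V_ie^{-\lambda_1T_i}>\delta\bigr)\lesssim \sum_i \mathbb{P}\bigl(V_i>\delta\, i^{\lambda_1/\lambda_0}e^{-C'}\bigr)<\infty ,
\]
because $\sum_i \mathbb{P}(V>ci^{\lambda_1/\lambda_0})<\infty$ whenever $\mathbb{E}[V^{\lambda_0/\lambda_1}]<\infty$, which follows from $\mathbb{E}[V^2]<\infty$ if $\lambda_0/\lambda_1\le 2$. When $\lambda_0>2\lambda_1$, we instead use that $V$ has all moments: $Y_1$ has exponential tails, and Doob's $L^p$ inequality applies for every $p$.

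To make this rigorous we handle the randomness of $T_i$ and $\delta=\tfrac f2W$ as follows. Work on the events $\{T_i\ge\lambda_0^{-1}\log i-C\ \forall i\}$ for each integer $C$. Since $V_i$ is independent of $(T_j)_j$, Borel--Cantelli applies for each rational $\delta>0$. Then use $W>0$ on $\Omega_0^\infty$. Borel--Cantelli gives the required finite $N$, after which the sum is over finitely many convergent indicators.

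\textbf{Finiteness of the limit.} The limit is finite by the same Borel--Cantelli bound, since $Y_1^{(i)}\le V_i$. For $\widetilde R_x$ the argument is identical with $\delta=x$.
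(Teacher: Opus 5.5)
Your proof is correct and has the same skeleton as the paper's: termwise convergence of the indicators, plus a Borel--Cantelli truncation showing that $e^{-\lambda_1 T_i}\sup_{s}e^{-\lambda_1 s}Z_1^{(i)}(s)\to 0$ almost surely. This is exactly the statement of the paper's Lemma on $e^{-\lambda_1 T_i}Y_*^{(i)}$. The difference lies in how summability is obtained.

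\textbf{The paper's route.} The paper conditions on $T_i$ and applies Doob's weak-type maximal inequality to the mean-one martingale, giving $\mathbb{P}(e^{-\lambda_1 T_i}V_i>\varepsilon)\le \varepsilon^{-1}\mathbb{E}[e^{-\lambda_1 T_i}]$. It then sums with Campbell's theorem: $\sum_i \mathbb{E}[e^{-\lambda_1 T_i}]=(av+\mu)\int_0^\infty e^{(\lambda_0-\lambda_1)s}\,ds<\infty$. This uses no pathwise information about the $T_i$ and only the trivial first-moment control of the suprema.

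\textbf{Your route.} You instead need the almost sure bound $\sup_t e^{-\lambda_0 t}M_t<\infty$, which you assert rather than prove. It is true, for example via $M_t=R_t+(av+\mu)\int_0^t Z_0(s)\,ds$, or from the finite moments of $\sup_z e^{-\lambda_0 z}M_z$ established later in the paper. On top of that you need a moment of $V$. Since $\lambda_1=a+b-d>\lambda_0$ always holds in this model, $\lambda_0/\lambda_1<1$, so $\mathbb{E}[V^{\lambda_0/\lambda_1}]\le 1+\mathbb{E}[V]$. Your separate case $\lambda_0>2\lambda_1$ is therefore vacuous. Your truncation step does not itself use $\lambda_1>\lambda_0$, but that inequality is needed anyway for $W<\infty$, so the paper's one-line estimate is the more economical choice.

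\textbf{Where you improve on the paper.} You are more careful on one point: you actually justify that $\{e^{-\lambda_1 T_i}Y_1^{(i)}=Wf\}$ is a null event. You do this by using independence of $Y_1^{(i)}$ from $(T_j)_j$ and $(Y_1^{(j)})_{j\ne i}$, together with atomlessness of its positive part. The paper merely asserts this.
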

\begin{proof}
  See Appendix Section \ref{app:frequency_limit}
\end{proof}

Note that $e^{-\lambda_1 T_i}Y_1^{(i)}/W$ represents the asymptotic proportion of the population that the $i$th type-1 clone makes up. Additionally, it should be remarked that $R_f(t)\leq \lceil 1/f\rceil$, so that $R_f(t)$ converges boundedly and thus in $L^p$ for $p>0$ as $t\to\infty$. A trichotomy follows from the above proposition:
\begin{restatable}{corollary}{ScaleSumSFS}
    Denote $k(n):\mathbb{N}\to \mathbb{N}$ such that $k(n)\ll n$ as $n\to\infty$, $G^+(t):\mathbb{R}_{\geq 0}\to \mathbb{R}_{\geq 0}$ such that $G^+(t)\gg e^{\lambda_1 t}$ as $t\to\infty$, and $G^-(t):\mathbb{R}_{\geq 0}\to \mathbb{R}_{\geq 0}$ such that $G^-(t)\ll e^{\lambda_1 t}$ as $t\to\infty$. Then conditional on $\Omega_0^\infty$, for every $x\in (0,1)$:

  \begin{align*}
    \sum_{j=n-k(n)}^{n} S_j(\tau_n)&\overset{a.s.}{\to} 0 &\sum_{j> G^+(t)}S_j(t)&\overset{a.s.}{\to}0\\
    \sum_{j=\lceil fn\rceil }^{n} S_j(\tau_n)&\overset{a.s.}{\to} \sum_{i\in \mathbb{N}}\mathbf{1}_{e^{-\lambda_1 T_i}Y_1^{(i)}>Wf}& \sum_{j> xe^{\lambda_1 t}}S_j(t)&\overset{a.s.}{\to}\sum_{i\in \mathbb{N}}\mathbf{1}_{Y_1^{(i)}> xe^{\lambda_1 T_i}}\\
    \sum_{j=k(n)}^{n} S_j(\tau_n)&\overset{a.s.}{\to} \infty & \sum_{j> G^-(t)}S_j(t)&\overset{a.s.}{\to}\infty
  \end{align*}\label{cor:scale_sum_SFS}
\end{restatable}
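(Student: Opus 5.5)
Everything reduces to Proposition \ref{prop:frequency_limit} plus monotonicity of the counts in the threshold. Throughout we work on $\Omega_0^\infty$, where $Z_1(t)\sim We^{\lambda_1 t}$ with $W\in(0,\infty)$ and $\tau_n<\infty$. The fixed-size version of Lemma \ref{lem:large_size_time} gives $\tau_n\to\infty$ and $e^{\lambda_1\tau_n}\sim n/W$. Since $Z_0(\tau_n)\ll n$ (because $\lambda_0<\lambda_1$), we also have $Z_1(\tau_n)\sim n$.

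\textbf{Middle row.} The right column is exactly $\widetilde R_x(t)$, so its limit is Proposition \ref{prop:frequency_limit}. For the left column, note first that no clone exceeds $n$ at $\tau_n$, so $\sum_{j=\lceil fn\rceil}^n S_j(\tau_n)$ counts clones of size $\ge fn$. Fix $\delta>0$ small. Eventually $Z_1(\tau_n)(f-\delta)<fn\le Z_1(\tau_n)(f+\delta)$, which gives the sandwich
\[R_{f+\delta}(\tau_n)\le \sum_{j\ge \lceil fn\rceil}S_j(\tau_n)\le R_{f-\delta}(\tau_n).\]
Because $\tau_n\to\infty$, each $R_{f\pm\delta}(\tau_n)$ converges a.s. to the limit in Eq.~\ref{eq:frequency_limit} with $f$ replaced by $f\pm\delta$. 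Let $\delta\to0$ along a countable sequence. The limit function $f\mapsto\sum_i \mathbf 1_{e^{-\lambda_1T_i}Y_1^{(i)}>Wf}$ is continuous at $f$ provided no $i$ has $e^{-\lambda_1 T_i}Y_1^{(i)}=Wf$ exactly. This event is null, since the $Y_1^{(i)}$ have a density on the event of survival and are independent of the $T_i$ and of the other clones. The same continuity argument handles the boundary subtlety $>$ versus $\ge$ in the $\widetilde R_x$ statement.

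\textbf{Top row.} Take $G^+(t)\gg e^{\lambda_1 t}$. For any fixed $x$, eventually $G^+(t)>xe^{\lambda_1 t}$, so
\[\limsup_t \sum_{j>G^+}S_j(t)\le \sum_i \mathbf 1_{Y_1^{(i)}>xe^{\lambda_1T_i}}.\]
Since $T_i\ge0$ and $Y_1^{(i)}<\infty$, the right side decreases to $0$ as $x\to\infty$; one should check dominated convergence here, which holds because for large $x$ only finitely many $i$ could matter. The count is integer-valued, so it is eventually $0$. For the fixed-size statement, $k(n)\ll n$ means $n-k(n)\ge (1-\delta)n$ eventually. The sum is then bounded by the number of clones of size $\ge(1-\delta)n$, whose limit is $\sum_i \mathbf 1_{e^{-\lambda_1T_i}Y_1^{(i)}>W(1-2\delta)}$. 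As $\delta\to0$ this tends to $\sum_i\mathbf 1_{e^{-\lambda_1T_i}Y_1^{(i)}\ge W}$. That sum is $0$ a.s., because $W=\sum_i e^{-\lambda_1T_i}Y_1^{(i)}$ and, on $\Omega_0^\infty$, at least two clones survive (infinitely many clones are produced, each established with probability $q_1$). Hence no single term equals $W$.

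\textbf{Bottom row.} By monotonicity, for each fixed $x$ we eventually have
\[\sum_{j>G^-}S_j(t)\ge \widetilde R_x(t)\to \sum_i \mathbf 1_{Y_1^{(i)}>xe^{\lambda_1T_i}}.\]
As $x\to0^+$ this limit increases to $\#\{i:Y_1^{(i)}>0\}=\infty$ a.s., since infinitely many established clones arise on $\Omega_0^\infty$ (by Borel–Cantelli or the conditional independence of the $Y_1^{(i)}$). The fixed-size case is identical: $k(n)\le fn$ eventually for any $f$, and the Eq.~\ref{eq:frequency_limit} limit tends to $\infty$ as $f\to0$.

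\textbf{Main obstacle.} I expect the hardest part to be the fixed-size sandwich. It requires the a.s. null-event argument (no clone proportion exactly equal to $f$ or to $1$), and it requires transferring the $t$-limits of Proposition \ref{prop:frequency_limit} to the random times $\tau_n$. The latter is straightforward only because those limits are a.s. and $\tau_n\to\infty$ a.s.
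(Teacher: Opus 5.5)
Your proof is correct. It rests on the same two ingredients as the paper's: Proposition \ref{prop:frequency_limit}, and the fact that on $\Omega_0^\infty$ at least two (indeed infinitely many) clones are established. The packaging is different, though.

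\textbf{The paper's route.} For the middle row, the paper reruns the proof of Proposition \ref{prop:frequency_limit} with $Z_0(t)+Z_1(t)$ in place of $Z_1(t)$, which is legitimate since $e^{-\lambda_1 t}Z_0(t)\to 0$. It then evaluates at $t=\tau_n$, where the total population is exactly $n$. This gives the fixed-size middle row directly, with no need for $Z_1(\tau_n)\sim n$ or a $\delta$-sandwich. For the top row it argues pathwise. No clone can exceed $Z_1(t)\sim We^{\lambda_1 t}\ll G^+(t)$, so that sum is identically $0$ for large $t$. A second established clone keeps the largest clone's fraction below $1-\varepsilon$, so no clone lies in $[n-k(n),n]$.

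\textbf{Your route.} You treat Proposition \ref{prop:frequency_limit} as a black box and obtain every limit by monotonicity in the threshold. You sandwich between $R_{f\pm\delta}(\tau_n)$ or $\widetilde R_x(t)$, then send $\delta\to 0$, $x\to\infty$, or $x,f\to 0^+$. This is a little longer for the top row. In exchange it handles all six limits uniformly and actually writes out the bottom row, which the paper covers only by ``similar arguments.''

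\textbf{One justification to tighten.} Your route needs the limit to be continuous in $f$, i.e.\ that no $e^{-\lambda_1 T_i}Y_1^{(i)}$ equals $Wf$. Your independence argument for this is imprecise, because $W$ contains the $i$-th term itself. State it this way: the event is equivalent to
\[(1-f)e^{-\lambda_1 T_i}Y_1^{(i)}=f\sum_{k\neq i}e^{-\lambda_1 T_k}Y_1^{(k)}.\]
Conditionally on everything except $Y_1^{(i)}$, the right side is fixed and strictly positive on $\Omega_0^\infty$. The law of $Y_1^{(i)}$ has an atom only at $0$, so the event is null.
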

\begin{proof}
  See Appendix Section \ref{app:frequency_limit}.
\end{proof}
If we look at the large detection-size limit column first, we see that there is an asymptotically vanishing contribution to the cumulative SFS from clones with size near $n$, an order 1 contribution from clones of size $fn$ for $f\in (0,1)$, and an infinite contribution from clones of small size. The middle expression gives a connection between the fixed-size SFS viewed at absolute frequencies $j = \lceil fn \rceil$ and above and the relative SFS observed at a fixed time, showing that they converge to the same limit.
We see similar results for the large-time limit column. 
Finally, compare the above corollary with Theorem \ref{thm:SFSSparsity}. The scale that sets the sparsity of mutations \textit{at a particular frequency} $j(t)$ is $j(t)\sim e^{\frac{\lambda_0\lambda_1}{\lambda_0+\lambda_1}t}$ and the scale that sets the sparsity of mutations of \textit{at least} a particular frequency $j(t)$ is $j(t)\sim e^{\lambda_1 t}$.

We next prove a convergence result for $\widetilde{R}_x(t)$ and $R_f(t)$ that gives a better insight into the limit in Proposition \ref{prop:frequency_limit}.
\begin{restatable}{theorem}{ASRelativeSFS}
   On $\Omega_0^\infty$, we obtain the following small frequency scaling a.s.:
  \[\lim_{x\to 0^+}\lim_{t\to\infty}x^{\lambda_0/\lambda_1}\widetilde{R}_x(t)=\lim_{f\to 0^+}\lim_{t\to\infty}f^{\lambda_0/\lambda_1}W^{\lambda_0/\lambda_1}R_f(t)=\frac{(av+\mu) q_1^{1-\lambda_0/\lambda_1}}{\lambda_1}\Gamma(\frac{\lambda_0}{\lambda_1})Y\]
  \label{prop:ASRelativeSFS}
\end{restatable}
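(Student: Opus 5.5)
By Proposition \ref{prop:frequency_limit}, on $\Omega_0^\infty$ the inner limits exist a.s.:
\[\lim_{t\to\infty}\widetilde{R}_x(t)=N(x):=\sum_{i}\mathbf{1}_{\{Y_1^{(i)}>xe^{\lambda_1T_i}\}},\qquad \lim_{t\to\infty}R_f(t)=N(fW).\]
Substituting $x=fW$, the second claim reduces to the first: $f^{\lambda_0/\lambda_1}W^{\lambda_0/\lambda_1}N(fW)=x^{\lambda_0/\lambda_1}N(x)$, and $fW\to0^+$ as $f\to 0^+$ because $0<W<\infty$ on $\Omega_0^\infty$. So it suffices to show $x^{\lambda_0/\lambda_1}N(x)\to cY$ a.s. as $x\to0^+$, where $c=\frac{(av+\mu)q_1^{1-\lambda_0/\lambda_1}}{\lambda_1}\Gamma(\lambda_0/\lambda_1)$.

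\textbf{Strategy.} Condition on the type-0 path $\mathcal{F}_0=\sigma(Z_0(s),s\ge0)$. The clone marks $Y_1^{(i)}$ are then i.i.d. and independent of the $T_i$'s. (If there is replication-independent mutation, the pair structure is only approximately Poisson. Still, $M_t$ has compensator $\int_0^t(av+\mu)Z_0(s)\,ds$, and given the type-0 trajectory the counting is built from independent marks, so the computations below go through.) The natural change of variables is $u=\frac1{\lambda_1}\log(1/x)$, i.e. $x=e^{-\lambda_1 u}$. Then
\[N(x)=\sum_i\mathbf{1}_{\{T_i<u+\frac1{\lambda_1}\log Y_1^{(i)}\}},\]
the number of clones whose "effective birth time" $T_i-\lambda_1^{-1}\log Y_1^{(i)}$ precedes $u$. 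Heuristically, with $Z_0(s)\approx Ye^{\lambda_0 s}$ and $\mathbb{P}(Y_1>y)=q_1e^{-q_1y}$,
\[\mathbb{E}[N(x)\mid\mathcal{F}_0]\approx (av+\mu)Y\int_0^\infty e^{\lambda_0 s}q_1e^{-q_1xe^{\lambda_1 s}}\,ds=(av+\mu)Y\frac{q_1}{\lambda_1}(q_1x)^{-\lambda_0/\lambda_1}\Gamma(\lambda_0/\lambda_1),\]
which equals $cYx^{-\lambda_0/\lambda_1}$. This agrees with Proposition \ref{prop:scaling} as $x\to0$, using $\Gamma(\lambda_0/\lambda_1,q_1x)\to\Gamma(\lambda_0/\lambda_1)$.

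\textbf{Step 1: conditional mean.} Make the heuristic rigorous. Split the integral at a large time $s_0$. Since $e^{-\lambda_0 s}Z_0(s)\to Y$ a.s., the contribution from $s>s_0$ is $(1+o(1))cYx^{-\lambda_0/\lambda_1}$. The part $s\le s_0$ is $O(1)$, hence negligible after multiplying by $x^{\lambda_0/\lambda_1}$.

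\textbf{Step 2: $L^2$ concentration.} Conditionally on $\mathcal{F}_0$, $N(x)$ is a sum of independent indicators (conditional Poisson-type). So its conditional variance is at most $C\,\mathbb{E}[N(x)\mid\mathcal{F}_0]$, the $\mu$-correction being handled as in \eqref{eq:choose_SFS}. Hence
\[\mathbb{E}\Big[\big(x^{\lambda_0/\lambda_1}N(x)-x^{\lambda_0/\lambda_1}\mathbb{E}[N(x)\mid\mathcal{F}_0]\big)^2\Big]\lesssim x^{\lambda_0/\lambda_1}.\]
Along a geometric sequence $x_k=e^{-\lambda_1 k\delta}$, this bound is summable. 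Borel–Cantelli then gives a.s. convergence of $x_k^{\lambda_0/\lambda_1}N(x_k)$ to $cY$.

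\textbf{Step 3: monotone interpolation.} $N(x)$ is non-increasing in $x$. For $x_{k+1}\le x\le x_k$,
\[x_k^{\lambda_0/\lambda_1}N(x_k)\,e^{-\lambda_0\delta}\le x^{\lambda_0/\lambda_1}N(x)\le x_{k+1}^{\lambda_0/\lambda_1}N(x_{k+1})\,e^{\lambda_0\delta}.\]
So the $\liminf$ and $\limsup$ as $x\to0^+$ lie within a factor $e^{\pm\lambda_0\delta}$ of $cY$. Letting $\delta\downarrow0$ along a countable sequence finishes the proof. This is the same device as Proposition \ref{prop:convergence_tool}, which could be invoked directly.

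\textbf{Main obstacle.} The delicate point is Step 1/2 with the random intensity. Because $Z_0$ is random and does not equal $Ye^{\lambda_0 s}$, the conditional-mean limit must be established a.s., not just in expectation. The $O(1)$ early-time contributions, together with the error $|e^{-\lambda_0 s}Z_0(s)-Y|$, must be controlled uniformly in $x$ on the Borel–Cantelli sequence. I would handle this by first proving the result with $Z_0$ replaced by the deterministic intensity $Ye^{\lambda_0 s}$ on $[s_0,\infty)$, then sandwiching between $(Y\pm\eta)e^{\lambda_0 s}$ for $s\ge s_0(\eta,\omega)$.
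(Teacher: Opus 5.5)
Your proof is correct in outline, and complete when $\mu=0$. It has the same architecture as the paper's proof (Lemma \ref{lem:small_x_behavior_approx} and the corollary after it): reduce the $R_f$ statement to the $\widetilde R_x$ statement via $x=fW$ and Proposition \ref{prop:frequency_limit}; use conditional independence of the clone marks to bound the variance by the mean; replace $Z_0(s)$ by $Ye^{\lambda_0 s}$; and use monotonicity of $N(x)$ in $x$ to upgrade to almost sure convergence. The differences are in execution.

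The paper conditions on $\mathcal{B}_\infty$ and compares $\sum_i g(xe^{\lambda_1 T_i})$, with $g(y)=q_1e^{-q_1y}$, against $(av+\mu)\int_0^\infty g(xe^{\lambda_1 s})Z_0(s)\,ds$ through the martingale $R$. It then makes the replacement $Z_0(s)\to Ye^{\lambda_0 s}$ in $L^2$ via Lemma \ref{lem:L2_distance}, and feeds the integrable $L^1$ error into the continuous-parameter criterion of Proposition \ref{prop:convergence_tool}. You make the replacement pathwise, which needs only $e^{-\lambda_0 s}Z_0(s)\to Y$ a.s.\ and no rate. You also replace Proposition \ref{prop:convergence_tool} by Chebyshev and Borel--Cantelli on the skeleton $x_k=e^{-\lambda_1 k\delta}$ followed by the monotone sandwich. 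Your route is somewhat more elementary; the paper's reuses the machinery of Theorem \ref{prop:convergence_results} essentially verbatim. The ``main obstacle'' you flag is not really one: Step 1 is a deterministic statement for each fixed sample point and holds simultaneously for all $x$, so no uniformity over the Borel--Cantelli sequence is needed.

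The one step not yet justified is Step 1 when $\mu>0$. Given the type-$0$ path, the replication-independent mutation times are a $\mu/(d+\mu)$-thinning of the down-jumps of $Z_0$ (Section \ref{app:alt_representation_model}), not a Poisson process of intensity $\mu Z_0$. Your conditional mean therefore contains the random sum $\frac{\mu}{d+\mu}\sum_D g(xe^{\lambda_1 D})$ over down-jump times $D$, and an argument using only $e^{-\lambda_0 s}Z_0(s)\to Y$ does not control it. (Your Step 2 is unaffected, since the thinned indicators are still conditionally independent.)

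The simplest repair is the paper's. Condition on $\mathcal{B}_\infty$ and write the conditional mean as $\int_0^\infty g(xe^{\lambda_1 s})\,dM_s=(av+\mu)\int_0^\infty g(xe^{\lambda_1 s})Z_0(s)\,ds+\int_0^\infty g(xe^{\lambda_1 s})\,dR_s$. By It\^o's isometry the last term has second moment $(av+\mu)\int_0^\infty g(xe^{\lambda_1 s})^2e^{\lambda_0 s}\,ds\lesssim x^{-\lambda_0/\lambda_1}$. After multiplying by $x^{2\lambda_0/\lambda_1}$ this is $O(x^{\lambda_0/\lambda_1})$, exactly like your Step 2 bound, so it is absorbed into the same Borel--Cantelli argument. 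Your pathwise Step 1 then applies unchanged to the first term.
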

\begin{proof}
  See Appendix Section \ref{app:relSFS}.
\end{proof}

\begin{figure}[H]
	\centering
	\includegraphics[width=\textwidth]{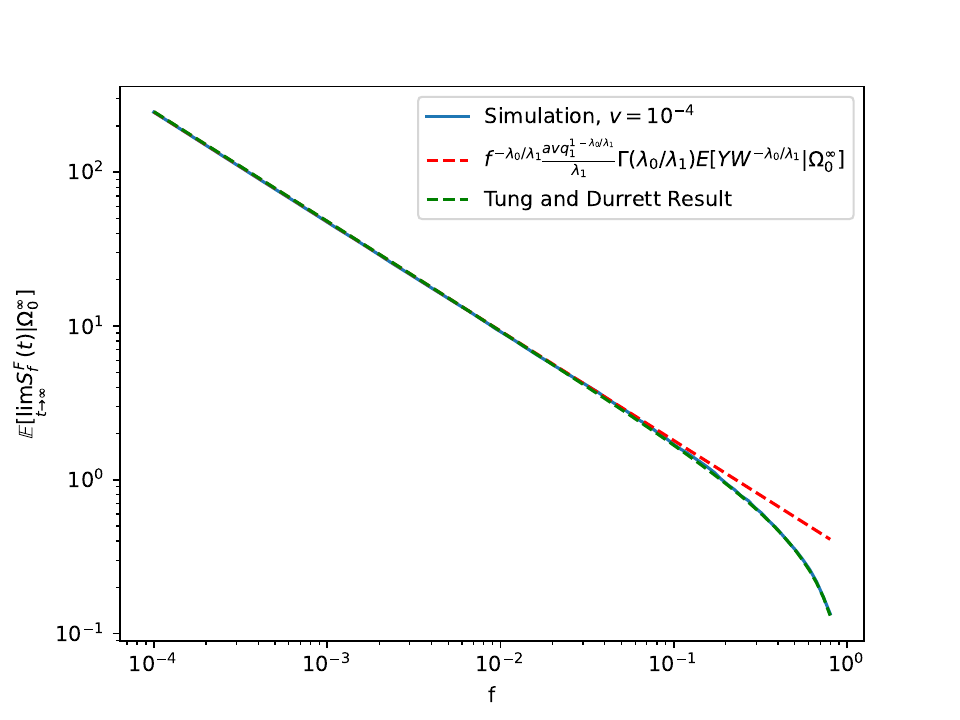}
	\caption{Large time relative plots of $R_f(t)$. }\label{fig:relative_sfs}
\end{figure}

The proof of the large-time, small $x$ asymptotics for $\widetilde{R}_x(t)$ uses similar techniques as in the proof of Theorem \ref{prop:convergence_results} and Proposition \ref{prop:fraction_as}. The asymptotics for $R_f(t)$ follow from the approximation $R_f(t)\approx \widetilde{R}_{x_f}(t)$ with $x_f=fW$ and taking $x_f\to 0$. Hence, conditional on $\Omega_0^\infty$, we obtain the following power law result:
\begin{align}
  \lim_{t\to\infty}R_f(t)\overset{f\to 0^+}{\sim} f^{-\lambda_0/\lambda_1}\frac{(av+\mu) q_1^{1-\lambda_0/\lambda_1}}{\lambda_1}\Gamma(\frac{\lambda_0}{\lambda_1})YW^{-\lambda_0/\lambda_1}\label{eq:relative_large_scaling}
\end{align}
Note that if we substitute $f^{-\lambda_0/\lambda_1} \Gamma(\lambda_0/\lambda_1) $ with $\frac{d}{df} \left( f^{-\lambda_0/\lambda_1} \Gamma(\lambda_0/\lambda_1)\right) = -  f^{-\lambda_0/\lambda_1-1} \Gamma(\lambda_0/\lambda_1+1)$ and ignore the stochastic terms, we recover the $j^{-\lambda_0/\lambda_1-1}$ power law in Eq.~\ref{eq:large_j_equivalent} of Proposition \ref{prop:SFS_mean_asymptotics} (irrespective of sign). We can thus say that Eq.~\ref{eq:relative_large_scaling} is a cumulative version of the power law result we derived before.

Tung and Durett analyze the distribution of $\lim_{t\to\infty}R_f(t
)$ (Eq. \ref{eq:frequency_limit}) using results from Pitman and Yor concerning the distribution of normalized points in a Poisson point process over the positive real line \cite{tung,pitman1997}. In particular, they analyze a semideterministic approximation of our model, where the type-$0$ population grows as $Z_0^*(t)=Ye^{\lambda_{0,0} t}$, and the mutation times $\{T_i\}$ are generated as an inhomogenous Poisson point process with intensity $\Lambda(dt)=vYe^{\lambda_{0,0} t}\,dt$ conditional on $Y$\footnote{Recall that $\lambda_{0,v}=a(1-v)-d$ denotes the growth rate with no replication-independent mutations and with the replication-dependent mutation probability explicitly shown. Note that $\lambda_{0,0}$ is used as their semi-deterministic model does not incorporate mutations into the type-0 growth rate}. They find that the tail follows a $f^{-\lambda_{0,0}/\lambda_1}$ shape in expectation. They obtain:
\begin{align}
  \mathbb{E}[\lim_{t\to\infty}R_f(t)]\overset{T.D.}{=}\frac{\sin\left(\pi\frac{\lambda_{0,0}}{\lambda_1}\right)}{\pi\frac{\lambda_{0,0}}{\lambda_1}}\left(\frac{1}{f}-1\right)^{\lambda_{0,0}/\lambda_1},\label{eq:tung_durrett}
\end{align}
where $\overset{T.D.}{=}$ denotes equality in the Tung and Durrett model. In the small mutation limit, we actually recover the above up to leading order as $f\to 0^+$.
\begin{restatable}{proposition}{SmallMutationRelativeSFS}
  If mutations are only associated with births:
  \[\lim_{v\to0^+}\mathbb{E}\left[\lim_{f\to0^+}\lim_{t\to\infty}f^{\lambda_0/\lambda_1}R_{f}(t)|\Omega_0^\infty\right]=\lim_{v\to 0^+}\mathbb{E}\left[\frac{av q_1^{1-\lambda_0/\lambda_1}}{\lambda_1}\Gamma(\frac{\lambda_0}{\lambda_1})YW^{-\lambda_0/\lambda_1}|\Omega_0^\infty\right]=\frac{\sin\left(\pi\frac{\lambda_{0,0}}{\lambda_1}\right)}{\pi\frac{\lambda_{0,0}}{\lambda_1}}.\]\label{prop:small_mutation_relative_SFS}
\end{restatable}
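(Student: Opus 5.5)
The first equality is immediate from Theorem \ref{prop:ASRelativeSFS}. With $\mu=0$, on $\Omega_0^\infty$ the almost sure limit $\lim_{f\to0^+}\lim_{t\to\infty}f^{\lambda_0/\lambda_1}R_f(t)$ equals $\frac{av q_1^{1-\lambda_0/\lambda_1}}{\lambda_1}\Gamma(\frac{\lambda_0}{\lambda_1})YW^{-\lambda_0/\lambda_1}$. The two conditional expectations are therefore expectations of the same random variable.

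Write $\alpha=\lambda_0/\lambda_1=\lambda_{0,v}/\lambda_1$. Since $\alpha\to\alpha_0:=\lambda_{0,0}/\lambda_1$ and $q_1$ does not depend on $v$, the whole task is to show
\[
v\,\mathbb{E}[YW^{-\alpha}\mid\Omega_0^\infty]\;\to\;\frac{\lambda_1\sin(\pi\alpha_0)}{a\,\pi\alpha_0\,\Gamma(\alpha_0)\,q_1^{1-\alpha_0}}\qquad\text{as }v\to0^+.
\]

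\textbf{Step 1: reduce to a Laplace transform.} Use the identity $W^{-\alpha}=\Gamma(\alpha)^{-1}\int_0^\infty u^{\alpha-1}e^{-uW}\,du$ and Tonelli to get $\mathbb{E}[YW^{-\alpha}\mid\Omega_0^\infty]=\Gamma(\alpha)^{-1}\int_0^\infty u^{\alpha-1}\mathbb{E}[Ye^{-uW}\mid\Omega_0^\infty]\,du$.

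\textbf{Step 2: condition on the type-0 path.} With replication-dependent mutations only, a mutation event does not change $Z_0$. Hence, conditional on the path $(Z_0(s))_{s\ge0}$ (and so on $Y$), the times $T_i$ form a Poisson process with intensity $avZ_0(s)\,ds$, and the $Y_1^{(i)}$ are i.i.d. and independent of it. The Laplace transform of $Y_1^{(1)}$ gives $1-\mathbb{E}[e^{-\theta Y_1^{(1)}}]=q_1\theta/(q_1+\theta)$. Using the representation $W=\sum_i e^{-\lambda_1T_i}Y_1^{(i)}$ we then obtain
\[
\mathbb{E}[e^{-uW}\mid Z_0]=\exp\Big(-av\int_0^\infty Z_0(s)\frac{q_1ue^{-\lambda_1 s}}{q_1+ue^{-\lambda_1 s}}\,ds\Big).
\]

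\textbf{Step 3: heuristic computation.} Since $W=O(v)$, the relevant scale is $u\asymp 1/v$. The integrand then concentrates at $s\approx\lambda_1^{-1}\log u\to\infty$, where $Z_0(s)\approx Ye^{\lambda_0 s}$. Substituting $r=ue^{-\lambda_1 s}$ turns the exponent into approximately
\[
-\frac{av\,Y\,u^{\alpha}}{\lambda_1}\int_0^u \frac{q_1 r^{-\alpha}}{q_1+r}\,dr\;\approx\;-cYu^\alpha,\qquad c=\frac{av\,q_1^{1-\alpha}\pi}{\lambda_1\sin(\pi\alpha)},
\]
using $\int_0^\infty r^{-\alpha}(1+r)^{-1}dr=\pi/\sin(\pi\alpha)$; this needs $\alpha<1$, i.e. $b>0$. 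Plugging this into Step 1 and using $\int_0^\infty u^{\alpha-1}e^{-cYu^\alpha}\,du=1/(\alpha cY)$ cancels the factor $Y$ and gives $\mathbb{E}[YW^{-\alpha}\mid\Omega_0^\infty]\approx 1/(\alpha c\Gamma(\alpha))$. Multiplying by $av q_1^{1-\alpha}\Gamma(\alpha)/\lambda_1$ yields $\sin(\pi\alpha)/(\pi\alpha)\to\sin(\pi\alpha_0)/(\pi\alpha_0)$. Conveniently, the conditioning on $\Omega_0^\infty$ drops out because the $Y$ cancels pointwise.

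\textbf{Step 4: rigorous justification (main obstacle).} The delicate part is replacing $Z_0(s)$ by $Ye^{\lambda_0 s}$ uniformly in $v$, and then passing $v\to0$ under the $u$-integral. I would rescale $u=w/v$ and show that for each fixed $w$ the exponent converges to $-\frac{aq_1^{1-\alpha_0}\pi}{\lambda_1\sin(\pi\alpha_0)}Yw^{\alpha_0}$.
\begin{itemize}
\item The contribution from $s\le s_0$ is $O(v)$.
\item For $s\ge s_0$, bound $|e^{-\lambda_0 s}Z_0(s)-Y|$ via the $L^2$ martingale convergence. One can check that its rate is uniform in small $v$, and that $Z_{0}$ for different $v$ can be coupled by thinning so that $Y_v\to Y_0$.
\end{itemize}
For domination I would use a lower bound on the exponent of the form $c'vY\min(u,u^\alpha)$ on $\Omega_0^\infty$. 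This comes from restricting the integral to $s\ge\sigma$, where $\sigma$ is a (random, a.s. finite) time after which $Z_0(s)\ge Ye^{\lambda_0 s}/2$. That yields an integrable envelope for $u^{\alpha-1}Ye^{-uW}$, after which dominated convergence completes the argument. If the random $\sigma$ causes trouble, I would fall back on bounding $\mathbb{E}[Y^{-\alpha}\,;\,\Omega_0^\infty]$-type moments, which is harmless since $Y\mid\Omega_0^\infty$ is exponential.
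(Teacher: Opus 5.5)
Your skeleton is correct: the Laplace representation of $W^{-\alpha}$, the conditional exponent $av\int_0^\infty Z_0(s)\frac{q_1ue^{-\lambda_1 s}}{q_1+ue^{-\lambda_1 s}}\,ds$, the substitution $r=ue^{-\lambda_1 s}$, and the constant $\sin(\pi\alpha)/(\pi\alpha)$. It is the same computation as the paper's Proposition \ref{prop:conv_dist}. You fold the paper's three steps (weak convergence of $(Y_v,v^{-\lambda_1/\lambda_{0,v}}W_v)$, uniform integrability, inverse-moment formula) into a single dominated-convergence step.

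\textbf{Step 4 is mis-scaled.} The bound $\mathbb{E}[W]\asymp v$ is carried by rare early mutations. On $\Omega_0^\infty$ the first established mutant arrives when $e^{\lambda_0 T}\asymp 1/(vY)$, so typically $W\asymp v^{\lambda_1/\lambda_0}=v^{1/\alpha}$; your own heuristic says the relevant $u$ solves $vu^{\alpha}\asymp 1$. With $u=w/v$ the exponent is $\approx cYu^{\alpha}\propto v^{1-\alpha}Yw^{\alpha}\to 0$, so the pointwise limit you claim is false. Worse, after that substitution $v\,\mathbb{E}[YW^{-\alpha}]=\Gamma(\alpha)^{-1}v^{1-\alpha}\int_0^\infty w^{\alpha-1}\mathbb{E}[Ye^{-wW/v}]\,dw$, and the integrand tends to the non-integrable $w^{\alpha_0-1}\mathbb{E}[Y_0]$, so no dominating function can exist in that scaling. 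You need $u=wv^{-\lambda_1/\lambda_{0,v}}$. This gives exactly $vu^{\alpha}=w^{\alpha}$ and $v\,u^{\alpha-1}\,du=w^{\alpha-1}\,dw$, i.e. $v\,\mathbb{E}[YW^{-\alpha}]=\Gamma(\alpha)^{-1}\int_0^\infty w^{\alpha-1}\mathbb{E}[Ye^{-wv^{-\lambda_1/\lambda_{0,v}}W}]\,dw$. With this normalization, which is the paper's, your replacement of $e^{-\lambda_0 s}Z_0(s)$ by $Y$ via Lemma \ref{lem:L2_distance} does give the pointwise limit, with error $O(v^{1/2})$.

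\textbf{The $v$-uniform envelope is missing, and this is where the real work lies.} You need $\mathbb{E}[Ye^{-wv^{-\lambda_1/\lambda_{0,v}}W}]\le g(w)$ for all small $v$, with $\int^\infty w^{\alpha_0-1}g(w)\,dw<\infty$, i.e. decay strictly faster than $w^{-\alpha_0}$.

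Your lower bound $c'vY\min(u,u^{\alpha})$ cannot hold with a deterministic $c'$ once you restrict to $s\ge\sigma$. You only get roughly $vY\min(ue^{-(\lambda_1-\lambda_0)\sigma},u^{\alpha})$. Integrating that against $w^{\alpha-1}$ needs a $v$-uniform bound on something like $\mathbb{E}[Ye^{\lambda_0\sigma}]$, where $\sigma$ is a last-exit time correlated with $Y$. That is not proved. Your fallback (moments of $Y^{-\alpha}$) does not touch the actual difficulty, which is the early-time discrepancy between $Z_0(s)$ and $Ye^{\lambda_0 s}$.

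The paper avoids $\sigma$ entirely in Lemma \ref{lem:uniform_integrability}:
\begin{itemize}
\item It bounds $W\ge e^{-\lambda_1 T_1^*}Y_1^{(1),*}$ using the first established clone, and uses the finite negative moments of the exponential $Y_1^{(1),*}$.
\item It controls $\mathbb{E}[(vY)^{1+\varepsilon}e^{\lambda_{0,v}(1+\varepsilon)T_1^*}\mid\Omega_{0,v}^\infty]$ through the time change $u(t)=\int_0^t avq_1Z_0(s)\,ds$ and the identity $\frac{d}{du}e^{\lambda_{0,v}t(u)}=\lambda_{0,v}/(avq_1M(t(u)))$.
\item The branching property at the stopping times $t(z)$ then gives $\mathbb{E}[(Y/M(t(z)))^{1+\varepsilon};\Omega_{0,v}^\infty]\le\mathbb{E}[Y^{1+\varepsilon}]$.
\end{itemize}

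The same first-clone bound would also give you your envelope. Since $\frac{q_1}{q_1+x}\le (q_1/x)^{\beta}$ for $\beta\in[0,1]$, you get $\mathbb{E}[Ye^{-uW}]\le (q_1/u)^{\beta}\,\mathbb{E}[Ye^{\beta\lambda_1 T_1^*};\Omega_0^\infty]$. With $\beta=\alpha(1+\varepsilon)$ this decays like $w^{-\alpha(1+\varepsilon)}$ in the corrected scaling. But that holds only once you prove $\sup_v v^{1+\varepsilon}\mathbb{E}[Ye^{\lambda_{0,v}(1+\varepsilon)T_1^*};\Omega_{0,v}^\infty]<\infty$. This needs the same time-change argument, plus H\"older to handle the different power of $Y$. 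Alternatively, invoke uniform integrability of $vYW^{-\alpha}$ directly and conclude as the paper does.
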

\begin{proof}
  See Appendix Section \ref{app:small_mutation_relative_SFS}.
\end{proof}

The intuition behind this correspondence lies in a deeper connection between our two-type branching process model in the small mutation limit and the semi-deterministic model Tung and Durrett study (for example, for the case $\mu=0$, see Cheek an Antal \cite{cheek2018} Proposition 5.1; also see the Discussion for more commentary on this).

By Figure \ref{fig:relative_sfs}, we observe quite good agreement between simulations and Tung and Durett's result (Eq. \ref{eq:tung_durrett}) when mutation rates are small. Our leading order asymptotic (Eq. \ref{eq:relative_large_scaling} and Proposition \ref{prop:small_mutation_relative_SFS}) is good for small $f$ but is an overestimate of both the simulation and Tung and Durett result for large $f$.

\subsubsection{The largest clone}\label{sec:smallmut_largeclone}

We conclude our investigation of the large-frequency end of the SFS by examining the size of the largest clone.

\begin{restatable}{proposition}{LargestClone}
  We have the following large time and large detection-size asymptotics for the largest clone on $\Omega_0^\infty$:
  \begin{align*}
    &e^{-\lambda_1 t}\max_{T_i\leq t}\left(Z_1^{(i)}(t-T_i)\right)\overset{a.s.}{\to}\sup_{1\leq i<\infty}\left(e^{-\lambda_1 T_i}Y_1^{(i)}\right)\\
    &n^{-1}\max_{T_i\leq \tau_n}\left(Z_1^{(i)}(\tau_n-T_i)\right)\overset{a.s.}{\to}\sup_{1\leq i<\infty}\left(e^{-\lambda_1 T_i}Y_1^{(i)}\right)/W.\\
  \end{align*}
    In particular:
    \begin{align*}
    &\max_{T_i\leq t}\left(Z_1^{(i)}(t-T_i)\right)/Z_1(t)\overset{a.s.}{\to}\sup_{1\leq i<\infty}\left(e^{-\lambda_1 T_i}Y_1^{(i)}\right)/W\\
    &\max_{T_i\leq \tau_n}\left(Z_1^{(i)}(\tau_n-T_i)\right)/Z_1(\tau_n)\overset{a.s.}{\to}\sup_{1\leq i<\infty}\left(e^{-\lambda_1 T_i}Y_1^{(i)}\right)/W.\\
  \end{align*}\label{prop:largest_clone_convergence}
\end{restatable}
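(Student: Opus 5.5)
We prove the fixed-time statement first. Write $M(t):=e^{-\lambda_1 t}\max_{T_i\le t}Z_1^{(i)}(t-T_i)=\max_{T_i\le t}e^{-\lambda_1T_i}\,\bigl(e^{-\lambda_1(t-T_i)}Z_1^{(i)}(t-T_i)\bigr)$. For each fixed $i$, the bracket converges a.s.\ to $Y_1^{(i)}$ as $t\to\infty$. Set $V_i:=e^{-\lambda_1T_i}Y_1^{(i)}$ and $V^\ast:=\sup_i V_i$. Since $W=\sum_iV_i<\infty$ a.s.\ on $\Omega_0^\infty$ (Appendix Section \ref{app:W_representation}), we have $V_i\to0$, so the supremum is attained and finite.

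For the lower bound, fix $N$. Then $\liminf_t M(t)\ge\max_{i\le N}\lim_t e^{-\lambda_1 t}Z_1^{(i)}(t-T_i)=\max_{i\le N}V_i$, and letting $N\to\infty$ gives $\liminf_t M(t)\ge V^\ast$.

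For the upper bound, we need to control the tail of clones uniformly in $t$. We use
\[
\max_{i>N,\,T_i\le t}e^{-\lambda_1 t}Z_1^{(i)}(t-T_i)\le e^{-\lambda_1 t}\sum_{i>N}Z_1^{(i)}(t-T_i)=e^{-\lambda_1 t}Z_1(t)-\sum_{i\le N}e^{-\lambda_1 t}Z_1^{(i)}(t-T_i).
\]
As $t\to\infty$, the right-hand side converges a.s.\ to $W-\sum_{i\le N}V_i=\sum_{i>N}V_i$, using that $e^{-\lambda_1t}Z_1(t)\to W$ a.s. Therefore
\[
\limsup_t M(t)\le\max\Bigl(\max_{i\le N}V_i,\ \sum_{i>N}V_i\Bigr),
\]
and letting $N\to\infty$ gives $\limsup_t M(t)\le V^\ast$. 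This sandwich via the total mass $W$ is the key step; it avoids any uniform estimate on the individual clones. The main point to verify is that $W$ is indeed the a.s.\ limit of $e^{-\lambda_1 t}Z_1(t)$ and equals $\sum_iV_i$, which is asserted in the model section. It uses $\lambda_0<\lambda_1$ (true since $b>0$ and $\lambda_0\le a-d$), so that the clones born late contribute $\sum_{i>N}V_i\to0$.

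For the fixed-size statement, on $\Omega_0^\infty$ we have $\tau_n\to\infty$. Evaluating the fixed-time result along $t=\tau_n$ gives $e^{-\lambda_1\tau_n}\max_i Z_1^{(i)}(\tau_n-T_i)\to V^\ast$. Next, $n=Z_0(\tau_n)+Z_1(\tau_n)$, and $e^{-\lambda_1\tau_n}Z_0(\tau_n)\to0$ because $\lambda_0<\lambda_1$, while $e^{-\lambda_1\tau_n}Z_1(\tau_n)\to W$. Hence $n e^{-\lambda_1\tau_n}\to W$; this is the content of Lemma \ref{lem:large_size_time}. Dividing gives $n^{-1}\max\to V^\ast/W$. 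Finally, the two ``in particular'' statements follow by dividing the fixed-time and fixed-size results by $e^{-\lambda_1 t}Z_1(t)\to W$ and $e^{-\lambda_1\tau_n}Z_1(\tau_n)\to W$ respectively, noting that $W>0$ a.s.\ on $\Omega_0^\infty$.
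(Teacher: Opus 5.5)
Your argument is correct. It coincides with the paper's on the lower bound (take $\max_{i\le N}V_i$, then $N\to\infty$) but reaches the upper bound by a different route.

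The paper proves a separate lemma (Lemma \ref{lem:Y_small}). It applies Doob's maximal inequality to each clone martingale $e^{-\lambda_1 s}Z_1^{(i)}(s)$, and combines this with Borel--Cantelli and $\sum_i\mathbb{E}[e^{-\lambda_1T_i}]<\infty$ from Campbell's theorem. This gives $\sup_{t\ge T_i}e^{-\lambda_1 t}Z_1^{(i)}(t-T_i)\to0$ as $i\to\infty$. So every late clone is small at every time, and the $\limsup$ is at most $\max(\max_{i\le I}V_i,\varepsilon)$.

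You instead use a no-loss-of-mass sandwich. Since $e^{-\lambda_1 t}Z_1(t)\to W=\sum_iV_i$, the combined rescaled mass of the clones $i>N$ tends to $\sum_{i>N}V_i$, which dominates each of them. This is shorter and needs no new estimate, but the analytic content is the same. The identity $W=\sum_iV_i$ in Appendix \ref{app:W_representation} is itself proved by dominated convergence using $\mathbb{E}[\sup_{s}e^{-\lambda_1 s}Z_1^{(i)}(s)]<\infty$ and Campbell's theorem. So you invoke the maximal-inequality ingredient through that lemma rather than avoiding it. The paper's version gives the stronger uniform-in-time, per-clone bound, which it also uses in the proof of Proposition \ref{prop:frequency_limit}.

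For the fixed-size part, your direct computation $ne^{-\lambda_1\tau_n}=e^{-\lambda_1\tau_n}\bigl(Z_0(\tau_n)+Z_1(\tau_n)\bigr)\to W$ is more elementary than the paper's appeal to Lemma \ref{lem:large_size_time}, and gives the same conclusion.

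One minor point: $\sum_{i>N}V_i\to0$ holds simply because it is the tail of a convergent series. The role of $\lambda_0<\lambda_1$ is to make $W$ finite in the first place, not to make that tail vanish.
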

\begin{proof}
  See Appendix Section \ref{app:largest_clone_convergence}.
\end{proof}
Note that:
\[\lim_{t\to\infty}\sup\text{supp}(R_f(t))=\sup\text{supp}(\lim_{t\to\infty}R_f(t))=\sup_{1\leq i<\infty}\left(e^{-\lambda_1 T_i}Y_1^{(i)}\right)/W.\]
We can get further insights into the time limits in Proposition \ref{prop:largest_clone_convergence} when the mutation rate is small. In particular, define $\alpha=\lambda_{0,0}/\lambda_1$ and $\gamma=\left(\frac{q_1^{1-\alpha}a\Gamma(\alpha)}{\lambda_1 q_0}\right)^{1/\alpha}.$ In the theorem below $V$ denotes the largest element of a Poisson-Dirichlet distribution with parameters $(\alpha,0)$ and $Z$ denotes a random variable with a log-logistic distribution with scale $\gamma$ and shape $\alpha$.
\begin{restatable}{proposition}{LargestCloneSmall}
 If mutations are only attached to births, then conditional on $\Omega_0^\infty$:
  \begin{align*}
    \sup_{1\leq i<\infty}\left(e^{-\lambda_1 T_i}Y_1^{(i)}\right)\overset{v\to 0^+}{\Rightarrow}Z& & \sup_{1\leq i<\infty}\left(e^{-\lambda_1 T_i}Y_1^{(i)}\right)/W\overset{v\to 0^+}{\Rightarrow}V.
  \end{align*}
  \label{prop:largest_clone_small}
\end{restatable}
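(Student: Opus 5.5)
The plan is to reduce everything to a Poisson point process computation in the small-mutation limit. Conditional on $Y$ (and on $\Omega_0^\infty$), the points $\{(T_i,Y_1^{(i)})\}$ should behave, as $v\to 0^+$, like a Poisson process. Mutations occur at rate $avZ_0(s)\approx avYe^{\lambda_{0,0}s}$, so only the late times, where $Z_0$ has already settled to $Ye^{\lambda_0 s}$, matter. Each point carries an independent mark $Y_1^{(i)}$, which is $0$ with probability $p_1$ and $\mathrm{Exp}(q_1)$ otherwise. This is the Cheek--Antal/Tung--Durrett correspondence invoked after Proposition \ref{prop:small_mutation_relative_SFS}.

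\textbf{Step 1 (the limiting point process).} I would map each point to $X_i:=e^{-\lambda_1 T_i}Y_1^{(i)}$. For the semideterministic intensity $avYe^{\lambda_0 s}ds$, the mean number of points with $X_i>x$ is
\[
avY\int_0^\infty e^{\lambda_0 s}q_1e^{-q_1xe^{\lambda_1 s}}\,ds .
\]
Substituting $u=xe^{\lambda_1 s}$, this becomes
\[
\frac{avYq_1}{\lambda_1}x^{-\alpha}\int_x^\infty u^{\alpha-1}e^{-q_1u}\,du\;\to\;\frac{avY q_1^{1-\alpha}\Gamma(\alpha)}{\lambda_1}x^{-\alpha},
\]
which is the same computation as in Proposition \ref{prop:scaling}. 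Since $v\to0$ while the mutations must still matter, I would first note that the relevant mutation times are of order $\frac{1}{\lambda_0}\log(1/v)$. A time shift $T_i\mapsto T_i-\frac{1}{\lambda_{0}}\log(1/v)$ rescales $X_i$ by $v^{-1/\alpha}$ up to the factor $e^{(\lambda_0-\lambda_{0,0})\cdot}$, which tends to $1$. After this shift the points $\{X_i\}$ converge to a Poisson process on $(0,\infty)$ with intensity $cY\alpha x^{-\alpha-1}dx$, where $c=a q_1^{1-\alpha}\Gamma(\alpha)/\lambda_1$. The lower cutoff $x>u$ introduces only a vanishing correction near $0$, and the largest points are unaffected. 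I also need the law of $Y$ on $\Omega_0^\infty$, which is $\mathrm{Exp}(q_0)$ with $q_0\to q_{0,0}$, the $v=0$ value.

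Care is needed here because the statement has no explicit $v$-normalization. Either the normalization is absorbed because the relevant mutation time scale is itself $v$-dependent, or the sup in question is finite as stated. I will follow the paper's definition of $\gamma$, which has no $v$, and take the limiting intensity to be $cY\alpha x^{-\alpha-1}$.

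\textbf{Step 2 (max and ratio).} For the first limit, $P(\sup_i X_i\le x\mid Y)=\exp(-cYx^{-\alpha})$. Averaging over $Y\sim\mathrm{Exp}(q_0)$ gives
\[
\frac{q_0}{q_0+cx^{-\alpha}}=\frac{1}{1+(x/\gamma)^{-\alpha}},\qquad \gamma^\alpha=\frac{c}{q_0}.
\]
This is exactly the log-logistic CDF with scale $\gamma$ and shape $\alpha$. For the second limit, $W=\sum_i X_i$ by Appendix \ref{app:W_representation}. Conditional on $Y$, the normalized ranked points $X_{(k)}/\sum_i X_i$ of a Poisson process with intensity $\propto x^{-\alpha-1}$, $\alpha\in(0,1)$, are $\mathrm{PD}(\alpha,0)$ by Pitman--Yor \cite{pitman1997}. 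This law does not depend on $Y$, so $\sup X_i/W\Rightarrow V$.

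\textbf{Main obstacle.} The hard part is justifying the passage to the limit. I need convergence of the point process $\{X_i\}$ jointly with its sum, uniformly enough that $\sup$ and $\sum$ are continuous functionals of it. For the sup, vague convergence on $(\epsilon,\infty]$ suffices. For $W$, I must control the contribution of small points. Their first moment is $E\sum_i X_i\mathbf 1_{X_i<\epsilon}\lesssim\epsilon^{1-\alpha}$, uniformly in $v$, by the mean computation above; this uses $\alpha<1$, i.e.\ $\lambda_0<\lambda_1$, together with the exact mean from Proposition \ref{prop:SFS_mean}.

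The genuinely delicate point is replacing the true mutation process, driven by $Z_0(s)$, with the Poisson process driven by $Ye^{\lambda_0 s}$. I would handle it by conditioning on the type-0 trajectory, under which the mutation times form a Cox process with intensity $avZ_0(s)$. The contribution of mutations before a fixed time $s_0$ vanishes as $v\to0$. After $s_0$, the a.s.\ convergence $e^{-\lambda_0 s}Z_0(s)\to Y$ makes the intensities close. I would then compare Laplace functionals, using Cheek--Antal Proposition 5.1 as the template.
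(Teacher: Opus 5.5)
This is essentially the paper's proof. Conditional on the type-0 path, the rescaled points $J_v^{(i)}=v^{-\lambda_1/\lambda_0}e^{-\lambda_1T_i}Y_1^{(i)}$ (your time shift) form a Cox process. Its Laplace functional converges, after replacing $e^{-\lambda_0 s}Z_0(s)$ by $Y$, to that of a mixed Poisson process with intensity proportional to $Yx^{-1-\alpha}$. Continuous mapping then gives the log-logistic law, and Pitman--Yor gives $V$.

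For the replacement of $e^{-\lambda_0 s}Z_0(s)$ by $Y$, use the $v$-uniform $L^2$ identity of Lemma \ref{lem:L2_distance}. A fixed-$v$ almost-sure limit is not enough, because the law of $Z_0$ itself depends on $v$.

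The differences from the paper are cosmetic. You keep the zero marks instead of thinning to established clones. You get joint convergence with $v^{-\lambda_1/\lambda_0}W$ by truncating and using the uniform bound $\mathbb{E}\sum_i J_v^{(i)}\mathbf{1}_{J_v^{(i)}<\epsilon}\lesssim\epsilon^{1-\alpha}$, whereas the paper uses a joint Laplace functional with $h(j)=\theta j+f(j)$.

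You should state plainly that the time shift is forced, not one of two options. The unscaled supremum tends to $0$, since it is bounded by $W$ and $\mathbb{E}W=av/(\lambda_1-\lambda_0)\to 0$. So the first limit has to be read as $v^{-\lambda_1/\lambda_0}\sup_i e^{-\lambda_1T_i}Y_1^{(i)}\Rightarrow Z$, which is exactly what both your argument and the paper's $J_v^{(i)}$ establish.
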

\begin{proof}
  See Appendix Section \ref{app:largest_clone_small}.
\end{proof}

\begin{remark}
  A complicated expression for the CDF of $V$ can be obtained explicitly from Proposition 20 of Pitman and Yor \cite{pitman1997}. All moments of $V$ are known using Proposition 17 of Pitman and Yor, with the first moment being:
  \[\mathbb{E}[V]=\int_0^\infty t^{-\alpha}e^{-t}\left(\Gamma(1-\alpha)+\Gamma(-\alpha,t)\right)^{-1}\,dt.\]
Furthermore, as long as $\frac{\sin(\pi \alpha)}{\pi\alpha}\geq \frac{1}{2}$, ($0<\alpha\lesssim 0.603$, that is $b$ is sufficiently large), the median of $V$ is:
\[\frac{1}{1+\left(\frac{\pi\alpha}{2\sin(\pi\alpha)}\right)^{1/\alpha}}\]
Finally, $V$ is small very infrequently:
\[\mathbb{P}(V\leq x)\lsimas{x}{0^+}e^{-cx^{-1}}\]
where $c$ can be explicitly determined. We give proofs of some of the statements in Appendix Section \ref{app:poisson_dirichlet}.\label{rem:poisson_dirichlet}
\end{remark}

In Durrett et al.~\cite{durrettgenetics2010}, under a semi-deterministic version of our model, the degree to which the largest clone dominates asymptotically is also determined via the Laplace transform of $1/V$.

\begin{remark}
    In the propositions where we restrict mutation accumulation to occur during births and look in the small mutation limit (Propositions \ref{prop:small_mutation_relative_SFS} and \ref{prop:largest_clone_small}), it is likely possible to include replication-independent mutations and specify the small mutation limit to be $v,\mu\to 0^+$ such that $v/\mu\to \theta$ where $\infty>\theta>0$.
\end{remark}

\section{Discussion}

In this work, we have characterized the driver SFS $S_j(t)$ and the relative driver SFS $R_f(t)$ under various limiting regimes, for a two-type branching process in which driver mutations confer a selective advantage.   
See Figure \ref{fig:paper_snapshot} for a graphical summary of our results and the various characteristics of the SFS they address.

\begin{figure}[H]
	\centering
	\includegraphics[width=\textwidth]{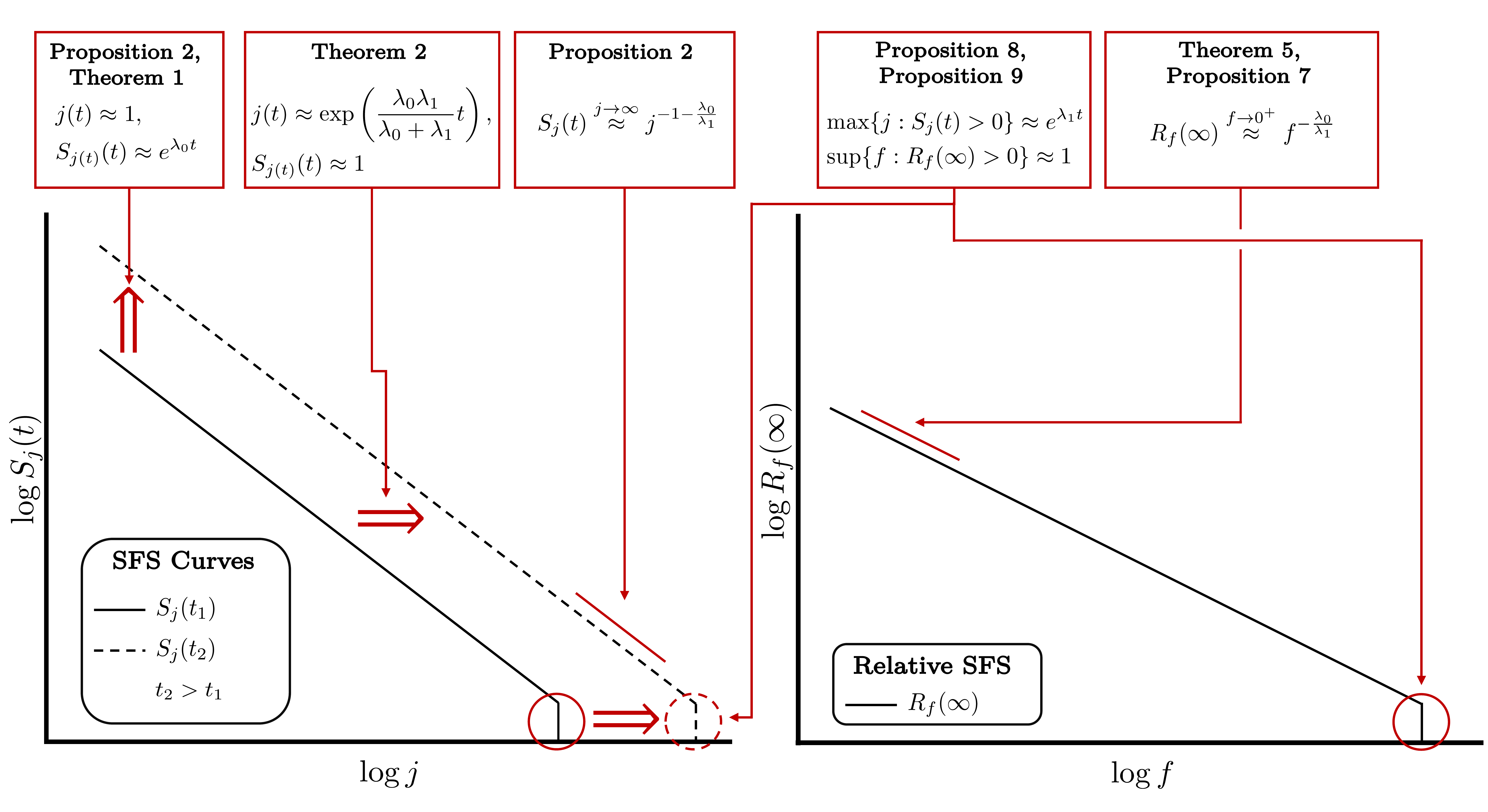}
	\caption{Diagram of various results from this paper and the corresponding region of $S_j(t)$ or $R_f(t)$ they describe. $\approx$ denotes asymptotic equivalence $\sim$ up to some (possibly random) constant. The large arrows depict how $(j(t),S_{j(t)})$ varies as $t$ ranges in $\{t_1,t_2\}$. The encircled regions represent the end of the support of $j\to S_j(t)$ and $f\to R_f(\infty)$.}\label{fig:paper_snapshot}
\end{figure}

\textbf{Comparing how clones of different sizes contribute to the SFS.} The connection between $S_j(t)$ and $R_f(t)$ is that they both measure the number of clones existing at a certain frequency $j$ or $\geq f$, respectively; the difference being the notion of frequency used -- $j$ is in units of number of cells whereas $f$ is in units of the fraction of the type-1 population. In other words, $S_j(t)$ considers the contribution of clones of size $\mathcal{O}(1)$ and $R_f(t)$ considers the contribution of clones of size $\mathcal{O}(\exp(\lambda_1 t))$ 

Taking the $t\to\infty$ limit of $S_j(t)$ and $R_f(t)$ (appropriately scaled) reveals that both have power-law tails $j^{-\lambda_0/\lambda_1-1}$ and $f^{-\lambda_0/\lambda_1}$ as $j\to\infty$ and $f\to0^+$ respectively. There is an intuitive heuristic for why we observe these tails. Suppose that $\gamma_j$ is the initiation time of a clone that grows to size $j$ at the detection time $t$. This clone has time $t-\gamma_j$ to grow to size $j$, so solving $\exp(\lambda_1 (t-\gamma_j))\approx j$ we obtain $\gamma_j\approx t-(1/\lambda_1)\log j$. If a clone is of size $j$, it will take time $\approx 1/(j(a+b+d))$ for the clone to change its size. Thus, the number of clones of size $j$ at time $t$ is:
\[S_j(t)\approx \int_{\gamma_j}^{\gamma_j+1/(j(a+b+d))}(av+\mu)q_1e^{\lambda_0 s}\,ds\approx Ce^{\lambda_0 t}j^{-1-\lambda_0/\lambda_1}.\]

Note then that clones that contribute to the SFS at absolute frequency $j$ occur at $\mathcal{O}(1)$ time units before $t$. For the relative SFS, we instead consider $\gamma_f$ as the initiation time of a clone that grows to size $ fZ_1(t)\approx fWe^{\lambda_1 t}$. Thus, solving as above, we obtain that $\gamma_f= -\frac{1}{\lambda_1}\log fW$ approximately. Clones that form earlier also contribute so that:
\[R_f(t)\approx \int_{0}^{\gamma_f}(av+\mu)q_1 e^{\lambda_0 s}\,ds\approx CW^{-\lambda_0/\lambda_1}f^{-\lambda_0/\lambda_1}.\]
The clones that contribute to the relative SFS at frequency $\geq f$ occur within a $\mathcal{O}(1)$ window after the start of the process at time $0$.

Finally, when counting clones of size on order $1\ll g(t)\ll \exp(t\lambda_0\lambda_1/(\lambda_0+\lambda_1))$, Theorem \ref{thm:SFSSparsity}, Point 3 and Corollary \ref{cor:sparsity_part1_2_3}, show that $\exp(-\lambda_0t)g(t)^{1+\lambda_0/\lambda_1}S_{\lceil xg(t)\rceil}(t)\to x^{-1-\lambda_0/\lambda_1}YL_{\lceil xj(\cdot)\rceil}=YCx^{-1-\lambda_0/\lambda_1}$ for $x\in (0,\infty)$, with:
\[C=\frac{\lambda_1(av+\mu)}{(a+b)^2}\left(\frac{a+b}{a+b-d}\right)^{\frac{\lambda_1+\lambda_0}{\lambda_1}}\Gamma(1+\lambda_0/\lambda_1).\]
Thus, we recover power law tails when taking $x\to0^+$ and $x\to\infty$. So, the power-law tails are in some sense universal irrespective of whether we consider frequencies of $\mathcal{O}(1)$, intermediate, or large clones.

\textbf{Estimating parameters from the SFS.} We found that in principle, the SFS can be used to discern certain evolutionary parameters of this continuous-time linear birth-death process. For example, the tails of 
$$\lim_{t\to\infty}S_j(t)/S_{\geq 1}(t)=\lim_{n\to\infty}S_j(\tau_n)/S_{\geq 1}(\tau_n)
$$ and 
$$\lim_{t\to\infty}R_f(t)=\lim_{n\to\infty}R_f(\tau_n)$$
on $\Omega_0^\infty$ follow power-laws $j^{-\lambda_0/\lambda_1-1}$ and $f^{-\lambda_0/\lambda_1}$ as $j\to\infty$ and $f\to 0^+$ respectively, which can be used to estimate the selection effect (i.e. relative increase in fitness between type-0 and type-1 cells).  

We can also construct a consistent estimator for this fitness increase $b$ using the small $j=1$ region of the SFS, assuming all other parameters are known. For example, the parameters of the wild-type population may be known, but the fitness increase $b$ associated with mutation may not be. Define $F(b)=\frac{\int_0^\infty e^{-\lambda_0 s}P_1(s)\,ds}{\int_0^{\infty}e^{-\lambda_0 s}(1-P_0(s))}$. Using the explicit form of $P_j$ (for example provided in \cite{durrettbranching2015}), $P_1$ and $P_0$ are decreasing in $b$, so in total $F(b)$ is monotonically decreasing and is continuous. Hence, $F^{-1}$ is defined and continuous\footnote{That is if $F^{-1}$ is taken as a function over domain $[0,F(0)]$. We can extend $F^{-1}$ continuously onto $[0,1]$ by defining it over $(F(0),1]$ to be $0$}. We may then consider the estimator $\widehat{b}(t)=F^{-1}(f_1(t))$ where $f_1(t):=S_1(t)/S_{\geq 1}(t)$ is the proportion of type-1 lineages at time $t$ of size $1$. By Proposition \ref{prop:fraction_as} on $\Omega_0^\infty$, $f_1(t)\to F(b)$ a.s. as $t\to\infty$ and $f_1(\tau_n)\to F(b)$ as $n\to\infty$, so by continuous mapping $\widehat{b}(t)\to b$ a.s. and $\widehat{b}(\tau_n)\to b$ a.s.

\textbf{A comment on the semi-deterministic model.} We note that Durrett and Moseley \cite{durrett2010} studied an approximation to our model where the type-0 population grows semi-deterministically; this model was later used to investigate tumor heterogeneity, allow for random fitness increases, and quantify the relative SFS with selection \cite{durrett2010,durrettfoo2010,durrett2013,tung}. The justification for the semi-deterministic model is that for small mutation rates, the typical time of forming mutations (conditional on $\Omega_0^\infty$) occurs when the type-0 population is large and thus approximately growing according to $Ye^{\lambda_0 t}$. Our results here recover some quantities (like $R_f(t)$ and the size of the largest clone) computed in prior semi-deterministic models as small-mutation limits of the fully stochastic model (Proposition \ref{prop:small_mutation_relative_SFS} and Proposition \ref{prop:largest_clone_small}).

\textbf{Future work.} The model considered in this work considers a single driver mutation, and does not allow for sequential drivers. In addition, we consider only the SFS of the driver mutation population here, not the SFS of the full tumor population.  

In upcoming work, we extend our analyses to study different components of the site frequency spectrum, including the hitchhiker SFS $S_j^h(t)$, in a more general multi-type population. Additionally, we will explore second-order asymptotics and rare-event deviations in the driver SFS.

\section{Acknowledgements}
The work of K. Leder and J. Foo were partially supported by NSF DMS award 2526511. The work of J. Foo and A. Ahmed were partially supported by CMMI 2228034.  

\printbibliography

\appendix

\section{Appendix}

\subsection{Useful tools in our analysis}

\subsubsection{$\sigma$-filtrations used in our analyses}

Here, we define some important $\sigma$-filtrations

\begin{itemize}
    \item Let $\mathcal{F}_t^{\leq i}$ for $i\in \{0,1\}$ denote the smallest $\sigma$-algebra generated either (if $i=0$) by the type-0 process in the time interval $[0,t]$ or (if $i=1$) by the type-0 and type-1 process in the time interval $[0,t]$. The $i=0$ case is most often used, so that we also write $\mathcal{F}_t:=\mathcal{F}_t^{\leq 0}$. We also let $\mathcal{F}_\infty^{\leq i}=\sigma\left(\bigcup_{t\geq 0}\mathcal{F}_t^{\leq i}\right)$. Once again, for shorthand we write $\mathcal{F}_\infty:=\mathcal{F}_\infty^{\leq 0}$.
    \item Let $\mathcal{B}_t^{\leq i}$ for $i\in \{0,1\}$ denote the smallest $\sigma$-algebra containing $\mathcal{F}_t^{\leq i}$ as well as the timings of the type-1 mutations that occur within the interval $[0,t]$. We also write $\mathcal{B}_t:=\mathcal{B}_t^{\leq0}$. We also let $\mathcal{B}_\infty^{\leq i}=\sigma\left(\bigcup_{t\geq 0}\mathcal{B}_t^{\leq i}\right)$. Once again, for shorthand we write $\mathcal{B}_\infty:=\mathcal{B}_\infty^{\leq 0}$.
    \item Let $\mathcal{I}_t$ be the smallest $\sigma$-algebra containing $\mathcal{B}_t^{\leq 1}$ as well as the identities of the clones to which each type-1 cell belongs to. Finally, define $\mathcal{I}_\infty=\sigma\left(\bigcup_{t\geq 0}\mathcal{I}_t\right)$.
\end{itemize}

Clearly, $\mathcal{F}_t^{\leq i}\subseteq \mathcal{B}_t^{\leq i}\subseteq \mathcal{I}_t$. $(Z_0(t),Z_1(t))$ is adapted to $\mathcal{F}_t^{\leq 1}$. $(Z_0(t),Z_1^B(t))$ and $S_j(t)$ are adapted to $\mathcal{I}_t$

\subsubsection{An alternate representation of the model}\label{app:alt_representation_model}

There is an alternate construction of the model we refer to in some places of the appendix, which is equal in distribution to the model introduced in Section \ref{sec:model_notation}. Start with a one-type linear birth-death process with birth rate $a(1-v)$ and death rate $d+\mu$. Equivalently, each type-0 cell has an independent exponential clock with rate $a(1-v)+d+\mu$. Once the clock rings, the cell divides with probability $a(1-v)/(a(1-v)+d+\mu)$ and dies otherwise. We let $\{D_i\}$ denote the timings of all death events in this type-0 process.

Now, we describe the initiation times of type-1 clones. These type-1 clones, conditional on their initiation, grow as branching processes with birth rate $a+b$ and death rate $d$, independently from the other type-1 clones and from the type-0 process. To every cell, attach an independent homogeneous Poisson Point Process of rate $av$ over its lifetime. Conditional on $\mathcal{F}_\infty$, the summation of these points processes over all cells yields $\{T_i'\}$, a Poisson Point Process with rate $av Z_0(s)$. These mark the initiation times of the type-1 clones produced in a replication-dependent manner. Let $\{T_i''\}$ be an independent $\mu/(d+\mu)$-thinning of $\{D_i\}$. Then, $\{T_i''\}$ marks the initiation times of the type-1 clones produced in a replication-free manner. Then, $\{T_i\}=\{T_i'\}\cup \{T_i''\}$ are the initiation times of all type-1 clones.

\subsubsection{A comparison criterion for almost sure convergence}

Here, we reformulate Proposition 4 of \cite{gunnarsson2025} (which extends analysis used in Theorem 21 of Harris \cite{harris1963}) to work with more general classes of stochastic processes.

\begin{proposition}
  Let $(\Omega,\mathcal{F},\mathbb{P})$ be a probability space and consider jointly-measurable, stochastic processes $X_t$ and $Y_t$. We further take $X_t$ to be non-negative and non-decreasing a.s. Suppose there exists $\lambda>0$ such that $e^{-\lambda t}Y_t \to \widetilde{Y}_\infty$ almost surely, where $\widetilde{Y}_\infty$ is a.s. finite and non-negative. If, in addition, the following condition is satisfied:

  \[\int_0^\infty \mathbb{E}[e^{-\lambda t}|X_t-Y_t|]dt<\infty\]
then $e^{-\lambda t}X_t\to \widetilde{Y}_\infty$ a.s.\label{prop:convergence_tool}
\end{proposition}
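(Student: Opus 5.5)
We follow the Harris/Gunnarsson argument: pass from the continuous-time condition to a discrete lattice via Borel--Cantelli, then fill the gaps between lattice points using monotonicity of $X_t$. Write $U_t=e^{-\lambda t}|X_t-Y_t|$. The hypothesis $\int_0^\infty \mathbb{E}[U_t]\,dt<\infty$ and Tonelli (joint measurability) give $\int_0^\infty U_t\,dt<\infty$ a.s. Integrability alone does not force $U_t\to 0$ pointwise, so we cannot conclude directly. Instead we find, for each $\delta>0$, a lattice $\{k\delta+s_\delta\}_{k\ge 0}$ along which $U$ tends to $0$, and interpolate.

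\textbf{Step 1 (lattice choice).} Fix $\delta>0$ and write
\[
\int_0^\delta \sum_{k\ge 0}\mathbb{E}\bigl[U_{k\delta+s}\bigr]\,ds=\int_0^\infty \mathbb{E}[U_t]\,dt<\infty .
\]
Hence $\sum_k \mathbb{E}[U_{k\delta+s}]<\infty$ for Lebesgue-a.e.\ $s\in[0,\delta)$. Fix such an $s=s_\delta$. Then $\sum_k U_{k\delta+s}<\infty$ a.s., so $U_{k\delta+s}\to 0$ a.s. Combined with $e^{-\lambda t}Y_t\to\widetilde Y_\infty$, this gives $e^{-\lambda t_k}X_{t_k}\to\widetilde Y_\infty$ a.s. along $t_k=k\delta+s_\delta$. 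Take $\delta$ ranging over $\{1/m\}_{m\in\mathbb{N}}$; this is countably many choices, so the null sets combine into a single null set.

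\textbf{Step 2 (interpolation by monotonicity).} For $t\in[t_k,t_{k+1}]$, monotonicity of $X$ gives $X_{t_k}\le X_t\le X_{t_{k+1}}$. Therefore
\[
e^{-\lambda\delta}\,e^{-\lambda t_k}X_{t_k}\;\le\; e^{-\lambda t}X_t\;\le\; e^{\lambda\delta}\,e^{-\lambda t_{k+1}}X_{t_{k+1}} .
\]
Letting $t\to\infty$,
\[
e^{-\lambda\delta}\widetilde Y_\infty\le\liminf_{t\to\infty} e^{-\lambda t}X_t\le\limsup_{t\to\infty} e^{-\lambda t}X_t\le e^{\lambda\delta}\widetilde Y_\infty
\]
a.s. Here finiteness and non-negativity of $\widetilde Y_\infty$ are what make these sandwich bounds meaningful. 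Sending $\delta=1/m\to 0$ along the countable family yields $e^{-\lambda t}X_t\to\widetilde Y_\infty$ a.s.

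\textbf{Main obstacle.} The delicate point is Step 1: a good offset $s_\delta$ must exist for which the discrete series is summable. The Fubini decomposition over $[0,\delta)$ handles this, since it requires only joint measurability and not continuity of $Y$. Everything else is routine squeezing.
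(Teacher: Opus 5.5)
Your proof is correct, but it takes a different route from the paper.

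\textbf{The paper's route.} The paper argues pathwise and by contradiction. It uses only that $\int_0^\infty e^{-\lambda t}|X_t-Y_t|\,dt<\infty$ a.s. On the event $\{\limsup_{t\to\infty} e^{-\lambda t}(X_t-Y_t)>0\}$ (resp.\ $\liminf<0$), monotonicity of $X$ and the convergence of $e^{-\lambda t}Y_t$ spread a gap of size $\delta$ over whole intervals $[t_k,t_k+\Delta]$ (resp.\ $[t_k-\Delta,t_k]$). These intervals sit around infinitely many well-separated times, so the pathwise integral would be infinite.

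\textbf{Your route.} You use integrability in expectation, via Fubini over offsets, to extract for each $\delta=1/m$ a deterministic lattice $t_k=k\delta+s_\delta$. Along that lattice the discrepancies are summable and hence vanish. You then sandwich $e^{-\lambda t}X_t$ between consecutive lattice values, with multiplicative error $e^{\pm\lambda\delta}$.

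\textbf{What each buys.} Your argument is direct rather than by contradiction, and it handles the upper and lower bounds in one step. It also separates the two ingredients cleanly: summability gives convergence on a skeleton, and monotonicity of $X$ gives the interpolation. Finally, it uses the convergence of $e^{-\lambda t}Y_t$ only at the lattice times. The paper's argument is purely pathwise, so it proves the statement under the weaker hypothesis that the pathwise integral is a.s.\ finite. Your Fubini trick also works pathwise, though: choose an $\omega$-dependent offset $s_\delta(\omega)$ with $\sum_k U_{k\delta+s}(\omega)<\infty$. So nothing essential is lost.

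One small correction to your commentary. The sandwich inequalities rely on $X\ge 0$, which is what lets you replace $e^{-\lambda(t-t_k)}$ by $e^{-\lambda\delta}$ and $e^{\lambda(t_{k+1}-t)}$ by $e^{\lambda\delta}$. Finiteness of $\widetilde Y_\infty$ is what then closes the gap as $\delta\to 0$.
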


\begin{proof}
It suffices to show that $\limsup_{t\to\infty} e^{-\lambda t}(X_t-Y_t)>0$ and $\liminf_{t\to\infty} e^{-\lambda t}(X_t-Y_t)<0$ both occur on sets of probability 0. Note that the integrability condition implies that a.s. $\int_0^\infty e^{-\lambda t}|X_t-Y_t|<\infty$. 

First consider sample paths such that $\limsup_{t\to\infty} e^{-\lambda t}(X_t-Y_t)>0$. For each such path there exists $\delta>0$ and a sequence of times $\{t_k;k\geq 1\}$ with $t_{k+1}-t_k\geq 1$ for which $e^{-\lambda t_k}(X_{t_k}-Y_{t_k})>4\delta$. Due to the almost sure convergence of $\{e^{-\lambda t}Y_t;t\geq 0\}$ there is a $T$ where $t>T$ implies $|e^{-\lambda t}Y_t-\widetilde{Y}_{\infty}|\leq \delta$. For any $k$ sufficiently large (so that $t_k\geq T$):

  \[e^{-\lambda t_k}X_{t_k}>4\delta+e^{-\lambda t_k}Y_{t_k}\geq 3\delta +\widetilde{Y}_{\infty}.\]
Next since $\{X_t;t\geq 0\}$ is monotone we know that for any $0<\Delta<1$, if $t\in [t_k,t_k+\Delta]$:
  \[e^{-\lambda t}X_t\geq e^{-\lambda (t-t_k)}e^{-\lambda t_k}X_{t_k}\geq e^{-\lambda \Delta}(3\delta+\widetilde{Y}_\infty).\]
We can then choose $\Delta$ sufficiently small such that:
  \[e^{-\lambda \Delta}(3\delta+\widetilde{Y}_\infty)\geq (2\delta + \widetilde{Y}_\infty).\]
Then combining with the estimate $\widetilde{Y}_\infty\geq e^{-\lambda t}Y_t-\delta$, we finally obtain:
  \[e^{-\lambda t}(X_t-Y_t)\geq \delta,\]
for $t$ in $\bigcup_{k=K}^\infty[t_k,t_k+\Delta]$. Therefore on the set $\{\omega\in \Omega; \limsup_{t\to\infty}e^{-\lambda t}(X_t-Y_t)>0\}$
  \[\int_0^\infty e^{-\lambda t}|X_t-Y_t|\,dt\geq \int_{\bigcup_{k=K}^\infty[t_k,t_k+\Delta]}\delta=\infty.\]
In particular, $\limsup_{t\to\infty}e^{-\lambda t}(X_t-Y_t)\leq 0$ a.s.

We now consider sample paths such that $\liminf_{t\to\infty}e^{-\lambda t}(X_t-Y_t)<0$. For each such path there exists  $\delta>0$ and $t_k$ a sequence with $t_{k+1}-t_k\geq 1$ (and $t_1\geq 1$) such that $e^{-\lambda t_k}(X_{t_k}-Y_{t_k})<-5\delta$. Fix any $0<\Delta<1$, (and take $t\in [t_k-\Delta,t_k]$) then:

  \[-5\delta >e^{-\lambda t_k}(e^{\lambda\Delta}X_t-e^{\lambda \Delta} Y_{t_k})\geq e^{-\lambda t_k}(e^{\lambda (t_k-t)}X_t-e^{\lambda\Delta}Y_{t_k})=e^{-\lambda t}X_t-e^{\lambda\Delta}e^{-\lambda t_k}Y_{t_k}.\]
Then for $t$ sufficiently large we can again use the a.s. convergence of $\{e^{-\lambda t} Y_t;t\geq 0\}$ to find $T<\infty$ such that $t\geq T$ implies $|\widetilde{Y}_\infty-e^{-\lambda t}Y_t|<\delta$ and thus
  \[-5\delta +\delta e^{\lambda\Delta}\geq e^{-\lambda t}X_t - e^{\lambda\Delta}\widetilde{Y}_\infty.\]
Take $\Delta$ sufficiently small such that $|e^{\lambda\Delta}\widetilde{Y}_\infty-\widetilde{Y}_\infty|<\delta$ and $\delta e^{\lambda\Delta}<2\delta$. Combining these bounds yields 

  \[-2\delta\geq e^{-\lambda t}X_t-\widetilde{Y}_\infty.\]
We can then replace $\widetilde{Y}_\infty$ with $e^{-\lambda t}Y_t$ at the cost of one more $\delta$ to arrive at 
  \[-\delta\geq e^{-\lambda t}X_t-e^{-\lambda t}Y_t\]
for $t\geq T$. In particular, for almost all sample paths such that $\liminf_{t\to\infty}e^{-\lambda t}(X_t-Y_t)<0$ $\int_0^\infty e^{-\lambda t}|X_t-Y_t|\,dt=\infty$, thus the theorem follows.

\end{proof}

\subsubsection{Representation of $W$}\label{app:W_representation}

The following is a useful representation of $W=\lim_{t\to\infty} e^{-\lambda_1 t}Z_1(t)$.

\begin{lemma}
  $W=\sum_{i\in \mathbb{N}} e^{-\lambda_1 T_i}Y_1^{(i)}$, where $Y_1^{(i)}$ are i.i.d. deflated exponentials where the mass at $0$ has probability $p_1$ and the rate of the exponential part is $q_1$.
\end{lemma}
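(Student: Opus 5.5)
We want to show $e^{-\lambda_1 t}Z_1(t)=\sum_{i:T_i\le t}e^{-\lambda_1 t}Z_1^{(i)}(t-T_i)$ converges almost surely to $\sum_i e^{-\lambda_1 T_i}Y_1^{(i)}$. The natural route is a dominated-convergence argument for the series. Write each term as $e^{-\lambda_1 T_i}\cdot e^{-\lambda_1(t-T_i)}Z_1^{(i)}(t-T_i)\mathbf{1}_{\{T_i\le t\}}$. For each fixed $i$ with $T_i<\infty$, the second factor tends to $Y_1^{(i)}$ a.s. by the classical limit for a single supercritical birth-death process. It therefore suffices to find a summable random dominating sequence.

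\textbf{Domination.} Set $U_i:=\sup_{s\ge 0}e^{-\lambda_1 s}Z_1^{(i)}(s)$. The process $e^{-\lambda_1 s}Z_1^{(i)}(s)$ is a nonnegative $L^2$-bounded martingale started at $1$, so Doob's $L^2$ inequality gives $\mathbb{E}[U_i^2]\le 4\sup_s\mathbb{E}[e^{-2\lambda_1 s}Z_1^{(i)}(s)^2]=:C<\infty$. The $U_i$ are i.i.d. and independent of $\mathcal{B}_\infty$, by the alternate representation of Section~\ref{app:alt_representation_model}. Each term is then bounded by $e^{-\lambda_1 T_i}U_i$, and
\[
\mathbb{E}\Big[\sum_i e^{-\lambda_1 T_i}U_i\,\Big|\,\mathcal{B}_\infty\Big]\le \sqrt{C}\sum_i e^{-\lambda_1 T_i}.
\]
Conditional on $\mathcal{F}_\infty$, the points $T_i'$ form a Poisson process of intensity $avZ_0(s)$ and the $T_i''$ are a thinning of the death times. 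Hence
\[
\mathbb{E}\Big[\sum_i e^{-\lambda_1 T_i}\Big]\le \int_0^\infty (av+\mu)e^{-\lambda_1 s}e^{\lambda_0 s}\,ds,
\]
using $\mathbb{E}[\#\text{deaths in }ds]=(d+\mu)e^{\lambda_0 s}ds$ and the thinning probability $\mu/(d+\mu)$.

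\textbf{Main obstacle.} This integral is finite only when $\lambda_1>\lambda_0$. That holds here: $\lambda_1=a+b-d>a(1-v)-d-\mu=\lambda_0$ since $b>0$. So the dominating series has finite mean and is a.s. finite. Dominated convergence for series, applied pathwise, then gives the a.s. convergence and the identity $W=\sum_i e^{-\lambda_1T_i}Y_1^{(i)}$. The same domination yields $L^1$ convergence.

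\textbf{Remaining claims.} The $Y_1^{(i)}$ are i.i.d. because the clone processes are i.i.d. linear birth-death processes. Each has the stated law, an atom at $0$ of mass $p_1$ and otherwise exponential with rate $q_1$, by the standard result already quoted in Section~\ref{sec:model_notation}.
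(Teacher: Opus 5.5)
Your proposal is correct and follows the paper's proof. The argument is the same: dominated convergence with dominator $\sum_i e^{-\lambda_1 T_i}\sup_{s\ge 0}e^{-\lambda_1 s}Z_1^{(i)}(s)$, whose mean is controlled by a Doob maximal inequality, independence of the clones from the mutation times, and Campbell's theorem. Two differences work in your favour: Doob's $L^2$ inequality is more direct than the paper's $L\log L$ bound with Vitali's theorem, and you explicitly check $\lambda_1-\lambda_0=b+av+\mu>0$ so that Campbell's integral is finite, a step the paper leaves implicit.
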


\begin{proof}
  We have:

  \begin{align}
    e^{-\lambda_1 t}Z_1(t)=\sum_{T_i\leq t} e^{-\lambda_1 t}Z_1^{(i)}(t-T_i)=\sum_{i\in \mathbb{N}}\mathbf{1}_{T_i\leq t}e^{-\lambda_1 t}Z_1^{(i)}(t-T_i),\label{eq:W_limit}
  \end{align}
where $Z_1^{(i)}$ are i.i.d. branching processes of each type $1$ clone.  Once again, for the $i$ such that $T_i=\infty$, $Z_1^{(i)}(s)\equiv 0$ for all $s$. We will take the limit as $t\to\infty$ in \eqref{eq:W_limit}, and the conclusion will follow by passing the limit inside the summation.  We have pointwise limits: $\lim_{t\to\infty}\mathbf{1}_{T_i\leq t}e^{-\lambda_1 T_i}e^{-\lambda_1(t-T_i)}Z_1^{(i)}(t-T_i)=e^{-\lambda_1 T_i}Y_1^{(i)}$. Furthermore, we can bound Eq. \ref{eq:W_limit} by:

  \begin{align}
    \sum_{i\in \mathbb{N}}e^{-\lambda_1 T_i}\sup_{0\leq s< \infty}e^{-\lambda_1s}Z_1^{(i)}(s).\label{eq:W_summable}
  \end{align}

  To apply dominated convergence, it suffices to check the right hand side of Eq. \ref{eq:W_summable} has finite expectation. Now, $e^{-\lambda_1s}Z_1^{(i)}(s)$ is a martingale with respect to the filtration generated by $Z_1^{(i)}$. Noting that $e^{-\lambda_1 s}Z_1^{(i)}(s)$ is cadlag and uniformly integrable, we obtain by Doob's maximal inequality that:
  \begin{align*}
      \mathbb{E}[\sup_{0\leq s\leq t}e^{-\lambda_1 s}Z_1^{(i)}(s)]\leq C\mathbb{E}[e^{-\lambda_1 s}Z_1^{(i)}(s)\log(e^{-\lambda_1 s}Z_1^{(i)}(s))+1]
  \end{align*}
  For all $t\geq 0$ and a constant $C$ not depending on $t$ ($x\log x$ evaluated at $0$ is taken to be $0$). Taking $t\to\infty$, by monotone convergence, the left hand side converges to $\mathbb{E}[\sup_{0\leq s<\infty}e^{-\lambda_1 s}Z_1^{(i)}(s)]$. It is further known that $\sup_{0\leq s<\infty}\mathbb{E}[(e^{-\lambda_1 s}Z_1^{(i)}(s))^2]<\infty$ so $\sup_{0\leq s<\infty}\mathbb{E}\left[\left(e^{-\lambda_1 s}Z_1^{(i)}(a)\log(e^{-\lambda_1 s}Z_1^{(i)}(s))\right)^{1+\varepsilon}\right]<\infty$ for any $0<\varepsilon<1$. Hence since the integrand of the upper bound converges to $Y\log Y+1$, by Vitali's convergence theorem we obtain:
  
  \[\mathbb{E}\left[\sup_{0\leq s< \infty}e^{-\lambda_1s}Z_1^{(i)}(s)\right]\leq C\mathbb{E}[Y\log Y+1]<\infty.\]

  Since $Z_1^{(i)}$ is independent from $T_i$, it then suffices to check that $\sum_{i\in \mathbb{N}}e^{-\lambda_1 T_i}$ has finite expectation, but this directly follows from Campbell's Theorem.

\end{proof}

\subsubsection{Approximating Hitting Times}

The following approximation lemmas will be useful in our analyses.

\begin{lemma}\label{lem:large_size_time}
  Let $X_t$ be a jointly measurable stochastic process such that there exists a $\lambda>0$ such that for $\omega\in A$, a probability $1$ event, $t\to X_t(\omega)$ is bounded over compact sets and $\lim_{t\to\infty}e^{-\lambda t}X_t(\omega)=\theta(\omega)>0$. Then defining $\tau_n=\inf\{t:X(t)\geq n\}$ and $t_n=\frac{1}{\lambda}\log(n/\theta)$, $|\tau_n-t_n|\to 0$.
\end{lemma}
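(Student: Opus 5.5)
Fix $\omega\in A$. The argument is deterministic and pathwise: it compares the level $n$ with $X$ along its own trajectory, and the only difficulty is that $X$ need not be monotone.

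\textbf{Upper bound.} Fix $\varepsilon\in(0,1)$ and set $s=t_n+\varepsilon$. Note that $t_n\to\infty$ as $n\to\infty$, since $\theta$ is fixed and positive. For $n$ large, $e^{-\lambda s}X_s\ge \theta e^{-\lambda\varepsilon/2}$ by the convergence $e^{-\lambda t}X_t\to\theta$. Hence
\[X_s\ge \theta e^{-\lambda\varepsilon/2}e^{\lambda t_n}e^{\lambda\varepsilon}=n e^{\lambda\varepsilon/2}>n,\]
so $\tau_n\le t_n+\varepsilon$.

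\textbf{Lower bound.} We need $X_t<n$ for every $t\le t_n-\varepsilon$. Choose $T$ such that $e^{-\lambda t}X_t\le \theta e^{\lambda\varepsilon/2}$ for all $t\ge T$. For $t\in[T,t_n-\varepsilon]$ this gives
\[X_t\le \theta e^{\lambda\varepsilon/2}e^{\lambda t}\le \theta e^{\lambda\varepsilon/2}e^{\lambda(t_n-\varepsilon)}=n e^{-\lambda\varepsilon/2}<n.\]
For $t\in[0,T]$, the path is bounded on the compact set $[0,T]$, say by $K(\omega)<\infty$. Once $n>K$, we have $X_t<n$ on $[0,T]$ as well. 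Together these give $\tau_n\ge t_n-\varepsilon$ for all $n$ large.

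The only step needing care is this early-time segment, where non-monotonicity could in principle let $X$ cross $n$ before time $T$. It is exactly what the local-boundedness hypothesis handles.

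Combining the two bounds, $|\tau_n-t_n|\le\varepsilon$ for all large $n$. Since $\varepsilon$ is arbitrary, $|\tau_n-t_n|\to0$ on $A$, i.e.\ almost surely. The same argument works whether the infimum defining $\tau_n$ uses $\ge n$ or $=n$, provided jumps are of size one, as they are for the population processes in the paper.
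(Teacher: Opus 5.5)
Your proof is correct, and it is essentially the standard pathwise argument the paper relies on when it defers to Proposition 1 of Gunnarsson et al.: the almost-sure limit $e^{-\lambda t}X_t\to\theta$ confines any crossing of level $n$ at large times to within $\varepsilon$ of $t_n$, and local boundedness on $[0,T]$ rules out an early crossing. You bracket $\tau_n$ by inspecting the path on $[0,t_n-\varepsilon]$ and at $t_n+\varepsilon$, rather than evaluating the limit along $\tau_n$ itself (which would need $X(\tau_n)=n$); this proves the lemma exactly as stated, with $\{X_t\geq n\}$ and no unit-jump or path-regularity assumption, and it applies $\omega$ by $\omega$ wherever $\theta(\omega)>0$.
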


\begin{proof}
  This follows the same proof as Proposition 1 of Gunnarrsson 2025 \cite{gunnarsson2025}
\end{proof}

\begin{lemma}\label{lem:point_process_times}
  Let $\Phi=\{T_k\}_{k=1}^\infty$ be a simple point process over $[0,\infty)$ with $\Phi([0,\infty))=\infty$. Denote $M_t=\Phi([0,t])$ for $t\geq 0$. Suppose there exists a $\lambda$ such that $\lim_{t\to\infty}e^{-\lambda t}M_t(\omega)=\theta(\omega)>0$. Then, defining $\widehat{T}_k$ by $k=\theta e^{\lambda \widehat{T}_k}$, $|\widehat{T}_k-T_k|\to 0$.
\end{lemma}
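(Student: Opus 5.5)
The idea is to invert the relation between the counting function $M_t$ and the jump times $T_k$, in the same spirit as Lemma \ref{lem:large_size_time}. Fix $\omega$ in the probability-one event where $e^{-\lambda t}M_t(\omega)\to\theta(\omega)>0$; we may assume $\lambda>0$, since otherwise $M_t$ could not grow to infinity while $e^{-\lambda t}M_t$ has a positive limit. Because $\Phi$ is simple with infinitely many points, the points can be ordered $T_1<T_2<\cdots$ with $T_k\to\infty$. Moreover
\[
M_{T_k}=k \quad\text{and}\quad M_{T_k^-}=k-1 .
\]
Only the convergence $M_t\sim\theta e^{\lambda t}$ will be used, evaluated along $t=T_k$ and $t\uparrow T_k$.

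First, $M_{T_k}=k$ gives
\[
e^{-\lambda T_k}k=e^{-\lambda T_k}M_{T_k}\to\theta ,
\]
since $T_k\to\infty$. Write $k=\theta e^{\lambda \widehat T_k}$, which is the definition of $\widehat T_k$. Then
\[
e^{\lambda(\widehat T_k-T_k)}=\frac{k e^{-\lambda T_k}}{\theta}\to 1 .
\]
Taking logarithms, $\lambda(\widehat T_k-T_k)\to 0$, and so $|\widehat T_k-T_k|\to 0$. The left limit $M_{T_k^-}=k-1$ leads to the same conclusion, because $(k-1)/k\to 1$. That would give a two-sided sandwich if one were worried about the cadlag convention for $M_t$, but it is not actually needed.

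The only delicate point is justifying that $T_k\to\infty$, so that the convergence of $e^{-\lambda t}M_t$ can be evaluated along the random sequence $t=T_k$. This follows because $M_t<\infty$ for every $t$ on the good event: $e^{-\lambda t}M_t$ converges, and finite values are implicit in that statement. Hence only finitely many points lie in each $[0,t]$. Once that is in place, the rest is the elementary computation above, and I expect no real obstacle.
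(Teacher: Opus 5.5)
Your proof is correct, and it takes a more direct route than the paper.

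The paper argues by inversion. It fixes $\varepsilon$ and uses the two-sided envelope $(1-\varepsilon)\theta e^{\lambda t}\le M_t\le(1+\varepsilon)\theta e^{\lambda t}$ for large $t$. It then evaluates $M$ at the shifted times $\widehat T_k+\delta$ and $\widehat T_k-\delta_2$, where $\delta=-\lambda^{-1}\log(1-\varepsilon)$ and $\delta_2=2\lambda^{-1}\log(1+\varepsilon)$. This gives $M_{\widehat T_k-\delta_2}<k\le M_{\widehat T_k+\delta}$, hence $\widehat T_k-\delta_2\le T_k\le \widehat T_k+\delta$.

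You instead evaluate the limit along the sequence $t=T_k\to\infty$ itself. There simplicity gives $M_{T_k}=k$ exactly, so $ke^{-\lambda T_k}\to\theta$, and the conclusion is a single logarithm. This is shorter and needs no auxiliary shifts. It also yields a comparable explicit bound, $\lambda|\widehat T_k-T_k|\le\max\{-\log(1-\varepsilon),\log(1+\varepsilon)\}$, once $T_k$ exceeds the threshold.

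What the paper's version buys is that it never uses the value of $M$ at the jump time. It needs only $T_k=\inf\{t:M_t\ge k\}$ and the envelope, so it is the same first-passage argument as in Lemma \ref{lem:large_size_time} and would survive overshoot or multiple points. Your remark about also using $M_{T_k^-}\le k-1$ gives the same robustness: $e^{-\lambda T_k}M_{T_k^-}\le ke^{-\lambda T_k}\le e^{-\lambda T_k}M_{T_k}$, and both outer terms tend to $\theta$.
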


\begin{proof}
  Note that for all $\varepsilon>0$, there is $T(\omega)$ sufficiently large such that $t>T$ implies:

    \[(1-\varepsilon)\theta e^{\lambda t}\leq M_t\leq (1+\varepsilon)\theta e^{\lambda t}.\]
Fix $0<\varepsilon<1/2$ and define $\delta=-\frac{1}{\lambda}\log(1-\varepsilon)$ and $\delta_2=\frac{2}{\lambda}\log(1+\varepsilon)$. Choose $K$ sufficiently large such that $k>K$ implies $\widehat{T}_k+\delta>T$ and $\widehat{T}_k-\delta_2>T$. Note then that
\begin{align*}
    &M_{\widehat{T}_k+\delta}\geq(1-\varepsilon) \theta e^{\lambda \widehat{T}_k}e^{\lambda\delta}=k(1-\varepsilon)e^{\lambda \delta}= k,\\
    &M_{\widehat{T}_k-\delta_2}\leq (1+\varepsilon)\theta e^{\lambda \widehat{T}_k}e^{-\lambda \delta_2}=\frac{k}{1+\varepsilon}<k.
\end{align*}
Hence $\widehat{T}_k+\delta\geq T_k$ and $\widehat{T}_k-\delta_2\leq T_k$ and we obtain $|T_k-\widehat{T}_k|\leq \max\{\delta,\delta_2\}$. Hence, $\lim_{k\to\infty} |T_k-\widehat{T}_k|=0$
\end{proof}

\subsection{Exact Representation of Moments of Fixed-Time SFS}\label{app:SFS_mean}
Here we prove Proposition \ref{prop:SFS_mean}:
\SFSmean*
\begin{proof}
  We first establish the results \eqref{eq:mean_SFS} and \eqref{eq:mean_SFS_random_fitness}. Let $\mathcal{M}_{[s,s+dt]}$ denote the event of having precisely one event - being mutation - in the specified interval and $\mathcal{B}_j(s,t)$ the event that the clone created in $(s,s+dt]$ grew to size $j$ at time $t$; with the assumption that $\mathcal{B}_j(s,t)=\emptyset$ if no mutations are created in the interval $(s,s+dt]$. Then, the contribution of mutations arising within $(s,s+dt]$ to $E[S_j(t)]$ is:
  \begin{align*}
    \mathbb{P}(\mathcal{M}_{[s,s+dt]}\cap \mathcal{B}_j(s,t))+o(dt)&=\mathbb{P}(\mathcal{M}_{[s,s+dt]})\mathbb{P}(\mathcal{B}_j(s,t)|\mathcal{M}_{(s,s+dt]})+o(dt)\\
    &=(av+\mu)\mathbb{E}[Z_0(s)]P_j(t-s)\,dt+o(dt)
  \end{align*}
  where $o(dt)$ accounts for the possibility of multiple events occurring and that $P_j(t-s)$ ought be $P_j(t-s')$ for $s'\in (s,s+dt]$. Summing such contributions recovers Eq. \ref{eq:mean_SFS}. The formula for $\mathbb{E}[S_j^B(t)]$ analogously follows by conditioning $\mathbb{P}(\mathcal{B}_j(s,t)|\mathcal{M}_{[s,s+dt]})$ on the selection amount, distributed as $B$, of that mutant clone.

Next we consider the second moment results \eqref{eq:choose_SFS} and \eqref{eq:choose_SFS_random_fitness}.
Interpret $S_j(t)\choose 2$ as the number of distinct pairs of driver mutations (without considering order) that reach size $j$ at time $t$.
For $s<r$, the contribution to $S_j\choose 2$ from mutant pairs created in $[s,s+dt]$ and $[r,r+dt]$ is:

    \[\mathbb{P}(\mathcal{M}_{[s,s+dt]}\cap \mathcal{M}_{[r,r+dt]})P_j(t-s)P_j(t-r),\]
up to $o(dt^2)$ factor. Now, partition $\mathcal{M}_{[s,s+dt]}=\mathcal{M}_{[s,s+dt]}^b\sqcup \mathcal{M}_{[s,s+dt]}^f$, with the superscript $b,f$ denoting mutation occurring in a replication-dependent and -independent manner, respectively. Then:

    \begin{align*}
        \mathbb{P}(\mathcal{M}_{[s,s+dt]}\cap \mathcal{M}_{[r,r+dt]}|\mathcal{F}_s)&=\mathbb{P}(\mathcal{M}_{[r,r+dt]}|\mathcal{F}_s,\mathcal{M}_{[s,s+dt]}^b)\mathbb{P}(\mathcal{M}_{[s,s+dt]}^b|\mathcal{F}_s)\\
        &+\mathbb{P}(\mathcal{M}_{[r,r+dt]}|\mathcal{F}_s,\mathcal{M}_{[s,s+dt]}^f)\mathbb{P}(\mathcal{M}_{[s,s+dt]}^f|\mathcal{F}_s)\\
        &=((av+\mu)e^{(r-s)\lambda_0}Z_0(s)\,dt)avZ_0(s)\,dt+((av+\mu)e^{(r-s)\lambda_0}(Z_0(s)-1)\,dt)\mu Z_0(s)\,dt,
    \end{align*}
where equalities are up to $o(dt^2)$ factors. The last line of the previous display follows since given $\mathcal{F}_s$ and $\mathcal{M}^f_{[s,s+dt]}$, the type $0$ population at time $s+dt$ is $Z_0(s)-1$ (and given $\mathcal{F}_s$ and $\mathcal{M}^b_{[s,s+dt]}$, the type $0$ population remains $Z_0(s)$ at time $[s,s+dt]$). Taking expectation and using that $\mathbb{E}[Z_0(s)Z_0(r)]=e^{\lambda_0(r-s)}\mathbb{E}[Z_0(s)^2]$ and $\mathbb{E}[Z_0(r)]=e^{(r-s)\lambda_0}\mathbb{E}[Z_0(s)]$, the infinitesimal contribution is thus:

    \[\mathbb{P}(\mathcal{M}_{[s,s+dt]}\cap \mathcal{M}_{[r,r+dt]})P_j(t-s)P_j(t-r)=\left((av+\mu)^2\mathbb{E}[Z_0(r)Z_0(s)]-\mu(av+\mu)\mathbb{E}[Z_0(r)]\right)P_j(t-s)P_j(t-r)(dt)^2.\]
We recover Eq. \ref{eq:choose_SFS} by summing over $0\leq s<r\leq t$. The formula for the random fitness case analogously follows by conditioning the probability of the clones growing to size $j$ on the selection amount $B_1,B_2$ of those mutant clones.
\end{proof}

\subsection{Large Time Scaling of Mean of Fixed-Time SFS}\label{app:SFS_mean_asymptotics}
\SFSmeanasymptotics*
\begin{proof}
  Combine Propositions \ref{prop:exact_time_asymptotic_mean_SFS}, \ref{prop:time_rate_convergence_SFS_mean}, and  \ref{prop:mean_SFS_random_large_freq} below.
\end{proof}
We break up the statement into three Propositions and prove each one. It also depends on the form of $P_j(s)$ (for $j\geq 1$), which is (see \cite{durrettbranching2015}):
\begin{align}
    P_j(t)=\left(1-\frac{de^{\lambda_1 t}-d}{(a+b)e^{\lambda_1 t}-d}\right)\left(1-\frac{(a+b)e^{\lambda_1 t}-(a+b)}{(a+b)e^{\lambda_1 t}-d}\right)\left(\frac{(a+b)e^{\lambda_1 t}-(a+b)}{(a+b)e^{\lambda_1 t}-d}\right)^{j-1}\label{eq:P_j}
\end{align}
\begin{proposition}
  The scaled SFS has the following exact form:
  
  \[e^{-\lambda_0t}\mathbb{E}[S_j(t)]=(av+\mu)\int_0^t e^{-\lambda_0 s}P_j(s)\,ds=(av+\mu)\frac{\Gamma(j)\Gamma(\lambda_0/\lambda_1+1)}{\Gamma(j+\lambda_0/\lambda_1+1)}\sum_{n=0}^\infty \frac{(\lambda_0/\lambda_1)_n(j)_n}{(j+1+\lambda_0/\lambda_1)_nn!}p_1^n\]

  And, the following frequency asymptotic:

  \[\lim_{t\to\infty}e^{-\lambda_0t}\mathbb{E}[S_j(t)]=(av+\mu)\frac{q_1\Gamma(\lambda_0/\lambda_1+1)}{\lambda_1j^{\lambda_0/\lambda_1+1}}(1-p_1)^{-\lambda_0/\lambda_1}+\mathcal{O}\left(\frac{1}{j^{\lambda_0/\lambda_1+2}}\right)\]
  \label{prop:exact_time_asymptotic_mean_SFS}
\end{proposition}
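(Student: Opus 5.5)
We will take $t\to\infty$ in \eqref{eq:mean_SFS}, which gives $\lim_{t\to\infty}e^{-\lambda_0t}\mathbb{E}[S_j(t)]=(av+\mu)\int_0^\infty e^{-\lambda_0 s}P_j(s)\,ds$. The limit is justified by monotone convergence, since $P_j\le 1$ and $\lambda_0>0$ make the integral finite. The remaining work is to evaluate this integral in closed form from \eqref{eq:P_j} and then read off its large-$j$ behaviour. Throughout write $r=a+b$, $\alpha=\lambda_0/\lambda_1$ and $x=e^{\lambda_1 s}$. (In the display of the statement the factor $q_1/\lambda_1=1/r$ appears to have been absorbed, and the exact series is the $t\to\infty$ limit, as in Proposition \ref{prop:SFS_mean_asymptotics}. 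I will prove that version.)

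\textbf{Step 1: change of variables.} Set $u=\frac{rx-r}{rx-d}\in[0,1)$, which increases from $0$ to $1$ as $s$ runs over $[0,\infty)$. A direct computation gives:
\begin{itemize}
\item $1-u=\lambda_1/(rx-d)$;
\item $rx=(r-du)/(1-u)$, so that $x=(1-p_1u)/(1-u)$ because $d/r=p_1$;
\item the first two factors of \eqref{eq:P_j} equal $\lambda_1x/(rx-d)$ and $\lambda_1/(rx-d)$, hence $P_j(s)=x(1-u)^2u^{j-1}$;
\item differentiating $1-u=\lambda_1/(rx-d)$ gives $du/ds=rx(1-u)^2$, so that $P_j(s)\,ds=u^{j-1}\,du/r$;
\item $e^{-\lambda_0 s}=x^{-\alpha}=(1-u)^{\alpha}(1-p_1u)^{-\alpha}$.
\end{itemize}
Combining these,
\[\int_0^\infty e^{-\lambda_0 s}P_j(s)\,ds=\frac{q_1}{\lambda_1}\int_0^1 u^{j-1}(1-u)^{\alpha}(1-p_1u)^{-\alpha}\,du .\]

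\textbf{Step 2: hypergeometric identification.} Euler's integral representation ${}_2F_1(A,B;C;z)=\frac{\Gamma(C)}{\Gamma(B)\Gamma(C-B)}\int_0^1u^{B-1}(1-u)^{C-B-1}(1-zu)^{-A}du$ applies with $A=\alpha$, $B=j$, $C=j+\alpha+1$, $z=p_1<1$. It gives the integral as $B(j,\alpha+1)\,{}_2F_1(\alpha,j;j+\alpha+1;p_1)$. Expanding ${}_2F_1$ as its power series in $p_1$ yields exactly the displayed sum, with prefactor $\Gamma(j)\Gamma(\alpha+1)/\Gamma(j+\alpha+1)$. Alternatively, one can expand $(1-p_1u)^{-\alpha}=\sum_n(\alpha)_np_1^nu^n/n!$ and integrate termwise by Beta integrals, which is justified by monotone convergence.

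\textbf{Step 3: large-$j$ asymptotics.} This is the only mildly delicate step. First, $B(j,\alpha+1)=\Gamma(\alpha+1)j^{-\alpha-1}(1+\mathcal{O}(1/j))$ by the standard Gamma-ratio asymptotics. Second, I will write $\int_0^1u^{j-1}(1-u)^\alpha g(u)\,du$ with $g(u)=(1-p_1u)^{-\alpha}$, which is smooth on $[0,1]$ because $p_1<1$. Taylor expansion gives $|g(u)-g(1)|\le C(1-u)$, and hence
\[\int_0^1u^{j-1}(1-u)^\alpha g(u)\,du=g(1)B(j,\alpha+1)+\mathcal{O}(B(j,\alpha+2)).\]
Since $B(j,\alpha+2)=\mathcal{O}(j^{-\alpha-2})$ and $g(1)=(1-p_1)^{-\alpha}=q_1^{-\alpha}$, multiplying by $(av+\mu)q_1/\lambda_1$ gives the main term
\[(av+\mu)\frac{\Gamma(\alpha+1)}{\lambda_1 j^{\alpha+1}}q_1^{1-\alpha}\]
with error $\mathcal{O}(j^{-\alpha-2})$.

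The main care needed is in Step 1: checking the algebra of the substitution, in particular the Jacobian $du/ds=rx(1-u)^2$, which cancels the $x(1-u)^2$ in $P_j$ exactly. Once that is verified, Steps 2 and 3 are standard special-function facts.
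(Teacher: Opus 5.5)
Your proof is correct, and you read the display correctly: the paper's own computation puts $q_1/\lambda_1$ in front of the series, and the series identity holds for the $t\to\infty$ limit, as in the main-text version of the proposition.

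Steps 1 and 2 are the paper's argument. Your single substitution $u=(1-e^{-\lambda_1 s})/(1-p_1e^{-\lambda_1 s})$ is the composition of the paper's $x=e^{-\lambda_1 s}$ and $y=(1-x)/(1-p_1x)$. The paper likewise expands $(1-p_1y)^{-\alpha}$ and integrates termwise against Beta integrals.

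Step 3 is where you genuinely differ. The paper works on the series:
\begin{itemize}
\item it uses $(j)_n\le(j+1+\alpha)_n$ and dominated convergence to get the limit $(1-p_1)^{-\alpha}$;
\item it then uses the Weierstrass product inequality to bound $j\bigl(1-(j)_n/(j+1+\alpha)_n\bigr)\le n(1+\alpha)$, and a second dominated-convergence argument for the $\mathcal{O}(1/j)$ rate;
\item finally it multiplies by the Beta-function asymptotic from Watson's lemma.
\end{itemize}
You stay with the Euler integral and use only that $g(u)=(1-p_1u)^{-\alpha}$ is Lipschitz on $[0,1]$. The constant is finite because $p_1<1$, and the best one is $g'(1)=\alpha p_1q_1^{-\alpha-1}$ since $g'$ is increasing. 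This bounds the error by $g'(1)\,B(j,\alpha+2)$ in one line.

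Neither route is sharper than the other. The paper's dominating sum is $(1+\alpha)\sum_n n(\alpha)_np_1^n/n!=(1+\alpha)\alpha p_1q_1^{-\alpha-1}$. So both bounds on this part of the error are asymptotic to $\alpha p_1q_1^{-\alpha-1}\Gamma(\alpha+2)\,j^{-\alpha-2}$, which is its true size (expand $g$ to second order at $u=1$). Your version is simply shorter. It also does not use the power-series structure of the weight, so it applies verbatim to any weight that is Lipschitz at $u=1$. The paper's version keeps everything in the hypergeometric form it reuses for the normalized SFS limit.
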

\begin{proof}
  From Proposition \ref{prop:SFS_mean}:
  \begin{align}
    e^{-\lambda_0t}\mathbb{E}[S_j(t)]= e^{-\lambda_0t}(av+\mu)\int_0^t\mathbb{E}[Z_0(s)]P_j(t-s)ds=(av+\mu)\int_0^t e^{-\lambda_0 s}P_j(s)ds,\label{eq:exact_mean_driver_sfs}
\end{align}
we can then easily see
\begin{align}
  \lim_{t\to\infty}e^{-\lambda_0t}\mathbb{E}[S_j(t)]=(av+\mu)\int_0^\infty e^{-\lambda_0 s}P_j(s)\,ds.\label{eq:asymp_mean}
\end{align}
Using the definition of $P_j(s)$ we can write the previous integral (Eq. \ref{eq:asymp_mean}) as
$$
\int_0^\infty e^{-\lambda_0 s}P_j(s)\,ds=q_1^2\int_0^\infty \frac{e^{-s(\lambda_0+\lambda_1)}}{(1-p_1e^{-\lambda_1s})^2}\left(\frac{1-e^{-\lambda_1s}}{1-p_1e^{-\lambda_1s}}\right)^{j-1}ds.
$$
Using the changes of variable $x=e^{-\lambda_1 s}$ and $y=\frac{1-x}{1-p_1x}$, we can rewrite the previous display as
\begin{align}
&\frac{q_1^2}{\lambda_1}\int_0^1\frac{x^{\lambda_0/\lambda_1}}{(1-p_1x)^2}\left(\frac{1-x}{1-p_1x}\right)^{j-1}dx=\frac{q_1}{\lambda_1}\int_0^1\left(\frac{1-y}{1-p_1y}\right)^{\lambda_0/\lambda_1}y^{j-1}dy.\label{eq:SFS_mean_integral}
\end{align}
One way of analyzing this integral is to relate it to a hyper-geometric series. Noticing that $(1-y)^{-a}=\sum_{n=0}^\infty \frac{(a)_{n}}{n!}y^n$, where for positive number $(a)_{n}=\Gamma(a+n)/\Gamma(a)$ denotes the rising factorial, we can rewrite the integral on the right-hand-side of the previous display as:
\begin{align}
    \label{eq:ExpectedSFS_Series}
\frac{\Gamma(j)\Gamma(\lambda_0/\lambda_1+1)}{\Gamma(j+\lambda_0/\lambda_1+1)}\sum_{n=0}^\infty \frac{(\lambda_0/\lambda_1)_{n}(j)_{n}}{(j+1+\lambda_0/\lambda_1)_{n}n!}p_1^n.
\end{align}
By Watson's Lemma, noting the cofactor to the sum is the beta function ($B(j,\lambda_0/\lambda_1+1)$):

\[\frac{\Gamma(j)\Gamma(\lambda_0/\lambda_1+1)}{\Gamma(j+\lambda_0/\lambda_1+1)}=\frac{\Gamma(\lambda_0/\lambda_1+1)}{j^{\lambda_0/\lambda_1+1}}+\mathcal{O}\left(\frac{1}{j^{\lambda_0/\lambda_1+2}}\right).\]
Since the rising factorial function is increasing we know that
$$
\frac{(\lambda_0/\lambda_1)_{n}(j)_{n}}{(j+1+\lambda_0/\lambda_1)_{n}n!}p_1^n\leq \frac{(\lambda_0/\lambda_1)_n}{n!}p_1^n,
$$
and furthermore basic asymptotics of the Gamma function give us that as $j\to\infty$ the summands of \eqref{eq:ExpectedSFS_Series} converge to $\frac{(\lambda_0/\lambda_1)_n}{n!}p_1^n.$
Thus by dominated convergence we can conclude
\[\int_0^1 \left(\frac{1-y}{1-p_1y}\right)^{\lambda_0/\lambda_1}y^{j-1}\,dy\sim \frac{\Gamma(\lambda_0/\lambda_1+1)}{j^{\lambda_0/\lambda_1+1}}\sum_{n=0}^\infty \frac{(\lambda_0/\lambda_1)_n}{n!}(p_1)^n=\frac{\Gamma(\lambda_0/\lambda_1+1)}{j^{\lambda_0/\lambda_1+1}}(1-p_1)^{-\lambda_0/\lambda_1}.\]

Now, to get a rate of convergence in terms of $j$ as $j\to\infty$ on:

\[\sum_{n=0}^\infty \frac{(\lambda_0/\lambda_1)_n(j)_n}{(j+1+\lambda_0/\lambda_1)_nn!}p_1^n\]
it suffices to examine how quickly

\[\sum_{n=0}^\infty \frac{(\lambda_0/\lambda_1)_n}{n!}p_1^n\left(1-\frac{(j)_n}{(j+1+\lambda_0/\lambda_1)_n}\right)\to 0\]
as $j\to\infty$. Notice that:

\begin{align*}
  j\left(1-\frac{(j)_n}{(j+1+\lambda_0/\lambda_1)_n}\right)&=j\cdot \left(\frac{(j+1+\lambda_0/\lambda_1)_n-(j)_n}{(j+1+\lambda_0/\lambda_1)_n}\right)\\
  &\overset{j\to\infty}{\sim} j\left(\frac{(j+1+\lambda_0/\lambda_1)^n-j^n}{(j+1+\lambda_0/\lambda_1)^n}\right)\\
  &\overset{j\to\infty}{\sim} j\left((1+\frac{1+\lambda_0/\lambda_1}{j})^n-1\right)\\
  &= j\left(n\frac{1+\lambda_0/\lambda_1}{j}+\mathcal{O}(j^{-2})\right)\\
  &\overset{j\to\infty}{\sim} n(1+\lambda_0/\lambda_1)
\end{align*}
Notably then $f_j(n)=j\frac{(\lambda_0/\lambda_1)_n}{n!}p_1^n\left(1-\frac{(j)_n}{(j+1+\lambda_0/\lambda_1)_n}\right)$ has a pointwise limit as $j\to\infty$. Note that:

\[|f_j(n)|=\left|\frac{(\lambda_0/\lambda_1)_n}{n!}p_1^n\right|\cdot j\cdot \left|\left(1-\prod_{k=0}^{n-1} \frac{(j+k)}{(j+1+\lambda_0/\lambda_1+k)}\right)\right|\]
Using the Weierstrass product inequality:

\begin{align*}
  &=|\frac{(\lambda_0/\lambda_1)_n}{n!}p_1^n|\cdot j\cdot |\left(1-\prod_{k=0}^{n-1} \left(1-\frac{(1+\lambda_0/\lambda_1)}{(j+1+\lambda_0/\lambda_1+k)}\right)\right)|\\
  &\leq |\frac{(\lambda_0/\lambda_1)_n}{n!}p_1^n|\cdot j\cdot \sum_{k=0}^{n-1}\frac{(1+\lambda_0/\lambda_1)}{(j+1+\lambda_0/\lambda_1+k)}\\
  &\leq |\frac{(\lambda_0/\lambda_1)_n}{n!}p_1^n|\cdot n(1+\lambda_0/\lambda_1)
\end{align*}
which is summable against $n$ (and upper bounds $f_j$ for all $j$), hence we can use dominated convergence to interchange $\lim_{j\to\infty}$ and the sum in $\lim_{j\to\infty}\sum_{n=0}^\infty j\frac{(\lambda_0/\lambda_1)_n}{n!}p_1^n\left(1-\frac{(j)_n}{(j+1+\lambda_0/\lambda_1)_n}\right)$. We further notice that:

\[\lim_{j\to\infty}\sum_{n=0}^\infty j\frac{(\lambda_0/\lambda_1)_n}{n!}p_1^n\left(1-\frac{(j)_n}{(j+1+\lambda_0/\lambda_1)_n}\right)=\sum_{n=0}^\infty \frac{(\lambda_0/\lambda_1)_n}{n!}p_1^nn(1+\lambda_0/\lambda_1)=L\in (0,\infty)\]

This the implies that the original sum of interest

\[\sum_{n=0}^\infty \frac{(\lambda_0/\lambda_1)_n(j)_n}{(j+1+\lambda_0/\lambda_1)_n(n)!}p_1^n\]

converges at rate $\mathcal{O}(j^{-1})$ to its limit as $j\to\infty$. We thus finally obtain:

\begin{align*}
  \lim_{t\to\infty}e^{-\lambda_0 t}\mathbb{E}[S_j(t)]&=(av+\mu)\frac{q_1}{\lambda_1}\int_0^1 \left(\frac{1-y}{1-p_1y}\right)^{\lambda_0/\lambda_1}y^{j-1}\,dy\\
  &=(av+\mu)\frac{q_1\Gamma(\lambda_0/\lambda_1+1)}{\lambda_1j^{\lambda_0/\lambda_1+1}}(1-p_1)^{-\lambda_0/\lambda_1}+\mathcal{O}\left(\frac{1}{j^{\lambda_0/\lambda_1+2}}\right)
\end{align*}
\end{proof}
\begin{proposition}
  \[\left(\lim_{t\to\infty}e^{-\lambda_0 t}\mathbb{E}[S_j(t)]\right)-e^{-\lambda_0s}\mathbb{E}[S_j(s)]\overset{s\to\infty}{\sim}(av+\mu)\frac{q_1}{\lambda_0/\lambda_1+1}e^{-(\lambda_0+\lambda_1)s}\]
  \label{prop:time_rate_convergence_SFS_mean}
\end{proposition}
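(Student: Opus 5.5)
The plan is to reduce the statement to a tail estimate for a deterministic integral. By \eqref{eq:exact_mean_driver_sfs} in the proof of Proposition \ref{prop:exact_time_asymptotic_mean_SFS}, $e^{-\lambda_0 s}\mathbb{E}[S_j(s)]=(av+\mu)\int_0^s e^{-\lambda_0 u}P_j(u)\,du$. Subtracting this from the limit \eqref{eq:asymp_mean} gives, exactly,
\[
\Big(\lim_{t\to\infty}e^{-\lambda_0 t}\mathbb{E}[S_j(t)]\Big)-e^{-\lambda_0 s}\mathbb{E}[S_j(s)]=(av+\mu)\int_s^\infty e^{-\lambda_0 u}P_j(u)\,du .
\]
So it suffices to find the precise large-$s$ behaviour of this tail integral for fixed $j$.

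Next I would use the explicit kernel \eqref{eq:P_j}, rewritten as in the proof of Proposition \ref{prop:exact_time_asymptotic_mean_SFS}:
\[
P_j(u)=q_1^2\,\frac{e^{-\lambda_1 u}}{(1-p_1e^{-\lambda_1 u})^2}\left(\frac{1-e^{-\lambda_1 u}}{1-p_1e^{-\lambda_1 u}}\right)^{j-1}.
\]
As $u\to\infty$, the two factors involving $1-p_1e^{-\lambda_1 u}$ and $1-e^{-\lambda_1 u}$ both tend to $1$ with $\mathcal{O}(e^{-\lambda_1 u})$ error. Hence $e^{\lambda_1 u}P_j(u)\to \kappa$, where $\kappa$ is the leading constant read off directly from \eqref{eq:P_j}; with the form above, $\kappa=q_1^2$. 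In particular $P_j(u)=\kappa e^{-\lambda_1 u}(1+\mathcal{O}(e^{-\lambda_1 u}))$.

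Then, by l'H\^opital's rule applied to $\int_s^\infty e^{-\lambda_0 u}P_j(u)\,du$ against $e^{-(\lambda_0+\lambda_1)s}$, or directly from the uniform expansion,
\[
\int_s^\infty e^{-\lambda_0 u}P_j(u)\,du\sim \frac{\kappa}{\lambda_0+\lambda_1}e^{-(\lambda_0+\lambda_1)s}=\frac{\kappa/\lambda_1}{\lambda_0/\lambda_1+1}e^{-(\lambda_0+\lambda_1)s}.
\]
The error term is $\mathcal{O}(e^{-(\lambda_0+2\lambda_1)s})$, so the equivalence is genuine.

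The only real obstacle is bookkeeping of the constant: the stated prefactor $q_1/(\lambda_0/\lambda_1+1)$ has to be matched with $\kappa/\lambda_1$. I will recheck the normalisation of \eqref{eq:P_j} at $u\to\infty$ carefully. The point to verify is whether the leading coefficient of $P_j$ is $q_1^2$ or $\lambda_1 q_1=(a+b)q_1^2$, using $\lambda_1=(a+b)q_1$. If these disagree, the exponential rate $e^{-(\lambda_0+\lambda_1)s}$ is unaffected and only the constant needs correcting. Everything else is routine, because $j$ is fixed and there is no uniformity issue.
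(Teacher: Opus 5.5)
Your reduction to the tail integral $(av+\mu)\int_s^\infty e^{-\lambda_0u}P_j(u)\,du$ and your evaluation of the leading coefficient are both correct. Together they prove
\[
\Big(\lim_{t\to\infty}e^{-\lambda_0 t}\mathbb{E}[S_j(t)]\Big)-e^{-\lambda_0 s}\mathbb{E}[S_j(s)]=(av+\mu)\frac{q_1^2}{\lambda_0+\lambda_1}e^{-(\lambda_0+\lambda_1)s}+\mathcal{O}\big(e^{-(\lambda_0+2\lambda_1)s}\big).
\]
The step you postpone, matching this with the stated prefactor, cannot be carried out. The stated constant $(av+\mu)q_1\lambda_1/(\lambda_0+\lambda_1)$ exceeds the true one by the factor $\lambda_1/q_1=a+b$.

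There is no normalisation issue in \eqref{eq:P_j}. Its first factor tends to $1-d/(a+b)=q_1$. Its second factor equals $\lambda_1/((a+b)e^{\lambda_1u}-d)\sim q_1e^{-\lambda_1u}$. The third tends to $1$. So $\kappa=q_1^2$.

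Two independent checks confirm this:
\begin{itemize}
\item For $d=0$ the clone is a Yule process at rate $\lambda_1=a+b$, so $P_j(u)=e^{-\lambda_1u}(1-e^{-\lambda_1u})^{j-1}$. The tail is then $\sim(av+\mu)e^{-(\lambda_0+\lambda_1)s}/(\lambda_0+\lambda_1)$, not $\lambda_1$ times that.
\item The left-hand side is dimensionless, while $(av+\mu)q_1/(\lambda_0/\lambda_1+1)$ has the dimension of a rate.
\end{itemize}
So commit to $\kappa=q_1^2$ and state the corrected asymptotic, rather than adjusting your constant to fit the statement.

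The paper's proof uses the same idea in the variable $y=(1-x)/(1-p_1x)$, $x=e^{-\lambda_1u}$. It sandwiches $((1-y)/(1-p_1y))^{\lambda_0/\lambda_1}y^{j-1}$ between multiples of $(1-y)^{\lambda_0/\lambda_1}$ near $y=1$ and uses $r(s)\sim q_1e^{-\lambda_1s}$. It lands on the stated constant only because it writes the difference as $(av+\mu)\int_{1-r(s)}^1(\cdots)\,dy$. This drops the prefactor $q_1/\lambda_1$ that appears in its own identity \eqref{eq:SFS_mean_integral}; restoring that factor gives exactly your $q_1^2/(\lambda_0+\lambda_1)$.

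Your direct time-domain argument is shorter and keeps the constant visible. It also yields the $\mathcal{O}(e^{-(\lambda_0+2\lambda_1)s})$ remainder, which the sandwich argument does not.
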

\begin{proof}
  Following our substitutions in the above proof, the integral limits under the substitution $x=e^{-\lambda_1 s}$ goes from $0,t$ to $e^{-\lambda_1t},1$. Then under the substitution $y=\frac{1-x}{1-p_1x}$, the limits become $0,\frac{1-e^{-\lambda_1 t}}{1-p_1 e^{-\lambda_1 t}}$. Let $r(t)=1-\frac{1-e^{-\lambda_1 t}}{1-p_1 e^{-\lambda_1 t}}$. Then:

  \[\left(\lim_{t\to\infty}e^{-\lambda_0 t}\mathbb{E}[S_j(t)]\right)-e^{-\lambda_0s}\mathbb{E}[S_j(s)]=(av+\mu)\int_{1-r(s)}^1 \left(\frac{1-y}{1-p_1y}\right)^{\lambda_0/\lambda_1}y^{j-1}\,dy.\]
For any $\varepsilon>0$ (sufficiently small so that $\frac{1}{(1-p_1)^{\lambda_0/\lambda_1}}>\varepsilon$), we have $s$ sufficiently large such that:

  \[\left(\frac{1}{(1-p_1)^{\lambda_0/\lambda_1}}-\varepsilon\right)\int_{1-r(s)}^1 (1-y)^{\lambda_0/\lambda_1}\,dy\leq \int_{1-r(s)}^1 \left(\frac{1-y}{1-p_1y}\right)^{\lambda_0/\lambda_1}y^{j-1}\,dy \leq \frac{1}{(1-p_1)^{\lambda_0/\lambda_1}}\int_{1-r(s)}^1 (1-y)^{\lambda_0/\lambda_1}\,dy.\]
Since $\int_{1-r(s)}^1 (1-y)^{\lambda_0/\lambda_1}\,dy=\frac{\lambda_1}{\lambda_0+\lambda_1}r(s)^{\lambda_0/\lambda_1+1}$ and $\varepsilon$ is arbitrary we can conclude that as $s\to\infty$

  \[\int_{1-r(s)}^1 \left(\frac{1-y}{1-p_1y}\right)^{\lambda_0/\lambda_1}y^{j-1}\,dy\sim \frac{1}{(1-p_1)^{\lambda_0/\lambda_1}}\frac{\lambda_1}{\lambda_0+\lambda_1}r(s)^{\lambda_0/\lambda_1+1}\sim (1-p_1)\frac{\lambda_1}{\lambda_1+\lambda_0}e^{-(\lambda_0+\lambda_1)s}.\]
\end{proof}
\begin{proposition}
  Under the random fitness advance model with Assumption \ref{assumption:random}:
  \[\lim_{t\to\infty}e^{-\lambda_0t}\mathbb{E}[S_j^B(t)]\overset{j\to\infty}{\sim} \frac{C}{j^{\frac{\lambda_0}{\lambda_1(b_{\max})}+1}\log j}\]
  $C$ is a function of the parameters of the model which can be explicitly determined.\label{prop:mean_SFS_random_large_freq}
\end{proposition}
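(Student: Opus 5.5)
We start from the random-fitness mean in Proposition \ref{prop:SFS_mean}: Eq.~\eqref{eq:mean_SFS_random_fitness}, after the same change of variables $s\mapsto t-s$ as in Eq.~\eqref{eq:exact_mean_driver_sfs}. Letting $t\to\infty$ by monotone convergence gives
\[\lim_{t\to\infty}e^{-\lambda_0t}\mathbb{E}[S_j^B(t)]=(av+\mu)\int_0^{b_{\max}}h(b)\,I_j(b)\,db,\qquad I_j(b):=\int_0^\infty e^{-\lambda_0 s}P_j(s|b)\,ds.\]
We then write $I_j(b)$ in the closed form of Eq.~\eqref{eq:SFS_mean_integral}, now with $b$-dependent parameters $\lambda_1(b)=a+b-d$, $p_1(b)=d/(a+b)$, $q_1(b)=1-p_1(b)$, and $\alpha(b)=\lambda_0/\lambda_1(b)$:
\[I_j(b)=\frac{q_1(b)}{\lambda_1(b)}\int_0^1\Big(\frac{1-y}{1-p_1(b)y}\Big)^{\alpha(b)}y^{j-1}\,dy.\]
Proposition \ref{prop:exact_time_asymptotic_mean_SFS} gives $I_j(b)\sim c(b)\,j^{-\alpha(b)-1}$ with $c(b)=\frac{q_1(b)^{1-\alpha(b)}\Gamma(\alpha(b)+1)}{\lambda_1(b)}$. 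We need this asymptotic to hold uniformly for $b$ in a neighbourhood of $b_{\max}$. To get that, we rerun the dominated-convergence/Weierstrass-product argument with constants that are uniform in $b$. This works because $\alpha(b)$ and $p_1(b)$ range over compact sets bounded away from the degenerate values. The result is
\[I_j(b)=c(b)\,j^{-\alpha(b)-1}\big(1+\mathcal{O}(j^{-1})\big)\]
uniformly in $b\in[0,b_{\max}]$.

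The main step is a Laplace-type analysis of $\int_0^{b_{\max}}h(b)c(b)j^{-\alpha(b)-1}\,db$ as $j\to\infty$. We write
\[j^{-\alpha(b)-1}=j^{-\alpha(b_{\max})-1}\exp\big(-(\alpha(b)-\alpha(b_{\max}))\log j\big).\]
Since $\alpha$ is smooth and strictly decreasing with $\alpha'(b_{\max})=-\lambda_0/\lambda_1(b_{\max})^2<0$, we substitute $u=(b_{\max}-b)\log j$. Then
\[(\alpha(b)-\alpha(b_{\max}))\log j=\frac{\lambda_0}{\lambda_1(b_{\max})^2}\,u+\mathcal{O}\!\left(\frac{u^2}{\log j}\right).\]
The integral becomes
\[\frac{j^{-\alpha(b_{\max})-1}}{\log j}\int_0^{b_{\max}\log j}h\Big(b_{\max}-\frac{u}{\log j}\Big)\,c\Big(b_{\max}-\frac{u}{\log j}\Big)\,e^{-\frac{\lambda_0}{\lambda_1(b_{\max})^2}u(1+o(1))}\,du.\]
Two facts from Assumption \ref{assumption:random} control the limit. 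Left-continuity of $h$ at $b_{\max}$ gives pointwise convergence of the integrand to $h(b_{\max})c(b_{\max})e^{-\lambda_0u/\lambda_1(b_{\max})^2}$. Boundedness of $h$, together with the global bound $\alpha(b)-\alpha(b_{\max})\ge \kappa(b_{\max}-b)$ for some $\kappa>0$ (available because $\alpha$ is convex and decreasing on the compact interval), supplies an integrable exponential dominating function. Dominated convergence then gives the claim with
\[C=(av+\mu)\,h(b_{\max})\,c(b_{\max})\,\frac{\lambda_1(b_{\max})^2}{\lambda_0}.\]

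The step I expect to be the obstacle is the uniform-in-$b$ control of the $\mathcal{O}(j^{-1})$ remainder and of the prefactor. The errors must be genuinely uniform so that they can be pulled through the $db$-integral without spoiling the $1/\log j$ scale. The region of $b$ far from $b_{\max}$ is harmless: there the contribution is bounded by $j^{-\alpha(b_{\max}-\delta)-1}$, which is negligible compared with $j^{-\alpha(b_{\max})-1}/\log j$. For $b$ near $b_{\max}$, I would handle uniformity by bounding the hypergeometric-series remainder in Proposition \ref{prop:exact_time_asymptotic_mean_SFS} explicitly by $n(1+\alpha)\frac{(\alpha)_n}{n!}p_1^n$. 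Since $\alpha\le\lambda_0/\lambda_1(0)$ and $p_1\le d/a<1$, summing this over $n$ gives a constant independent of $b$.
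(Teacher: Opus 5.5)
Your proposal is correct and follows essentially the paper's route: insert the fixed-$b$ asymptotic of Proposition \ref{prop:exact_time_asymptotic_mean_SFS} into the $h(b)\,db$ integral and do an endpoint Laplace analysis at $b_{\max}$, which gives the same constant $C=(av+\mu)h(b_{\max})c(b_{\max})\lambda_1(b_{\max})^2/\lambda_0$ as the paper. The differences are minor. The paper substitutes $\ell=\phi(b)-\phi(b_{\max})$ with $\phi(b)=\lambda_0/\lambda_1(b)+1$, so the exponent is exactly $-\ell\log j$, and then uses an $\varepsilon$--$\delta$ squeeze near $\ell=0$; you instead linearize via $u=(b_{\max}-b)\log j$ with a convexity-based dominating function, and your explicit uniform-in-$b$ bound on the $\mathcal{O}(j^{-1})$ remainder justifies passing that error term through the $h(b)\,db$ integral, a step the paper leaves implicit.
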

\begin{proof}
  Let $\widetilde{C}(b)=(av+\mu)\frac{q_1(b)\Gamma(\lambda_0/\lambda_1(b)+1)}{\lambda_1(b)}(1-p_1(b))^{-\lambda_0/\lambda_1(b)}h(b)$. From Proposition \ref{prop:SFS_mean_asymptotics} and Fubini-Tonelli:
  \begin{align}
      \lim_{t\to\infty}e^{-\lambda_0 t}\mathbb{E}[S_j^{B}(t)]&=\mathbb{E}\left[(av+\mu)\frac{q_1(B)\Gamma(\lambda_0/\lambda_1(B)+1)}{\lambda_1(B)j^{\lambda_0/\lambda_1(B)+1}}(1-p_1(B))^{-\lambda_0/\lambda_1(B)}+\mathcal{O}\left(\frac{1}{j^{\lambda_0/\lambda_1(B)+2}}\right)\right]\nonumber\\
      &\overset{j\to\infty}{\sim} \int_0^{b_{\text{max}}}\widetilde{C}(b)e^{-\left(\lambda_0/\lambda_1(b)+1\right)\log j}\,db.\label{eq:large_time_mean_random_goal}
  \end{align}
 Therefore, we examine the rate at which \ref{eq:large_time_mean_random_goal} goes to $0$ as $j\to\infty$. Let $\phi(b)=\lambda_0/\lambda_1(b)+1$ and perform the change of variables $\ell=\phi(b)-\phi(b_{\max})$. Let $C(\ell):=\frac{\tilde{C}(b(\ell))}{|\phi'(b(\ell))|}$ and let $C:=C(0)$, where $C$ is as in the proposition statement. Then:

\[\int_0^{b_{\text{max}}}\tilde{C}(b)e^{-\left(\lambda_0/\lambda_1(b)+1\right)\log j}\,db=e^{-\phi(b_{\max})\log j}\int_0^{\ell_{\text{max}}}C(\ell)e^{-\ell\log j}\,d\ell.\]
$C(\ell)$ remains continuous at $0$ with $C(0)>0$ and is bounded over $[0,\ell_{\max}]$ by quantity $M$ from Assumption \ref{assumption:random}. Fix $\varepsilon>0$ and choose $\delta>0$ such that the $\sup_{-\delta<y<\delta}|C(0)-C(y)|<\varepsilon$ (note $\delta$ is independent of $j$).
Then:

\[(C(0)-\varepsilon)(1-e^{-\delta\log j})\leq (\log j)\int_0^\delta C(\ell)e^{-\ell \log j}\,d\ell\leq (C(0)+\varepsilon)(1-e^{-\delta\log j}).\]
Also notice that $(\log j)\int_\delta^{\ell_{\max}} C(\ell)e^{-\ell \log j}\,d\ell\to 0$ as $j\to\infty$ for $\delta>0$ since $C$ is bounded on the interval $[0,\ell_{max}].$
We thus have that
\[C(0)-\varepsilon\leq \liminf_{j\to\infty}(\log j)\int_0^{\ell_{\max}} C(\ell)e^{-\ell \log j}\,d\ell \leq \limsup_{j\to\infty}(\log j)\int_0^{\ell_{\max}} C(\ell)e^{-\ell \log j}\,d\ell\leq C(0)+\varepsilon\]
and since $\varepsilon$ is arbitrary, we conclude
\[e^{-\phi(b_{\max})\log j}\int_0^{\ell_{\text{max}}}C(\ell)e^{-\ell\log j}\,d\ell\sim \frac{C(0)}{j^{\phi(b_{\max})}\log j}.\]
\end{proof}
\subsection{SFS first-order convergence results}\label{app:SFS_convergence}

We prove Theorem \ref{prop:convergence_results} and Proposition \ref{prop:fraction_as} by proving several lemmas. We use the same overall proof structure as in Gunnarsson et al. \cite{gunnarsson2025} where the SFS was decomposed into a sum of monotone processes. However, our $L^2$ approximation scheme is simpler.
\subsubsection{Setup}
First, we introduce the notation as in \cite{gunnarsson2025}. Define $\tau_{j,-}^i(0)=T_i$ (the initiation time of the $i$-th driver clone produced) and recursively for $k\geq 1$, define
\begin{align*}
  &\tau_{j,+}^i(k)=\inf\{s>\tau_{j,-}^i(k-1):Z_1^{(i)}(s-T_i)=j\}\\
  &\tau_{j,-}^i(k)=\inf\{s>\tau_{j,+}^i(k):Z_1^{(i)}(s-T_i)\neq j\},
\end{align*}
with the convention $\inf\emptyset=\infty$. Then $\tau_{j,+}^i(k)$ is the time that the $i$th mutant clone hits size $j$ for the $k$th time, and $\tau_{j,-}^i(k)$ is the time that the clone exits out of size $j$ for the $k$th time. Then define:
\begin{align*}
  I_{j,\pm}^{(i)}(t)=\sum_{k\in \mathbb{N}}\mathbf{1}_{\tau_{j,\pm}^i(k)\leq t}
\end{align*}
which are the number of times the $i$th clone has entered into or exited size $j$ by time $t$. Finally, we define:
\begin{align}
  S_{j,\pm}^{k}(t)=\sum_{i\in\mathbb{N}} \mathbf{1}_{I^{(i)}_{j,\pm}(t)\geq k}\label{eq:def_S_j_pm}
\end{align}
which are the number of mutant clones that have reached size $j$ at least $k$ times (or exited size $j$ at least $k$ times) by time $t$. Note that $t\to S_{j,\pm}^{k}(t)$ is monotone increasing and:
\begin{align}
  S_j(t)=\sum_{k\in \mathbb{N}} S_{j,+}^{k}(t)-\sum_{k\in \mathbb{N}} S_{j,-}^{k}(t).\label{eq:S_j_decomposition}
\end{align}
We also define:
\[P^k_{j,\pm}(t)=\mathbb{P}\left(I_{j,\pm}^{(1)}(t)\geq k|Z_1^{(1)}(0)=1,Z_0(0)=0\right)\]
which is the probability that a type-1 clone has entered into (or exited from) size $j$ at least $k$ times by time $t$. Note that
\begin{align*}
\mathbf{1}_{\{Z_1(t)=j\}}
= \sum_{k\ge1}\big(\mathbf{1}_{\{I^{(1)}_{j,+}(t)\ge k\}}
-
\mathbf{1}_{\{I^{(1)}_{j,-}(t)\ge k\}}\big),
\end{align*}
since being at size $j$ at time $t$ means the clone has entered size $j$ exactly one more time than it has exited it; taking expectations gives 
\begin{align}
    P_{j}(t)=\sum_{k\in \mathbb{N}}\left(P_{j,+}^k(t)-P_{j,-}^k(t)\right).\label{eq:P_j_decomposition}
\end{align}

We also use the following approximating processes:

\begin{table}[H]
  \centering
\begin{tabular}{|c|p{12cm}|}
\hline
\textbf{Notation} & \textbf{Definition} \\
\hline
$\overline{S}_{j,\pm}^{k}(t)$ & $(av+\mu)\int_0^t Z_0(s)P_{j,\pm}^k(t-s)\,ds$\\
$\widehat{S}_{j,\pm}^{k}(t)$ & $(av+\mu)\int_0^t Ye^{\lambda_0 s}P_{j,\pm}^k(t-s)\,ds$\\
$\overline{S}_j(t)$ & $(av+\mu)\int_0^t Z_0(s)P_{j}(t-s)\,ds$\\
$\widehat{S}_j(t)$ & $(av+\mu)\int_0^t Ye^{\lambda_0 s}P_{j}(t-s)\,ds$\\
\hline
\end{tabular}
\caption{SFS notation}\label{table:SFS_notation}
\end{table}

The following uniform bound will be useful in what follows.
\begin{lemma}\label{lem:uniform_bound_kth_entrance}
  There exists $\theta \in (0,1)$, independent of $j,k,b,t$, such that $P_{j,\pm}^k(t)\lesssim \theta^k$ as $k\to\infty$.\label{lem:P_j_pm_summable}
\end{lemma}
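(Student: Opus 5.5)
The plan is to use the strong Markov property at successive entrance times into size $j$, so that $P^k_{j,\pm}(t)$ is bounded by the $k$-th power of a return probability. I will then show this return probability is bounded away from $1$ uniformly in $j$ and $b$.

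First, since exits follow entrances, $I^{(1)}_{j,-}(t)\le I^{(1)}_{j,+}(t)$, so $P^k_{j,-}(t)\le P^k_{j,+}(t)$ and it suffices to treat the $+$ case. Moreover $\{I^{(1)}_{j,+}(t)\ge k\}\subseteq\{I^{(1)}_{j,+}(\infty)\ge k\}$, so time $t$ can be dropped. Let $r_j$ be the probability that a birth-death process with birth rate $a+b$ and death rate $d$, started at $j$, ever returns to $j$ after first leaving it. By the strong Markov property at the stopping times $\tau^1_{j,+}(m)$, the event of at least $k$ entrances requires the first entrance and then $k-1$ independent returns. Hence
\[P^k_{j,+}(t)\le r_j^{\,k-1}.\]

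Next, I compute $r_j$ by a first-step analysis on the embedded jump chain. From $j$, the chain steps up with probability $\beta=(a+b)/(a+b+d)$ and down with probability $1-\beta$. After stepping down it returns to $j$ with probability $1$ if the process survives, but it may hit $0$ first; I bound this return probability crudely by $1$. After stepping up, the return probability equals the probability that a supercritical walk started at $j+1$ ever hits $j$, which by the gambler's-ruin formula is $p_1=d/(a+b)$. Therefore
\[r_j\le (1-\beta)+\beta p_1=\frac{2d}{a+b+d}.\]
For $j=0$ there are no returns, since $0$ is absorbing, so that case is trivial.

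Finally, I need this bound to be uniform in $b$. Since $b>0$, we have $a+b\ge a$, and the map $x\mapsto 2d/(x+d)$ is decreasing. So $r_j\le 2d/(a+d)$ for all $b$. This quantity is $<1$ exactly when $d<a$, which holds because $\lambda_0>0$ forces $a>a(1-v)>d+\mu\ge d$. Setting $\theta=2d/(a+d)\in(0,1)$ gives $P^k_{j,\pm}(t)\le\theta^{k-1}\lesssim\theta^k$ uniformly in $j,k,b,t$.

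The main care point is the strong Markov step: the entrance times $\tau_{j,+}^i(k)$ must be stopping times for the clone's natural filtration, which they are as hitting times of a càdlàg jump process. The other care point is the crude bound on the downward-return probability, which must not depend on $j$.
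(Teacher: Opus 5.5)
Your proposal is correct and is essentially the paper's proof. Both reduce to the $+$ case via $I^{(1)}_{j,-}(t)\le I^{(1)}_{j,+}(t)$, bound the return probability to $j$ by first-step analysis on the jump chain as $\frac{d}{a+b+d}+\frac{a+b}{a+b+d}\cdot\frac{d}{a+b}=\frac{2d}{a+b+d}\le\frac{2d}{a+d}$, and then iterate with the strong Markov property. Your explicit derivation of $d<a$ from $\lambda_0>0$, together with the $\theta^{k-1}\lesssim\theta^k$ bookkeeping, fills in details the paper leaves implicit.
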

\begin{proof}
  Let $\mathcal{Z}(n)$ denote a discrete time-embedding of the continuous-time linear birth-death process associated with a type-1 clone. Let $\mathcal{Z}(0)=j$. If the first step is a birth, then $\mathbb{P}(\text{exists some time }n\geq 2\text{ with }\mathcal{Z}(n)=j|\mathcal{Z}(1)=j+1)=\frac{d}{a+b}$, since this the extinction probability. Thus 
\[  
\mathbb{P}(\text{exists some time }n\geq 1\text{ with }\mathcal{Z}(n)=j|\mathcal{Z}(0)=j)\leq \frac{d}{a+b}\frac{a+b}{a+b+d}+1\cdot \frac{d}{a+b+d}=2\frac{d}{a+b+d}\leq \frac{2d}{a+d}<1,
\]
and we can thus take $\theta=\frac{2d}{a+d}$. Notice that $P_{j,-}^k(t)\leq P_{j,+}^k(t)$ since $\tau_{j,-}^{(1)}(k)\geq \tau_{j,+}^{(1)}(k)$ for all $k\geq 1$, which implies $I_{j,+}^{(1)}(t)\geq I_{j,-}^{(1)}(t)$ for all $t\geq 0$. The result then follows by the strong Markov property.
\end{proof}

\begin{lemma}\label{lem:CenteredMuts_Martingale}
  For $t>0$ define $R_t=M_t-(av+\mu)\int_0^t Z_0(s)\,ds$ then $\{R_t;t\geq 0\}$ is a 0-mean $\mathcal{B}_t$-martingale. 
\end{lemma}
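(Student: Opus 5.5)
We will prove this directly from the alternate representation of the model in Section \ref{app:alt_representation_model}, splitting $M_t=M'_t+M''_t$, where $M'_t=\#\{i:T_i'\le t\}$ counts replication-dependent mutations and $M''_t=\#\{i:T_i''\le t\}$ counts replication-independent ones. We will show separately that $R'_t=M'_t-av\int_0^tZ_0(s)\,ds$ and $R''_t=M''_t-\mu\int_0^tZ_0(s)\,ds$ are $\mathcal{B}_t$-martingales, and then add them.

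\emph{Replication-dependent part.} Conditional on $\mathcal{F}_\infty$, $\{T_i'\}$ is a Poisson point process with intensity $avZ_0(s)$. Equivalently, each cell carries an independent rate-$av$ Poisson clock, independent of the type-0 dynamics. So $M'$ is a counting process whose $\mathcal{B}_t$-intensity is $avZ_0(t-)$: the marks in $(t,t+h]$ are independent of everything else given $Z_0$ on that interval. The standard compensator characterization of Poisson/Cox processes then gives that $R'_t$ is a local martingale. Concretely, for $s<t$ we will compute $\mathbb{E}[M'_t-M'_s\mid\mathcal{B}_s]$ by conditioning on $\mathcal{B}_s\vee\sigma(Z_0(r):r\le t)$. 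This uses independence of future clock marks from the past marks given the type-0 path, which is legitimate because the clocks are independent of the type-0 birth–death events. The result is $av\,\mathbb{E}[\int_s^tZ_0(r)dr\mid\mathcal{B}_s]$.

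\emph{Replication-independent part.} $M''_t$ counts death events of the type-0 process marked by independent $\mu/(d+\mu)$-coins. The total death counting process $D_t$ of the type-0 population has $\mathcal{F}_t$-compensator $(d+\mu)\int_0^tZ_0(s)\,ds$, since each cell dies at rate $d+\mu$. Independent thinning multiplies the compensator by $\mu/(d+\mu)$, giving $\mu\int_0^t Z_0(s)\,ds$. The coin flips are part of $\mathcal{B}_t$ but are independent of future coins, so the martingale property holds with respect to $\mathcal{B}_t$ as well.

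\emph{Integrability.} To upgrade from local martingales to true martingales, we need $\mathbb{E}[M_t]<\infty$ and $\mathbb{E}\int_0^tZ_0\,ds=\int_0^te^{\lambda_0 s}ds<\infty$. The first follows from the second by monotone convergence applied to the localized identity $\mathbb{E}[M_{t\wedge\sigma_n}]=(av+\mu)\mathbb{E}\int_0^{t\wedge\sigma_n}Z_0$, with $\sigma_n$ the hitting time of $n$ by $Z_0$. Then $R$ is dominated on $[0,t]$ by $M_t+(av+\mu)\int_0^tZ_0$, which is integrable, so the localizing sequence passes to the limit by dominated convergence. The mean is zero since $R_0=0$.

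\emph{Main obstacle.} The delicate step is the filtration bookkeeping. $\mathcal{B}_t$ contains both the type-0 path and the mutation times, and we must verify that the future mutation marks and the type-0 evolution after $t$ remain conditionally independent of the past marks. This requires the memorylessness of the attached Poisson clocks and the independence of the thinning coins. I would handle it by showing $\mathbb{E}[M_{t+h}-M_t\mid\mathcal{B}_t]=(av+\mu)Z_0(t)h+o(h)$ uniformly, and then integrating via the Markov property of $(Z_0,M)$.
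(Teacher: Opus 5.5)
Your proof is correct, but it takes a different route from the paper's.

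The paper does not split $M$ by mutation mechanism. It first obtains $\mathbb{E}[M_t]=(av+\mu)\int_0^t\mathbb{E}[Z_0(s)]\,ds$ by an infinitesimal-increment argument. Then, for $s<t$, it uses the Markov property at time $s$ together with the branching property (independence of lineages). This rewrites $\mathbb{E}[(M_t-M_s)-(av+\mu)\int_s^tZ_0(r)\,dr\mid\mathcal{B}_s]$ as $Z_0(s)$ times the same unconditional expectation for a process started from one type-0 cell over horizon $t-s$, which vanishes by the first step.

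You instead work in the alternate representation and identify the two compensators separately. The replication-dependent part uses the conditionally Poisson structure given the type-0 path. The replication-independent part uses independent $\mu/(d+\mu)$-thinning of the death counting process. You then pass from local to true martingales via the localizing times $\sigma_n$ and the domination $\sup_{r\le t}|R_r|\le M_t+(av+\mu)\int_0^tZ_0(s)\,ds$.

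What each buys:
\begin{itemize}
\item The paper's argument is shorter and handles both mechanisms at once, since it only needs the branching/Markov structure of $(Z_0,M)$. However, it leaves the first-moment identity at the level of an $o(dt)$ heuristic and never checks integrability.
\item Yours requires the filtration bookkeeping you flag: independence of the Poisson clocks and thinning coins from the type-0 jumps and from past marks. In exchange, the compensator and the integrability are fully explicit.
\end{itemize}

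One simplification: your conditional computation $\mathbb{E}[M'_t-M'_s\mid\mathcal{B}_s]=av\,\mathbb{E}[\int_s^tZ_0(r)\,dr\mid\mathcal{B}_s]$ already gives the martingale property of $R'$ directly. Taking $s=0$ gives $\mathbb{E}[M'_t]<\infty$ by nonnegativity. So the localization step is only really needed for the thinned-death part.
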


\begin{proof}
By forming a partition $\Pi_t$ over $[0,t]$ of intervals of length $dt$, let $0<s<t$ and over one such increment $\mathbb{E}[M_s-M_{s-dt}]=(av+\mu)\mathbb{E}[Z_0(s)]dt+o(dt^2)$. Summing over all contributions $\mathbb{E}[M_t]=(av+\mu)\mathbb{E}\left[\int_0^t Z_0(s)\,ds\right].$

  Now fix $t>s\geq 0$, notice:

  \[\mathbb{E}\left[M_t-(av+\mu)\int_0^tZ_0(r)\,dr|\mathcal{B}_s \right]=\mathbb{E}\left[(M_t-M_s)-(av+\mu)\int_s^tZ_0(r)\,dr|\mathcal{B}_s \right]+M_s-(av+\mu)\int_0^sZ_0(r)\,dr\]

  However, using the strong Markov property and independence of lineages:
  \[\mathbb{E}\left[(M_t-M_s)-(av+\mu)\int_s^tZ_0(r)\,dr|\mathcal{B}_s \right]=Z_0(s)\mathbb{E}[M_{t-s}-(av+\mu)\int_0^{t-s}Z_0(r)\,ds]=0\]
\end{proof}

We next show an $L^2$ approximation result for the monotonic processes $\{S_{j,\pm}^k(t);t\geq 0\}$:

\begin{proposition}\label{prop:L2_driver}
  There exists $A<\infty$ such that $\mathbb{E}\left[(S_{j,\pm}^{k}(t)-\overline{S}_{j,\pm}^{k}(t))^2\right]\leq A\theta^k e^{\lambda_0 t}$
\end{proposition}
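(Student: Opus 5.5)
We plan to condition on the $\sigma$-algebra $\mathcal{B}_\infty$, which contains the type-0 trajectory and all clone initiation times $\{T_i\}$. The core decomposition is
\[
S_{j,\pm}^k(t)-\overline{S}_{j,\pm}^k(t)=\Bigl(S_{j,\pm}^k(t)-\sum_{i:T_i\le t}P_{j,\pm}^k(t-T_i)\Bigr)+\Bigl(\sum_{i:T_i\le t}P_{j,\pm}^k(t-T_i)-(av+\mu)\int_0^t Z_0(s)P_{j,\pm}^k(t-s)\,ds\Bigr)=:D_1(t)+D_2(t).
\]
By $(x+y)^2\le 2x^2+2y^2$, it suffices to bound $\mathbb{E}[D_1^2]$ and $\mathbb{E}[D_2^2]$ separately by a constant times $\theta^k e^{\lambda_0 t}$.

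\textbf{The term $D_1$.} Conditional on $\mathcal{B}_\infty$, the clones $Z_1^{(i)}$ are i.i.d. and independent of $\mathcal{B}_\infty$. Hence the indicators $\mathbf{1}_{I^{(i)}_{j,\pm}(t)\ge k}$ are conditionally independent Bernoulli variables with means $P_{j,\pm}^k(t-T_i)$. It follows that
\[
\mathbb{E}[D_1^2\mid\mathcal{B}_\infty]=\sum_{T_i\le t}P^k_{j,\pm}(t-T_i)\bigl(1-P^k_{j,\pm}(t-T_i)\bigr)\le \sum_{T_i\le t}P_{j,\pm}^k(t-T_i).
\]
Taking expectations and using the compensator of $M_t$ from Lemma \ref{lem:CenteredMuts_Martingale} (the Campbell-type identity $\mathbb{E}\sum_{T_i\le t}f(T_i)=(av+\mu)\int_0^t\mathbb{E}[Z_0(s)]f(s)\,ds$), we obtain
\[
\mathbb{E}[D_1^2]\le (av+\mu)\int_0^t e^{\lambda_0 s}P^k_{j,\pm}(t-s)\,ds.
\]
Lemma \ref{lem:P_j_pm_summable} gives $P^k_{j,\pm}\le C\theta^k$ uniformly in $t$ and $j$, so this term is at most $C'\theta^k e^{\lambda_0 t}$.

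\textbf{The term $D_2$.} We write $D_2(t)=\int_0^t f_t(s)\,dR_s$ with $f_t(s)=P^k_{j,\pm}(t-s)$, a deterministic and bounded integrand, where $R$ is the martingale of Lemma \ref{lem:CenteredMuts_Martingale}. By the Itô isometry for stochastic integrals against compensated counting processes,
\[
\mathbb{E}[D_2^2]=\mathbb{E}\int_0^t f_t(s)^2\,d\langle R\rangle_s.
\]
The jumps of $M$ have size one, so the predictable quadratic variation is $\langle R\rangle_s=(av+\mu)\int_0^s Z_0(r)\,dr$. This yields the bound
\[
\mathbb{E}[D_2^2]\le (av+\mu)\sup_s f_t(s)\int_0^t e^{\lambda_0 s}f_t(s)\,ds\le C\theta^{2k}e^{\lambda_0 t}\le C\theta^k e^{\lambda_0 t}.
\]

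\textbf{Main obstacle.} The delicate point is the replication-independent mutations. Those events simultaneously remove a type-0 cell, so the intensity of $M$ is $(av+\mu)Z_0(s-)$ only when the construction in Section \ref{app:alt_representation_model} is used and $\{T_i''\}$ is handled as a thinning of the death times. We must check that $R$ is a square-integrable martingale with $\langle R\rangle$ as stated. Equivalently, the cross terms appearing in $\mathbb{E}[D_2^2]$ must be handled by the second-moment computation of Proposition \ref{prop:SFS_mean}, which already accounts for the correction $-\mu(\mu+av)\mathbb{E}[Z_0(r)]$ and which only decreases the variance. If the isometry is unclear, our fallback is to expand $\mathbb{E}[D_2^2]$ directly. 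We would use $\mathbb{E}[M(ds)M(dr)]$ from the proof of Proposition \ref{prop:SFS_mean}, together with $\mathbb{E}[Z_0(r)Z_0(s)]=e^{\lambda_0(r-s)}\mathbb{E}[Z_0(s)^2]$ and $\mathbb{E}[Z_0(s)^2]\lesssim e^{2\lambda_0 s}$. In that expansion the $e^{2\lambda_0 t}$ terms cancel exactly between $\mathbb{E}[(\sum P)^2]$, the cross term and $\mathbb{E}[\overline S^2]$, leaving only the diagonal part, which is of order $\theta^k e^{\lambda_0 t}$. The constant $A$ then depends only on $a,d,v,\mu$ and $\theta$.
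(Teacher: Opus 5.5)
Your proof is correct and follows essentially the same route as the paper. The paper also splits $S^k_{j,\pm}(t)-\overline{S}^k_{j,\pm}(t)$ through the conditional mean $C_t=\mathbb{E}[S^k_{j,\pm}(t)\mid\mathcal{B}_t]$, with three small differences: it shows the cross term vanishes instead of using $(x+y)^2\le 2x^2+2y^2$, it applies It\^o's isometry to $\int_0^t P^k_{j,\pm}(t-s)\,dR_s$ with $[R]_t=M_t$ rather than your $\langle R\rangle$, and it needs no fallback because replication-independent mutations occur at rate $\mu Z_0(s-)$ in the original model, so the compensator $(av+\mu)\int_0^t Z_0(s)\,ds$ holds directly.
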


\begin{proof}
Let $C_t=\mathbb{E}[S_{j,\pm}^k(t)|\mathcal{B}_t]$ and then note that

\begin{align}\label{eq:2ndMomentDecomposition}
    &\mathbb{E}\left[\left(S_{j,\pm}^{k}(t)-\overline{S}_{j,\pm}^{k}(t)\right)^2\right]\nonumber\\
    &=\mathbb{E}\left[\left(S_{j,\pm}^{k}(t)-C_t+C_t-\overline{S}_{j,\pm}^{k}(t)\right)^2\right]\nonumber\\
    &=\mathbb{E}\left[\left(S_{j,\pm}^{k}(t)-C_t\right)^2\right]+2\mathbb{E}\left[\left(S_{j,\pm}^{k}(t)-C_t\right)\left(C_t-\overline{S}_{j,\pm}^{k}(t)\right)\right]+\mathbb{E}\left[\left(C_t-\overline{S}_{j,\pm}^{k}(t)\right)^2\right].
\end{align}

Note that $C_t,\overline{S}_{j,\pm}^{k}(t)$ are both $\mathcal{B}_t$-measurable. Thus using towering the middle term vanishes:

\[\mathbb{E}\left[\left(S_{j,\pm}^{k}(t)-C_t\right)\left(C_t-\overline{S}_{j,\pm}^{k}(t)\right)|\mathcal{B}_t\right]=0.\]
Now note that the first term in \eqref{eq:2ndMomentDecomposition} is the mean of the conditional variance: $\mathbb{E}\left[\mathbb{V}\left(S_{j,\pm}^{k}(t)|\mathcal{B}_t\right)\right].$
Recall our definition of $S_{j,\pm}^{k}(t)$ (Eq. \ref{eq:def_S_j_pm}) which is a sum of indicators of functions of $Z_1^{(i)}$ (which are i.i.d.) and $T_i$. Thus conditional on $\mathcal{B}_t$, the sum is over independent random indicators\footnote{Note that these indicators would not be independent if we wanted to prove analogous limit theorems for hitchiker/passenger SFS.}. Also the variance of the indicator, conditional on $\mathcal{B}_t$, is simply $P_{j,\pm}^{k}(t-T_i)(1-P_{j,\pm}^{k}(t-T_i))$ and we thus have

\begin{align}
  \mathbb{E}\left[\mathbb{V}\left(S_{j,\pm}^{k}(t)|\mathcal{B}_t\right)\right]&=\mathbb{E}\left[\sum_{T_i\leq t} P_{j,\pm}^{k}(t-T_i)(1-P_{j,\pm}^{k}(t-T_i))\right]\nonumber\\
  &=\int_0^t (av+\mu)\mathbb{E}[Z_0(s)]P_{j,\pm}^{k}(t-s)(1-P_{j,\pm}^{k}(t-s))\,ds\leq (A/2)\theta^k e^{\lambda_0 t}.\label{eq:L2_S_to_C}
\end{align}
With the second to last equality due to Campbell's theorem, and the inequality an application of Lemma \eqref{lem:uniform_bound_kth_entrance}.

For the third term in \eqref{eq:2ndMomentDecomposition}, observing that $C_t=\sum_{T_i\leq t}P_{j,\pm}^{k}(t-T_i)=\int_0^t P_{j,\pm}^k(t-s,1)\,dM_s$, and recalling the definition of $\overline{S}_{j,\pm}^{k}(t)$, we can can conclude that $C_t-\overline{S}_{j,\pm}^{k}(t)=\int_0^t P_{j,\pm}^{k}(t-s)\,dR_s$, where $R_t=M_t-(av+\mu)\int_0^t Z_0(s)\,ds$ is a $0$-mean martingale via Lemma \ref{lem:CenteredMuts_Martingale}. The quadratic variation of $R_t$, denoted $[R]_t$ is:
\begin{align*}
    [R]_t=\sum_{0\leq v\leq t}(\Delta R_v)^2=\sum_{0\leq v\leq t}(\Delta M_v)^2=\sum_{0\leq v\leq t}\Delta M_v=M_t.
\end{align*}
So by Ito's isometry and Campbell's theorem:
\begin{align}
  \mathbb{E}\left[(C_t-\overline{S}_{j,\pm}^{k}(t))^2\right]=\mathbb{E}\left[\int_0^t P_{j,\pm}^{k}(t-s)^2\,dM_s\right]=\int_0^t (av+\mu)e^{\lambda_0 s}P_{j,\pm}^{k}(t-s)^2\,ds\leq (A/2)\theta^k e^{\lambda_0t}\label{eq:L2_C_int_Z}.
\end{align}
Combining \ref{eq:L2_S_to_C} and \ref{eq:L2_C_int_Z} gives the result.
\end{proof}

We will need an estimate of the error between $Z_0$ and $Ye^{\lambda_0 s}$ in $L^2$. This can be calculated explicitely:
\begin{lemma}\label{lem:L2_distance}
    Let $Z(s)$ be a one-type continuous-time linear birth-death process with birth rate $\alpha$ and death rate $\beta$ with $\alpha>\beta$. Denote $\lambda=\alpha-\beta$. Let $Q_t=e^{-\lambda t}Z(t)$ and $Y=\lim_{t\to\infty} Q_t$, almost surely. Then:
    \[\mathbb{E}\left[\left(e^{-\lambda t}Z(t)-Y\right)^2\right]=\frac{\alpha+\beta}{\lambda}e^{-\lambda t}\]
\end{lemma}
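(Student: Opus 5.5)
We will use the martingale $Q_t=e^{-\lambda t}Z(t)$ together with its $L^2$-convergence to $Y$. Since $Q_t$ is a square-integrable martingale that converges in $L^2$ to $Y$, the increment $Y-Q_t$ is orthogonal to $Q_t$. Hence
\[\mathbb{E}[(Q_t-Y)^2]=\mathbb{E}[Y^2]-\mathbb{E}[Q_t^2]=\lim_{s\to\infty}\mathbb{E}[Q_s^2]-\mathbb{E}[Q_t^2],\]
and the problem reduces to computing the second moment $m_2(t):=\mathbb{E}[Z(t)^2]$ exactly, starting from $Z(0)=1$.

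The second moment comes from the standard forward (Kolmogorov) equation for the linear birth-death process. With $m_1(t)=\mathbb{E}[Z(t)]=e^{\lambda t}$, applying the generator to $z^2$ gives
\[\alpha z\big((z+1)^2-z^2\big)+\beta z\big((z-1)^2-z^2\big)=2\lambda z^2+(\alpha+\beta)z,\]
so that
\[m_2'(t)=2\lambda m_2(t)+(\alpha+\beta)e^{\lambda t},\qquad m_2(0)=1.\]
Solving this linear ODE yields
\[m_2(t)=e^{2\lambda t}+\frac{\alpha+\beta}{\lambda}\big(e^{2\lambda t}-e^{\lambda t}\big),\]
and therefore
\[\mathbb{E}[Q_t^2]=1+\frac{\alpha+\beta}{\lambda}\big(1-e^{-\lambda t}\big).\]
As $t\to\infty$ this tends to $1+\frac{\alpha+\beta}{\lambda}$, which equals $\mathbb{E}[Y^2]$. (As a consistency check, $Y$ is $0$ with probability $\beta/\alpha$ and otherwise exponential with rate $\lambda/\alpha$, giving $\mathbb{E}[Y^2]=2\alpha/\lambda=1+(\alpha+\beta)/\lambda$.) Subtracting the two expressions gives exactly $\frac{\alpha+\beta}{\lambda}e^{-\lambda t}$.

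The only points needing care are justifying the moment ODE and the $L^2$ convergence. For the ODE, we use that $Z(t)$ is stochastically dominated by a Yule process, so all its moments are finite, which justifies applying Dynkin's formula to $z^2$. For the $L^2$ convergence, we use that $\sup_t\mathbb{E}[Q_t^2]<\infty$, which follows from the formula above, and then invoke the $L^2$ martingale convergence theorem; it also identifies the $L^2$ limit with the almost sure limit $Y$. Neither step is a real obstacle, so the main work is the routine ODE computation.
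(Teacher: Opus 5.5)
Your proof is correct, and it reaches the same formula by a different route from the paper's. The paper writes $Y-Q_t=\int_t^\infty dQ_s$ and applies It\^o's isometry to get $\mathbb{E}[(Y-Q_t)^2]=\mathbb{E}\big[[Q]_\infty-[Q]_t\big]$. It then notes that each birth or death at time $T$ contributes a squared jump $e^{-2\lambda T}$, and integrates this against the mean event intensity $(\alpha+\beta)e^{\lambda s}\,ds$ using Campbell's formula. You instead use orthogonality of martingale increments to reduce to $\lim_s\mathbb{E}[Q_s^2]-\mathbb{E}[Q_t^2]$, and compute the second moment exactly from the generator.

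Underneath, the two arguments are the same identity. Your ODE is equivalent to $\frac{d}{dt}\mathbb{E}[Q_t^2]=(\alpha+\beta)e^{-\lambda t}$, which is exactly the rate at which the expected quadratic variation grows in the paper's argument.

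What each buys:
\begin{itemize}
\item The paper's version is shorter and avoids solving any ODE. It also fits the template used repeatedly elsewhere in the paper (e.g.\ $[R]_t=M_t$ in Proposition~\ref{prop:L2_driver}). However, it leans on the $L^2$ convergence of $Q$, which is quoted as known rather than proved.
\item Your version is more elementary and self-contained. It establishes $\sup_t\mathbb{E}[Q_t^2]<\infty$ and identifies the $L^2$ limit with $Y$ explicitly. The check $\mathbb{E}[Y^2]=2\alpha/\lambda$ against the known law of $Y$ is a good confirmation.
\item The price of your version is justifying Dynkin's formula for the unbounded function $z^2$, which your Yule-domination remark adequately handles.
\end{itemize}

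Both arguments implicitly take $Z(0)=1$; with $Z(0)=n$ the right-hand side picks up a factor $n$.
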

\begin{proof}
    This can be computed using the Kolmogorov backwards equation and using various properties of branching processes. There is however an easier way. Note that the quadratic variation:
    \[[Q]_t=\sum_{T\in\Phi_{\text{event}}} e^{-2\lambda T},\]
    where $\Phi_{\text{event}}$ is the point process of event timings, having mean intensity $(\alpha+\beta)e^{\lambda t}\,dt$. $Q_t$ is a $\sigma(\{Z(s)\}_{0\leq s\leq t})$-martingale. Then, we can write $Y-e^{-\lambda t}Z(t)$ as a stochastic integral and use Ito's isometry followed by Campbell's theorem:
    \begin{align*}
        \mathbb{E}\left[\left(Y-e^{-\lambda t}Z(t)\right)^2\right]=\mathbb{E}\left[\left(\int_t^\infty\,dQ_s\right)^2\right]&=\mathbb{E}\left[\int_t^\infty d[Q]_s\right]\\
        &=(\alpha+\beta)\int_t^\infty e^{-\lambda s}\,ds\\
        &=\frac{\alpha+\beta}{\lambda}e^{-\lambda t}
    \end{align*}
\end{proof}

\begin{lemma}\label{lem:pm_approx_via_Y}
There exists a positive constant $A$ such that for all $t>0$
\[
\mathbb{E}\left[(\overline{S}_{j,\pm}^{k}(t)-\widehat{S}_{j,\pm}^{k}(t))^2\right]\leq At\theta^ke^{\lambda_0 t}.
\]
\end{lemma}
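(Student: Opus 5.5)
The difference to control is
\[
\overline{S}_{j,\pm}^{k}(t)-\widehat{S}_{j,\pm}^{k}(t)=(av+\mu)\int_0^t \bigl(Z_0(s)-Ye^{\lambda_0 s}\bigr)P_{j,\pm}^k(t-s)\,ds=(av+\mu)\int_0^t e^{\lambda_0 s}\bigl(e^{-\lambda_0 s}Z_0(s)-Y\bigr)P_{j,\pm}^k(t-s)\,ds .
\]
I will apply Cauchy--Schwarz in the $ds$ integral, splitting the weight symmetrically. With $D_s=e^{-\lambda_0 s}Z_0(s)-Y$,
\[
\Bigl(\int_0^t e^{\lambda_0 s}D_sP_{j,\pm}^k(t-s)\,ds\Bigr)^2\leq \Bigl(\int_0^t P_{j,\pm}^k(t-s)\,ds\Bigr)\Bigl(\int_0^t e^{2\lambda_0 s}D_s^2P_{j,\pm}^k(t-s)\,ds\Bigr).
\]
Equivalently, one can use the cruder bound $(\int_0^t f)^2\le t\int_0^t f^2$. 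That variant is where the factor $t$ in the statement naturally comes from, so I will use it:
\[
\mathbb{E}\bigl[(\overline{S}_{j,\pm}^{k}(t)-\widehat{S}_{j,\pm}^{k}(t))^2\bigr]\leq (av+\mu)^2\, t\int_0^t e^{2\lambda_0 s}\,\mathbb{E}[D_s^2]\,P_{j,\pm}^k(t-s)^2\,ds .
\]

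Next I apply Lemma \ref{lem:L2_distance} to the type-0 process. This process is a linear birth-death process with birth rate $\alpha=a(1-v)$ and death rate $\beta=d+\mu$, so $\lambda=\lambda_0$. The lemma gives $\mathbb{E}[D_s^2]=\frac{\alpha+\beta}{\lambda_0}e^{-\lambda_0 s}$. For this I need the alternate representation of Section \ref{app:alt_representation_model}, under which the type-0 process is exactly such a one-type process and mutation to type-1 counts as a death. Substituting turns the integrand into $\frac{\alpha+\beta}{\lambda_0}e^{\lambda_0 s}P_{j,\pm}^k(t-s)^2$.

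Finally, Lemma \ref{lem:uniform_bound_kth_entrance} gives $P_{j,\pm}^k\le P_{j,\pm}^k\cdot 1\lesssim\theta^k$, and $P_{j,\pm}^k(t-s)^2\le P_{j,\pm}^k(t-s)$, so the integrand is at most $C\theta^k e^{\lambda_0 s}$, uniformly in $j$ and $t$. The constant must cover all $k$, not only large $k$. This works because $\theta^k$ is bounded below for small $k$ and $P\le 1$, so enlarging $C$ suffices. Then $\int_0^t e^{\lambda_0 s}\,ds\le e^{\lambda_0 t}/\lambda_0$, which yields $\mathbb{E}[(\overline S-\widehat S)^2]\le At\theta^k e^{\lambda_0 t}$.

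I expect no serious obstacle here. The only point needing care is justifying Lemma \ref{lem:L2_distance} for $Z_0$ in the presence of mutations, which is exactly the alternate-representation identification above. Fubini applies because everything is nonnegative.
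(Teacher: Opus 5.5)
Your proof is correct and follows essentially the same route as the paper: Cauchy--Schwarz in the $ds$-integral (which produces the factor $t$), the exact $L^2$ identity of Lemma \ref{lem:L2_distance} applied to $Z_0$ as a birth--death process with rates $a(1-v)$ and $d+\mu$, and the geometric bound of Lemma \ref{lem:uniform_bound_kth_entrance}. The only difference is cosmetic: the paper splits Cauchy--Schwarz as $\int_0^t P_{j,\pm}^k(t-s)^2\,ds\cdot\int_0^t (Z_0(s)-Ye^{\lambda_0 s})^2\,ds$, whereas you pair $1$ against the full integrand, and the resulting bound is the same.
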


\begin{proof}
  By Cauchy-Schwarz:
  \[\left(\int_0^t (av+\mu)P_{j,\pm}^{k}(t-s)(Z_0(s)-Ye^{\lambda_0 s})\,ds\right)^2\leq (av+\mu)^2\int_0^t P_{j,\pm}^{k}(t-s)^2\,ds\int_0^t (Z_0(s)-Ye^{\lambda_0 s})^2\,ds\]
  Note the bounds:
  \[\int_0^t P_{j,\pm}^{k}(t-s)^2\,ds\leq C\theta^k t\]
  and:
  \[\mathbb{E}\left[\left(Z_0(s)-Ye^{\lambda_0 s}\right)^2\right]\lsimas{s}{\infty}e^{\lambda_0 s},\]
  from the prior lemma. The proof is complete combining the above.
\end{proof}

We can now combine our previous lemmas and propositions to establish the following a.s. convergence.

\begin{proposition}
  $\lim_{t\to\infty} e^{-\lambda_0 t}S_j(t)=(av+\mu)Y\int_0^\infty e^{-\lambda_0 s}P_j(s)\,ds$ on $\Omega_0^\infty$.\label{prop:almost_sure_conv}
\end{proposition}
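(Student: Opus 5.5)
We will apply Proposition \ref{prop:convergence_tool} separately to each monotone process $S_{j,\pm}^k(t)$ and then reassemble $S_j(t)$ via the decomposition \eqref{eq:S_j_decomposition}.

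\textbf{Step 1: convergence of each $S_{j,\pm}^k$.} Fix $j,k$ and a sign. Set $X_t=S_{j,\pm}^k(t)$, which is non-negative and non-decreasing, and $Y_t=\widehat{S}_{j,\pm}^k(t)$, with $\lambda=\lambda_0$. Since $e^{-\lambda_0 t}\widehat{S}_{j,\pm}^k(t)=(av+\mu)Y\int_0^t e^{-\lambda_0 s}P_{j,\pm}^k(s)\,ds$, this converges a.s. (monotonically in $t$) to $\widetilde Y^k_\pm:=(av+\mu)Y\int_0^\infty e^{-\lambda_0 s}P^k_{j,\pm}(s)\,ds$. The limit is finite because $P^k_{j,\pm}\le 1$ and $\lambda_0>0$. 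For the integrability condition we combine Proposition \ref{prop:L2_driver} and Lemma \ref{lem:pm_approx_via_Y} using the triangle inequality in $L^2$:
\[
\mathbb{E}\bigl[|S^k_{j,\pm}(t)-\widehat S^k_{j,\pm}(t)|\bigr]\le \bigl(A\theta^k e^{\lambda_0 t}\bigr)^{1/2}+\bigl(At\theta^k e^{\lambda_0 t}\bigr)^{1/2}\lesssim \theta^{k/2}(1+\sqrt t)\,e^{\lambda_0 t/2}.
\]
Hence $\int_0^\infty e^{-\lambda_0 t}\mathbb{E}|X_t-Y_t|\,dt\lesssim \theta^{k/2}\int_0^\infty(1+\sqrt t)e^{-\lambda_0 t/2}\,dt<\infty$. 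Proposition \ref{prop:convergence_tool} then gives $e^{-\lambda_0 t}S^k_{j,\pm}(t)\to\widetilde Y^k_\pm$ a.s. This holds on the whole space, not only on $\Omega_0^\infty$.

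\textbf{Step 2: summing over $k$.} This is the main obstacle, since we need to exchange the a.s. limit with the infinite sum over $k$. I would handle it with a tail bound that is uniform in $t$ and summable in $k$. Let $E_K(t)=e^{-\lambda_0 t}\sum_{k>K}(S^k_{j,+}(t)+S^k_{j,-}(t))$. Because each $S^k_{j,\pm}$ is monotone, it suffices to control $\sup_t E_K(t)$ a.s.

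The mean of $E_K(t)$ is bounded by $(av+\mu)\int_0^\infty e^{-\lambda_0 s}\sum_{k>K}2P^k_{j,+}(s)\,ds\lesssim\theta^K$, uniformly in $t$. However, a bound on the mean at each fixed $t$ does not control the supremum over $t$.

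I would first try the following route. Set $U_K(t):=\sum_{k>K}S^k_{j,+}(t)$, which is non-decreasing, and run the argument of Step 1 again with the summed processes. Let $X_t=U_K(t)$ and $Y_t=\sum_{k>K}\widehat S^k_{j,+}(t)$. The $L^1$ error estimates sum over $k$ because $\sum_k\theta^{k/2}<\infty$. Proposition \ref{prop:convergence_tool} then yields $e^{-\lambda_0 t}U_K(t)\to (av+\mu)Y\int_0^\infty e^{-\lambda_0 s}\sum_{k>K}P^k_{j,+}(s)\,ds$ a.s., and this limit is $\lesssim Y\theta^K$. The same argument applies with $-$ in place of $+$.

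Equivalently, and more simply, apply the Step 1 argument directly to the two monotone processes $\sum_{k\ge1}S^k_{j,+}(t)$ and $\sum_{k\ge1}S^k_{j,-}(t)$. Each is non-decreasing, and its $\widehat S$ counterpart converges by monotone convergence. The $L^1$ bound holds after summing the per-$k$ bounds using $\sum_k\theta^{k/2}<\infty$; this requires Minkowski's inequality in $L^1$ and care that the constant $A$ is uniform in $k$, which Lemma \ref{lem:P_j_pm_summable} provides.

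\textbf{Step 3: conclusion.} Subtracting the two limits and using \eqref{eq:P_j_decomposition} (with Fubini, justified by the geometric bound) gives
\[
e^{-\lambda_0 t}S_j(t)\to (av+\mu)Y\int_0^\infty e^{-\lambda_0 s}\sum_k\bigl(P^k_{j,+}(s)-P^k_{j,-}(s)\bigr)\,ds=(av+\mu)Y\int_0^\infty e^{-\lambda_0 s}P_j(s)\,ds
\]
a.s. In particular this holds on $\Omega_0^\infty$.
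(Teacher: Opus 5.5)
Your proposal is correct, and its final form, applying Proposition~\ref{prop:convergence_tool} directly to the two non-decreasing processes $\sum_{k\ge 1}S^k_{j,\pm}(t)$, is exactly the paper's argument: you sum the per-$k$ bounds $\lesssim\theta^{k/2}(1+\sqrt{t})\,e^{\lambda_0 t/2}$ from Proposition~\ref{prop:L2_driver} and Lemma~\ref{lem:pm_approx_via_Y}, then recombine via \eqref{eq:S_j_decomposition} and \eqref{eq:P_j_decomposition}. The per-$k$ convergence in Step 1 and the tail/supremum discussion at the start of Step 2 are unnecessary detours, because the summed application of the comparison criterion already handles the interchange of the limit and the sum over $k$.
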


\begin{proof}
  Using Proposition \ref{prop:L2_driver}, we have:

    \[\mathbb{E}\left|S_{j,\pm}^{k}(t)-\overline{S}_{j,\pm}^{k}(t)\right|\leq \sqrt{\mathbb{E}\left|S_{j,\pm}^{k}(t)-\overline{S}_{j,\pm}^{k}(t)\right|^2}\leq C_2\theta^{k/2}e^{(\lambda_0/2) t}.\]
From Lemma \ref{lem:pm_approx_via_Y} we obtain:

\[\mathbb{E}\left|\overline{S}_{j,\pm}^{k}(t)-\widehat{S}_{j,\pm}^{k}(t)\right|\leq C_3\sqrt{t}\theta^{k/2}e^{\lambda_0 t/2},\]
and using the triangle inequality:
\[\mathbb{E}\left|S_{j,\pm}^{k}(t)-\widehat{S}_{j,\pm}^{k}(t)\right|\leq C_4 \theta^{k/2}\sqrt{t}e^{(\lambda_0t)/2}.\]

Define $\widehat{S}_{j,\pm}(t):=\sum_{k\geq 1}\widehat{S}_{j,\pm}^{k}$, which is summable from Lemma \ref{lem:P_j_pm_summable}, and then via the triangle inequality:

\[\mathbb{E}\left|\sum_{k=1}^\infty S_{j,\pm}^{k}(t)-\widehat{S}_{j,\pm}(t)\right|\leq \sum_{k=1}^\infty \mathbb{E}\left|S_{j,+}^{k}(t)-\widehat{S}_{j,+}^{k}(t)\right|\lesssim\sqrt{t}e^{\lambda_0/2\cdot t}\]
Thus, $\int_0^\infty e^{-\lambda_0 t}\mathbb{E}\left|\sum_{k\geq 1}S_{j,\pm}^{k}(t)-\widehat{S}_{j,\pm}(t)\right|<\infty$. We get a.s. convergence of the sum by noting $e^{-\lambda_0 t}\widehat{S}_{j,\pm}(t)$ converges a.s. and by invoking Proposition \ref{prop:convergence_tool}. By writing $S_j(t)$ as in Eq. \ref{eq:S_j_decomposition} and using the identity Eq. \ref{eq:P_j_decomposition}, the statement is proven.
\end{proof}

We can also extend the above analysis to get a.s. convergence in the case where the selection amount is random:

\begin{corollary}
    Suppose that each clone has random fitness $B$, where $B$ has support within a compact interval $[0,b_{\text{max}}]$. Then:

    \[\lim_{t\to\infty}e^{-\lambda_0t}S_j^B(t)=(av+\mu)Y\int_0^\infty e^{-\lambda_0 s}\mathbb{E}\left[P_j(s|B)\right]\,ds\]
 almost surely.\label{cor:L2_as_random_fitness}
\end{corollary}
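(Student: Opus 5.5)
We follow the proof of Proposition \ref{prop:almost_sure_conv} essentially verbatim, replacing the deterministic kernel $P_j(s)$ by its annealed version $\mathbb{E}[P_j(s|B)]$. As in the deterministic case, $S_j^B(t)$ splits as $S_j^B(t)=\sum_{k}S_{j,+}^{k,B}(t)-\sum_k S_{j,-}^{k,B}(t)$, where $S_{j,\pm}^{k,B}(t)$ counts clones that have entered (exited) size $j$ at least $k$ times by time $t$; each of these is monotone in $t$. The key observation is that the pairs $(B_i,Z_1^{(i)})$ are i.i.d. and independent of $\mathcal{B}_t$. Conditional on $\mathcal{B}_t$, the indicator $\mathbf{1}_{I_{j,\pm}^{(i)}(t)\geq k}$ is therefore Bernoulli with parameter $\overline{P}^k_{j,\pm}(t-T_i):=\mathbb{E}[P^k_{j,\pm}(t-T_i|B)]$, and these indicators are independent across $i$. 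We then define $\overline{S}^{k,B}_{j,\pm}$ and $\widehat{S}^{k,B}_{j,\pm}$ exactly as in Table \ref{table:SFS_notation}, with $P^k_{j,\pm}$ replaced by $\overline{P}^k_{j,\pm}$.

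\textbf{Step 1: uniform geometric bound.} We need an analogue of Lemma \ref{lem:uniform_bound_kth_entrance} that is uniform in the fitness. Its proof gives the return probability bound $2d/(a+B+d)\leq 2d/(a+d)=\theta$, which does not depend on $B\in[0,b_{\max}]$. Averaging over $B$ then yields $\overline{P}^k_{j,\pm}(t)\leq \theta^{k}$ up to a constant. (Where $2d/(a+d)\ge 1$, one would instead use the same argument over two steps; we simply inherit whatever the lemma gives, since its $\theta$ was stated to be independent of $b$.)

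\textbf{Step 2: $L^2$ approximations.} We repeat Proposition \ref{prop:L2_driver}. The conditional variance given $\mathcal{B}_t$ equals $\sum_{T_i\le t}\overline{P}^k(1-\overline{P}^k)$, and Campbell's theorem bounds its expectation by $A\theta^k e^{\lambda_0 t}$. The martingale term $C_t-\overline{S}^{k,B}_{j,\pm}(t)=\int_0^t\overline{P}^k_{j,\pm}(t-s)\,dR_s$ is handled by Itô's isometry as before; the martingale $R_t$ of Lemma \ref{lem:CenteredMuts_Martingale} does not involve $B$ at all. Lemma \ref{lem:pm_approx_via_Y} carries over unchanged, since it uses only Cauchy--Schwarz, the bound $\overline{P}^k\le C\theta^k$, and Lemma \ref{lem:L2_distance}. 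Combining these gives $\mathbb{E}|S^{k,B}_{j,\pm}(t)-\widehat{S}^{k,B}_{j,\pm}(t)|\lesssim\theta^{k/2}\sqrt{t}\,e^{\lambda_0 t/2}$. Summing over $k$ produces an error that is integrable against $e^{-\lambda_0 t}$.

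\textbf{Step 3: convergence.} By monotone convergence, $e^{-\lambda_0 t}\widehat{S}^{B}_{j,\pm}(t)=(av+\mu)\,Y\int_0^t e^{-\lambda_0 s}\sum_k\overline{P}^k_{j,\pm}(s)\,ds$ converges a.s. The limit is finite because $\sum_k\overline{P}^k_{j,\pm}\le C/(1-\theta)$ and $\lambda_0>0$. Proposition \ref{prop:convergence_tool} now gives a.s. convergence of $e^{-\lambda_0t}\sum_kS^{k,B}_{j,\pm}(t)$, and subtracting the two limits using \eqref{eq:P_j_decomposition}, averaged over $B$, identifies the limit as $(av+\mu)Y\int_0^\infty e^{-\lambda_0 s}\mathbb{E}[P_j(s|B)]\,ds$. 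Interchanging $\mathbb{E}_B$ with the sum over $k$ is justified by Tonelli.

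\textbf{Main obstacle.} The only step that genuinely needs checking is Step 1: the constant $\theta<1$ must be uniform over the support of $B$. Compactness of $[0,b_{\max}]$, together with monotonicity of the return probability in $b$ (larger birth rate means a lower return probability), should give this. If the paper's $\theta$ needed $d<a$, we would handle the complementary case by bounding the return probability by $\max_{b\in[0,b_{\max}]}$ of a continuous function of $b$ that is strictly less than $1$.
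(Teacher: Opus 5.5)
Your proposal is correct and follows the paper's own argument: average $P^k_{j,\pm}$ over $B$, note that the constant $\theta$ in Lemma \ref{lem:uniform_bound_kth_entrance} does not depend on the fitness, and then rerun Proposition \ref{prop:L2_driver}, Lemma \ref{lem:pm_approx_via_Y} and Proposition \ref{prop:convergence_tool} with the averaged kernel. The hedging in Step 1 is unnecessary, because $\lambda_0>0$ already forces $a>d$, so $2d/(a+B+d)\le 2d/(a+d)<1$ for every $B\ge 0$; in particular, compactness of the support of $B$ is never actually used.
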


\begin{proof}
  Define analogs of $P_{j,\pm}^k$ and the approximations $\overline{S}_{j,\pm}^k,\widehat{S}_{j,\pm}^k$ by replacing $P_{j,\pm}^k$ with the expectation of the probability conditional on selection amount $B$:
  \begin{align*}
      P_{j,\pm}^{k,B}(t)=\mathbb{E}[\mathbb{P}(I_{j,\pm}^{(i)}(t)\geq k|Z_1^{(1)}(0)=1,Z_0(0)=0,B)],
  \end{align*}
  and replacing $P_{j,\pm}^k$ with $P_{j,\pm}^{k,B}$ in the definitions of $\overline{S}_{j,\pm}^k,\widehat{S}_{j,\pm}^k$. Remark still that:
  \[P_{j}^B(t)=\sum_{k\in \mathbb{N}}\left(P_{j,+}^{k,B}(t)-P_{j,-}^{k,B}(t)\right)\]
  by linearity of expectation. Importantly, Lemma \ref{lem:uniform_bound_kth_entrance} follows since $\theta$ is independent of the selection amount. Thus, performing the same analysis, the $L^2$ errors from Proposition \ref{prop:L2_driver} and Lemma \ref{lem:pm_approx_via_Y} are still of the form $\theta^k e^{\lambda_0 t}$, up to a constant multiple. Therefore, the analysis of Proposition \ref{prop:almost_sure_conv} will still show that $S_j^B$ converges almost surely, as desired.

\end{proof}

Then, the large detection-size behavior of $S_j$ follows by approximating $\tau_n$ by $ (1/\lambda_1)\log(n/W)$.

\begin{proposition}
    $\lim_{n\to\infty} \frac{1}{n^{\lambda_0/\lambda_1}}S_j(\tau_n)=(av+\mu)YW^{-\lambda_0/\lambda_1}\int_0^\infty e^{-\lambda_0 s}P_j(s)\,ds$ (conditional on $\Omega_0^\infty$)\label{prop:convergence_detection_size}
\end{proposition}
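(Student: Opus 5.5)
The plan is to transfer the fixed-time almost sure limit of Proposition \ref{prop:almost_sure_conv} to the random time $\tau_n$, using the hitting-time approximation of Lemma \ref{lem:large_size_time}. Write
\[
n^{-\lambda_0/\lambda_1}S_j(\tau_n)=\bigl(e^{-\lambda_0\tau_n}S_j(\tau_n)\bigr)\cdot\bigl(n^{-\lambda_0/\lambda_1}e^{\lambda_0\tau_n}\bigr),
\]
and treat the two factors separately on $\Omega_0^\infty$.

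\textbf{First factor.} On $\Omega_0^\infty\subseteq\Omega_1^\infty$ we have $\tau_n<\infty$ for every $n$, and $\tau_n\to\infty$ as $n\to\infty$, because $t\mapsto Z_0(t)+Z_1(t)$ is finite on compacts (it has finitely many jumps there) and $\tau_n$ is nondecreasing. Proposition \ref{prop:almost_sure_conv} gives an almost sure limit of $e^{-\lambda_0 t}S_j(t)$ along the continuous parameter $t\to\infty$. Off a single null set, this limit therefore also holds along the random sequence $t=\tau_n(\omega)$. No stopping-time or uniformity argument is needed, since the convergence is pathwise in $t$. This yields
\[
e^{-\lambda_0\tau_n}S_j(\tau_n)\to(av+\mu)Y\int_0^\infty e^{-\lambda_0 s}P_j(s)\,ds.
\]

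\textbf{Second factor.} Apply Lemma \ref{lem:large_size_time} to $X_t=Z_0(t)+Z_1(t)$ with $\lambda=\lambda_1$. Here $\lambda_1>\lambda_0$ since $b>0$. Also $e^{-\lambda_1 t}Z_0(t)=e^{-(\lambda_1-\lambda_0)t}\,e^{-\lambda_0t}Z_0(t)\to 0$ a.s., and $e^{-\lambda_1 t}Z_1(t)\to W$ by the representation in Appendix \ref{app:W_representation}. Hence $e^{-\lambda_1 t}X_t\to W$.

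The hypothesis of the lemma requires $W>0$ on $\Omega_0^\infty$. I expect this to be the main point needing care, and I would argue it as follows. On $\Omega_0^\infty$ the type-0 population grows like $Ye^{\lambda_0 t}$ with $Y>0$, so infinitely many mutant clones are produced. Each clone independently has $Y_1^{(i)}>0$ with probability $q_1>0$, and conditional on the $T_i$ these events are independent. By Borel--Cantelli, some term $e^{-\lambda_1T_i}Y_1^{(i)}$ is positive almost surely, so $W\geq e^{-\lambda_1T_i}Y_1^{(i)}>0$.

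Since $X$ jumps by $\pm1$, $\tau_n$ defined with $=n$ coincides with the lemma's $\geq n$ version. The lemma then gives $|\tau_n-\tfrac1{\lambda_1}\log(n/W)|\to0$, hence
\[
e^{\lambda_0\tau_n}\sim (n/W)^{\lambda_0/\lambda_1}\quad\text{and}\quad n^{-\lambda_0/\lambda_1}e^{\lambda_0\tau_n}\to W^{-\lambda_0/\lambda_1}.
\]

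\textbf{Conclusion.} Multiplying the two almost sure limits gives the claimed limit $(av+\mu)YW^{-\lambda_0/\lambda_1}\int_0^\infty e^{-\lambda_0 s}P_j(s)\,ds$ on $\Omega_0^\infty$. The identification with $YW^{-\lambda_0/\lambda_1}\lim_{s\to\infty}e^{-\lambda_0 s}\mathbb{E}[S_j(s)]$ is then immediate from \eqref{eq:asymp_mean}.
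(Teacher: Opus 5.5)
Your proposal is correct and follows the paper's proof: you apply Proposition \ref{prop:almost_sure_conv} pathwise along $\tau_n\to\infty$, then use Lemma \ref{lem:large_size_time} to replace $e^{\lambda_0\tau_n}$ by $(n/W)^{\lambda_0/\lambda_1}$. You also verify hypotheses the paper leaves implicit: that $e^{-\lambda_1 t}(Z_0(t)+Z_1(t))\to W$ because $\lambda_1>\lambda_0$, that $W>0$ on $\Omega_0^\infty$, and that the ``$=n$'' and ``$\geq n$'' hitting times agree, since the total population changes by at most one per event.
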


\begin{proof}
Since $\tau_n\to\infty$ a.s., we can now apply Proposition \ref{prop:almost_sure_conv} to conclude that condtional on $\Omega_\infty$:
  \begin{align}
    \lim_{n\to\infty} e^{-\lambda_0\tau_n}S_j(\tau_n)=(av+\mu)Y\int_0^\infty e^{-\lambda_0 s}P_j(s)\,ds\label{eq:hitting_times_large}.
  \end{align}
    Set $t_n=\frac{1}{\lambda_1}\log\left(\frac{n}{W}\right)$ and note that $|\tau_n-t_n|\to 0$ as $n\to\infty$ a.s. by Lemma \ref{lem:large_size_time}. This can then be combined with \eqref{eq:hitting_times_large} to conclude
    \begin{align}
        \lim_{n\to\infty} e^{-\lambda_0\tau_n}S_j(\tau_n)&=\lim_{n\to\infty} \left(e^{-\lambda_1(\tau_n-t_n)}e^{-\lambda_1t_n}\right)^{\lambda_0/\lambda_1}S_j(\tau_n)\nonumber\\
        &=\lim_{n\to\infty}\frac{W^{\lambda_0/\lambda_1}}{n^{\lambda_0/\lambda_1}}S_j(\tau_n).\label{eq:size_large}
    \end{align}
The desired result then follows by combining Eqs. \ref{eq:hitting_times_large} and \ref{eq:size_large}.
\end{proof}

Finally, Theorem \ref{prop:convergence_results} can easily be shown.

\ConvergenceResults*

\begin{proof}
  $L^2$ convergence follows by adapting Proposition \ref{prop:L2_driver} and Lemma \ref{lem:pm_approx_via_Y}. Specifically, Proposition \ref{prop:L2_driver} can be adapted to show $\mathbb{E}\left[\left(S_j(t)-\overline{S}_j(t)\right)^2\right]\lesssim e^{\lambda_0 t}$ by replacing $S_{j,\pm}^{k}$ with $S_j$ and $P_{j,\pm}^{k}$ with $P_j$. Similarly, adapting Lemma \ref{lem:pm_approx_via_Y}, we get $\mathbb{E}\left[(\overline{S}_j(t)-\widehat{S}_j(t))^2\right]\lesssim te^{\lambda_0 t}$. Thus $e^{-\lambda_0 t}S_j(t)$ converges in $L^2$.
  
  Almost sure convergence for $e^{-\lambda_0 t}S_j(t)$ and $n^{-\lambda_0/\lambda_1}S_j(\tau_n)$ were established in Propositions \ref{prop:almost_sure_conv} and \ref{prop:convergence_detection_size} respectively. $L^2$ and a.s. convergence the random fitness advance setting follows from Corollary \ref{cor:L2_as_random_fitness}. 
\end{proof}

Finally, we can easily prove convergence results for $S_{\geq j}(t)$, counting the number of mutations present in $\geq j$ cells at time $t$.

\begin{corollary}
  The following a.s. convergence result hold on $\Omega_0^\infty$:
  \begin{align*}
    &\lim_{t\to\infty}e^{-\lambda_0 t}S_{\geq j}(t)=(av+\mu)Y\int_0^\infty e^{-\lambda_0 s}\sum_{k=j}^\infty P_k(s)\,ds,\\
    &\lim_{n\to\infty}\frac{1}{n^{\lambda_0/\lambda_1}}S_{\geq j}(\tau_n)=(av+\mu)YW^{-\lambda_0/\lambda_1}\int_0^\infty e^{-\lambda_0 s}\sum_{k=j}^\infty P_k(s)\,ds.
  \end{align*}
  The series of equalities given by \ref{eq:normalized_SFS_limit} holds.
\end{corollary}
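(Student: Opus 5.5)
The plan is to rerun the argument of Proposition \ref{prop:almost_sure_conv} with $S_j$ replaced by $S_{\geq j}$, and then divide. The key simplification is that a clone's size is at least $j$ at time $t$ exactly when it has entered size $j$ once more than it has exited sizes $\geq j$ downward. It is cleaner still to write
\[
S_{\geq j}(t)=\sum_{i}\mathbf{1}_{\{Z_1^{(i)}(t-T_i)\geq 1\}}-\sum_{k=1}^{j-1}S_k(t)
\]
for $j\geq 2$. By Proposition \ref{prop:almost_sure_conv}, each $S_k$ with $k<j$ already converges almost surely after scaling by $e^{-\lambda_0 t}$. It therefore suffices to treat $S_{\geq 1}(t)$, the number of surviving clones.

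For $S_{\geq 1}$, decompose into monotone pieces as $S_{\geq 1}(t)=M_t-D_t$. Here $M_t$ counts clones ever produced and $D_t=\sum_i\mathbf{1}_{\{T_i+\text{extinction time}_i\leq t\}}$ counts clones extinct by time $t$; both are nondecreasing. The approximants are $\widehat M_t=(av+\mu)Y\int_0^t e^{\lambda_0 s}\,ds$ and $\widehat D_t=(av+\mu)Y\int_0^t e^{\lambda_0 s}P_0(t-s)\,ds$.

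The $L^2$ bound for $D_t$ is exactly Proposition \ref{prop:L2_driver} with $P^k_{j,\pm}$ replaced by $P_0$. The conditional variance given $\mathcal{B}_t$ is a sum of independent Bernoulli variances, controlled via Campbell's theorem. The compensator error is $\int_0^t P_0(t-s)\,dR_s$, handled by Itô's isometry and Lemma \ref{lem:CenteredMuts_Martingale}. Together these give a bound $\lesssim e^{\lambda_0 t}$.

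The $Z_0$-to-$Ye^{\lambda_0 s}$ step is Lemma \ref{lem:pm_approx_via_Y}, using Lemma \ref{lem:L2_distance}; this yields $\mathbb{E}|D_t-\widehat D_t|\lesssim\sqrt{t}\,e^{\lambda_0 t/2}$. The same holds for $M_t$ with $P\equiv 1$. Since $e^{-\lambda_0 t}\widehat M_t$ and $e^{-\lambda_0 t}\widehat D_t$ converge almost surely (the latter by dominated convergence, as $P_0\leq 1$ and $e^{-\lambda_0 s}$ is integrable), Proposition \ref{prop:convergence_tool} gives almost sure convergence of each piece. Subtracting and using $1-P_0=\sum_{k\geq 1}P_k$ yields the first limit with $\int_0^\infty e^{-\lambda_0 s}\sum_{k\geq j}P_k(s)\,ds$.

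The fixed-size statement then follows verbatim from Proposition \ref{prop:convergence_detection_size}. Since $\tau_n\to\infty$ on $\Omega_0^\infty$, Lemma \ref{lem:large_size_time} gives $e^{-\lambda_0\tau_n}\sim n^{-\lambda_0/\lambda_1}W^{\lambda_0/\lambda_1}$.

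For the ratio \eqref{eq:normalized_SFS_limit}, divide the $S_j$ limit of Theorem \ref{prop:convergence_results} by the $S_{\geq 1}$ limit. The random factors $(av+\mu)Y$ or $(av+\mu)YW^{-\lambda_0/\lambda_1}$ cancel; they are positive on $\Omega_0^\infty$ since $Y>0$ and $W\in(0,\infty)$ there. The numerator, by Proposition \ref{prop:exact_time_asymptotic_mean_SFS}, equals
\[
\frac{q_1}{\lambda_1}\,\frac{\Gamma(j)\Gamma(\lambda_0/\lambda_1+1)}{\Gamma(j+\lambda_0/\lambda_1+1)}\,{}_2F_1\!\left(\tfrac{\lambda_0}{\lambda_1},j;j+\tfrac{\lambda_0}{\lambda_1}+1;p_1\right).
\]
For the denominator, use $1-P_0(s)=q_1/(1-p_1e^{-\lambda_1 s})$ and substitute $x=e^{-\lambda_1 s}$:
\[
\int_0^\infty e^{-\lambda_0 s}\bigl(1-P_0(s)\bigr)\,ds=\frac{q_1}{\lambda_1}\int_0^1\frac{x^{\lambda_0/\lambda_1-1}}{1-p_1x}\,dx=\frac{q_1}{\lambda_0}\,{}_2F_1\!\left(1,\tfrac{\lambda_0}{\lambda_1};\tfrac{\lambda_0}{\lambda_1}+1;p_1\right).
\]
This follows by expanding $(1-p_1x)^{-1}$ termwise, and after cancelling $q_1$ it matches the stated expression.

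The main obstacle is only bookkeeping: checking that $L^2$ estimates uniform in $k$ are unnecessary here, since no infinite sum over $k$ is needed in the $M-D$ decomposition. The sum over $k<j$ in the first display is finite, so no summability issue arises.
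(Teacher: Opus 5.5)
Your proof is correct, but it uses a different decomposition from the paper's.

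\textbf{The paper's route.} The paper redefines $\tau^i_{j,\pm}(k)$ as the successive times the $i$th clone enters and exits $\{j,j+1,\dots\}$. It then reruns the full $k$-indexed machinery of Proposition~\ref{prop:L2_driver}, Lemma~\ref{lem:pm_approx_via_Y} and Proposition~\ref{prop:almost_sure_conv}, summing the $L^1$ errors over $k$. This needs a geometric bound on the number of exits from $\{j,j+1,\dots\}$. Lemma~\ref{lem:uniform_bound_kth_entrance} concerns returns to a single state, so it does not literally supply that bound, and the paper leaves it implicit. The bound does hold: each exit requires a descent from $j$ to $j-1$, which has probability $p_1$ by the strong Markov property.

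\textbf{Your route.} You use that $0$ is absorbing, so $\{1,2,\dots\}$ is entered exactly once (at $T_i$) and left at most once (at extinction). This gives $S_{\geq 1}(t)=M_t-D_t$ with no sum over $k$; for $j=1$ the two decompositions coincide. You then reach $S_{\geq j}$ by subtracting the finitely many $S_k$ with $k<j$, whose almost sure limits are already established.

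\textbf{Trade-off.} Your argument is shorter and needs no new summability estimate. The price is that it leans on the $S_k$ result and on the absorbing structure of $\{1,2,\dots\}$. The paper's route is a single self-contained argument that would adapt to other sets of sizes.

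\textbf{The hypergeometric identity.} For \eqref{eq:normalized_SFS_limit}, you use the closed form $1-P_0(s)=q_1/(1-p_1e^{-\lambda_1 s})$ and expand termwise. The paper instead sums \eqref{eq:SFS_mean_integral} over $j$ by monotone convergence and identifies the resulting integral as a hypergeometric function. The two give the same ${}_2F_1(1,\lambda_0/\lambda_1;\lambda_0/\lambda_1+1;p_1)$ expression.
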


\begin{proof}
  Conduct the same analysis as to show convergence for $S_j$, but take $\tau_{j,+}^i(k),\tau_{j,-}^i(k)$ to be the times the $i$th mutant clone becomes \textit{at least} size $j$ $k$ times and the times the $i$th mutant clone exits from the region $\{j,j+1,\dots\}$ $k$ times.

  For the series of equalities in \ref{eq:normalized_SFS_limit}, the first three expressions are clearly equal by the almost sure limits. Therefore, it remains to check the last equality. From Proposition \ref{prop:SFS_mean_asymptotics}:
  \begin{align*}
      \lim_{t\to\infty}e^{-\lambda_0 t}\mathbb{E}[S_j(t)]&=(av+\mu)\frac{q_1}{\lambda_1}\frac{\Gamma(j)\Gamma(\lambda_0/\lambda_1+1)}{\Gamma(j+\lambda_0/\lambda_1+1)}\sum_{n=0}^\infty \frac{(\lambda_0/\lambda_1)_n(j)_n}{(j+1+\lambda_0/\lambda_1)_n n!}p_1^n\\
      &=(av+\mu)\frac{q_1}{\lambda_1}\frac{\Gamma(j)\Gamma(\lambda_0/\lambda_1+1)}{\Gamma(j+\lambda_0/\lambda_1+1)}{}_2F_1(\lambda_0/\lambda_1,j;j+\lambda_0/\lambda_1+1;p_1).
  \end{align*}
  By monotone convergence:
  \begin{align*}
      \lim_{t\to\infty}e^{-\lambda_0 t}\mathbb{E}[S_{\geq 1}(t)]=(av+\mu)\sum_{j\geq 1}\int_0^\infty P_j(s)e^{-\lambda_0 s}\,ds
  \end{align*}
  and by using \ref{eq:SFS_mean_integral}
  \begin{align*}
      &=(av+\mu)\frac{q_1}{\lambda_1}\int_0^1 (1-y)^{\lambda_0/\lambda_1-1}(1-p_1 y)^{-\lambda_0/\lambda_1}\,dy\\
      &=(av+\mu)\frac{q_1}{\lambda_0}{}_2F_1(\lambda_0/\lambda_1,1;\lambda_0/\lambda_1+1;p_1).
  \end{align*}
  Dividing both yields the last equality.
\end{proof}

\subsection{Sparsity of mutations at sizes varying in time}\label{app:SFSSparsity}
Here we prove Theorem \ref{thm:SFSSparsity} and then Corollary \ref{cor:SFSSparsity}.
\SFSSparsity*
\begin{proof}
  This is a combination of Corollary \ref{cor:sparsity_part1_2_3}, Lemmas \ref{lem:sparsity_part2} and \ref{lem:sparsity_part3}, and Propositions \ref{prop:limsup_j_k_plus} and \ref{prop:liminf_j_k_mimus}
\end{proof}
Throughout, assume $j(t)$ is non-decreasing. We prove this Proposition in various steps; firstly, we obtain a simpler expression of $\mathbb{E}[S_{j(t)}(t)]$:
\begin{lemma}\label{lem:changing_j_mean}
  Define:
  \begin{align}
      I_t(n)=\int_{ne^{-\lambda_1t}}^{n}\frac{\lambda_1u^{\frac{\lambda_0}{\lambda_1}}}{((a+b)-d(u/n))^2}\left(\frac{(a+b)n-(a+b)u}{(a+b)n-du}\right)^{n-1}\,du\label{eq:I_t_n_def}
  \end{align}
  Then for $j(t):\mathbb{R}_{\geq 0}\to\mathbb{N}$:
  \begin{align}
    \mathbb{E}[S_{j(t)}(t)]=(av+\mu)e^{\lambda_0 t}j(t)^{-\frac{\lambda_1+\lambda_0}{\lambda_1}}I_t(j(t))\label{eq:meanSFS_varying}
  \end{align}
\end{lemma}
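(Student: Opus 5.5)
The identity is exact, so the plan is a direct computation starting from Proposition \ref{prop:SFS_mean}. I will reuse the first change of variables from the proof of Proposition \ref{prop:exact_time_asymptotic_mean_SFS}, then rescale by $j=j(t)$ to produce the prefactor $j^{-(\lambda_1+\lambda_0)/\lambda_1}$.

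\textbf{Step 1: reduce to a time integral.} Fix $t$ and write $j=j(t)$. Since $\mathbb{E}[Z_0(s)]=e^{\lambda_0 s}$, equation \eqref{eq:exact_mean_driver_sfs} gives
\[
\mathbb{E}[S_j(t)]=(av+\mu)e^{\lambda_0 t}\int_0^t e^{-\lambda_0 s}P_j(s)\,ds .
\]
Now substitute the explicit kernel \eqref{eq:P_j}. Dividing numerator and denominator by $e^{\lambda_1 s}$ and simplifying, the first two factors combine into
\[
\frac{\lambda_1^2 e^{-\lambda_1 s}}{\bigl((a+b)-de^{-\lambda_1 s}\bigr)^2}.
\]
(This is the step to check most carefully: the product is $(a+b-d)^2e^{\lambda_1 s}/((a+b)e^{\lambda_1 s}-d)^2$, and since $\lambda_1=a+b-d$ it equals the display above.) The third factor is
\[
\left(\frac{(a+b)-(a+b)e^{-\lambda_1 s}}{(a+b)-de^{-\lambda_1 s}}\right)^{j-1}.
\]

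\textbf{Step 2: change variables.} Put $x=e^{-\lambda_1 s}$, so $ds=-dx/(\lambda_1 x)$ and $e^{-\lambda_0 s}=x^{\lambda_0/\lambda_1}$. The interval $s\in[0,t]$ becomes $x\in[e^{-\lambda_1 t},1]$, and the integral becomes
\[
\lambda_1\int_{e^{-\lambda_1 t}}^{1}\frac{x^{\lambda_0/\lambda_1}}{\bigl((a+b)-dx\bigr)^2}\left(\frac{(a+b)-(a+b)x}{(a+b)-dx}\right)^{j-1}dx .
\]
Then set $u=jx$ (in the notation of \eqref{eq:I_t_n_def}, $n=j$). This gives $dx=du/j$ and $x^{\lambda_0/\lambda_1}=u^{\lambda_0/\lambda_1}j^{-\lambda_0/\lambda_1}$, and the limits become $je^{-\lambda_1 t}$ and $j$. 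Multiplying the inner fraction by $j/j$ turns it into $\bigl((a+b)j-(a+b)u\bigr)/\bigl((a+b)j-du\bigr)$, and the squared denominator becomes $((a+b)-d(u/j))^2$. Collecting powers of $j$ gives $j^{-\lambda_0/\lambda_1-1}=j^{-(\lambda_1+\lambda_0)/\lambda_1}$. The remaining integral is exactly $I_t(j)$, which yields \eqref{eq:meanSFS_varying}.

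The only real obstacle is bookkeeping: making the constant $\lambda_1$ come out cleanly in Step 1, and confirming that the exponent $j-1$ and the integration limits match \eqref{eq:I_t_n_def}. No limiting argument is needed, since the statement is an exact identity valid for every $t$ and every $j(t)\in\mathbb{N}$.
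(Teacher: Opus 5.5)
Your proposal is correct and matches the paper's proof: both start from Proposition~\ref{prop:SFS_mean} with $\mathbb{E}[Z_0(s)]=e^{\lambda_0 s}$, substitute the explicit kernel \eqref{eq:P_j}, and change variables. The only difference is that the paper substitutes $u=j(t)e^{-\lambda_1 s}$ in one step, while you factor it as $x=e^{-\lambda_1 s}$ followed by $u=jx$, which gives the same computation.
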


Furthermore, $\sup_{n\in \mathbb{N},t\geq 0}I_t(n)$ is bounded; if $j(t)\ll e^{\lambda_1 t}$, then $I_t(j(t))$ converges to a constant if $j(t)$ is non-decreasing.
\begin{proof}
  By Proposition \ref{prop:SFS_mean}
    \[\mathbb{E}[S_{j(t)}(t)]=(av+\mu)e^{\lambda_0 t}\int_0^t e^{-\lambda_0 s}P_{j(t)}(s)\,ds.\]
    Explicitly write out $P_j(s)$ using \ref{eq:P_j} and use the change of variables $u=j(t)e^{-\lambda_1 s}$:
    \begin{align}
        &=\frac{(av+\mu)}{\lambda_1}e^{\lambda_0 t}\int_{j(t)e^{-\lambda_1 t}}^{j(t)}u^{\frac{\lambda_0}{\lambda_1}}j(t)^{-\frac{\lambda_0}{\lambda_1}}\frac{\lambda_1^2(j(t)/u)}{((a+b)(j(t)/u)-d)^2}\left(\frac{(a+b)(j(t)/u)-(a+b)}{(a+b)(j(t)/u)-d}\right)^{j(t)-1}\,\frac{du}{u}\nonumber\\
        &=(av+\mu)e^{\lambda_0 t}j(t)^{-\left(\frac{\lambda_1+\lambda_0}{\lambda_1}\right)}I_t(j(t))\nonumber
    \end{align}
    proving the equation \ref{eq:meanSFS_varying}. Denote $I_\infty(n)$ by the same equation as \ref{eq:I_t_n_def} but with the lower integration bound set to $0$. Remark that by dominated convergence:
    \begin{align}
        I_\infty(n)\overset{n\to\infty}{\to} \int_0^\infty \frac{\lambda_1}{(a+b)^2}u^{\frac{\lambda_0}{\lambda_1}}e^{-\frac{a+b-d}{a+b}u}\,du\label{eq:limit_I_inf}
    \end{align}
    which is strictly positive. Using that $t\to I_t(n)$ is increasing:
    \[\sup_{n\in \mathbb{N},t\geq 0}I_t(n)=\sup_{n\in \mathbb{N}}I_\infty(n)<\infty.\]

    Finally, under the condition that $j(t)\ll e^{\lambda_1 t}$ and $j(t)\to\infty$ as $t\to\infty$, we may write:
    \begin{align}
        I_t(j(t))=I_\infty(j(t))+(I_t(j(t))-I_\infty(j(t))),\label{eq:I_t_rewrite}
    \end{align}
    where the second term converges to $0$ by dominated convergence. So, $I_t(j(t))$ converges to the same limit as on the right-hand-side of \ref{eq:limit_I_inf}. If $j(t)\to N$, then examining \ref{eq:I_t_rewrite} once again, $I_t(j(t))$ converges to $I_\infty(N)$. \end{proof}

We first prove the easiest parts of the theorem in this corollary.
\begin{corollary}\label{cor:sparsity_part1_2_3}
The following hold:
\begin{itemize}
    \item $j(t)\gg\exp(t\lambda_0\lambda_1/(\lambda_0+\lambda_1))$ implies $S_{j(t)}(t)\to 0$ in $L^1$.
    \item $j(t)\sim C\exp(t\lambda_0\lambda_1/(\lambda_0+\lambda_1))$ implies:
    \[\lim_{t\to\infty}\mathbb{E}[S_{j(t)}(t)]=\frac{\lambda_1(av+\mu)}{(a+b)^2}C^{-\frac{\lambda_1+\lambda_0}{\lambda_1}}\left(\frac{a+b}{a+b-d}\right)^{\frac{\lambda_1+\lambda_0}{\lambda_1}}\Gamma(1+\lambda_0/\lambda_1).\]
    \item $j(t)\ll \exp(t\lambda_0\lambda_1/(\lambda_0+\lambda_1))$ implies
    \begin{align*}
        \lim_{t\to\infty}\mathbb{E}[e^{-\lambda_0 t}j(t)^{(\lambda_1+\lambda_0)/\lambda_1}S_{j(t)}(t)]=\begin{cases}
            \frac{\lambda_1(av+\mu)}{(a+b)^2}\left(\frac{a+b}{a+b-d}\right)^{\frac{\lambda_1+\lambda_0}{\lambda_1}}\Gamma(1+\lambda_0/\lambda_1) & \text{if }\lim_{t\to\infty}j(t)=\infty\\
            (av+\mu)I_\infty(N) & \text{if }\lim_{t\to\infty}j(t)=N,\,N<\infty
        \end{cases}
    \end{align*}
\end{itemize}
\end{corollary}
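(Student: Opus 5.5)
Everything reduces to the identity of Lemma \ref{lem:changing_j_mean},
\[
\mathbb{E}[S_{j(t)}(t)]=(av+\mu)e^{\lambda_0 t}j(t)^{-\frac{\lambda_1+\lambda_0}{\lambda_1}}I_t(j(t)),
\]
combined with the facts already recorded there. First, $\sup_{n,t}I_t(n)<\infty$. Second, when $j(t)\ll e^{\lambda_1 t}$, $I_t(j(t))$ converges: if $j(t)\to\infty$ the limit is the integral in \eqref{eq:limit_I_inf}, and if $j(t)\to N<\infty$ the limit is $I_\infty(N)$. Since $j$ is non-decreasing, these two cases are exhaustive. I would first compute the limit integral in closed form. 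Substituting $w=\frac{a+b-d}{a+b}u$ gives
\[
\int_0^\infty \frac{\lambda_1}{(a+b)^2}u^{\lambda_0/\lambda_1}e^{-\frac{a+b-d}{a+b}u}\,du=\frac{\lambda_1}{(a+b)^2}\left(\frac{a+b}{a+b-d}\right)^{\frac{\lambda_1+\lambda_0}{\lambda_1}}\Gamma(1+\lambda_0/\lambda_1),
\]
which is the constant appearing in the second and third bullets. I also note that $e^{\lambda_0 t}j(t)^{-(\lambda_1+\lambda_0)/\lambda_1}$ is exactly $\bigl(e^{t\lambda_0\lambda_1/(\lambda_0+\lambda_1)}/j(t)\bigr)^{(\lambda_0+\lambda_1)/\lambda_1}$, and this rewriting drives the trichotomy.

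For the first bullet, $S_{j(t)}(t)\ge 0$, so $L^1$ convergence to $0$ is the same as $\mathbb{E}[S_{j(t)}(t)]\to0$. This holds because $I_t$ is uniformly bounded while the prefactor $\bigl(e^{t\lambda_0\lambda_1/(\lambda_0+\lambda_1)}/j(t)\bigr)^{(\lambda_0+\lambda_1)/\lambda_1}\to 0$. No convergence of $I_t$ is needed, so the case $j(t)$ comparable to or larger than $e^{\lambda_1 t}$ is also covered.

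For the second bullet, $j(t)\sim Ce^{t\lambda_0\lambda_1/(\lambda_0+\lambda_1)}$ makes the prefactor converge to $C^{-(\lambda_1+\lambda_0)/\lambda_1}$. Since $\lambda_0\lambda_1/(\lambda_0+\lambda_1)<\lambda_1$, we have $1\ll j(t)\ll e^{\lambda_1 t}$, so $I_t(j(t))$ converges to the Gamma expression above; multiplying by $(av+\mu)$ gives the stated limit. For the third bullet, $e^{-\lambda_0 t}j(t)^{(\lambda_1+\lambda_0)/\lambda_1}\mathbb{E}[S_{j(t)}(t)]=(av+\mu)I_t(j(t))$ exactly. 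Because $j(t)\ll e^{\lambda_1 t}$ here too, the limit is $(av+\mu)$ times the Gamma constant if $j\to\infty$, and $(av+\mu)I_\infty(N)$ if $j\to N$.

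The main obstacle is checking that the convergence claims of Lemma \ref{lem:changing_j_mean} really hold. That lemma states the dominated convergence steps only briefly, so I would verify them. For $I_t(j(t))-I_\infty(j(t))\to0$, the missing piece is the integral over $[0,j(t)e^{-\lambda_1 t}]$. Its integrand is bounded by $\lambda_1u^{\lambda_0/\lambda_1}/(a+b-d)^2$, so the piece is $O\bigl((j(t)e^{-\lambda_1 t})^{1+\lambda_0/\lambda_1}\bigr)\to0$. For $I_\infty(n)\to$ the integral in \eqref{eq:limit_I_inf}, I would use the pointwise limit
\[
\Bigl(1-\tfrac{(a+b-d)u/n}{(a+b)-du/n}\Bigr)^{n-1}\to e^{-\frac{a+b-d}{a+b}u}.
\]
For domination, the bound $1-x\le e^{-x}$ gives the envelope $e^{-cu(n-1)/n}$ with $c=(a+b-d)/(a+b)$, since $(a+b)-du/n\le a+b$ on $u\le n$. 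Together with the bounded denominator this yields an integrable envelope on $[0,n]$.
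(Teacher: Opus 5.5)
Your proposal is correct and follows the paper's proof: all three bullets are read off from the identity $\mathbb{E}[S_{j(t)}(t)]=(av+\mu)e^{\lambda_0 t}j(t)^{-(\lambda_1+\lambda_0)/\lambda_1}I_t(j(t))$ of Lemma~\ref{lem:changing_j_mean}, using uniform boundedness of $I_t$ for the first bullet and convergence of $I_t(j(t))$ (valid since $j(t)\ll e^{\lambda_1 t}$) for the other two. Your explicit Gamma evaluation of the limiting integral and your domination checks (the bound $\lambda_1 u^{\lambda_0/\lambda_1}/\lambda_1^2$ near $0$ and the envelope $e^{-cu(n-1)/n}$) correctly supply details the paper leaves implicit.
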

\begin{proof}
    Let $j(t)\gg \exp(t(\lambda_0\lambda_1/(\lambda_0+\lambda_1)))$.  So, $e^{\lambda_0 t}j(t)^{-(\lambda_1+\lambda_0)/\lambda_1}\ll 1$, so that using \ref{eq:meanSFS_varying} and that $I_t(n)$ is uniformly bounded over all $t\geq 0$ and $n$:
    \[\mathbb{E}[|S_{j(t)}(t)|]=\mathbb{E}[S_{j(t)}(t)]\to 0.\]
    The limits on mean in the second two cases once again follow from using the expression \ref{eq:meanSFS_varying} with $j(t)\sim C\exp(t\lambda_0\lambda_1/(\lambda_0+\lambda_1))$ and $j(t)\ll \exp(t(\lambda_0\lambda_1/(\lambda_0+\lambda_1)-\varepsilon))$.
\end{proof}

For the proof of point (2) of the theorem, it will be useful to control the term $P_j(t)$:
\begin{lemma}\label{lem:P_j_bound}
    $P_j(t)\leq 1/j$.
\end{lemma}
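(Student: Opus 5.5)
The bound follows directly from the explicit formula \eqref{eq:P_j}. I write
\[
\alpha(t)=\frac{de^{\lambda_1 t}-d}{(a+b)e^{\lambda_1 t}-d},\qquad \beta(t)=\frac{(a+b)e^{\lambda_1 t}-(a+b)}{(a+b)e^{\lambda_1 t}-d},
\]
so that $P_j(t)=(1-\alpha)(1-\beta)\beta^{j-1}$ for $j\ge 1$, with $\alpha,\beta\in[0,1)$ for $t\ge 0$ (using $a+b>d$).

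Since $0\le 1-\alpha\le 1$, it suffices to show $(1-\beta)\beta^{j-1}\le 1/j$ for every $\beta\in[0,1]$. This is elementary calculus. The function $g(\beta)=(1-\beta)\beta^{j-1}$ vanishes at $\beta=1$, and for $j\ge 2$ it is maximized at $\beta^*=(j-1)/j$. The maximum value is
\[
\frac{1}{j}\left(1-\frac1j\right)^{j-1}\le \frac1j.
\]
For $j=1$ the bound is trivial, since $g\le 1$.

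Alternatively, I can avoid calculus with a summation argument. The quantity $(1-\beta)\beta^{k-1}$ is nonincreasing in $k$, and
\[
\sum_{k=1}^{j}(1-\beta)\beta^{k-1}=1-\beta^j\le 1.
\]
Hence $j(1-\beta)\beta^{j-1}\le 1$.

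There is no real obstacle here. The only point to check is that $\alpha,\beta$ lie in $[0,1]$, which follows from supercriticality $\lambda_1>0$ and $e^{\lambda_1 t}\ge 1$.
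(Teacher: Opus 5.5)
Your proof is correct and follows the paper's argument: drop the factor $1-\alpha(t)\le 1$, then use $\max_{\beta\in[0,1]}(1-\beta)\beta^{j-1}\le 1/j$. The paper only asserts this last inequality, whereas you justify it, either by locating the maximizer $\beta^*=(j-1)/j$ or by the comparison with the geometric sum $\sum_{k=1}^{j}(1-\beta)\beta^{k-1}=1-\beta^j\le 1$.
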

\begin{proof}
    Let
    \[\beta(t)=\frac{(a+b)e^{\lambda_1 t}-(a+b)}{(a+b)e^{\lambda_1 t}-d}.\]
    Then using \ref{eq:P_j}, we can bound $P_j(s)$ for any time $t$ by writing:
    \begin{align*}
        P_j(t)\leq (1-\beta(t))\beta(t)^{j-1}.
    \end{align*}
    Note that $\beta(z)\in [0,1]$, so that
    \begin{align*}
        \leq \max_{\beta\in [0,1]}(1-\beta)\beta^{j-1}\leq \frac{1}{j}.
    \end{align*}
\end{proof}

Now, we examine characteristic functions and then use the Levy continuity theorem to prove part 2 of Theorem \ref{thm:SFSSparsity}.

\begin{lemma}\label{lem:sparsity_part2}
    Suppose $j(t)\sim C\exp(t\lambda_0\lambda_1/(\lambda_0+\lambda_1))$ and $L=\lim_{t\to\infty}\mathbb{E}[S_{j(t)}(t)]$. Then:
    \[\left|\mathbb{E}\left[e^{i\theta S_{j(t)}(t)}\mathbf{1}_{\Omega_0^\infty}\right]-\mathbb{E}\left[e^{YL(e^{i\theta}-1)}\mathbf{1}_{\Omega_0^\infty}\right]\right|\overset{t\to\infty}{\to} 0\]
\end{lemma}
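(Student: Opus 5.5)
We will condition on $\mathcal{B}_t$, the $\sigma$-algebra generated by the type-0 process and the clone initiation times up to $t$. Given $\mathcal{B}_t$, the variable $S_{j(t)}(t)=\sum_{T_i\le t}\mathbf{1}_{\{Z_1^{(i)}(t-T_i)=j(t)\}}$ is a sum of independent Bernoulli variables with success probabilities $p_i:=P_{j(t)}(t-T_i)$. Hence
\[
\mathbb{E}\bigl[e^{i\theta S_{j(t)}(t)}\mid\mathcal{B}_t\bigr]=\prod_{T_i\le t}\bigl(1+p_i(e^{i\theta}-1)\bigr).
\]
Lemma \ref{lem:P_j_bound} gives $p_i\le 1/j(t)\to 0$ uniformly in $i$. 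Using $|\log(1+z)-z|\le |z|^2$ for small $|z|$, and the fact that the real part of $\log(1+p_i(e^{i\theta}-1))$ is nonpositive so the product has modulus at most one, we obtain
\[
\Bigl|\prod_{T_i\le t}\bigl(1+p_i(e^{i\theta}-1)\bigr)-\exp\bigl(C_t(e^{i\theta}-1)\bigr)\Bigr|\le 4\sum_{T_i\le t}p_i^2\le \frac{4}{j(t)}C_t .
\]
Here $C_t=\sum_{T_i\le t}p_i$ is exactly the quantity from the proof of Proposition \ref{prop:L2_driver}, with $P_{j,\pm}^k$ replaced by $P_{j(t)}$. Since $\mathbb{E}[C_t]=\mathbb{E}[S_{j(t)}(t)]\to L$ by Corollary \ref{cor:sparsity_part1_2_3}, the expectation of this error is $O(1/j(t))\to 0$.

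It therefore suffices to show $C_t\to YL$ in $L^1$. The map $x\mapsto e^{x(e^{i\theta}-1)}$ is $2$-Lipschitz on $x\ge 0$, since its modulus is at most one there, so $L^1$ convergence of $C_t$ yields convergence of the characteristic functions. We then multiply by $\mathbf{1}_{\Omega_0^\infty}$; off $\Omega_0^\infty$ we have $Y=0$, so the indicator is harmless. We split $C_t-YL$ into three pieces using the approximations of Table \ref{table:SFS_notation}: $(C_t-\overline{S}_{j(t)}(t))+(\overline{S}_{j(t)}(t)-\widehat{S}_{j(t)}(t))+(\widehat{S}_{j(t)}(t)-YL)$.

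For the first piece, the It\^o isometry argument of Proposition \ref{prop:L2_driver} gives
\[
\mathbb{E}\bigl[(C_t-\overline{S}_{j(t)}(t))^2\bigr]=(av+\mu)\int_0^t e^{\lambda_0 s}P_{j(t)}(t-s)^2\,ds\le \frac{1}{j(t)}\,\mathbb{E}[S_{j(t)}(t)]\to 0,
\]
again by Lemma \ref{lem:P_j_bound}.

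For the third piece, $\widehat{S}_{j(t)}(t)=Y\,\mathbb{E}[S_{j(t)}(t)]$ exactly, because $\mathbb{E}[Z_0(s)]=e^{\lambda_0 s}$. This piece is therefore $Y(\mathbb{E}[S_{j(t)}(t)]-L)\to 0$ in $L^1$.

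The main obstacle is the middle piece, since the crude Cauchy--Schwarz bound of Lemma \ref{lem:pm_approx_via_Y} produces a factor $t$. Instead we bound directly in $L^1$:
\[
\mathbb{E}\bigl|\overline{S}_{j(t)}(t)-\widehat{S}_{j(t)}(t)\bigr|\le (av+\mu)\int_0^t P_{j(t)}(t-s)\,\mathbb{E}\bigl|Z_0(s)-Ye^{\lambda_0 s}\bigr|\,ds .
\]
Lemma \ref{lem:L2_distance} gives $\mathbb{E}|Z_0(s)-Ye^{\lambda_0 s}|\le c\,e^{\lambda_0 s/2}$, so the right-hand side is at most
\[
c\int_0^t e^{\lambda_0 s/2}P_{j(t)}(t-s)\,ds=c\,e^{\lambda_0 t}\int_0^t e^{-\lambda_0 u}e^{-\lambda_0(t-u)/2}P_{j(t)}(u)\,du .
\]
Mass in $P_{j(t)}(u)$ requires $e^{\lambda_1 u}\gtrsim j(t)$, so the relevant $u$ satisfy $u\approx t\lambda_0/(\lambda_0+\lambda_1)$, which is strictly less than $t$. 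On that range the extra factor $e^{-\lambda_0(t-u)/2}$ is exponentially small.

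Concretely, we split at $u\le t-\delta t$ and $u>t-\delta t$, with $\delta>0$ small. On the first range the extra factor is at most $e^{-\lambda_0\delta t/2}$, and the remaining integral is controlled by $\mathbb{E}[S_{j(t)}(t)]$, which is bounded. On the second range we use $P_{j(t)}(u)\le 1/j(t)$, which gives $e^{\lambda_0 t}\int_{t-\delta t}^t e^{-\lambda_0 u}\,du\,/\,j(t)\lesssim e^{\lambda_0\delta t}/j(t)\to 0$ once $\delta<\lambda_1/(\lambda_0+\lambda_1)$. Combining the three pieces proves the lemma, and L\'evy's continuity theorem then yields point 2 of Theorem \ref{thm:SFSSparsity}.
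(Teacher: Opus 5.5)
Your proof is correct and is essentially the paper's own argument. It uses the same four error terms: the product-versus-Poisson-exponential error (via $P_{j(t)}\le 1/j(t)$), $C_t-\overline{S}_{j(t)}(t)$ (It\^o isometry), $\overline{S}_{j(t)}(t)-\widehat{S}_{j(t)}(t)$ (via $\mathbb{E}|Z_0(s)-Ye^{\lambda_0 s}|\lesssim e^{\lambda_0 s/2}$), and $\widehat{S}_{j(t)}(t)-YL=Y(\mathbb{E}[S_{j(t)}(t)]-L)$.

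There is one small fix. The paper conditions on $\mathcal{B}_\infty$ rather than $\mathcal{B}_t$, and that is what lets you insert $\mathbf{1}_{\Omega_0^\infty}$: the event is $\mathcal{B}_\infty$-measurable, and the conditional law of $S_{j(t)}(t)$ given $\mathcal{B}_\infty$ is the same product. The fact that $Y=0$ off $\Omega_0^\infty$ does not by itself justify that step.

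In the middle term, the paper simply uses $P_{j(t)}\le 1/j(t)$ on all of $[0,t]$ and gets $C\,t\,e^{\lambda_0 t/2}/j(t)\to 0$. This holds because $\frac{\lambda_0\lambda_1}{\lambda_0+\lambda_1}>\frac{\lambda_0}{2}$, i.e.\ $\lambda_1>\lambda_0$, which is automatic since $b>0$. So the factor $t$ is not a real obstacle. Your split at $u=(1-\delta)t$ is a sharper estimate, and its only gain is that it does not need $\lambda_1>\lambda_0$.
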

\begin{proof}
    We examine the difference of the characteristic functions of $S_{j(t)}(t)$ and the proposed limit in distribution:
    \begin{align}
        \left|\mathbb{E}\left[e^{i\theta S_{j(t)}(t)}\mathbf{1}_{\Omega_0^\infty}\right]-\mathbb{E}\left[e^{YL(e^{i\theta}-1)}\mathbf{1}_{\Omega_0^\infty}\right]\right|&\leq \mathbb{E}\left[\left|\mathbb{E}[e^{i\theta S_{j(t)}(t)}|\mathcal{B}_\infty] -e^{\mathbb{E}[S_{j(t)}|\mathcal{B}_\infty](e^{i\theta}-1)}\right|\right]\label{eq:boundary_term_1}\\
        &+\mathbb{E}\left[\left|e^{\mathbb{E}[S_{j(t)}|\mathcal{B}_\infty](e^{i\theta}-1)}-e^{\overline{S}_{j(t)}(t)(e^{i\theta}-1)}\right|\right]\label{eq:boundary_term_2}\\
        &+\mathbb{E}\left[\left|e^{\overline{S}_{j(t)}(t)(e^{i\theta}-1)}-e^{\widehat{S}_{j(t)}(t)(e^{i\theta}-1)}\right|\right]\label{eq:boundary_term_3}\\
        &+\mathbb{E}\left[\left|e^{\widehat{S}_{j(t)}(t)(e^{i\theta}-1)}-e^{YL(e^{i\theta}-1)}\right|\right]\label{eq:boundary_term_4}
    \end{align}
    We now show that each term above tends to $0$ as $t\to\infty$. We start with Term \ref{eq:boundary_term_1}. Remark that:
    \begin{align*}
        \mathbb{E}[e^{i\theta S_{j(t)}(t)}|\mathcal{B}_\infty]=\prod_{T_i\leq t}\left(1+P_{j(t)}(t-T_i)(e^{i\theta}-1)\right).
    \end{align*}
    Furthermore:
    \begin{align*}
        e^{\mathbb{E}[S_{j(t)}(t)|\mathcal{B}_\infty](e^{i\theta}-1)}=\prod_{T_i\leq t}e^{P_j(t-T_i)(e^{i\theta}-1)}.
    \end{align*}
    Now, notice that for complex numbers $a_k,b_k$ where $1\leq k\leq n$, if $|a_k|,|b_k|\leq 1$ then:
    \begin{align*}
        \left|\prod a_k-\prod b_k\right|\leq \sum_{k} |a_k-b_k|.
    \end{align*}
    Remark further that complex number $z$ with $\text{Re}(z)\leq 0$ and any $k\geq 0$:
    \begin{align}
        \left|e^{z}-\sum_{\ell=0}^k z^\ell/\ell!\right|\leq \frac{|z|^{k+1}}{(k+1)!}.\label{eq:exponential_bound}
    \end{align}
    Combining these facts:
    \begin{align*}
        \left|\prod_{T_i\leq t}\left(1+P_{j(t)}(t-T_i)(e^{i\theta}-1)\right)-\prod_{T_i\leq t}e^{P_j(t-T_i)(e^{i\theta}-1)}\right|&\leq \sum_{T_i\leq t}\left|1+P_{j(t)}(t-T_i)(e^{i\theta}-1)-e^{P_{j(t)}(t-T_i)(e^{i\theta}-1)}\right|\\
        &\leq 2 \sum_{T_i\leq t}P_{j(t)}(t-T_i)^2.
    \end{align*}
    We then take expectation over the times $\{T_i\}_i$ apply Lemma \ref{lem:P_j_bound} on a single factor $P_{j(t)}(t-T_i)$ and use Lemma \ref{lem:changing_j_mean}:
    \begin{align*}
        \ref{eq:boundary_term_1}\leq \frac{1}{2j(t)}\mathbb{E}[S_{j(t)}(t)]\to 0.
    \end{align*}

    Now the terms \ref{eq:boundary_term_2}, \ref{eq:boundary_term_3}, \ref{eq:boundary_term_4} will tend to $0$ if it is shown that $\mathbb{E}[S_{j(t)}|\mathcal{B}_\infty]-\overline{S}_{j(t)}(t),\overline{S}_{j(t)}(t)-\widehat{S}_{j(t)}(t),\widehat{S}_{j(t)}(t)-YL$ converge to $0$ in $L^1$, respectively. This is because for $A,B\geq 0$:
    \[\left|e^{A\left(e^{i\theta}-1\right)}-e^{B\left(e^{i\theta}-1\right)}\right|=\left|e^{(A-B)\left(e^{i\theta}-1\right)}-1\right|\leq |A-B|\left|e^{i\theta}-1\right|\leq 2|A-B|\]
    with the inequality coming from \ref{eq:exponential_bound}. This is what we will show using the ideas from Proposition \ref{prop:L2_driver} and Lemma \ref{lem:pm_approx_via_Y}. Firstly:
    \begin{align*}
      \ref{eq:boundary_term_2}&\leq 2\mathbb{E}\left[\left|\mathbb{E}[S_{j(t)}(t)|\mathcal{B}_\infty]-\overline{S}_{j(t)}(t)\right|\right]\\
      &\leq 2\sqrt{\mathbb{E}\left[\left(\int_0^t P_{j(t)}(t-s)\,dR_s\right)^2\right]}\\
      &= 2\sqrt{\mathbb{E}\left[\int_0^t P_{j(t)}(t-s)^2\,d[R]_s\right]}\\
      &=2\frac{1}{\sqrt{j(t)}}\sqrt{\mathbb{E}[S_{j(t)}(t)]}\to 0.
    \end{align*}

    For term \ref{eq:boundary_term_3}, we notice have:
    \begin{align*}
      \ref{eq:boundary_term_3}&\leq 2\mathbb{E}\left[\left|\overline{S}_{j(t)}(t)-\widehat{S}_{j(t)}(t)\right|\right]\\
      &\leq 2(av+\mu)\int_0^t P_{j(t)}(t-s)\sqrt{\mathbb{E}[(Z_0(s)-Ye^{\lambda_0 s})^2]}\,ds.\
    \end{align*}

    From Lemma \ref{lem:L2_distance}, the $L^2$ distance between $Z_0(s)$ and $Ye^{\lambda_0 s}$ is $\mathcal{O}(e^{\lambda_0 s})$. We can also use Lemma \ref{lem:P_j_bound} and that $\frac{\lambda_0\lambda_1}{\lambda_0+\lambda_1}>\frac{1}{2}\lambda_0$ to see:
    \begin{align*}
      &\leq C\int_0^t P_{j(t)}(t-s)e^{s\lambda_0/2}\,ds\\
      &\leq C\frac{te^{t\lambda_0/2}}{j(t)}\to 0.
    \end{align*}

    Remark that $\widehat{S}_{j(t)}(t)=Y\mathbb{E}[S_{j(t)}(t)]$ so that trivially:
    \begin{align*}
      \ref{eq:boundary_term_4}\leq 2\mathbb{E}[Y]|\mathbb{E}[S_{j(t)}(t)]-L|\to 0.
    \end{align*}

\end{proof}

\begin{lemma}\label{lem:sparsity_part3}
Suppose $j(t)\ll \exp(t\lambda_0\lambda_1/(\lambda_0+\lambda_1))$. Let $L=\lim_{t\to\infty}\mathbb{E}[e^{-\lambda_0 t}j(t)^{\frac{\lambda_1+\lambda_0}{\lambda_1}}S_{j(t)}(t)]$. Then:
\[\mathbb{E}[|e^{-\lambda_0 t}j(t)^{\frac{\lambda_1+\lambda_0}{\lambda_1}}S_{j(t)}(t)-YL|]\overset{t\to\infty}{\to}0.\]
\end{lemma}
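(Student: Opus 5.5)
We split the error exactly as in the proof of Lemma \ref{lem:sparsity_part2}, through the chain $S_{j(t)}(t)\to \mathbb{E}[S_{j(t)}(t)\mid\mathcal{B}_\infty]\to \overline{S}_{j(t)}(t)\to\widehat{S}_{j(t)}(t)\to Y\,\mathbb{E}[S_{j(t)}(t)]$, with the normalization $c(t):=e^{-\lambda_0 t}j(t)^{(\lambda_1+\lambda_0)/\lambda_1}$ applied throughout. By Lemma \ref{lem:changing_j_mean}, $c(t)\mathbb{E}[S_{j(t)}(t)]=(av+\mu)I_t(j(t))$. This is uniformly bounded, and by Corollary \ref{cor:sparsity_part1_2_3} it converges to $L$. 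Set $m(t):=\mathbb{E}[S_{j(t)}(t)]$, so that $m(t)\asymp c(t)^{-1}$, and the hypothesis $j(t)\ll e^{t\lambda_0\lambda_1/(\lambda_0+\lambda_1)}$ is exactly $c(t)\to 0$, i.e.\ $m(t)\to\infty$. We bound each increment in $L^2$ (hence $L^1$) and multiply by $c(t)$.

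First, conditional on $\mathcal{B}_\infty$, $S_{j(t)}(t)$ is a sum of independent indicators. Its conditional variance therefore has expectation at most $\int_0^t(av+\mu)e^{\lambda_0 s}P_{j(t)}(t-s)\,ds=m(t)$, giving $c(t)\,\mathbb{E}|S_{j(t)}(t)-\mathbb{E}[S_{j(t)}(t)\mid\mathcal{B}_\infty]|\le c(t)\sqrt{m(t)}\asymp\sqrt{c(t)}\to0$. Second, the martingale step of Proposition \ref{prop:L2_driver} (Itô isometry with $[R]_t=M_t$) combined with Lemma \ref{lem:P_j_bound} gives $\mathbb{E}[(\mathbb{E}[S_{j(t)}\mid\mathcal{B}_\infty]-\overline{S}_{j(t)}(t))^2]\le m(t)/j(t)\le m(t)$, which is again $o(c(t)^{-2})$. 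Fourth, $\widehat{S}_{j(t)}(t)=Y m(t)$ exactly, so $c(t)\widehat{S}_{j(t)}(t)=Y(av+\mu)I_t(j(t))\to YL$ in $L^1$, since $\mathbb{E}Y<\infty$ and the deterministic factor converges.

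The third step, $\overline{S}_{j(t)}(t)-\widehat{S}_{j(t)}(t)$, is the main obstacle, because the crude bound used in Lemma \ref{lem:sparsity_part2} ($P_j\le 1/j$, giving $te^{\lambda_0 t/2}/j(t)$) fails when $j(t)$ is small, e.g.\ constant. Instead we use Minkowski's inequality together with Lemma \ref{lem:L2_distance}:
\[\mathbb{E}|\overline{S}_{j(t)}(t)-\widehat{S}_{j(t)}(t)|\le C\int_0^t P_{j(t)}(t-s)e^{\lambda_0 s/2}\,ds=Ce^{\lambda_0 t/2}\int_0^t P_{j(t)}(u)e^{-\lambda_0 u/2}\,du.\]
The last integral is handled by the same substitution $u'=j e^{-\lambda_1 u}$ as in Lemma \ref{lem:changing_j_mean}, with $\lambda_0/2$ in place of $\lambda_0$. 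This gives the bound $j(t)^{-(\lambda_1+\lambda_0/2)/\lambda_1}\sup_{n,t}\tilde I_t(n)$, where $\tilde I$ is the analogue of $I_t$, bounded by the same dominated-convergence argument. Multiplying by $c(t)$ yields $Ce^{-\lambda_0 t/2}j(t)^{\lambda_0/(2\lambda_1)}=C\sqrt{c(t)}\,j(t)^{-1/2}\to0$.

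Summing the four $L^1$ bounds completes the proof. If the uniform boundedness of $\tilde I$ proves delicate for small $n$, we would instead use the trivial bound $P_j\le1$ when $j(t)$ is bounded, which gives $e^{-\lambda_0t/2}\to0$ directly, and use the substitution only when $j(t)\to\infty$. Since $j$ is non-decreasing, either $j(t)\to\infty$ or $j(t)$ is eventually constant, so these two cases are exhaustive.
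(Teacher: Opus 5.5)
Your proposal is correct and follows essentially the same route as the paper. You use the same four-term chain $S_{j(t)}(t)\to\mathbb{E}[S_{j(t)}(t)\mid\mathcal{B}_\infty]\to\overline{S}_{j(t)}(t)\to\widehat{S}_{j(t)}(t)\to Y\,\mathbb{E}[S_{j(t)}(t)]$: the first three increments have $L^1$-size $O(\sqrt{m(t)})=O(c(t)^{-1/2})$, and the last is handled by the convergence of $(av+\mu)I_t(j(t))$. The only variation is the third term. There the paper invokes a Cauchy--Schwarz analog of Lemma \ref{lem:pm_approx_via_Y}, which implicitly needs $\int_0^t P_{j(t)}(u)^2\,du\lesssim j(t)^{-2}\le j(t)^{-1-\lambda_0/\lambda_1}$, using $\lambda_0<\lambda_1$. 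Your Minkowski bound with the sharp estimate $\int_0^t P_{j}(u)e^{-\lambda_0 u/2}\,du\lesssim j^{-1-\lambda_0/(2\lambda_1)}$ is slightly more explicit and works equally well.
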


\begin{proof}
  As in Section \ref{app:SFS_convergence}, we may decompose
  \begin{align}
    |S_{j(t)}(t)-e^{\lambda_0 t}j(t)^{-\frac{\lambda_1+\lambda_0}{\lambda_1}}YL|&\leq |S_{j(t)}(t)-\mathbb{E}[S_{j(t)}(t)|\mathcal{B}_\infty]|\label{eq:beta_term_1}\\
    &+|\mathbb{E}[S_{j(t)}(t)|\mathcal{B}_\infty]-\overline{S}_{j(t)}(t)|\label{eq:beta_term_2}\\
    &+|\overline{S}_{j(t)}(t)-\widehat{S}_{j(t)}(t)|\label{eq:beta_term_3}\\
    &+|\widehat{S}_{j(t)}(t)-Ye^{\lambda_0 t}j(t)^{-\frac{\lambda_1+\lambda_0}{\lambda_1}}L|.\label{eq:beta_term_4}
  \end{align}

  Then using analogs of Proposition \ref{prop:L2_driver} and Lemma \ref{lem:pm_approx_via_Y} but replacing $P_{j,\pm}^k$ with $P_{j(t)}$ and using expression \ref{eq:meanSFS_varying}:
  \begin{align*}
    \max\{\mathbb{E}[(S_{j(t)}(t)-\mathbb{E}[S_{j(t)}(t)|\mathcal{B}_\infty])^2],\mathbb{E}[(\mathbb{E}[S_{j(t)}(t)|\mathcal{B}_\infty]-\overline{S}_{j(t)}(t))^2],\mathbb{E}[(\overline{S}_{j(t)}(t)-\widehat{S}_{j(t)}(t))^2]\}\lesssim e^{\lambda_0 t}j(t)^{-\frac{\lambda_1+\lambda_0}{\lambda_1}}.
  \end{align*}
Therefore:
  \begin{align*}
    e^{-\lambda_0 t}j(t)^{\frac{\lambda_1+\lambda_0}{\lambda_1}}\mathbb{E}[|S_{j(t)}(t)-\widehat{S}_{j(t)}(t)|]\lesssim e^{-\frac{\lambda_0}{2}t}j(t)^{\frac{\lambda_1+\lambda_0}{2\lambda_1}}\to 0.
  \end{align*}
Finally, term \ref{eq:beta_term_4} is controlled in a similar manner to how \ref{eq:boundary_term_4} was controlled in the previous lemma. Namely:
\begin{align*}
    \mathbb{E}[\ref{eq:beta_term_4}]=|\mathbb{E}[S_{j(t)}(t)]-e^{\lambda_0 t}j(t)^{-\frac{\lambda_1+\lambda_0}{\lambda_1}}L|\ll e^{\lambda_0 t}j(t)^{-\frac{\lambda_1+\lambda_0}{\lambda_1}}
\end{align*}
Combining everything:
\[\mathbb{E}\left|e^{-\lambda_0 t}j(t)^{\frac{\lambda_1+\lambda_0}{\lambda_1}}S_{j(t)}(t)-YL\right|\to 0.\]
\end{proof}

To prove the limits regarding $j_k^-(t),j_k^+(t)$, it suffices to prove the following on $\Omega_0^\infty$:

\[\frac{\lambda_0\lambda_1}{\lambda_0+\lambda_1}\leq \lim_{k\to\infty}\liminf_{t\to\infty}\frac{\log(j_k^-(t))}{t},\lim_{k\to\infty}\limsup_{t\to\infty}\frac{\log(j_k^+(t))}{t}\leq \frac{\lambda_0\lambda_1}{\lambda_0+\lambda_1}.\]
This is because we trivially have $j_k^-(t)\leq j_{k+1}^+(t)+1$ for every $t\geq 0$ and $k$. Before proving these two inequalities, we require two technical lemmas.

\begin{lemma}\label{lem:V_moments_finite}
    Define:
    \[V:=\sum_{i}e^{-2\lambda_0 T_i}.\]
    For all $k$, $\mathbb{E}[V^k]<\infty$.
\end{lemma}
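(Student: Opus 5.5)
We want to show $\mathbb{E}[V^k]<\infty$ for every $k$, where $V=\sum_i e^{-2\lambda_0 T_i}$. The plan is to condition on the type-0 path and use the Poisson/Cox structure of the mutation times. By the alternate representation of Section~\ref{app:alt_representation_model}, $\{T_i\}=\{T_i'\}\cup\{T_i''\}$. The points $\{T_i'\}$ form, conditionally on $\mathcal{F}_\infty$, a Poisson process with intensity $avZ_0(s)\,ds$. The points $\{T_i''\}$ are an independent thinning of the type-0 death times $\{D_i\}$.

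Since $(x+y)^k\le 2^{k-1}(x^k+y^k)$, it suffices to treat $V'=\sum_i e^{-2\lambda_0 T_i'}$ and $V''=\sum_i e^{-2\lambda_0 T_i''}$ separately. For $V''$, we dominate by $\sum_i e^{-2\lambda_0 D_i}$, so both pieces reduce to bounding moments of functionals of the form $\int_0^\infty e^{-2\lambda_0 s}\,dN_s$. Here $N$ is a counting process (mutations or deaths) whose compensator is $c\int_0^t Z_0(s)\,ds$.

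For $V'$, apply Campbell's formula for moments of a Poisson functional conditional on $\mathcal{F}_\infty$. The $k$-th moment is a finite sum, over set partitions, of products of terms $\int_0^\infty e^{-2m\lambda_0 s}\,av Z_0(s)\,ds$ with $m\ge1$. Each such term is bounded by $av\,U$, where $U:=\int_0^\infty e^{-2\lambda_0 s}Z_0(s)\,ds$. Hence $\mathbb{E}[V'^k]\le C_k\,\mathbb{E}[(1+U)^k]$, and the problem reduces to showing that $U$ has all moments.

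By Minkowski's inequality in $L^k$,
\[
\|U\|_k\le\int_0^\infty e^{-2\lambda_0 s}\,\|Z_0(s)\|_k\,ds .
\]
For a supercritical linear birth--death process started from one cell, $Z_0(s)$ is geometric-type with explicit law, and $\mathbb{E}[Z_0(s)^k]\le C_k e^{k\lambda_0 s}$. This gives $\|Z_0(s)\|_k\le C e^{\lambda_0 s}$, so the integral converges because $2\lambda_0>\lambda_0$.

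For the death-time sum, write $\sum_i e^{-2\lambda_0 D_i}=\int e^{-2\lambda_0 s}\,dN_s$ and split it as a compensator term plus a martingale. The compensator term is $(d+\mu)U$, which is handled as above. The martingale part $\int e^{-2\lambda_0 s}\,d\widetilde N_s$ is bounded in $L^k$ via the BDG inequality. Its quadratic variation is $\int e^{-4\lambda_0 s}\,dN_s$, which again has the same form with a faster exponent. Inducting on $k$ over powers of two (or iterating BDG) then closes the argument.

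The main obstacle is this last step. Deaths are not conditionally Poisson given the type-0 path, so the clean Campbell expansion is unavailable and the BDG/induction bookkeeping is needed. A simpler alternative to try first is pathwise domination: each death removes a cell that was present, so the number of deaths in $[n,n+1]$ is at most $\sup_{[n,n+1]}Z_0$ plus the births in that interval. This would give $\sum_i e^{-2\lambda_0 D_i}\lesssim\sum_n e^{-2\lambda_0 n}\bigl(\sup_{[n,n+1]}Z_0+\text{births in }[n,n+1]\bigr)$. Each summand has $L^k$ norm $O(e^{\lambda_0 n})$, by Doob's inequality for the martingale $e^{-\lambda_0 s}Z_0(s)$ and the same argument for births. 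Minkowski's inequality then yields a geometric series, which finishes the proof.
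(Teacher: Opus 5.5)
Your proof is correct, but it takes a different route from the paper's.

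\textbf{The paper's route.} The paper never conditions on the type-0 path. It does a first-step analysis of the whole process and obtains $V\overset{d}{=}\mathbf{1}_{E_b}e^{-2\lambda_0\tau}(V_1+V_2)+\mathbf{1}_{E_v}e^{-2\lambda_0\tau}(1+V_1)+\mathbf{1}_{E_\mu}e^{-2\lambda_0\tau}$. Expanding the $k$-th power gives $\mathbb{E}[V^k]=\frac{(2a-av)\,\mathbb{E}[V^k]}{a+d+\mu+2\lambda_0 k}+C_{<k}$. Since $a+d+\mu+2\lambda_0k-(2a-av)=(2k-1)\lambda_0>0$, induction on $k$ finishes.

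\textbf{Comparison.} The paper's route is shorter, handles both mutation mechanisms at once through the branching self-similarity of $V$, and gives an explicit moment recursion. However, its final rearrangement tacitly assumes $\mathbb{E}[V^k]<\infty$; otherwise the identity just reads $\infty=\infty$. Strictly, it needs a truncation such as $V_t=\sum_{T_i\le t}e^{-2\lambda_0 T_i}$ followed by monotone convergence. Your route uses Campbell's moment formula for the Cox part and domination by death counts for the replication-free part. It reduces everything to $\mathbb{E}[Z_0(s)^k]\le C_k e^{k\lambda_0 s}$ plus Minkowski, so it produces explicit bounds and has no such circularity. The cost is splitting by mechanism and treating the non-Poisson death times separately.

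\textbf{One detail to tighten.} Your $L^k$ bound on the births in $[n,n+1]$ is not really \emph{the same argument} as Doob's inequality for $e^{-\lambda_0 s}Z_0(s)$. Use the branching property instead. Given $Z_0(n)=m$, the number of type-0 events in $[n,n+1]$ is a sum of $m$ i.i.d.\ copies of the number of events in unit time from a single cell. That count is dominated by the split count of a Yule process of rate $a+d+\mu$, so it has all moments. Hence its $L^k$ norm is at most $C\|Z_0(n)\|_k\le Ce^{\lambda_0 n}$.

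Every mutation of either kind is a type-0 event, so this one bound already gives $\|V\|_k\le C\sum_n e^{-2\lambda_0 n}e^{\lambda_0 n}<\infty$. With it, the Campbell expansion and the BDG alternative become unnecessary.
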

\begin{proof}
    We prove this result by induction on $k$. In the $k=1$ case, the result follows by Campbell's theorem. Next assume the induction hypothesis $\mathbb{E}[V^{i}]<\infty$ for $1\leq i\leq k-1$. By examining what occurs during the first event (ocurring at time $\tau$, with $E_b,E_d,E_{v},E_{\mu}$ denoting birth without mutation, death, replication-dependent mutation, and replication-free mutation), we may write:
    \[V\overset{d}{=}\mathbf{1}_{E_b}e^{-2\lambda_0 \tau}(V_1+V_2)+\mathbf{1}_{E_v}(e^{-2\lambda_0 \tau}+e^{-2\lambda_0 \tau}V_1)+\mathbf{1}_{E_\mu}e^{-2\lambda_0 \tau},\]
    where $V_1,V_2$, conditional on the first event, are independent and distributed as $V$. Taking powers and expectation, we obtain:
    \begin{align*}
        \mathbb{E}[V^k]=\frac{a(1-v)\mathbb{E}[(V_1+V_2)^k]}{a+d+\mu+2\lambda_0 k}+\frac{av\mathbb{E}[(1+V)^k]}{a+d+\mu+2\lambda_0 k}+\frac{\mu}{a+d+\mu+2\lambda_0 k}.
    \end{align*}
    Let $C_{<k}$ denote some linear combination of terms of the form $\mathbb{E}[V^{i_1}]\mathbb{E}[V^{i_2}]$ where $0\leq i_1,i_2<k$. We can rewrite the above using the leading order terms:
    \begin{align*}
        &\mathbb{E}[V^k]=\frac{(a+a(1-v))\mathbb{E}[V^k]}{a+d+\mu+2\lambda_0 k}+C_{<k}\\
        \Longrightarrow &\mathbb{E}[V^k]= \frac{a+d+\mu+2\lambda_0 k}{(2k-1)\lambda_0}C_{<k}<\infty,
    \end{align*}
    as desired.
\end{proof}

\begin{lemma}\label{lem:large_deviation_bound}
    We have the following large-deviation bound:
    \begin{align*}
        \mathbb{P}(\inf_{t\geq 0} e^{-\lambda_0 t}Z_0(t)\leq x|\Omega_0^\infty)\lsimas{x}{0^+}\sqrt{x}.
    \end{align*}
\end{lemma}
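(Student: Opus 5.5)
We will prove that $\mathbb{P}(\inf_{t\ge0}Q_t\le x,\ \Omega_0^\infty)\lesssim\sqrt{x}$, where $Q_t:=e^{-\lambda_0 t}Z_0(t)$. Dividing by $\mathbb{P}(\Omega_0^\infty)=q_0>0$ then gives the claim. Recall that $Q_t$ is a nonnegative martingale, $Q_0=1$, and $Q_t\to Y$ a.s. and in $L^2$, with $Y\mid\Omega_0^\infty\sim\mathrm{Exp}(q_0)$.

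Define the stopping time $\sigma_x=\inf\{t\ge0:Q_t\le x\}$ and split
\[
\mathbb{P}(\sigma_x<\infty,\Omega_0^\infty)\le \mathbb{P}(Y\le \sqrt{x},\Omega_0^\infty)+\mathbb{P}(\sigma_x<\infty,\ Y>\sqrt{x}).
\]
The first term is at most $q_0\sqrt{x}$, because the law of $Y$ on $\Omega_0^\infty$ has bounded density. For the second term, use the strong Markov property at $\sigma_x$. By the branching property, on $\{\sigma_x<\infty\}$ we have $Y=e^{-\lambda_0\sigma_x}\sum_{\ell=1}^{Z_0(\sigma_x)}Y^{(\ell)}$, where the $Y^{(\ell)}$ are i.i.d.\ copies of $Y$ independent of $\mathcal{F}_{\sigma_x}$. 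At time $\sigma_x$ the process has just jumped down, so $Q_{\sigma_x}\le x$. Conditional on $\mathcal{F}_{\sigma_x}$, $Y$ therefore has mean $Q_{\sigma_x}\le x$. Markov's inequality then gives
\[
\mathbb{P}(Y>\sqrt{x}\mid\mathcal{F}_{\sigma_x})\le x/\sqrt{x}=\sqrt{x}
\]
on $\{\sigma_x<\infty\}$. Hence the second term is at most $\sqrt{x}$, and the total is $\lesssim\sqrt{x}$.

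The main point needing care is the strong Markov/branching decomposition of $Y$ at the stopping time $\sigma_x$. It reduces to the fact that $Q_t$ is a uniformly integrable martingale (it is bounded in $L^2$ by Lemma \ref{lem:L2_distance}), so optional stopping gives $\mathbb{E}[Y\mid\mathcal{F}_{\sigma_x}]=Q_{\sigma_x}$ on $\{\sigma_x<\infty\}$, with no explicit decomposition required.

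One could in fact balance the two terms better: splitting at threshold $x^{\alpha}$ gives a bound of order $x^{\alpha}+x^{1-\alpha}$, which is optimized at $\alpha=1/2$. That yields exactly the $\sqrt{x}$ rate claimed, so no sharper estimate is needed. A slightly better rate (of order $x\log(1/x)$) might be available by using the second moment, but the lemma only asks for $\sqrt{x}$.
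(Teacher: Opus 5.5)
Your argument is correct, but it takes a different route from the paper's.

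\textbf{The paper's route.} It works with the unconditional probability. Since $\inf_t Q_t=0$ on $(\Omega_0^\infty)^c$, it writes $\mathbb{P}(\inf_t Q_t\le x\mid\Omega_0^\infty)=\bigl(\mathbb{P}(\inf_t Q_t\le x)-p_0\bigr)/q_0$. It then applies Doob's maximal inequality to the submartingale $e^{-\alpha Q_t}$, which gives $\mathbb{P}(\inf_t Q_t\le x)\le e^{\alpha x}\mathbb{E}[e^{-\alpha Y}]=e^{\alpha x}\bigl(p_0+q_0^2/(q_0+\alpha)\bigr)$. Optimizing over $\alpha\asymp x^{-1/2}$ yields $p_0+D\sqrt{x}+O(x)$, and the extinction atom $p_0$ cancels.

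\textbf{Your route.} You work on $\Omega_0^\infty$ directly. You stop at the hitting time $\sigma_x$ and use optional stopping for the uniformly integrable martingale $Q$ to get $\mathbb{E}[Y\mid\mathcal{F}_{\sigma_x}]=Q_{\sigma_x}\le x$. You then balance $\mathbb{P}(0<Y\le y)=O(y)$ against the conditional Markov bound $x/y$.

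\textbf{Comparison.} Structurally the two proofs balance the same two quantities. After the cancellation, the paper's bound is about $p_0\alpha x+q_0^2/\alpha$, which matches your $x/y+q_0^2y$ up to constants under $\alpha\leftrightarrow 1/y$. Your version needs only uniform integrability of $Q$ and a linear bound on the law of $Y$ near $0$. It does not need the exact Laplace transform of $Y$, and it avoids computing the unconditional probability and then cancelling the atom $p_0$. The paper's computation gives a sharper conditional constant, roughly $2\sqrt{p_0}$, and it improves to $O(x)$ when $p_0=0$. Your bound is $\sqrt{x}$ regardless of $p_0$, which is all the lemma needs.

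\textbf{One small correction.} You write that at time $\sigma_x$ the process has just jumped down. In fact $Q_t=e^{-\lambda_0 t}Z_0(t)$ decreases continuously between jumps, so the level $x$ can be reached without any jump. The conclusion $Q_{\sigma_x}\le x$ still holds by right-continuity, so nothing downstream is affected.
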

\begin{proof}
    Remark that on $(\Omega_0^\infty)^c$, $\inf_t Z_0(t)e^{-\lambda_0 t}\leq x$ occurs (for any $x>0$). Thus we may write:
    \begin{align}
        \mathbb{P}(\inf_t Z_0(t)e^{-\lambda_0t}\leq x|\Omega_0^\infty)=\frac{\mathbb{P}(\inf_t Z_0(t)e^{-\lambda_0t}\leq x)-p_0}{q_0}\label{eq:large_deviation_rewrite}.
    \end{align}
    For each $\alpha>0$, we have $e^{-\alpha Z_0(t)e^{-\lambda_0 t}}$ is a $\mathcal{F}_t$-submartingale. So using Doob's submartingale inequality:
    \[\mathbb{P}(\inf_t Z_0(t)e^{-\lambda_0t}\leq x)=\mathbb{P}(\sup_t e^{-\alpha Z_0(t)e^{-\lambda_0 t}}\geq e^{-\alpha x})\leq e^{\alpha x}\mathbb{E}[e^{-\alpha Y}]=e^{\alpha x}\left(p_0+\frac{q_0^2}{q_0+\alpha}\right)\]
    Notice we may optimize $\alpha$ in the upper bound by choosing $\alpha^*=\frac{\sqrt{q_0^4+\frac{4p_0q_0^2}{x}}}{2p_0}-\frac{q_0^2}{2p_0}$. In particular, there exists a positive constant $D$ such that:
    \[\inf_\alpha \left(e^{\alpha x}\left(p_0+\frac{q_0^2}{q_0+\alpha}\right)\right)=p_0+D\sqrt{x}+\mathcal{O}_{x\to 0^+}(x).\]
    The leading order term is constant, but by plugging the expression into \ref{eq:large_deviation_rewrite}, this term cancels which yields the result.
\end{proof}

We will use the above result in order to bound how small the SFS can be.

\begin{lemma}\label{lem:upper_bound_Sj_strong}
    We obtain the following bound on $S_{j}(t)$ on the event $B=\{\inf_{t>0} Z_0(t)e^{-\lambda_0 t}>x\}$ for $x>0$, $k$, and $\delta\in (0,1)$:
    \begin{align}
        \mathbb{P}(S_j(t)\leq k,B)\leq C_{k,\delta}e^{-x(1-\delta)\int_0^t P_j(t-s)ave^{\lambda_0 s}\,ds}=C_{k,\delta}e^{-x(1-\delta)\frac{av}{av+\mu}\mathbb{E}[S_j(t)]}.\label{eq:S_j_upper_bound}
    \end{align}
    $C_{k,\delta}>0$ and is independent of $j$.
\end{lemma}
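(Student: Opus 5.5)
Condition on the type-0 trajectory and restrict to replication-dependent clones. Given $\mathcal{F}_\infty$, the replication-dependent initiation times $\{T_i'\}$ form a Poisson point process with intensity $avZ_0(s)\,ds$ (Section \ref{app:alt_representation_model}). The clone sizes $Z_1^{(i)}$ are i.i.d. and independent of $\mathcal{F}_\infty$. By Poisson thinning (marking each point $s$ by the event $\{Z_1^{(i)}(t-s)=j\}$), the number $S_j'(t)$ of replication-dependent clones of size $j$ at time $t$ is, conditionally on $\mathcal{F}_\infty$, exactly Poisson with mean
\[\Lambda_j(t):=\int_0^t avZ_0(s)P_j(t-s)\,ds.\]
Since $S_j(t)\geq S_j'(t)$, we have $\mathbb{P}(S_j(t)\le k,B)\le \mathbb{E}[\mathbf{1}_B\,\mathbb{P}(\mathrm{Poi}(\Lambda_j(t))\le k\mid\mathcal{F}_\infty)]$, and $B\in\mathcal{F}_\infty$.

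On $B$ we have $Z_0(s)>xe^{\lambda_0 s}$ for all $s$, so
\[\Lambda_j(t)\ge x\int_0^t P_j(t-s)ave^{\lambda_0 s}\,ds=:x m_j(t),\]
and by Proposition \ref{prop:SFS_mean}, $m_j(t)=\frac{av}{av+\mu}\mathbb{E}[S_j(t)]$, which gives the stated equality. Since $\lambda\mapsto\mathbb{P}(\mathrm{Poi}(\lambda)\le k)$ is decreasing, on $B$ the conditional probability is at most $\mathbb{P}(\mathrm{Poi}(xm_j(t))\le k)=e^{-xm_j(t)}\sum_{\ell=0}^k (xm_j(t))^\ell/\ell!$.

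The last step is the elementary bound $\sup_{\lambda\ge0}e^{-\delta\lambda}\sum_{\ell\le k}\lambda^\ell/\ell!=:C_{k,\delta}<\infty$. This holds because the polynomial is dominated by $e^{\delta\lambda}$ for large $\lambda$, and $C_{k,\delta}\ge1$. Hence $\mathbb{P}(\mathrm{Poi}(\lambda)\le k)\le C_{k,\delta}e^{-(1-\delta)\lambda}$. Setting $\lambda=xm_j(t)$ and taking expectations over $B$ yields \eqref{eq:S_j_upper_bound}, with $C_{k,\delta}$ independent of $j$ and $t$.

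The only delicate point is justifying the conditional Poisson structure. The replication-independent mutations are thinned deaths and so are not Poisson given $\mathcal{F}_\infty$, which is why they are discarded: this produces the factor $av/(av+\mu)$. The replication-dependent points attached to type-0 lifetimes are Poisson given the type-0 genealogy, and independent marking preserves this. Monotonicity also relies on $Z_0$ being unaffected by replication-dependent mutation in the alternate representation, which holds by construction.
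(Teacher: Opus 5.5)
Your proposal is correct and follows the paper's proof essentially step for step: discard replication-independent clones, use that the $P_j(t-s)$-thinned replication-dependent initiation times are conditionally Poisson given $\mathcal{F}_\infty$, absorb the polynomial factor into $C_{k,\delta}$ at the cost of a factor $(1-\delta)$ in the exponent, and use $Z_0(s)>xe^{\lambda_0 s}$ on $B$. The only difference is the order of the last two steps, and it is cosmetic. You apply monotonicity of the Poisson CDF in its mean before the $C_{k,\delta}$ bound, whereas the paper applies the $C_{k,\delta}$ bound first and then only needs monotonicity of $\lambda\mapsto e^{-(1-\delta)\lambda}$.
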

\begin{proof}
    Let $S_j'(t)$ denote the contribution to the SFS from clones produced in a replication-dependent manner. Trivially, $\mathbb{P}(S_j(t)\leq k,B)\leq \mathbb{P}(S_j'(t)\leq k,B)$. It suffices to bound this latter term. Remark that the initiation times of replication-dependent mutants are seeded as a Poisson point process $\Phi'$ with intensity $avZ_0(s)$ conditional on $\mathcal{F}_\infty$.

    Let $\widetilde{\Phi}_t$ be a $p_t(s)=P_j(t-s)$-thinning of $\Phi'$. Thus $\widetilde{\Phi}_t$ remains a Poisson point process with intensity $avp_t(s)Z_0(s)$ conditional on $\mathcal{F}_\infty$ and furthermore $\mathbb{P}(S_{j}'(t)\leq k,B)=\mathbb{P}(|\widetilde{\Phi}_t\cap [0,t]|\leq k,B)$. Then, since $B$ is $\mathcal{F}_\infty$-measurable:
    \begin{align*}
        \mathbb{P}(S_{j}'(t)\leq k,B)&=\mathbb{E}\left[\mathbf{1}_{B}\mathbb{E}[\mathbf{1}_{|\widetilde{\Phi}_t\cap [0,t]|\leq k}|\mathcal{F}_\infty]\right]\\
        &=\sum_{n=0}^k\mathbb{E}\left[\mathbf{1}_B\frac{1}{n!}\left(\int_0^t P_{j}(t-s)avZ_0(s)\,ds\right)^ne^{-\int_0^t P_{j}(t-s)avZ_0(s)\,ds}\right].
    \end{align*}
    At the expense of a $(1-\delta)$ multiple of the exponent term, we may remove the power $n$ term:
    \begin{align*}
        \leq C_{k,\delta}\mathbb{E}\left[\mathbf{1}_Be^{-(1-\delta)\int_0^t P_{j}(t-s)avZ_0(s)\,ds}\right]
    \end{align*}
    We can then trivially write $Z_0(s)\geq e^{\lambda_0 s}\inf_t Z_0(t)e^{-\lambda_0 t}$ and the desired result follows by using the definition of the event $B$.
\end{proof}

Finally, we prove the main results.

\begin{proposition}\label{prop:intermediate_j_k_plus}
    There exists constants $C_1>0$ such that if $k>\frac{C_1}{\varepsilon}$, then:
    \begin{align*}
        \limsup_{t\to\infty}\frac{\log(j_k^+(t))}{t}\leq \frac{\lambda_0\lambda_1}{\lambda_0+\lambda_1}+\varepsilon
    \end{align*}
    almost surely.
\end{proposition}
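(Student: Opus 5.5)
The goal is to show that, for $k$ large relative to $1/\varepsilon$, almost surely for all large $t$ no size $j\ge J(t):=\exp((\frac{\lambda_0\lambda_1}{\lambda_0+\lambda_1}+\varepsilon)t)$ carries $k$ or more clones. Equivalently, with $N_k(t):=\#\{j\geq J(t): S_j(t)\geq k\}$, we want $N_k(t)=0$ eventually. Since this is a $\limsup$ bound, unconditional moment estimates plus Borel--Cantelli over a time grid should suffice; no lower bound on $Z_0$ is required.

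\textbf{Step 1: a moment bound at a fixed time.} Use $\mathbf{1}_{S_j(t)\ge k}\le \binom{S_j(t)}{k}$ and estimate the $k$-th factorial moment. Conditional on $\mathcal{B}_\infty$, $S_j(t)$ is a sum of independent Bernoullis with success probabilities $P_j(t-T_i)$. Hence
\[
\mathbb{E}\Big[\binom{S_j(t)}{k}\,\Big|\,\mathcal{B}_\infty\Big]\le \frac{1}{k!}\Big(\sum_{T_i\le t}P_j(t-T_i)\Big)^k .
\]
Bound $k-1$ of the $k$ factors using Lemma~\ref{lem:P_j_bound} in the crude form $\sum_{T_i\le t}P_j(t-T_i)\le M_t/j$, or better, via the same computation as Lemma~\ref{lem:changing_j_mean}. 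To keep everything polynomial in the random inputs, write $\sum_{T_i\le t} P_j(t-T_i)\le e^{\lambda_0 t}\sum_i e^{-\lambda_0 T_i}P_j(t-T_i)$, and control moments of $\sum_i e^{-\lambda_0 T_i}$ using the branching/first-event recursion of Lemma~\ref{lem:V_moments_finite}, applied with exponent $\lambda_0$ instead of $2\lambda_0$. The same induction should go through, since the key coefficient becomes $(k-1)\lambda_0>0$ for $k\ge 2$.

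The target estimate is
\[
\mathbb{E}\Big[\binom{S_j(t)}{k}\Big]\lesssim_k \big(e^{\lambda_0 t}j^{-(\lambda_1+\lambda_0)/\lambda_1}\big)^{k}\cdot(\text{poly}(t)),
\]
possibly after using $\sup_{n,t}I_t(n)<\infty$ from Lemma~\ref{lem:changing_j_mean} to handle the $j$-dependence of each factor.

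\textbf{Step 2: summation over $j$ and over a time grid.} Write $\alpha=(\lambda_1+\lambda_0)/\lambda_1$, so that $e^{\lambda_0 t}J(t)^{-\alpha}=e^{-\alpha\varepsilon t}$. Summing over $j\ge J(t)$ gives
\[
\mathbb{E}[N_k(t)]\lesssim e^{k\lambda_0 t}J(t)^{1-k\alpha}\,\mathrm{poly}(t) = e^{-k\alpha\varepsilon t}\,J(t)\,\mathrm{poly}(t).
\]
Since $J(t)\le e^{\lambda_1 t}$, this decays exponentially once $k\alpha\varepsilon>\lambda_1$, that is, once $k>C_1/\varepsilon$ with $C_1=\lambda_1^2/(\lambda_0+\lambda_1)$ plus some slack. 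Then apply Markov's inequality and Borel--Cantelli along the grid $t_m=m\eta$.

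\textbf{Step 3 (main obstacle): interpolation between grid times.} Between grid points, clones can pass through sizes without being caught at $t_m$. To handle this, replace $\mathbf{1}_{S_j(t)\ge k}$ by the event that at least $k$ clones each visit size $j$ at some time in $[t_m,t_{m+1}]$ while $j\ge J(t_m)$. For each clone, the probability of visiting $j$ during the window is comparable to $P_j$ at a nearby time, up to a constant. This follows from the strong Markov property and the fact that a clone at size $j$ stays there for an $\mathrm{Exp}((a+b+d)j)$ time but, more usefully, is unlikely to leave a band $[j/2,2j]$ within time $\eta$.

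I expect this window estimate to be the delicate part. The cleanest route is to bound the probability of visiting $j$ in the window by $\mathbb{P}(Z_1^{(1)}(t_{m+1}-T_i)\in[j e^{-c\eta},je^{c\eta}])$ plus an exponentially small error. Summing over $j$ then only costs an extra $O(j)$-width factor, which the surplus in $k$ absorbs. Enlarging $C_1$ accommodates these constants, and since $\varepsilon$ is fixed the grid error in $J$ is harmless.
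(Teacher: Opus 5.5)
Your overall architecture (factorial-moment bound conditional on $\mathcal{B}_\infty$, summation over $j\ge J(t)$, Borel--Cantelli, an interpolation step) matches the paper's. The arithmetic in Step~2 is also right \emph{given} the target $\mathbb{E}[\binom{S_j(t)}{k}]\lesssim (e^{\lambda_0 t}j^{-(\lambda_0+\lambda_1)/\lambda_1})^k$. But the mechanism you offer for that target in Step~1 fails.

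The series $\sum_i e^{-\lambda_0 T_i}=\int_0^\infty e^{-\lambda_0 s}\,dM_s$ is infinite: its mean is $\int_0^\infty (av+\mu)\,ds$. In the first-event recursion the coefficient $(k-1)\lambda_0$ vanishes at $k=1$, so your induction has no base; the exponent $2\lambda_0$ in Lemma~\ref{lem:V_moments_finite} is essential.

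More fundamentally, both crude devices you list, $\sum_{T_i\le t}P_j(t-T_i)\le M_t/j$ and $1\le e^{\lambda_0(t-T_i)}$, lose a factor $j^{\lambda_0/\lambda_1}$ per factor. This is because the clones contributing to $S_j(t)$ are born near $t-\lambda_1^{-1}\log j$. They give only $\sum_{T_i\le t}P_j(t-T_i)\lesssim e^{\lambda_0 t}/j$, up to random and polynomial factors. But $\sum_{j\ge J}(e^{\lambda_0 t}/j)^k$ is small only when $J\gg e^{\frac{k}{k-1}\lambda_0 t}$, so at best you reach threshold $\lambda_0$, not $\lambda_0\lambda_1/(\lambda_0+\lambda_1)$. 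Lemma~\ref{lem:changing_j_mean} is a first-moment identity and cannot control fluctuations of $M$.

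What you need is $\mathbb{E}[S_j(t)\mid\mathcal{B}_\infty]\le C\,\Xi\,e^{\lambda_0 t}j^{-1-\lambda_0/\lambda_1}$, with $\Xi$ free of $(j,t)$ and having all moments. The paper integrates $\int_0^t P_j(t-s)\,dM_s$ by parts and takes $\Xi=\sup_z e^{-\lambda_0 z}M_z$. It bounds the moments of $\Xi$ via $M_z=R_z+(av+\mu)\int_0^z Z_0(s)\,ds$: Doob's inequality handles $e^{-\lambda_0 s}Z_0(s)$, and BDG handles $\int e^{-\lambda_0 s}\,dR_s$, whose quadratic variation is $\sum_i e^{-2\lambda_0 T_i}$.

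Step~3 also fails with a fixed mesh $\eta$. A clone visits size $j$ during a window of length $\eta$ with probability $\asymp \eta jP_j$, not $\asymp P_j$. Your own band bound shows this, since $[je^{-c\eta},je^{c\eta}]$ contains $\asymp\eta j$ sizes. That factor enters each of the $k$ factors of $(\sum_i q_i)^k/k!$, not once in the sum over $j$.

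Concretely, the expected number of clones passing through size $j$ in such a window is $\asymp\eta e^{\lambda_0 t}j^{-\lambda_0/\lambda_1}$. At $j=J(t)$ this grows like $\exp((\frac{\lambda_0\lambda_1}{\lambda_0+\lambda_1}-\frac{\lambda_0}{\lambda_1}\varepsilon)t)$ for small $\varepsilon$. So your surrogate event ``at least $k$ clones visit $j$ in the window'' has probability tending to one. In effect you have traded $S_j$ for a cumulative count over a multiplicative band, whose sparsity threshold is $e^{\lambda_1 t}$.

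The argument must exploit that the $k$ clones sit at $j$ \emph{simultaneously}. The paper does this through occupation time. At the first time in $[n,n+1)$ at which $S_j$ reaches $k+1$, it stays $\ge k+1$ for at least the minimum of $k+1$ independent $\mathrm{Exp}((a+b+d)j)$ holding times. Hence $\mathbb{P}(S_j(n)\le k,\ \sup_{[n,n+1)}S_j\ge k+1)\lesssim (k+1)j\int_n^{n+2}\mathbb{P}(S_j(s)\ge k+1)\,ds$. This costs a single factor of $j$, which the surplus in $k$ absorbs. Alternatively, a mesh of size $e^{-2\lambda_1 t}$ with a bound on the number of type-1 jumps per subinterval, as in the proof of Proposition~\ref{prop:liminf_j_k_mimus}, would also work.
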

\begin{proof}
    Fix any $\varepsilon>0$. Define $j(t)\sim e^{\left(\frac{\lambda_0\lambda_1}{\lambda_0+\lambda_1}+\varepsilon\right)t}$ such that for each $n$, $j(t)=j_n$ is constant for $t\in [n,n+1)$. Suppose we show that:
  \begin{align}
    \sum_{n=1}^\infty \mathbb{P}\left(\exists j\geq j_n,\sup_{t\in [n,n+1)}S_{j}(t)\geq k+1\right)<\infty\label{eq:to_show_upper}
  \end{align}
  for sufficiently large $k$ as in the proposition statement. Then by Borel-Cantelli, there is a $T=T(\omega)$ (depending on the sample point) such that $t\geq T$ implies:
  \[\frac{\log(j_k^+(t))}{t}\leq \frac{\lambda_0\lambda_1}{\lambda_0+\lambda_1}+\varepsilon\Longrightarrow \limsup_{t\to\infty}\frac{\log(j_k^+(t))}{t}\leq \frac{\lambda_0\lambda_1}{\lambda_0+\lambda_1}+\varepsilon,\]
  almost surely, as desired. It remains to show \ref{eq:to_show_upper}. Perform a union bound:
  \begin{align}
      \mathbb{P}\left(\exists j\geq j_n,\,\sup_{t\in [n,n+1)}S_{j}(t)\geq k+1\right)\leq \sum_{j\geq j_n}\mathbb{P}\left(\sup_{t\in [n,n+1)}S_{j}(t)\geq k+1\right).\label{eq:union_bound_jn}
  \end{align}
  Firstly, we can break up each of the events in \ref{eq:union_bound_jn} by whether $S_{j_n}(n)$ is larger than $k+1$ or not:
  \begin{align}
    \sum_{j\geq j_n}\mathbb{P}\left(\sup_{t\in [n,n+1)}S_{j}(t)\geq k+1\right)\leq \sum_{j\geq j_n}\mathbb{P}\left(S_{j}(n)\geq k+1\right)+\sum_{j\geq j_n}\mathbb{P}\left(S_{j}(n)\leq k,\sup_{t\in [n,n+1)}S_{j}(t)\geq k+1\right).\label{eq:sup_S_j_n_trivial_bound}
  \end{align}
  Define event $E_n^j:=\{S_{j}(n)\leq k,\sup_{t\in [n,n+1)}S_{j}(t)\geq k+1\}$. Letting $m$ denote the Lebesgue measure on $\mathbb{R}$, we define the local time $L_{B,j}^{\geq k}:=m\{t\in B:S_j(t)\geq k\}$ for Borel set $B$. To control the second term, note that on event $E_n^j$, $\zeta_n:=\inf\{t\geq n:S_{j}(t)\geq k+1\}$ is an almost surely finite stopping-time (with respect to the filtration by $\mathcal{I}_t$) and stops within $[n,n+1)$. Remark that since $(Z_0,Z_1)$ is a linear birth-death process, $S_j$ changes by increments of $1$ (almost surely), so that by right-continuity $S_{j}(\zeta_n)=k+1$. Starting from time $\zeta_n$, a necessary condition for $S_j(t)\leq k$ for $t\geq \zeta_n$ is that \textit{some} event occur to one of the original $k+1$ clones of size $j_n$ present at time $\zeta_n$ by time $t$. Thus:
  \[L_{[n,n+2],j}^{\geq k+1}\geq \mathbf{1}_{E_n^j}\min\{\eta,1\},\]
  where -- by the Strong Markov property -- conditional on $\mathcal{I}_{\zeta_n}$, $\eta$ is the minimum of $k+1$ independent exponentials each of rate $(a+b+d)j$. Using that $E_n^j$ is $\mathcal{I}_{\zeta_n}$-measurable and taking expectations:
  \begin{align}
    \mathbb{P}(E_n^j)\leq \frac{j(k+1)}{1-e^{-j(k+1)(a+b+d)}}\mathbb{E}[L_{[n,n+2]}^{\geq k+1}].\label{eq:sup_bound}
  \end{align}

  Remark that:
  \begin{align}
    L_{[n,n+2]}^{\geq k+1}=\int_n^{n+2}\mathbf{1}_{S_{j}(s)\geq k+1}\,ds\label{eq:local_time_decomposition}
  \end{align}
  Therefore, to bound \ref{eq:sup_bound}, we bound $\mathbb{P}(S_{j}(s)\geq k+1)$. Recall that $S_{j}(t)$, conditional on $\mathcal{B}_\infty$, is a sum of independent indicators, so that upon taking a factorial bound:
    \begin{align}
        \mathbb{P}(S_{j}(s)\geq k+1)\leq \mathbb{E}\left[\frac{\mathbb{E}[S_{j}(s)|\mathcal{B}_\infty]^{k+1}}{(k+1)!}\right].\label{eq:factorial_bound}
    \end{align}
    Now, remark that:
    \begin{align*}
        \mathbb{E}[S_j(t)|\mathcal{B}_\infty]=\int_0^t P_j(t-s)\,dM_s=\int_0^t M_{s-}P_j'(t-s)\,ds,
    \end{align*}
    with the last step by integration by parts. Then:
    \begin{align}
        &\leq  \sup_{z\in [0,\infty)} M_ze^{-\lambda_0 z}\left(P_j(t)+\lambda_0\int_0^t e^{\lambda_0 s}P_j(t-s)\,ds\right)\nonumber\\
        &\leq  \sup_{z\in [0,\infty)} M_ze^{-\lambda_0 z}\left(P_j(t)+\lambda_0 e^{\lambda_0 t}\int_0^\infty e^{-\lambda_0 u}P_j(u)\,du\right)\nonumber\\
        &\leq C\sup_{z\in [0,\infty)} \left(M_ze^{-\lambda_0 z}\right) e^{\lambda_0 t}j^{-1-\frac{\lambda_0}{\lambda_1}},\label{eq:S_j_expectation_bound}
    \end{align}
    with the last step coming from the asymptotic for $\mathbb{E}[e^{-\lambda_0 t}S_j(t)]$ (Proposition \ref{prop:SFS_mean}). Remark that $\sup_{z\in [0,\infty)}\left(M_z e^{-\lambda_0 z}\right)$ has all finite moments. To see this, recall from Lemma \ref{lem:CenteredMuts_Martingale} the decomposition $M_z=R_z+(av+\mu)\int_0^z Z_0(s)\,ds$ where $R_z$ is a $\mathcal{B}_z$-martingale. It suffices to show then that $\sup_{z\in [0,\infty)}e^{-\lambda_0 z}|R_z|$ and $\sup_{z\in [0,\infty)}e^{-\lambda_0 z}\int_0^z Z_0(s)\,ds$ have finite moments. For the latter term:

    \begin{align*}
        \sup_{z\in [0,\infty)}e^{-\lambda_0 z}\int_0^z Z_0(s)\,ds\leq \sup_{s\in [0,\infty)}(e^{-\lambda_0 s}Z_0(s))\sup_{z\in [0,\infty)}\frac{1}{\lambda_0}(1-e^{-\lambda_0 z})=\frac{1}{\lambda_0}\sup_{z\in [0,\infty)}e^{-\lambda_0 s}Z_0(s). 
    \end{align*}
The $p$th moment of the upper bound is controlled by Doob's maximal inequality:
    \begin{align*}
        \mathbb{E}\left[\sup_{z\in [0,\infty)}\left(e^{-\lambda_0 s}Z_0(s)\right)^p\right]\leq C_p\sup_{t\in [0,\infty)}\mathbb{E}[\left(e^{-\lambda_0 t}Z_0(t)\right)^p]<\infty,
    \end{align*}
    where the final inequality follows from e.g. Lemma 5 of \cite{foo2014escape}.
To control the term $e^{-\lambda_0 z}|R_z|$, define the $\mathcal{B}_t$-martingale $Q_t=\int_0^t e^{-\lambda_0 s}dR_s$. Using integration by parts:
    \begin{align*}
        |R_t|=\left|\int_0^t e^{\lambda_0 s}\,dQ_s\right|\leq |Q_t|e^{\lambda_0 t}+\lambda_0 \int_0^t \left|Q_{s-}\right|e^{\lambda_0 s}\,ds.
    \end{align*}
    Multiplying by $e^{-\lambda_0 t}$ and taking a supremum over $z\in [0,\infty)$:
    \begin{align*}
        \sup_{z\in [0,\infty)} e^{-\lambda_0 z}|R_z|\leq 3\sup_{z\in [0,\infty)}|Q_z|.
    \end{align*}
    We can control the upper bound using the BDG-inequalities, noting that $[Q]_t=\int_0^t e^{-2\lambda_0 s}d[R]_s=\int_0^t e^{-2\lambda_0 s}\,dM_s$:
    \begin{align*}
        \mathbb{E}[\sup_{z\in [0,\infty)}|Q_z|^p]\leq C_p \mathbb{E}\left[\left(\sum_{T_i}e^{-2\lambda_0 T_i}\right)^{p/2}\right].
    \end{align*}
    The last term is finite by using Lemma \ref{lem:V_moments_finite} and applying Jensen's inequality.
    
    Combining \ref{eq:factorial_bound} and \ref{eq:S_j_expectation_bound} then gives for $s\in [n,n+2]$:
    \begin{align}
      \mathbb{P}(S_{j}(s)\geq k+1)\leq C_2 e^{(k+1)\lambda_0t}j^{-(1+\frac{\lambda_0}{\lambda_1})(k+1)}.\label{eq:S_j_n_tight_bound}
    \end{align}
    Summing over all $j\geq j_n$ gives
    \begin{align}
        \sum_{j\geq j_n}\mathbb{P}(S_j(s)\geq k+1)\leq C_3e^{(k+1)\lambda_0 t}j_n^{-(1+\frac{\lambda_0}{\lambda_1})(k+1)+1}\leq C_4e^{-(c_1^\varepsilon k-c_2)n},\label{eq:S_j_n_sum_tight_bound}
    \end{align}
    where $c_1^\varepsilon,c_2>0$, $c_1^\varepsilon$ depends on $\varepsilon$ (scales linearly with $\varepsilon$), and $s\in [n,n+2]$. Therefore, using \ref{eq:local_time_decomposition} along with the union bound \ref{eq:union_bound_jn} implies that for $k$ such that $c_1^\varepsilon k-c_2>0$ (or equivalently for some $C_1>0$, $k>C_1/\varepsilon$), the second term on the right-hand-side of \ref{eq:sup_S_j_n_trivial_bound} is summable (over $n$). \ref{eq:S_j_n_sum_tight_bound} implies that the first term on the right-hand-side of \ref{eq:sup_S_j_n_trivial_bound} is summable (over $n$) as well. Thus, \ref{eq:to_show_upper} holds.
\end{proof}

\begin{proposition}\label{prop:limsup_j_k_plus}
  \begin{align}
      \lim_{k\to\infty}\limsup_{t\to\infty}\frac{\log(j_k^+(t))}{t}\leq \frac{\lambda_0\lambda_1}{\lambda_0+\lambda_1}\label{eq:limsup_j_k_plus}
  \end{align}
  on $\Omega_0^\infty$
\end{proposition}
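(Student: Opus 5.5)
This follows directly from Proposition \ref{prop:intermediate_j_k_plus}. That proposition gives a constant $C_1>0$ such that, for every $\varepsilon>0$ and every integer $k>C_1/\varepsilon$,
\[\limsup_{t\to\infty}\frac{\log j_k^+(t)}{t}\leq \frac{\lambda_0\lambda_1}{\lambda_0+\lambda_1}+\varepsilon\]
almost surely. The proof has two steps: show that $k\mapsto \limsup_{t\to\infty}\log(j_k^+(t))/t$ is monotone, so the limit in $k$ exists, and then send $\varepsilon\to0^+$ along a countable sequence.

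\emph{Monotonicity.} By definition $j_k^+(t)=\max\{j:S_j(t)\geq k\}$. If $S_j(t)\geq k+1$ then $S_j(t)\geq k$, so $j_{k+1}^+(t)\leq j_k^+(t)$ for every $t$.

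One convention needs care. When the set $\{j:S_j(t)\geq k\}$ is empty, $j_k^+(t)$ should be read as $0$ or as $1$, so that $\log j_k^+(t)\leq 0$; the upper bound then holds trivially on those times. With this convention the map $k\mapsto\limsup_{t\to\infty}\log(j_k^+(t))/t$ is non-increasing in $k$, taking values in $[-\infty,\infty]$. Hence $\lim_{k\to\infty}$ exists in the extended reals.

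\emph{Passing to the limit.} Take $\varepsilon_m=1/m$ and $k_m=\lfloor C_1 m\rfloor+1>C_1/\varepsilon_m$. Proposition \ref{prop:intermediate_j_k_plus} gives, for each $m$, an almost sure event $A_m$ on which
\[\limsup_{t\to\infty}\frac{\log j_{k_m}^+(t)}{t}\leq \frac{\lambda_0\lambda_1}{\lambda_0+\lambda_1}+\frac1m.\]
On $\bigcap_m A_m$, which still has probability one, monotonicity gives the same bound for every $k\geq k_m$. Therefore
\[\lim_{k\to\infty}\limsup_{t\to\infty}\frac{\log j_k^+(t)}{t}\leq \frac{\lambda_0\lambda_1}{\lambda_0+\lambda_1}+\frac1m\qquad\text{for all }m,\]
and letting $m\to\infty$ yields \eqref{eq:limsup_j_k_plus}.

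Since this holds on a full-probability event, it holds almost surely on $\Omega_0^\infty$ in particular. The only real obstacle is the countable-intersection bookkeeping, since Proposition \ref{prop:intermediate_j_k_plus} is stated for one fixed $\varepsilon$ at a time. All the probabilistic work is already contained in that proposition.
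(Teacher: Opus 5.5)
Your proposal is correct and follows the paper's proof. You use monotonicity of $k\mapsto j_k^+(t)$ to get existence of the limit, apply Proposition \ref{prop:intermediate_j_k_plus} for each $\varepsilon$, and then let $\varepsilon\to0^+$; your countable intersection over $\varepsilon_m=1/m$ and your convention for an empty index set are bookkeeping that the paper leaves implicit.
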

\begin{proof}
    Remark that $k\mapsto j_k^+(t)$ is non-increasing in $k$. Therefore, the limit on the left-hand-side of \ref{eq:limsup_j_k_plus} exists. For any $\varepsilon>0$, there exists $k'$ large enough such that:
    \[\limsup_{t\to\infty}\frac{\log(j_{k'}^+(t))}{t}\leq \frac{\lambda_0\lambda_1}{\lambda_0+\lambda_1}+\varepsilon\]
    by Proposition \ref{prop:intermediate_j_k_plus}. Thus:
    \begin{align*}
        \lim_{k\to\infty}\limsup_{t\to\infty}\frac{\log(j_k^+(t))}{t}\leq \limsup_{t\to\infty}\frac{\log(j_{k'}^+(t))}{t}\leq \frac{\lambda_0\lambda_1}{\lambda_0+\lambda_1}+\varepsilon.
    \end{align*}
    As $\varepsilon$ is arbitrary, we conclude \ref{eq:limsup_j_k_plus} holds.
\end{proof}

\begin{proposition}\label{prop:liminf_j_k_mimus}
  \[\frac{\lambda_0\lambda_1}{\lambda_0+\lambda_1}\leq \lim_{k\to\infty}\liminf_{t\to\infty}\frac{\log(j_k^-(t))}{t}\]
  on $\Omega_0^\infty$
\end{proposition}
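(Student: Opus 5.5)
Fix $\varepsilon>0$ and take $j(t)=\lceil e^{(\frac{\lambda_0\lambda_1}{\lambda_0+\lambda_1}-\varepsilon)t}\rceil$, frozen as $j_n$ on $[n,n+1)$. Since $k\mapsto j_k^-(t)$ is non-decreasing, it suffices, as in the proof of Proposition \ref{prop:limsup_j_k_plus}, to show: for every fixed $k$, almost surely on $\Omega_0^\infty$, for all large $t$ we have $S_j(t)>k$ for every $j\le j(t)$. This gives $j_k^-(t)>j(t)$, hence $\liminf_t \log j_k^-(t)/t\ge \frac{\lambda_0\lambda_1}{\lambda_0+\lambda_1}-\varepsilon$ for all $k$, and we then let $\varepsilon\to0$.

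First I localize on good events. By Lemma \ref{lem:large_deviation_bound}, the events $B_x=\{\inf_{t}Z_0(t)e^{-\lambda_0t}>x\}$ satisfy $\mathbb{P}(\Omega_0^\infty\setminus B_x)\to0$ as $x\to0^+$. So it is enough to prove the Borel--Cantelli statement on each $B_x$ and then take a countable union over $x=1/m$.

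Next I control a single time and frequency. For $t$ in a fine grid $\{n+\ell h_n\}$ and $j\le j_n+1$, Lemma \ref{lem:upper_bound_Sj_strong} gives
\[
\mathbb{P}(S_j(t)\le k+1,B_x)\le C\exp\!\bigl(-x(1-\delta)c\,\mathbb{E}[S_j(t)]\bigr).
\]
To bound the mean from below I use Lemma \ref{lem:changing_j_mean}. It gives $\mathbb{E}[S_j(t)]=(av+\mu)e^{\lambda_0t}j^{-(\lambda_0+\lambda_1)/\lambda_1}I_t(j)$, and I need $\inf_{j\ge1,\,t\ge1}I_t(j)>0$. This holds because $I_t(j)$ is increasing in $t$, each $I_1(j)>0$, and $I_\infty(j)$ has a positive limit. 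Hence for $j\le 2j(t)$,
\[
\mathbb{E}[S_j(t)]\gtrsim e^{\lambda_0t}j(t)^{-(\lambda_0+\lambda_1)/\lambda_1}\ge e^{c\varepsilon t}.
\]
The probability is therefore at most $C e^{-c'x e^{c\varepsilon t}}$, which is doubly-exponentially small. This bound comfortably survives a union over the $j_n\le e^{\lambda_1 n}$ frequencies and over polynomially or exponentially many grid points. In the case $\mu>0$ with $v=0$ I would instead use the replication-independent Poisson-thinning representation of Section \ref{app:alt_representation_model}. Strictly, it is Cox rather than Poisson, so I would instead condition on $\mathcal{F}_\infty$ and use the thinned death times; I would handle this as a remark.

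The main obstacle is passing from grid times to all $t\in[n,n+1)$, since $S_j(t)$ is not monotone. My plan mirrors the local-time trick in Proposition \ref{prop:intermediate_j_k_plus}, but in reverse. Suppose $S_j(t)\le k$ at some $t$ in a grid cell of length $h_n$, while $S_j\ge k+1$ at the grid point. Then at least one clone left size $j$ during that cell. Simpler still: if $S_j(t_\ell)\ge K$ with $K$ huge, dropping to $k$ within time $h_n$ requires at least $K-k$ clones of size $j$ to jump within time $h_n$. Conditional on $\mathcal{I}_{t_\ell}$, this has probability at most $\binom{K}{K-k}\bigl(j(a+b+d)h_n\bigr)^{K-k}$. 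New arrivals to size $j$ only help, since decreases must come from exits, and exits of clones present at $t_\ell$ dominate.

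I would therefore bound $\mathbb{P}(S_j(t_\ell)<K,B_x)$ by the doubly-exponential estimate (Lemma \ref{lem:upper_bound_Sj_strong} holds for any fixed $K$ with $C_{K,\delta}$). I would choose $h_n=e^{-2\lambda_1 n}$ so that $j h_n\le e^{-\lambda_1 n}$, and take $K=k+2$. The number of grid points times the number of frequencies is then $e^{O(n)}$, and each term is $O(e^{-\lambda_1 n\cdot 2})$ after a small adjustment of $K$. Alternatively I would take $K-k$ large enough that $e^{O(n)}e^{-\lambda_1 n(K-k)}$ is summable in $n$. Borel--Cantelli then closes the argument on $B_x$.
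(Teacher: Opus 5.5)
Your proposal is correct and follows essentially the paper's argument: a grid of mesh $\asymp e^{-2\lambda_1 n}$ on $[n,n+1)$, the doubly-exponential bound of Lemma~\ref{lem:upper_bound_Sj_strong} at grid points (driven by the mean lower bound from Lemma~\ref{lem:changing_j_mean}), a jump-counting bound inside each grid cell, and Borel--Cantelli. Two of your choices differ from the paper, and both are harmless and slightly cleaner. You localize on fixed events $B_{1/m}$ rather than using $x_n=n^{-4}$ with Lemma~\ref{lem:large_deviation_bound}. You also count jumps only among $K$ chosen clones of size $j$, each jumping at rate $(a+b+d)j$, rather than among all type-1 cells on the event $\{Z_1\le e^{(\lambda_1+\beta)n}\}$, which is why your bound works for every $k$ directly.

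One step needs fixing. It is false that $\inf_{j\ge 1,t\ge 1}I_t(j)>0$: writing $\rho=\frac{(a+b)(1-e^{-\lambda_1})}{(a+b)-de^{-\lambda_1}}<1$, one has $I_1(j)\le C j^{1+\lambda_0/\lambda_1}\rho^{j-1}\to 0$. What you actually need is $\inf_{t\ge T,\,1\le j\le 2j(t)}I_t(j)>0$. This follows from
\[
0\le I_\infty(j)-I_t(j)\le \frac{\lambda_1}{(a+b-d)^2}\,\frac{(je^{-\lambda_1 t})^{1+\lambda_0/\lambda_1}}{1+\lambda_0/\lambda_1}\to 0
\]
uniformly over $j\le 2j(t)$, together with $\inf_j I_\infty(j)>0$.

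A smaller slip: $k\mapsto j_k^-(t)$ is non-increasing, not non-decreasing. This is immaterial, since you prove the bound for every $k$.
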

\begin{proof}
Fix any $\varepsilon>0$. Take $\lambda_1>\beta>0$. Define $j(t)\sim \exp\left((\frac{\lambda_0\lambda_1}{\lambda_0+\lambda_1}-\varepsilon)t\right)$ such that every $n$, $j(t)=j_n$ is constant over $t\in [n,n+1)$. Let $E_n=\{\sup_{t\in [n,n+1]}Z_1(t)\leq e^{(\lambda_1+\beta)n}\}$. Notice that since $e^{-\lambda_1 t}Z_1(t)$ converges almost surely to a finite random variable, $\mathbb{P}(E_n\text{ eventually})=1$. Then, to prove the claim it suffices to show:
  \begin{align}
    \sum_{n=1}^\infty \mathbb{P}\left(\exists j\leq j_n,\inf_{t\in [n,n+1)}S_{j}(t)\leq k,\Omega_0^\infty,E_n\right)<\infty\label{eq:to_show_lower}
  \end{align}

  Letting $x_n=n^{-4}$, the probability of the event $B_n^c=\{\inf_{t}Z_0e^{-\lambda_0 t}\leq x_n\}$ conditional on $\Omega_0^\infty$ is summable by Lemma \ref{lem:large_deviation_bound} and in particular $P(B_n^c\text{ i.o.})=0$. So, to prove \ref{eq:to_show_lower} it suffices to show:
\begin{align*}
    \sum_{n=1}^\infty \mathbb{P}\left(\exists j\leq j_n,\inf_{t\in [n,n+1)}S_{j}(t)\leq k,\Omega_0^\infty,E_n,B_n\right)<\infty.
  \end{align*}

  Now, partition $[n,n+1)$ into $M_n\ordas{n}{\infty}\exp(2\lambda_1 n)$ equally-spaced half-open intervals. Denote the intervals $\{[t_i,t_{i+1})\}_{i=1}^{M_n}$. By applications of unions bounds, it suffices to show:
  \begin{align*}
      \sum_{n=1}^\infty\sum_{j\leq j_n}\sum_{i=1}^{M_n} \mathbb{P}\left(\inf_{t\in [t_i,t_{i+1})}S_{j}(t)\leq k,E_n,B_n\right)<\infty.
  \end{align*}

  By \ref{eq:meanSFS_varying}, $\inf_{s\in [n,n+1),j\leq j_n}\mathbb{E}[S_{j}(s)]\gsimas{t}{\infty}\exp\left(n\left(\frac{\lambda_0+\lambda_1}{\lambda_1}\varepsilon\right)\right)$. Thus, using the first and third expression of Eq. \ref{eq:S_j_upper_bound} of Lemma \ref{lem:upper_bound_Sj_strong} with say $\delta=0.5$, we obtain $\max_{j\leq j_n,1\leq i\leq M_n}\mathbb{P}(B_n\cap S_j(t_i)\leq 2k)\lsimas{n}{\infty}\exp\left(-0.5n^{-4}e^{\varepsilon n}\right)$. This decays doubly-exponentially so that summing $\mathbb{P}(B_n\cap S_j(t_i)\leq 2k)$ over $(n,j,i)$ is finite. Therefore, it suffices to prove:
  \begin{align*}
      \sum_{n=1}^\infty\sum_{j\leq j_n}\sum_{i=1}^{M_n} \mathbb{P}\left(S_j(t_i)\geq 2k,\inf_{t\in [t_i,t_{i+1})}S_{j}(t)\leq k,Z_1(t_i)\leq e^{(\lambda_1+\beta)n}\right)<\infty.
  \end{align*}

  For the event to occur, it must be that at least $k$ events occur to the type-1 members present at the beginning of the interval (at time $t_i$) by the end of the interval (by time $t_{i+1}$). Hence, the above probability is dominated by the tail of a binomial $B$ with $\lceil e^{(\lambda_1+\beta)n}\rceil$ trials with success probability $p=1-\exp(-(a+b+d)/M_n)$. Using $M_n\gsimas{n}{\infty}\exp(2\lambda_1 n)$ and by a factorial tail bound:
  \[\mathbb{P}\left(S_j(t_i)\geq 2k,\inf_{t\in [t_i,t_{i+1})}S_{j}(t)\leq k,Z_1(t_i)\leq e^{(\lambda_1+\beta)n}\right)\leq \mathbb{P}(B\geq k)\leq \frac{(\lceil e^{(\lambda_1+\beta)n}\rceil p)^k}{k!}\lsimas{n}{\infty}e^{(\beta-\lambda_1)kn}\]
Further using that $M_n\lsimas{n}{\infty}\exp(2\lambda_1 n)$ and that $j_n\lsimas{n}{\infty}\exp(\frac{\lambda_0\lambda_1}{\lambda_0+\lambda_1}n)$:
  \[\sum_{j\leq j_n}\sum_{i=1}^{M_n} \mathbb{P}\left(S_j(t_i)\geq 2k,\inf_{t\in [t_i,t_{i+1})}S_{j}(t)\leq k,Z_1(t_i)\leq e^{(\lambda_1+\beta)n}\right)\lsimas{n}{\infty}e^{\frac{\lambda_0\lambda_1}{\lambda_0+\lambda_1}n+2\lambda_1n+(\beta-\lambda_1)kn}.\]
  Taking $k$ large enough ensures that \ref{eq:to_show_lower} holds.
  
\end{proof}

The second part of the corollary requires a novel proof. First, we consider a lemma regarding a bound on the laplace transform of the type-1 clone.
\begin{lemma}\label{lem:MGF_bound}
  Let $G(\theta,t)=\mathbb{E}[\exp(-\theta Z_1^{(1)}(t))]$ denote the laplace transform of a single type-one clone. For any fixed $t_0$, there exists a $K>0$ such that:
  \begin{align*}
    e^{\beta \exp(t(\lambda_0\lambda_1)/(\lambda_0+\lambda_1))}G(\beta,t)\leq e^{-Kt^2}
  \end{align*}
  for all $t\in [0,t_0]$ and some $\beta>0$ (depending on $t$)
\end{lemma}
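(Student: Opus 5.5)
The plan is a Chernoff-type expansion of the Laplace transform near $\beta=0$, with $\beta$ chosen proportional to $t$. Write $c(t)=\exp(t\lambda_0\lambda_1/(\lambda_0+\lambda_1))$ and $m(t)=\mathbb{E}[Z_1^{(1)}(t)]=e^{\lambda_1 t}$. Taking logarithms, it suffices to find $K>0$ and, for each $t\in[0,t_0]$, some $\beta=\beta(t)\ge 0$ with
\[
f_t(\beta):=\beta c(t)+\log G(\beta,t)\le -Kt^2 .
\]
At $t=0$ we have $G(\beta,0)=e^{-\beta}$ and $c(0)=1$, so $f_0\equiv 0$ and any $\beta$ works (for instance $\beta=0$). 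The interesting case is $t\in(0,t_0]$. There $f_t(0)=0$ and $f_t'(0)=c(t)-m(t)<0$, because $\lambda_1>\lambda_0\lambda_1/(\lambda_0+\lambda_1)$. This negative slope is the whole mechanism.

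\textbf{Step 1 (second-order bound on $\log G$).} I will use $e^{-x}\le 1-x+x^2/2$ for $x\ge 0$, which gives $G(\beta,t)\le 1-\beta m(t)+\tfrac{\beta^2}{2}\mathbb{E}[Z_1^{(1)}(t)^2]$. Combining this with $\log(1+y)\le y$ yields
\[
f_t(\beta)\le -\beta\bigl(m(t)-c(t)\bigr)+\tfrac{\beta^2}{2}M,\qquad M:=\sup_{t\le t_0}\mathbb{E}\bigl[Z_1^{(1)}(t)^2\bigr]<\infty.
\]
The constant $M$ is finite because the second moment of a linear birth-death process is explicit and continuous in $t$. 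This step is routine.

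\textbf{Step 2 (linear lower bound on the drift gap).} I claim there is $\kappa>0$ with $m(t)-c(t)\ge\kappa t$ for all $t\in[0,t_0]$. The function $g(t)=e^{\lambda_1 t}-c(t)$ vanishes at $0$ and has $g'(0)=\lambda_1-\lambda_0\lambda_1/(\lambda_0+\lambda_1)=\lambda_1^2/(\lambda_0+\lambda_1)>0$. It is also strictly positive on $(0,t_0]$, since $e^{\lambda_1 t}>c(t)$ there. Hence $g(t)/t$ extends continuously to $[0,t_0]$ with a positive value at $0$. Being continuous and positive on a compact set, it is bounded below by some $\kappa>0$.

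\textbf{Step 3 (choice of $\beta$).} I will take $\beta(t)=\kappa t/M$. Then $f_t(\beta)\le -\kappa^2t^2/M+\kappa^2t^2/(2M)=-\kappa^2 t^2/(2M)$, so $K=\kappa^2/(2M)$ works uniformly for $t\in[0,t_0]$. The only step needing care is Step 2: the uniformity in $t$ hinges on the gap $m(t)-c(t)$ being of order $t$ near $0$ rather than smaller, which is what makes the optimized exponent of order $t^2$. I expect no further obstacle. If a bound with $\beta$ not depending on $t$ were needed later, I would instead keep the explicit linear-fractional form of $G$ from \eqref{eq:P_j}, but the statement permits $\beta$ to depend on $t$.
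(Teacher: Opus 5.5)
Your proof is correct, and its skeleton matches the paper's. In both, $\beta\exp\bigl(t\lambda_0\lambda_1/(\lambda_0+\lambda_1)\bigr)+\log G(\beta,t)$ is bounded by a quadratic in the Chernoff parameter, whose linear coefficient is minus the gap $e^{\lambda_1 t}-\exp\bigl(t\lambda_0\lambda_1/(\lambda_0+\lambda_1)\bigr)\asymp t$. The parameter is then taken proportional to that gap, which gives an exponent of order $-t^2$.

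The difference is in how the quadratic bound is obtained. The paper starts from the explicit linear-fractional formula for $G$ and reparametrizes by $y_\beta=1-e^{-\beta}$. It then bounds both factors by exponentials that are quadratic in $y_\beta$, sets $y_\beta$ to a small multiple of $(C_t^{(1)}-C_t^{(3)})/(C_t^{(1)}C_t^{(2)}+C_t^{(3)})$, and concludes from the behaviour as $t\to0^+$. You bypass the closed form entirely: $e^{-x}\le 1-x+x^2/2$ and $\log(1+y)\le y$ reduce everything to the mean $e^{\lambda_1 t}$ and a bound on $\mathbb{E}[Z_1^{(1)}(t)^2]$ over $[0,t_0]$. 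Your compactness argument for $\kappa$ also gives uniformity over all of $[0,t_0]$ explicitly, rather than only near $t=0$.

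Your route is more elementary and makes plain what matters: two moments and the strict inequality of rates. Hence any rate strictly below $\lambda_1$ may replace $\lambda_0\lambda_1/(\lambda_0+\lambda_1)$. That is what the later application actually uses, since there the ratio $J(t)/J(s)$ grows at rate about $\lambda_0\lambda_1/(\lambda_0+\lambda_1)+\varepsilon$. The closed-form route buys an exact expression for $G$ valid for all $t$ and potentially sharper constants, which the lemma does not need.

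One cosmetic point: the statement asks for $\beta>0$, so at $t=0$ take any positive $\beta$ rather than $\beta=0$. Since $f_0\equiv 0$, this is immediate.
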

\begin{proof}
  Let $y_{\beta}=1-e^{-\beta}$, $C_t^{(1)}=\exp(\lambda_1t)$, and $C_t^{(2)}=\frac{a+b}{\lambda_1} \cdot (C_t^{(1)}-1)$, and $C_t^{(3)}=\exp(t\lambda_0\lambda_1/(\lambda_0+\lambda_1))$. It is known that (see Eq. 4 of \cite{durrettbranching2015}):
  \begin{align}
    e^{\beta C_t^{(3)}}G(\beta,t)=(1-y_{\beta})^{-C_t^{(3)}}\left(1-\frac{y_{\beta}C_t^{(1)}}{1+y_{\beta}C_t^{(2)}}\right).\label{eq:laplace_rewrite}
  \end{align}
  We upper bound \ref{eq:laplace_rewrite}. First, notice that:
  \begin{align*}
    &-\log(1-y_\beta)=y_\beta+\sum_{i=2}^\infty\frac{y_\beta^i}{i}\leq y_\beta+\frac{y_\beta^2}{2}\frac{1}{1-y_\beta},\text{ and}\\
    &1-\frac{y_\beta C_t^{(1)}}{1+y_\beta C_t^{(2)}}\leq e^{-\frac{y_\beta C_t^{(1)}}{1+y_\beta C_t^{(2)}}}\leq e^{-y_\beta C_t^{(1)}+y_\beta^2 C_t^{(1)}C_t^{(2)}}.
  \end{align*}
  Therefore:
  \begin{align*}
    \ref{eq:laplace_rewrite}\leq e^{y_\beta(C_t^{(3)}-C_t^{(1)})+y_\beta^2\left(\frac{C_t^{(3)}}{2(1-y_\beta)}+C_t^{(1)}C_t^{(2)}\right)}.
  \end{align*}
  Note that $\frac{C_t^{(1)}-C_t^{(3)}}{C_t^{(1)}C_t^{(2)}+C_t^{(3)}}$ is uniformly bounded over all $t\geq 0$, so in particular we can choose a constant $C$ such that
  $C\frac{C_t^{(1)}-C_t^{(3)}}{C_t^{(1)}C_t^{(2)}+C_t^{(3)}}\in (0,1)$. Then, we can choose $\beta^*$ (depending on $t$) so that $y^\ast_t:=y_{\beta^\ast}=C\frac{C_t^{(1)}-C_t^{(3)}}{C_t^{(1)}C_t^{(2)}+C_t^{(3)}}$. Using this $\beta^*$ we get
  \begin{align*}
    e^{\beta^\ast C_t^{(3)}}G(\beta^\ast,t)\leq e^{y_t^\ast(C_t^{(3)}-C_t^{(1)})+(y_t^\ast)^2\left(C_t^{(3)}+C_t^{(1)}C_t^{(2)}\right)}=e^{\frac{(C_t^{(3)}-C_t^{(1)})^2}{C_t^{(1)}C_t^{(2)}+C_t^{(3)}}\left(-C+C^2(C_t^{(3)}-C_t^{(1)})\right)}.
  \end{align*}
  The result follows since $C_t^{(1)}-C_t^{(3)}\overset{t\to 0^+}{\sim}\left(\lambda_1-\frac{\lambda_0\lambda_1}{\lambda_0+\lambda_1}\right)t$, and $C_t^{(1)}C_t^{(2)}+C_t^{(3)}\to 1$ as $t\to 0^+$  and then choosing $K$ sufficiently large.

\end{proof}

\begin{lemma}[Simple Poisson Tail Bounds]\label{lem:poisson_tails}
  Let $(P_n)_n$ be a sequence of Poisson random variables with rates $(\lambda_n)_n$. Then for any $k$:
  \begin{enumerate}
    \item $\mathbb{P}(P_1\geq k)\leq \frac{\lambda_1^k}{k!}$
    \item If $\lambda_n\ll 1$ as $n\to\infty$, then $\mathbb{P}(P_n\geq k)\overset{n\to\infty}{\sim} \frac{\lambda_n^k}{k!}$
  \end{enumerate}
\end{lemma}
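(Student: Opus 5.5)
The first bound is a Markov-type argument on the Poisson probabilities. Since $P_1\sim\text{Poisson}(\lambda_1)$, I will write
\[
\mathbb{P}(P_1\geq k)=e^{-\lambda_1}\sum_{m\geq k}\frac{\lambda_1^m}{m!}.
\]
Then I will use $m!\geq k!\,(m-k)!$, which follows from $\binom{m}{k}\geq 1$. This gives
\[
\sum_{m\geq k}\frac{\lambda_1^m}{m!}\leq \frac{\lambda_1^k}{k!}\sum_{m\geq k}\frac{\lambda_1^{m-k}}{(m-k)!}=\frac{\lambda_1^k}{k!}e^{\lambda_1}.
\]
The factor $e^{\lambda_1}$ cancels the prefactor $e^{-\lambda_1}$, yielding item 1. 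An alternative is to view $\mathbb{P}(P_1\geq k)$ as the probability that the $k$th point of a rate-one Poisson process lies in $[0,\lambda_1]$, and bound it by $\int_0^{\lambda_1}s^{k-1}/(k-1)!\,ds$. I will use the series argument, since it is shorter.

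For item 2, I will assume $k\geq 1$; for $k=0$ both sides equal $1$ only after interpreting $\lambda_n^0/0!=1$, so that case is trivial. Item 1 already supplies the upper bound $\mathbb{P}(P_n\geq k)\leq \lambda_n^k/k!$. For the lower bound, I will keep only the first term of the tail:
\[
\mathbb{P}(P_n\geq k)\geq \mathbb{P}(P_n=k)=e^{-\lambda_n}\frac{\lambda_n^k}{k!}.
\]
Since $\lambda_n\to 0$, we have $e^{-\lambda_n}\to 1$. Dividing the two-sided bound by $\lambda_n^k/k!$ therefore squeezes the ratio to $1$, which gives the asymptotic equivalence.

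There is no real obstacle here. The only step needing care is the factorial comparison $m!\geq k!\,(m-k)!$ used for item 1. I will state it explicitly so that the cancellation of $e^{\pm\lambda_1}$ is transparent.
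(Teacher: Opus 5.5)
Your proof is correct and is essentially the paper's: for item 1 the paper uses exactly the same comparison $z!\geq k!\,(z-k)!$ to get $\mathbb{P}(P_1\geq k)\leq \lambda_1^k/k!$. For item 2, your squeeze $e^{-\lambda_n}\lambda_n^k/k!\leq \mathbb{P}(P_n\geq k)\leq \lambda_n^k/k!$ is a slightly more elementary way to see that the normalized series $\sum_{z\geq k}\frac{k!}{z!}\lambda_n^{z-k}e^{-\lambda_n}$ tends to $1$, which the paper obtains by dominated convergence.
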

\begin{proof}
  For item 1:
  \[\mathbb{P}(P_1\geq k)= \sum_{z=k}^\infty \frac{\lambda_1^z}{z!}e^{-\lambda_1 }\leq \frac{\lambda_1^k}{k!}\sum_{z=k}^\infty \frac{\lambda_1^{z-k}}{(z-k)!}e^{-\lambda_1}=\frac{\lambda_1^k}{k!}\]

  Now, we examine item 2. This is as easy as item 1:
  \begin{align*}
    \mathbb{P}(P_n\geq k)=\frac{\lambda_n^k}{k!}\sum_{z=k}^\infty\frac{k!}{z!}\lambda_n^{z-k}e^{-\lambda_n}=\frac{\lambda_n^k}{k!}(1+o_{n\to\infty}(1)),
  \end{align*}
  with the $1+o_{n\to\infty}(1)$ term following since the sum above goes to $1$ as $n\to\infty$ by dominated convergence.
\end{proof}

\begin{proposition}\label{prop:limsup_theta}
   Let $J_\varepsilon(t)\lsimas{t}{\infty}\exp\left(\left(\frac{\lambda_0\lambda_1}{\lambda_0+\lambda_1}+\varepsilon\right)t\right)$ be monotone, then:
    \begin{align*}
        \limsup_{t\to\infty} S_{J_\varepsilon(t)}(t)\gsimas{\varepsilon}{0^+}\varepsilon^{-1}
    \end{align*}
    on $\Omega_0^\infty$.
\end{proposition}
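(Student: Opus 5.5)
We aim to show that, almost surely on $\Omega_0^\infty$, $S_{J_\varepsilon(t)}(t)\geq c/\varepsilon$ for arbitrarily large $t$, where $c>0$ does not depend on $\varepsilon$. The mechanism is a burst: a short window of time, about $\delta\asymp 1$ before $t$, in which the type-0 population seeds many clones that are all still of size $J_\varepsilon(t)$ at time $t$. Lemma \ref{lem:MGF_bound} is set up for exactly this, since it controls a single clone reaching size $\approx e^{t\lambda_0\lambda_1/(\lambda_0+\lambda_1)}$ in a short time.

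\textbf{Setup.} Work along the times $t_n=n\Delta$ for a fixed $\Delta$, and set $k=\lfloor c/\varepsilon\rfloor$. Take $m_n$ so that $\lambda_1 m_n\approx \log J_\varepsilon(t_n)$, i.e.\ $m_n\approx(\frac{\lambda_0}{\lambda_0+\lambda_1}+\varepsilon/\lambda_1)t_n$. Then a clone founded at time $t_n-m_n$ has probability $P_{J}(m_n)\asymp 1/J$ of having size exactly $J=J_\varepsilon(t_n)$ at time $t_n$; this follows from Eq.~\ref{eq:P_j}, because $\beta(m_n)^{J-1}$ stays of order one when $J\asymp e^{\lambda_1 m_n}$.

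Clones founded during $[t_n-m_n,t_n-m_n+1/J]$ form, conditionally on $\mathcal{F}_\infty$, a Poisson number with mean $\approx (av+\mu)Ye^{\lambda_0(t_n-m_n)}/J$. Their indicators of ending at size exactly $J$ are independent, so $S_J(t_n)$ stochastically dominates a Poisson variable with conditional mean
\[
\Lambda_n\asymp Y e^{\lambda_0 (t_n-m_n)}J^{-1}\int P_J\asymp Y\,e^{\lambda_0 t_n}J^{-1-\lambda_0/\lambda_1}\asymp Y e^{-\varepsilon\frac{\lambda_0+\lambda_1}{\lambda_1}t_n}.
\]
This is the same order as the first-moment formula \eqref{eq:meanSFS_varying}, with $I_t$ bounded below by Lemma \ref{lem:changing_j_mean}, so we simply use Lemma \ref{lem:changing_j_mean} directly: conditionally on $\mathcal{F}_\infty$, $\mathbb{E}[S_J(t_n)\mid\mathcal{F}_\infty]\gtrsim x\,e^{-\kappa\varepsilon t_n}$ on $B=\{\inf_s e^{-\lambda_0 s}Z_0(s)>x\}$, where $\kappa=(\lambda_0+\lambda_1)/\lambda_1$. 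This uses replication-dependent clones only, which are Poisson given $\mathcal{F}_\infty$, together with the thinning argument of Lemma \ref{lem:upper_bound_Sj_strong}.

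\textbf{Lower bound for each $n$.} By Lemma \ref{lem:poisson_tails}(2),
\[
\mathbb{P}(S_J(t_n)\geq k\mid\mathcal{F}_\infty)\gtrsim \frac{(x e^{-\kappa\varepsilon t_n})^k}{k!}=\frac{x^k}{k!}\,e^{-\kappa\varepsilon k\Delta n}.
\]
With $k=\lfloor c/\varepsilon\rfloor$ and $c\kappa\Delta\le 1$, this is $\gtrsim e^{-n}\cdot$const. That is not summable to infinity, so we must be more careful. The fix is to use many nearly independent times per unit time: take $t_{n,\ell}=n+\ell e^{-\lambda_1 n}$, which gives $e^{\lambda_1 n}$ subwindows per unit. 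Clones founded in disjoint windows are independent given $\mathcal{F}_\infty$, and the founding window relevant to $t_{n,\ell}$ has width $1/J$. Choosing the $t_{n,\ell}$ spaced by more than $1/J$ makes the relevant founding windows disjoint, so the events $\{S_J(t_{n,\ell})\ge k\text{ from window }\ell\}$ are conditionally independent. The expected number of successes is then
\[
\sum_{n}J_n\,e^{-\kappa\varepsilon k n}\,\frac{x^k}{k!}\approx\sum_n e^{(\frac{\lambda_0\lambda_1}{\lambda_0+\lambda_1}+\varepsilon)n-\kappa\varepsilon kn},
\]
which diverges when $k<\frac{\lambda_0\lambda_1/(\lambda_0+\lambda_1)}{\kappa\varepsilon}$, i.e.\ for $k\asymp\varepsilon^{-1}$ with a suitable constant $c$. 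A conditional second Borel--Cantelli argument (Lévy's extension, given $\mathcal{F}_\infty$) then yields infinitely many successes on $B$. Sending $x\to0$ and using Lemma \ref{lem:large_deviation_bound} covers all of $\Omega_0^\infty$. Monotonicity of $J_\varepsilon$ ensures $J$ is essentially constant across each unit window, so the same $J_n$ applies to all $\ell$.

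\textbf{Main obstacle.} The hardest step is the independence and decoupling. The success of window $\ell$ must be attributed only to clones founded in its own founding window. Clones founded elsewhere may lower $S_J$, but they cannot make it smaller than the count coming from the designated window, since $S_J$ counts all clones of size $J$; so domination by the designated window's count is fine. I would also need $P_J(s)\gtrsim 1/J$ uniformly for $s$ in that founding window, which follows from Eq.~\ref{eq:P_j}. If the dense grid fails, I would fall back on Lemma \ref{lem:MGF_bound}. It gives an exponentially small upper bound on the probability that a single clone lags, and it could be used to force near-deterministic growth of a burst of clones founded in a time window of length $O(1/t)$.
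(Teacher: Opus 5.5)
Your heuristic count is the right one. Roughly $J$ effectively distinct configurations per unit time, each carrying at least $k$ clones of size $J$ with probability about $e^{-\kappa\varepsilon kt}$, gives the threshold $k\asymp\varepsilon^{-1}$. This matches the paper's lower bound on the conditional mean of the $J$-weighted occupation time $L=\int J(t)\mathbf{1}\{S'_{J(t)}(t)\geq k\}\,dt$.

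The gap is in turning this count into a Borel--Cantelli argument. The founding times that feed $S_J(t)$ do not sit in a window of width $1/J$. From Eq.~\ref{eq:P_j}, $P_J(s)\approx q_1^2e^{-\lambda_1 s}\exp(-q_1Je^{-\lambda_1 s})$, which is of order $1/J$ for $s$ ranging over an interval of length order one. The first moment $e^{\lambda_0 t}J^{-1-\lambda_0/\lambda_1}$ only arises after integrating over that whole interval. A founding window of width $1/J$ contributes a Poisson mean of order $Ye^{\lambda_0 t}J^{-2-\lambda_0/\lambda_1}=Ye^{-\kappa\varepsilon t}/J$; your display for $\Lambda_n$ loses exactly this factor $1/J$. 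This leaves you stuck either way:
\begin{itemize}
\item With genuinely disjoint founding windows, each trial succeeds with probability of order $(Ye^{-\kappa\varepsilon t}/J)^k/k!$. The expected number of successes per unit time is then of order $J^{1-k}e^{-\kappa\varepsilon kt}$, which is summable, so the second Borel--Cantelli lemma gives nothing.
\item If you use the full order-one founding window for every grid time, the $\sim J$ events $\{S_J(t_{n,\ell})\geq k\}$ in a unit interval involve the same clones and are strongly dependent. Neither the independent nor the L\'evy-conditional version of Borel--Cantelli applies without a correlation estimate.
\end{itemize}

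What actually decorrelates $S_{J(s)}(s)$ from $S_{J(t)}(t)$, once $t-s$ is small but not tiny, is the clone dynamics: a clone of size $J(s)$ at time $s$ is very unlikely still to have size $J(t)$ at time $t$. The paper keeps exact conditional independence only at the coarse level: replication-dependent clones founded in disjoint unit windows $[n,n+1)$, observed during $[\alpha n,\alpha(n+1))$. Inside each block it proves $\mathbb{P}(L>0\mid\mathcal{F}_\infty)\to1$ by Paley--Zygmund. The variance of $L$ is controlled by splitting clones into those counted only at $s$, only at $t$, or at both. The last group is Poisson with rate $\mu_n(s)P_{J(s)\to J(t)}(t-s)$. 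For $t-s<\varepsilon_n$ it is handled by the small measure of that region. For $t-s\geq\varepsilon_n$ it is handled by the Chernoff bound $P_{J(s)\to J(t)}(t-s)\leq Ce^{-KJ(s)(t-s)^2}$ from Lemma~\ref{lem:MGF_bound}. Your fallback names that lemma, but this covariance estimate is the missing idea, and without it the proof does not go through.
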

\begin{proof}
  Define $k_\varepsilon:=\lfloor \frac{\lambda_0\lambda_1^2}{4\varepsilon(\lambda_0+\lambda_1)^2}\rfloor $. It would then suffice to show:
  \[\limsup_{t\to\infty}S_{J(t)}(t)\geq k_\varepsilon\]
  almost surely on $\Omega_0^\infty$. To do so, we consider $S_{J(t)}'(t)$, which is the SFS contributed only by mutants formed in a replication-dependent manner. Since $S_{J(t)}'(t)\leq S_{J(t)}(t)$, it then suffices to show:
  \[\limsup_{t\to\infty}S_{J(t)}'(t)\geq k_\varepsilon\]
  almost surely on $\Omega_0^\infty$. We further define $S_j^{',B}(t)$, for $B\subseteq \mathbb{R}$ a borel set, to be the SFS contributed only by mutants formed in a replication-dependent manner and at times contained in $B$. So, $S_{j}^{',B}(t):=\sum_{T_i'\in B,T_i'\leq t}\mathbf{1}_{Z_1^{',(i)}(t-T_i')=j}$. Recall that the mutation timings of replication-dependent mutants form a Poisson process conditional on $\mathcal{F}_\infty$. Then, conditional on $\mathcal{F}_\infty$, $\{\{T_i'\}\cap [n,n+1)\}_{n\in\mathbb{N}}$ form independent Poisson point processes on the intervals $[1,2),[2,3),\dots$. Thus, $\{\{S_{J(t)}^{',[n,n+1)}(t)\}_{t\geq 0}\}_{n\in\mathbb{N}}$ form independent processes conditional on $\mathcal{F}_\infty$. By the second Borel-Cantelli to prove the proposition it will then suffice to show the last equality in:
  \begin{align}
    \sum_{n=1}^\infty \mathbb{P}(\sup_{t}S_{J(t)}^{',[n,n+1)}(t)\geq k_\varepsilon|\mathcal{F}_\infty)\geq \sum_{n=1}^\infty \mathbb{P}(\sup_{t\in [\alpha n,\alpha(n+1))}S_{J(t)}^{',[n,n+1)}(t)\geq k_\varepsilon|\mathcal{F}_\infty)= \infty\label{eq:S_J_alpha_large}
  \end{align}
  on $\Omega_0^\infty$ for some $\alpha>1$. To choose an appropriate $\alpha$ we consider the following heuristic. If a clone is produced at time $\approx n$, then it will reach the boundary $J(t)$ at time $t^*$ determined by roughly $e^{\lambda_1 (t^*-n)}\approx e^{\left(\frac{\lambda_0\lambda_1}{\lambda_0+\lambda_1}+\varepsilon\right)t^*}$, so $t^*\approx \lambda_1n/\left(\lambda_1-\lambda_0\lambda_1/(\lambda_0+\lambda_1)-\varepsilon\right)$. Using this, set $\alpha:= \lambda_1/\left(\lambda_1-\lambda_0\lambda_1/(\lambda_0+\lambda_1)-\varepsilon\right)$ (and take $\varepsilon>0$ be small enough so that $\alpha>0$) in \ref{eq:S_J_alpha_large}. Define $L_{A,B}^k:=\int_{A} J(t)\mathbf{1}_{S_{J(t)}^{',B}(t)\geq k}\,dt$ to be a ``weighted'' local time. Then, we have the following equality of events (up to null sets):
  \begin{align*}
    \left\{\sup_{t\in [\alpha n,\alpha(n+1))}S_{J(t)}^{',[n,n+1)}(t)\geq k_\varepsilon\right\}&=\left\{\exists t\in [\alpha n,\alpha(n+1)),\,S_{J(t)}^{',[n,n+1)}(t)\geq k_\varepsilon\right\}\\
    &=\left\{L_{[\alpha n,\alpha (n+1)),[n,n+1)}^{k_\varepsilon}>0\right\}.
  \end{align*}

  By the Paley-Zygmund bound it suffices to show the last equality in the series:
  \begin{align}
    \sum_{n=1}^\infty \frac{\mathbb{E}\left[L_{[\alpha n,\alpha (n+1)),[n,n+1)}^{k_\varepsilon}|\mathcal{F}_\infty\right]^2}{\mathbb{E}\left[\left(L_{[\alpha n,\alpha (n+1)),[n,n+1)}^{k_\varepsilon}\right)^2|\mathcal{F}_\infty\right]}=\sum_{n=1}^\infty \frac{1}{\frac{\mathbb{V}\left[L^{k_\varepsilon}_{[\alpha n,\alpha(n+1)),[n,n+1)}|\mathcal{F}_\infty\right]}{\mathbb{E}\left[L^{k_\varepsilon}_{[\alpha n,\alpha(n+1)),[n,n+1)}|\mathcal{F}_\infty\right]^2}+1}=\infty\label{eq:tight_limsup_to_show}
  \end{align}
  almost surely on $\Omega_0^\infty$. To do so, we show that the conditional expectation grows sufficiently quickly and the conditional variance grows sufficiently slowly (but still both grow to $\infty$). Defining $\mu_n(t):=av\int_n^{n+1}Z_0(s)P_{J(t)}(t-s)\,ds$, $\mu_n(t)$ is the Poisson rate for $S_{J(t)}^{',[n,n+1)}$ conditional on $\mathcal{F}_\infty$. Hence by Fubini-Tonelli:
  \begin{align}
    \mathbb{E}\left[L_{[\alpha n,\alpha (n+1)),[n,n+1)}^k|\mathcal{F}_\infty\right]&=\int_{\alpha n}^{\alpha(n+1)}J(t)\mathbb{P}\left(S_{J(t)}^{',[n,n+1)}(t)\geq k|\mathcal{F}_\infty\right)\,dt\nonumber\\
    &=\int_{\alpha n}^{\alpha(n+1)}J(t)\sum_{r=k}^\infty\frac{(\mu_n(t))^r}{r!}e^{-\mu_n(t)}\,dt.\label{eq:L_limsup_larger_1}
\end{align}
Fix $t_n\in [\alpha n,\alpha(n+1))$ and $s_n\in [n,n+1)$. Choosing $n$ large enough, we can make $t_n-s_n$ arbitrarily large as $\alpha>1$. For $\varepsilon$ small, since $J_\varepsilon(t)\ll \exp(\lambda_1 t)$, by using \ref{eq:P_j}:
\begin{align*}
    P_{J(t_n)}(t_n-s_n)\overset{n\to\infty}{\sim} (a+b-d)e^{-\lambda_1 (t_n-s_n)},
\end{align*}
uniformly for any such choice of sequences $(t_n)_n,(s_n)_n$ where $(t_n,s_n)\in [\alpha,\alpha(n+1))\times [n,n+1)$. Fix $T$ large enough so that for all $s\geq T$, $\frac{1}{2}Ye^{\lambda_0 s}\leq Z_0(s)\leq 2Ye^{\lambda_0 s}$. Take $n\geq T$ and note that:
\begin{align*}
    \mu_n(t_n)\leq CY\int_n^{n+1}e^{\lambda_0 s}e^{-\lambda_1(t_n-s)}\,ds\leq CYe^{(\lambda_0+\lambda_1) n}e^{-\lambda_1\alpha n}\overset{n\to\infty}{\to}0
\end{align*}
as $n\to\infty$, since $\lambda_0+\lambda_1-\lambda_1\alpha<0$. Thus. for sufficiently large $n$:
\[e^{-\mu_n(t_n)}\geq e^{-1}.\]
Reciprocally:
\begin{align*}
    \mu_n(t_n)\geq CYe^{(\lambda_0+\lambda_1)n}e^{-\lambda_1\alpha n}
\end{align*}
for a different constant $C$. Thus, for sufficiently large $n$:
\begin{align*}
    \ref{eq:L_limsup_larger_1}&\geq CYe^{(\lambda_0+\lambda_1)kn}e^{-\lambda_1\alpha kn}\int_{\alpha n}^{\alpha(n+1)}e^{\left(\frac{\lambda_0\lambda_1}{\lambda_0+\lambda_1}+\varepsilon\right)t}\,dt\\
    &\geq CY e^{\left(\frac{\lambda_0\lambda_1}{\lambda_0+\lambda_1}+\varepsilon-k\lambda_1\right)\alpha n+(\lambda_0+\lambda_1)kn}.
\end{align*}
  Observe that setting $k=k_\varepsilon$:
  \begin{align}
    \left(\frac{\lambda_0\lambda_1}{\lambda_0+\lambda_1}+\varepsilon-k_\varepsilon\lambda_1\right)\alpha +(\lambda_0+\lambda_1)k_\varepsilon=\frac{3\lambda_0}{4}+\mathcal{O}_{\varepsilon\to 0^+}(\varepsilon)\label{eq:exp_minimum_growth}
  \end{align}
  Now we obtain upper bounds on the variance in the denominator of \ref{eq:tight_limsup_to_show}:
  \begin{align}
    \mathbb{V}\left[L_{[\alpha n,\alpha (n+1)),[n,n+1)}^k|\mathcal{F}_\infty\right]&=2\int_{\alpha n}^{\alpha(n+1)}\int_{s}^{\alpha(n+1)}J(t)J(s)\text{CoV}[\mathbf{1}_{S_{J(t)}^{',[n,n+1)}(t)\geq k},\mathbf{1}_{S_{J(s)}^{',[n,n+1)}(s)\geq k}|\mathcal{F}_\infty]\,dt\,ds\label{eq:variance_term_limsup}
  \end{align}
  We can write the covariance term as:
  \begin{align}
    &\text{CoV}[\mathbf{1}_{S_{J(t)}^{',[n,n+1)}(t)\geq k},\mathbf{1}_{S_{J(s)}^{',[n,n+1)}(s)\geq k}|\mathcal{F}_\infty]\nonumber\\
    &=\mathbb{P}\left(S_{J(t)}^{',[n,n+1)}(t)\geq k,S_{J(s)}^{',[n,n+1)}(s)\geq k|\mathcal{F}_\infty\right)-\mathbb{P}\left(S_{J(t)}^{',[n,n+1)}(t)\geq k|\mathcal{F}_\infty\right)\mathbb{P}\left(S_{J(s)}^{',[n,n+1)}(s)\geq k|\mathcal{F}_\infty\right).\label{eq:cov_decomposition}
  \end{align}
  Now, we stratify the clones which contribute towards $S_{J(t)}^{',[n,n+1)}(t)$ and $S_{J(s)}^{',[n,n+1)}(s)$. Let $\mathcal{C}_1^{s,t},\mathcal{C}_2^{s,t},\mathcal{C}_3^{s,t}\subseteq \mathbb{N}$ be random sets defined as follows:
  \begin{align*}
    &\mathcal{C}_1^{s,t}:=\{i\in \mathbb{N}:Z_1^{',(i)}(s-T_i')=J(s),Z_1^{',(i)}(t-T_i')\neq J(t)\}\\
    &\mathcal{C}_2^{s,t}:=\{i\in \mathbb{N}:Z_1^{',(i)}(s-T_i')\neq J(s),Z_1^{',(i)}(t-T_i')= J(t)\}\\
    &\mathcal{C}_3^{s,t}:=\{i\in \mathbb{N}:Z_1^{',(i)}(s-T_i')=J(s),Z_1^{',(i)}(t-T_i')= J(t)\}.
  \end{align*}
  In words, $\mathcal{C}_1^{s,t},\mathcal{C}_2^{s,t},\mathcal{C}_3^{s,t}$ respectively are the identities of the clones which contribute towards $S_{J(s)}^{',[n,n+1)}(s)$ but not $S_{J(t)}^{',[n,n+1)}(t)$, do not contribute towards $S_{J(s)}^{',[n,n+1)}(s)$ but do contribute towards $S_{J(t)}^{',[n,n+1)}(t)$, and contribute towards both $S_{J(s)}^{',[n,n+1)}(s)$ and $S_{J(t)}^{',[n,n+1)}(t)$, respectively. Let $X_i^{s,t}:=|\mathcal{C}_i^{s,t}|$ for $i\in \{1,2,3\}$ and thus $S_{J(s)}^{',[n,n+1)}(s)=X_1^{s,t}+X_3^{s,t}$ and $S_{J(t)}^{',[n,n+1)}(t)=X_2^{s,t}+X_3^{s,t}$. Here, $X_1^{s,t},X_2^{s,t},X_3^{s,t}$ are distributed as independent Poisson conditional on $\mathcal{F}_\infty$, since their distributions can be written in terms of thinnings of $\{T_i'\}_i\cap [n,n+1)$. To specify the rates of $X_i^{s,t}$, we introduce a notation for the transition function of a single type-1 clone $P_{n_1\to n_2}(t):=\mathbb{P}(Z_1^{(1)}(t)=n_2|Z_1^{(1)}(0)=n_1)$. Note that $P_{n_2}(t)=P_{1\to n_2}(t)$. $X_1^{s,t}$ has rate $\mu^{(1)}(s,t)=\int_n^{n+1}avZ_0(r)P_{J(s)}(s-r)\,dr(1-P_{J(s)\to J(t)}(t-s))$, $X_2^{s,t}$ has rate $\mu^{(2)}(s,t)=\sum_{z\neq J(s)}\int_n^{n+1}avZ_0(r)P_{z}(s-r)\,drP_{z\to J(t)}(t-s)$, and $X_3^{s,t}$ has rate $\mu^{(3)}(s,t)=\int_n^{n+1}avZ_0(r)P_{J(s)}(s-r)\,dr P_{J(s)\to J(t)}(t-s)$. Then by conditioning on the values $X_3^{s,t}$ can take:
  \begin{align*}
    \ref{eq:cov_decomposition}&\leq \mathbb{P}(X_1^{s,t}\geq k|\mathcal{F}_\infty)\mathbb{P}(X_2^{s,t}\geq k|\mathcal{F}_\infty)+\sum_{x=1}^{k-1}\mathbb{P}(X_1^{s,t}\geq k-x|\mathcal{F}_\infty)\mathbb{P}(X_2^{s,t}\geq k-x|\mathcal{F}_\infty)\mathbb{P}(X_3^{s,t}=x|\mathcal{F}_\infty)\\
    &+\mathbb{P}(X_3^{s,t}\geq k|\mathcal{F}_\infty)\\
    &-\mathbb{P}\left(S_{J(t)}^{',[n,n+1)}(t)\geq k|\mathcal{F}_\infty\right)\mathbb{P}\left(S_{J(s)}^{',[n,n+1)}(s)\geq k|\mathcal{F}_\infty\right).
  \end{align*}

  We can upper bound $\mathbb{P}(X_1^{s,t}\geq k|\mathcal{F}_\infty)\mathbb{P}(X_2^{s,t}\geq k|\mathcal{F}_\infty)$ by $\mathbb{P}\left(S_{J(t)}^{',[n,n+1)}(t)\geq k|\mathcal{F}_\infty\right)\mathbb{P}\left(S_{J(s)}^{',[n,n+1)}(s)\geq k|\mathcal{F}_\infty\right)$ in which case the first and last term cancel:
  \begin{align}
    &\leq \sum_{x=1}^{k-1}\mathbb{P}(X_1^{s,t}\geq k-x|\mathcal{F}_\infty)\mathbb{P}(X_2^{s,t}\geq k-x|\mathcal{F}_\infty)\mathbb{P}(X_3^{s,t}=x|\mathcal{F}_\infty)\label{eq:sum_limsup_bound}\\
    &+\mathbb{P}(X_3^{s,t}\geq k|\mathcal{F}_\infty).\label{eq:term_limsup_bound_2}
  \end{align}
  We control \ref{eq:sum_limsup_bound} term-by-term. For each summand, fixing $x\in \{1,\dots,k-1\}$ and using $\mu^{(1)}(s,t)\leq \mu_n(s)$, $\mu^{(2)}(s,t)\leq \mu_n(t)$, $\mu^{(3)}(s,t)\leq \min(\mu_n(s),\mu_n(t))$ and Lemma \ref{lem:poisson_tails}:
  \begin{align*}
    \mathbb{P}(X_1\geq k-x|\mathcal{F}_\infty)\mathbb{P}(X_2\geq k-x|\mathcal{F}_\infty)\mathbb{P}(X_3=x|\mathcal{F}_\infty)&\leq C_{x,k}\mu_n(s)^{k-x}\mu_n(t)^{k-x}(\mu^{(3)}(s,t))^{x}\\
    &\leq C_{x,k}\mu_n(s)^{k-x}\mu_n(t)^{k-x}\mu_n(t)^{\frac{x-1}{2}}\mu_n(s)^{\frac{x-1}{2}}\mu^{(3)}(s,t)\\
    &\leq C_{x,k}(\mu_n(t)^{2k-x-1}+\mu_n(s)^{2k-x-1})\mu^{(3)}(s,t)
  \end{align*}
Recall that $\mu_n$ goes to $0$ as $n\to\infty$, so eventually $\mu_n\leq 1$ for all large $n$ uniformly for $t,s\in [\alpha n,\alpha(n+1))$, in which case using $2k-x-1\geq k$:
  \begin{align}
    \leq C_{x,k}(\mu_n(t)^{k}+\mu_n(s)^{k})\mu^{(3)}(s,t)&=C_{x,k}\mu_n(t)^k\mu_n(s)P_{J(s)\to J(t)}(t-s)\label{eq:limsup_summand_1}\\
    &+C_{x,k}\mu_n(s)^{k+1}P_{J(s)\to J(t)}(t-s)\label{eq:limsup_summand_2},
  \end{align}
  and then using $\mu^{(3)}(s,t)=\mu_n(s)P_{J(s)\to J(t)}(t-s)$ for the equality.

  Define $\varepsilon_n:=\exp(-(3\lambda_0/8)n)$. The idea going forward is to control the right-hand-side of \ref{eq:variance_term_limsup} by separating the inner $t$-integration range into a small region between $s$ and $s+\varepsilon_n$ and between $s+\varepsilon_n$ and $\alpha(n+1)$. The integral in the first region is controlled by the small measure of $[s,s+\varepsilon_n]$ and the second region is controlled by the term $P_{J(s)\to J(t)}(t-s)$. The contribution of \ref{eq:limsup_summand_1} to the integral \ref{eq:variance_term_limsup} is
  \begin{align}
    &C_{x,k}\int_{\alpha n}^{\alpha(n+1)}\int_{s}^{\alpha(n+1)}J(t)J(s)\mu_n(t)^k\mu_n(s)P_{J(s)\to J(t)}(t-s)\,dt\,ds\nonumber\\
    &=C_{x,k}\int_{\alpha n}^{\alpha(n+1)}J(s)\mu_n(s)\int_{s}^{\alpha(n+1)}J(t)\mu_n(t)^kP_{J(s)\to J(t)}(t-s)\,dt\,ds\nonumber\\
    &=C_{x,k}\int_{\alpha n}^{\alpha(n+1)}J(s)\mu_n(s)\int_{s}^{s+\varepsilon_n}J(t)\mu_n(t)^kP_{J(s)\to J(t)}(t-s)\,dt\,ds\label{eq:contribution_integral_term_1}\\
    &+C_{x,k}\int_{\alpha n}^{\alpha(n+1)}J(s)\mu_n(s)\int_{s+\varepsilon_n}^{\alpha(n+1)}J(t)\mu_n(t)^kP_{J(s)\to J(t)}(t-s)\,dt\,ds.\label{eq:contribution_integral_term_2}
  \end{align}
  The first term (\ref{eq:contribution_integral_term_1}) can be controlled since the range of integration is small:
  \begin{align}
    \ref{eq:contribution_integral_term_1}
    &\leq C_{x,k}\int_{\alpha n}^{\alpha(n+1)}\mu_n(s)^{k+1}J(s)^2\varepsilon_n\,ds\nonumber\\
    &\leq C_{x,k}\mathbb{E}\left[L_{[\alpha n,\alpha (n+1)),[n,n+1)}^{k}|\mathcal{F}_\infty\right] J(\alpha(n+1))\varepsilon_n.\label{eq:contribution_integral_term_1_cont}
  \end{align}
  Now, note $\frac{\lambda_0\lambda_1}{\lambda_0+\lambda_1}\alpha=\lambda_0+\mathcal{O}_{\varepsilon\to 0^+}(\varepsilon)$ so that for sufficiently small $\varepsilon>0$, $\frac{\lambda_0\lambda_1}{\lambda_0+\lambda_1}\alpha-\frac{3\lambda_0}{8}<\ref{eq:exp_minimum_growth}$. Therefore for such small $\varepsilon>0$:
  \begin{align*}
    \ref{eq:contribution_integral_term_1_cont}\llas{n}{\infty}\mathbb{E}\left[L_{[\alpha n,\alpha (n+1)),[n,n+1)}^{k}|\mathcal{F}_\infty\right]^2.
  \end{align*}

  For the second term (\ref{eq:contribution_integral_term_2}), we use the Chernoff bound on $P_{J(s)\to J(t)}(t-s)$ for $s\in [\alpha n,\alpha(n+1))$ and $t\in [s+\varepsilon_n, \alpha(n+1))$ devised in Lemma \ref{lem:MGF_bound}:
  \begin{align*}
    P_{J(s)\to J(t)}(t-s)&\leq \mathbb{P}(Z_1^{(1)}(t-s)\leq J(t)|Z_1^{(1)}(0)=J(s))\leq\left(e^{\beta J(t)/J(s)}G(\beta,t-s)\right)^{J(s)}\\
    &\leq C e^{-KJ(s)(t-s)^2}\leq Ce^{-KJ(\alpha n)\varepsilon_n^2}\lsimas{n}{\infty} e^{-Ke^{\frac{\lambda_0}{8}n}}
  \end{align*}
  for some $K>0$ and all small $\varepsilon>0$. Therefore:
  \[
  \int_{\alpha n}^{\alpha(n+1)}J(s)\mu_n(s)\int_{s+\varepsilon_n}^{\alpha(n+1)}J(t)\mu_n(t)^kP_{J(s)\to J(t)}(t-s)\,dt\,ds\llas{n}{\infty} \mathbb{E}\left[L_{[\alpha n,\alpha (n+1)),[n,n+1)}^{k}|\mathcal{F}_\infty\right]^2.
  \]
almost surely on $\Omega_0^\infty$. Term \ref{eq:limsup_summand_2} is handled exactly analogously.

  For term \ref{eq:term_limsup_bound_2}:
  \begin{align*}
    \mathbb{P}(X_{3}^{s,t}\geq k|\mathcal{F}_\infty)\leq C_k(\mu^{(3)}(s,t))^k=C_k\mu_n(s)^kP_{J(s)\to J(t)}(t-s)^k.
  \end{align*}
  Thus the contribution of this term to the integral \ref{eq:variance_term_limsup} is precisely the same as for \ref{eq:sum_limsup_bound}.

  Therefore in total,
  \begin{align*}
    \mathbb{V}\left[L_{[\alpha n,\alpha (n+1)),[n,n+1)}^k|\mathcal{F}_\infty\right]&\llas{n}{\infty}\mathbb{E}[L_{[\alpha n,\alpha (n+1)),[n,n+1)}^k|\mathcal{F}_\infty]^2,
  \end{align*}
  so \ref{eq:tight_limsup_to_show} holds.
\end{proof}

Now, we end by proving the corollary.
\SFSSparsityCor*
\begin{proof}
    \textbf{1.} Recall that in Theorem \ref{thm:SFSSparsity}, we proved the fixed time versions of the above series of equalities. Note that $\tau_n$ is a discrete subsequence of time so that on $\Omega_0^\infty$:
    \begin{align*}
        \frac{\lambda_0\lambda_1}{\lambda_0+\lambda_1}\leq \lim_{k\to\infty}\liminf_{n\to\infty}\frac{\log j_k^-(\tau_n)}{\tau_n},\, \lim_{k\to\infty}\limsup_{n\to\infty}\frac{\log j_k^+(\tau_n)}{\tau_n}\leq  \frac{\lambda_0\lambda_1}{\lambda_0+\lambda_1}.
    \end{align*}
    Once again, since $j_k^-(\tau_n)\leq j_k^+(\tau_n)+1$ and $\tau_n\to\infty$, we immediately conclude
    \begin{align*}
        &\lim_{k\to\infty}\liminf_{n\to\infty}\frac{\log j_k^-(\tau_n)}{\tau_n}=\lim_{k\to\infty}\limsup_{n\to\infty}\frac{\log j_k^-(\tau_n)}{\tau_n}\\
        =&\lim_{k\to\infty}\liminf_{n\to\infty}\frac{\log j_k^+(\tau_n)}{\tau_n}=\lim_{k\to\infty}\limsup_{n\to\infty}\frac{\log j_k^+(\tau_n)}{\tau_n}=\frac{\lambda_0\lambda_1}{\lambda_0+\lambda_1}.
    \end{align*}
Combining this with Lemma \ref{lem:large_size_time}, which implies that $\tau_n\sim \frac{1}{\lambda_1}\log n$, proves the result.

    \textbf{2.} Remark that $\liminf_{t\to\infty}S_{j(t)}(t)\leq \liminf_{n\to\infty}S_{j(\tau_n)}(\tau_n)$, so to show the first line, it suffices to prove $\liminf_{t\to\infty}S_{j(t)}(t)=\infty$. Note that $\liminf_{t\to\infty}\frac{\log j_k^-(t)}{t}$ is decreasing in $k$ so that Theorem \ref{thm:SFSSparsity} implies that for all $k\geq 1$, $\liminf_{t\to\infty}\frac{\log j_k^-(t)}{t}\geq \frac{\lambda_0\lambda_1}{\lambda_0+\lambda_1}$. Since $\limsup_{t\to\infty}\frac{\log j(t)}{t}\leq \frac{\lambda_0\lambda_1}{\lambda_0+\lambda_1}-\varepsilon$, there is some time $T$ such that for all $t\geq T$, $j(t)<j_k^-(t)$. Then by definition $S_{j(t)}(t)>k$ for $t\geq T$, i.e., $\liminf_{t\to\infty}S_{j(t)}(t)\geq k$ and since $k$ is arbitrary, the first line is proven.

    For the second line, let $k_\varepsilon=\lceil c_2/c_1^{\varepsilon/2}+2\rceil$. Then as per Proposition \ref{prop:intermediate_j_k_plus}:
    \[\limsup_{t\to\infty}\frac{\log(j_k^+(t))}{t}\leq \frac{\lambda_0\lambda_1}{\lambda_0+\lambda_1}+\frac{\varepsilon}{2}.\]
    Thus, there is a $T(\omega)$ such that $t\geq T(\omega)$ implies $J(t)>j_k^+(t)$. For such $t$ by definition of $j_k^+(t)$, $S_{J(t)}(t)\leq k_\varepsilon\lesssim\varepsilon^{-1}$ as $\varepsilon\to 0^+$. And, once again the fixed detection-size limit follows from the detection-time limit.

    The last line follows from (1) of Theorem \ref{thm:SFSSparsity}. In particular, it implies that there is a subsequence of times $t_n\to \infty$ such that $S_{j(t_n)}(t_n)\to 0$.

    The last part of the corollary is just Proposition \ref{prop:limsup_theta}.
\end{proof}

\subsection{Tail of SFS at fixed detection size}
\subsubsection{Mass near $j=n$}\label{app:tau_n_SFS}
Here we prove Lemma \ref{lem:tau_n_SFS}:
\TauNSFS*
\begin{proof}

On $\Omega_1^\infty$, $S_n(\tau_n)$ has (along with being well-defined) support $\{0,1\}$ for $n\geq 3$. Let $A_1$ be the event that there is only 1 established type-1 clone. With probability 1 on $\Omega_1^\infty$, $\omega\in A_1$ iff eventually all cells belong to a single type-1 clone iff $S_n(\tau_n)=1$ for all large enough $n$. This proves the a.s. convergence. Note that the success probability is:
    \[\mathbb{P}(A_1|\Omega_1^\infty)=\mathbb{P}(A_1,\left(\Omega_0^\infty\right)^c|\Omega_1^\infty)=\frac{\mathbb{P}(A_1,(\Omega_0^\infty)^c)}{\pi_1}=\frac{\mathbb{P}(A_1|(\Omega_0^\infty)^c)(1-\pi_0)}{\pi_1}.\]

  We can compute $\mathbb{P}(A_1|(\Omega_0^{\infty})^c)$. Remark that on extinction of a supercritical one-type continuous-time linear birth-death process (with birth rate $a(1-v)$ and death rate $d+\mu$), the process becomes a subcritical continuous-time linear birth-death process with birth rate $\alpha=d+\mu$ and death rate $\beta=a(1-v)$. Thus, by using the description of our two-type model presented in Appendix Section \ref{app:alt_representation_model}, conditional on $\Omega_0^\infty$, the type-0 process evolves with birth parameter $\alpha$ and death parameter $\beta$. Each death there is an independent $\frac{q_1\mu}{d+\mu}$ probability of forming an established type-1 lineage produced in a replication-free manner (which is a further $q_1$-thinning of $\{T_i''\}$). And, further conditional on $\mathcal{F}_\infty$ and on $\Omega_0^\infty$, replication-dependent established type-1 lineages are formed with Poisson rate $q_1avZ_0(s)\,ds$ (which is a further $q_1$-thinning of $\{T_i'\}$). Thus, letting $D$ count the total number of death events that occur to the type-0 process:
  \begin{align*}
    &\mathbb{P}(A_1|(\Omega_0^{\infty})^c)\\
    &=\mathbb{E}\left[\mathbb{P}(A_1|\mathcal{F}_\infty,\left(\Omega_0^\infty\right)^c)|\left(\Omega_0^\infty\right)^c\right]\\
    &=\mathbb{E}\left[{D\choose 1}\frac{q_1\mu}{d+\mu}\left(\frac{d+p_1\mu}{d+\mu}\right)^{D-1}e^{-avq_1\int_0^\infty Z_0(s)\,ds}+\left(\frac{d+p_1\mu}{d+\mu}\right)^D\left(avq_1\int_0^\infty Z_0(s)\,ds\right)e^{-avq_1\int_0^\infty Z_0(s)\,ds}|(\Omega_0^\infty)^c\right].
  \end{align*}
  Let $Z(s)$ be a subcritical continuous-time linear birth-death process with rates $\alpha,\beta$, and again $D$ count the total number of death events. Now define $G(z,\theta)=\mathbb{E}\left[z^De^{-\theta \int_0^\infty Z(s)\,ds}\right]$. Hence, we can rewrite the above via:
  \begin{align}
      =\frac{q_1\mu}{d+\mu}\partial_z G((d+p_1\mu)/(d+\mu),avq_1)-avq_1\partial_\theta G((d+p_1\mu)/(d+\mu),avq_1)\label{eq:P_expr_in_G}.
  \end{align}

  By conditioning on the first event and using independence of lineages, and letting $\tau$ denote the time to the first event (which is exponential with rate $\alpha+\beta$):
  \[G(z,\theta)=\frac{\beta}{\alpha+\beta}z\mathbb{E}[e^{-\theta\tau}]+\frac{\alpha}{\alpha+\beta}\mathbb{E}[e^{-\theta\tau}]\left(G(z,\theta)\right)^2=\frac{\beta}{\alpha+\beta}z\frac{\alpha+\beta}{\alpha+\beta+\theta}+\frac{\alpha}{\alpha+\beta}\frac{\alpha+\beta}{\alpha+\beta+\theta}\left(G(z,\theta)\right)^2\]
  We then solve a quadratic equation to get $G$:
  \begin{align}
      G(z,\theta)=\frac{\alpha+\beta+\theta-\sqrt{(\alpha+\beta+\theta)^2-4\alpha\beta z}}{2\alpha}.\label{eq:G_expr}
  \end{align}
Note that the extinction probability of the type-0 process is that of a one-type process with birth rate $a(1-v)$ and death rate $d+\mu$ which is:
\begin{align}
    1-\pi_0=(d+\mu)/(a(1-v)).\label{eq:pi_0_expr}
\end{align}

We may then calculate $\pi_1$ by conditioning on the first event:
\[1-\pi_1=\frac{a(1-v)}{a+d+\mu}(1-\pi_1)^2+\frac{d}{a+d+\mu}+\frac{av}{a+d+\mu}p_1(1-\pi_1)+\frac{\mu}{a+d+\mu}p_1\]
Let the associated polynomial in $x=1-\pi_1$ be:
\[\mathcal{Q}(x)=a(1-v)x^2+d+avp_1x+\mu p_1-x(a+d+\mu).\]
Notice that $\mathcal{Q}''>0$ and $\mathcal{Q}(0)>0$ and $\mathcal{Q}(1)<0$, hence we are looking for the smaller root of $\mathcal{Q}$:
\begin{align}
    1-\pi_1=\frac{a+d+\mu-av p_1-\sqrt{(a+d+\mu-avp_1)^2-4a(1-v)(d+\mu p_1)}}{2a(1-v)}.\label{eq:pi_1_expr}
\end{align}

Thus, combining expressions \ref{eq:P_expr_in_G} to \ref{eq:pi_1_expr} gives an explicit expression for $\mathbb{P}(A_1|(\Omega_0^\infty)^c)$ and thus of $\mathbb{P}(A_1|\Omega_1^\infty)$.

\end{proof}

\subsubsection{Mass at tail in large selection limit}\label{app:freeze}

\Freeze*
\begin{proof}
    Decomposing the size at $Z_0(T_1)$:
    \begin{align*}
        &\mathbb{P}(\tau_{n(b)}<\infty,Z_0(\tau_{n(b)})=Z_0(T_1)|T_1\leq \tau_{n(b)}^{(0)},T_1<\infty)\\
        =&\sum_{r=0}^{n(b)-1}\mathbb{P}(\tau_{n(b)}<\infty,Z_0(\tau_{n(b)})=r|T_1\leq \tau_{n(b)}^{(0)},T_1<\infty,Z_0(T_1)=r)\mathbb{P}(Z_0(T_1)=r|T_1\leq \tau_{n(b)}^{(0)},T_1<\infty).
    \end{align*}
    Then, using the Strong-Markov property to start the process at time $T_1$:
    \begin{align}
        &=\sum_{r=0}^{n(b)-1}\mathbb{P}(\tau_{n(b)}<\infty,Z_0(\tau_{n(b)})=r|(Z_0(0)=r,Z_1^{(1)}(0)=1))\mathbb{P}(Z_0(T_1)=r|T_1\leq \tau_{n(b)}^{(0)},T_1<\infty).\label{eq:decomp_equation}
    \end{align}
    We will take $b\to\infty$. To understand how \ref{eq:decomp_equation} behaves, first notice that the summands are upper bounded by
    \begin{align*}
        &\mathbf{1}_{r\leq n(b)-1}\mathbb{P}(\tau_{n(b)}<\infty,Z_0(\tau_{n(b)})=r|(Z_0(0)=r,Z_1^{(1)}(0)=1))\mathbb{P}(Z_0(T_1)=r|T_1\leq \tau_{n(b)}^{(0)},T_1<\infty)\\
        \leq &\mathbf{1}_{r\leq n(b)-1}\frac{1}{\mathbb{P}(T_1\leq \tau_{n(b)}^{(0)}|T_1<\infty)}\mathbb{P}(Z_0(T_1)=r|T_1<\infty)\leq \frac{1}{\mathbb{P}(T_1\leq \tau_2^{(0)}|T_1<\infty)}\mathbb{P}(Z_0(T_1)=r|T_1<\infty)
    \end{align*}
    which is summable. Since $n(b)$ is monotone-increasing, $\mathbb{P}(Z_0(T_1)=r|T_1\leq \tau_{n(b)}^{(0)},T_1<\infty)\to \mathbb{P}(Z_0(T_1)=r|T_1\leq \tau_{n(\infty)}^{(0)},T_1<\infty)$. We can thus show convergence of \ref{eq:decomp_equation} by showing pointwise convergence of the first factor of each summand and invoking dominated convergence.
    
    First consider $\log n(b)\ll b$ as $b\to\infty$. We can write:
    \[\mathbb{P}(\tau_{n(b)}<\infty,Z_0(\tau_{n(b)})=r|(Z_0(0)=r,Z_1^{(1)}(0)=1))= \mathbb{P}(Z_0(\tau_{n(b)})=r|(Z_0(0)=r,Z_1^{(1)}(0)=1))\]
    if we interpret $Z_0(\infty)\neq r$. Let $E_0\sim \text{Exp}(r(a(1-v)+d+\mu))$ denote the time to the first type-0 event and $\tau_n^{(1),1}$ the time for the starting type-1 clone to reach size $n$. Remark that $E_0$ and $\tau_n^{(1),1}$ are independent. Also, $Z_1^{(1)}(t)\geq n$ implies that $\tau_n^{(1),1}\leq t$. Finally, define an intermediate time $t_b=(\log (b\cdot n(b)))/\lambda_1$. So, we obtain the further lower bound:
    \begin{align*}
        &\mathbb{P}(Z_0(\tau_{n(b)})=r|(Z_0(0)=r,Z_1^{(1)}(0)=1))\\
        &\geq \mathbb{P}(E_0>\tau_{n(b)-r}^{(1),1}|(Z_0(0)=r,Z_1^{(1)}(0)=1))\\
        &\geq \mathbb{P}(E_0>t_b|(Z_0(0)=r,Z_1^{(1)}(0)=1))\mathbb{P}(\tau_{n(b)-r}^{(1),1}<t_b|(Z_0(0)=r,Z_1^{(1)}(0)=1))\\
        &\geq \mathbb{P}(E_0>t_b|(Z_0(0)=r,Z_1^{(1)}(0)=1))\mathbb{P}(Z_1^{(1)}(t_b)\geq n(b)-r|(Z_0(0)=r,Z_1^{(1)}(0)=1)).
    \end{align*}
    Both terms in the above lower bound go to $1$. The first probability does trivially since $t_b\to 0$ as $b\to\infty$:
    \[\mathbb{P}(E_0>t_b|(Z_0(0)=r,Z_1^{(1)}(0)=1))\to 1.\]
    The second term is $1$ whenever $n(b)-r\leq 0$ so assume $n(b)-r\geq 1$. Using Bernoulli's inequality $(1-x)^n\geq 1-nx$, the CDF of $Z_1^{(1)}$, and that $\exp(\lambda t_b)=bn(b)$, we obtain:
    \[\mathbb{P}(Z_1^{(1)}(t_b)\geq n(b)-r|(Z_0(0)=r,Z_1^{(1)}(0)=1))\geq \frac{\lambda_1 bn(b)}{(a+b)bn(b)-d}\cdot \left(1-\frac{\lambda_1(n(b)-r)}{(a+b)bn(b)-d}\right)\to 1\]
    as $b\to\infty$. 
    
    Now, suppose $\log n(b)\gg b$ as $b\to\infty$ and rewrite \ref{eq:decomp_equation} as
    \begin{align*}
        &\sum_{r=0}^{n(b)-1}\mathbb{P}(\tau_{n(b)}<\infty,Z_0(\tau_{n(b)})=r|(Z_0(0)=r,Z_1^{(1)}(0)=1))\mathbb{P}(Z_0(T_1)=r|T_1\leq \tau_{n(b)}^{(0)},T_1<\infty)\\
        &=\mathbb{P}(\tau_{n(b)}<\infty|(Z_0(0)=0,Z_1^{(1)}(0)=1))\cdot \mathbb{P}(Z_0(T_1)=0|T_1\leq \tau_{n(b)}^{(0)},T_1<\infty)\\
        &+\sum_{r=1}^{n(b)-1}\mathbb{P}(\tau_{n(b)}<\infty,Z_0(\tau_{n(b)})=r|(Z_0(0)=r,Z_1^{(1)}(0)=1))\mathbb{P}(Z_0(T_1)=r|T_1\leq \tau_{n(b)}^{(0)},T_1<\infty).
    \end{align*}
    We will now show that the first term in on the RHS of the previous display converges. Note that $\mathbb{P}(\Omega_1^{\infty}|(Z_0(0)=r,Z_1^{(1)}(0)=1))\to 1$ as $b\to\infty$ and $\inf_n\mathbb{P}(\tau_{n}<\infty|\Omega_1^\infty)=\mathbb{P}(\cap \{\tau_n<\infty\}|\Omega_1^\infty)=1$. Thus:
    \[
    \mathbb{P}(\tau_{n(b)}<\infty|Z_1^{(1)}(0)=1)\geq \mathbb{P}(\tau_{n(b)}<\infty|Z_1^{(1)}(0)=1,\Omega_1^\infty)P(\Omega_1^\infty|Z_1^{(1)}(0)=1)\to 1
    \]
    as $b\to\infty$. Let $L_r=\sup\{t:Z_0(t)=r\}$ denote the last time the type-0 population returns to size $r$.  If $r>0$, since $Z_0$ converges almost surely to $0$ or $\infty$, $L_r$ is an almost surely finite random variable. Now, supposing $r\geq 1$:
    \begin{align*}
        &\mathbb{P}(\tau_{n(b)}<\infty,Z_0(\tau_{n(b)})=r|(Z_0(0)=r,Z_1^{(1)}(0)=1))\\
        &=\mathbb{P}(Z_0(\tau_{n(b)})=r|(Z_0(0)=r,Z_1^{(1)}(0)=1))\\
        &\leq \mathbb{P}(\tau_{n(b)-r}^{1}\leq L_r|(Z_0(0)=r,Z_1^{(1)}(0)=1))
    \end{align*}
    with the last inequality since $Z_0(\tau_{n(b)})=r$ implies that $Z_1(\tau_{n(b)})=n(b)-r$, so that $\tau^1_{n(b)-r}\leq \tau_{n(b)}\leq L_r$. Fix $\varepsilon>0$. There is $M_\varepsilon>0$ large enough, and independent of $b$, so that $\mathbb{P}(L_r\geq M_\varepsilon|(Z_0(0)=r,Z_1^{(1)}(0)=1))\leq \varepsilon$. By partitioning the probability in the above upper bound on the sets $\{L_r\geq M_\varepsilon\},\{L_r<M_\varepsilon\}$ and by writing the event $\tau^1_{n(b)-r}\leq L_r$ as $\sup_{s\in [0,L_r]}Z_1(s)\geq n(b)-r$:
    \[\leq \varepsilon+\mathbb{P}(\sup_{s\in [0,M_\varepsilon]}Z_1(s)\geq n(b)-r|(Z_0(0)=r,Z_1^{(1)}(0)=1)).\]
    Let $B_1(s)$ denote the total number of type-1 birth-events which have occurred up to time $s$. Then, $B_1(s)$ is increasing and $B_1(s)\geq Z_1(s)$, so that:
    \[\leq \varepsilon+\mathbb{P}(B_1(M_\varepsilon)\geq n(b)-r|(Z_0(0)=r,Z_1^{(1)}(0)=1))\leq \varepsilon+\frac{\mathbb{E}[B_1(M_\varepsilon)|(Z_0(0),Z_1^{(1)}(0)=1)]}{n(b)-r}.\]
    So, we must show that the expectation is $o(n(b))$ to conclude with the proof. Let $B_1^i(s-T_i)$ denote the number of birth events associated with clone $i$. Remark that the mean intensity of $B_1^1(s)$ is $(a+b)\mathbb{E}[Z_1^1(s)]\,ds$ so that $\mathbb{E}[B_1^1(s)]=\frac{a+b}{\lambda_1}\left(e^{\lambda_1 s}-1\right)$. Thus, writing $B_1(s)=\sum_{T_i\leq s}B_1^i(s-T_i)$, we obtain by conditioning on the mutation times $\{T_i\}_i$ and using Campbell's theorem:
    \[\mathbb{E}[B_1(M_\varepsilon)]=\frac{a+b}{\lambda_1}\int_0^{M_\varepsilon}\left(e^{\lambda_1 (M_\varepsilon-t)}-1\right)(av+\mu)e^{\lambda_0 t}\,dt\ll n(b).\]
\end{proof}
\subsection{Scaling of large families}\label{app:scaling}
Here we prove Proposition \ref{prop:scaling}.
\Scaling*
\begin{proof}
    \textbf{Part 1: }First we show:
    \[\lim_{t\to\infty}\mathbb{E}\left[\widetilde{R}_x(t)\right]=\lim_{t\to\infty}\mathbb{E}\left[\sum_{j\geq xg(t)}S_j(t)\right]=\frac{(av+\mu) q_1^{1-\lambda_0/\lambda_1}x^{-\lambda_0/\lambda_1}}{\lambda_1}\Gamma(\frac{\lambda_0}{\lambda_1},q_1 x)\]
    By Fubini-Tonnelli applied twice:
  \begin{align*}
    \lim_{t\to\infty}\mathbb{E}[\widetilde{R}_x(t)]&=(av+\mu)\lim_{t\to\infty}\sum_{j\geq xe^{\lambda_1 t}}\int_0^t e^{\lambda_0 s}\mathbb{P}(Z_1^{(1)}(t-s)=j)\,ds\\
    &=(av+\mu)\lim_{t\to\infty}\int_0^\infty \mathbf{1}_{s\leq t} e^{\lambda_0 s}\mathbb{P}(Z_1^{(1)}(t-s)\geq xe^{\lambda_1 t})\,ds
  \end{align*}

Notice pointwise that $e^{\lambda_0 s} \mathbb{P}(Z_1^{(1)}(t-s)\geq x e^{\lambda_1 t})=e^{\lambda_0 s} \mathbb{P}(e^{-\lambda_1 (t-s)}Z_1^{(1)}(t-s)\geq x e^{\lambda_1 s})\to e^{\lambda_0 s}\mathbb{P}(Y_1^{(1)}\geq x e^{\lambda_1 s})$. Furthermore by Markov inequality:

\[\mathbb{P}(Z_1^{(1)}(t-s)\geq xe^{\lambda_1 t})\leq \frac{e^{\lambda_1(t-s)}}{x e^{\lambda_1 t}}=\frac{1}{x}e^{-\lambda_1 s}.\]
Next note that $\int_0^\infty \frac{1}{x}e^{\lambda_0 s}e^{-\lambda_1 s}<\infty$, so by dominated convergence, we may pass the limit inside:
\[
=(av+\mu)\int_0^\infty e^{\lambda_0 s}(0\cdot p_1+\exp(-q_1 xe^{\lambda_1 s})\cdot q_1)\,ds.
\]
We then use the substitution $u=e^{\lambda_1 s}$:
\[
=\frac{(av+\mu)q_1}{\lambda_1}\int_1^\infty u^{\lambda_0/\lambda_1-1}e^{-q_1 x u}\,du=\frac{(av+\mu) q_1^{1-\lambda_0/\lambda_1}x^{-\lambda_0/\lambda_1}}{\lambda_1}\Gamma(\frac{\lambda_0}{\lambda_1},q_1 x).
\]
This proves the limit in the deterministic fitness advance case.

\textbf{Part 2: }Now, suppose instead that we consider the case with the random fitness advance.

Recall that $\lim_{t\to\infty}e^{-\lambda_0t}\mathbb{E}[S_j^B(t)]=(av+\mu)\int_0^\infty e^{-\lambda_0 s}\mathbb{E}[P_j(s|B)]\,ds$. Note then:

  \begin{align*}
    e^{\lambda_0 t}\left|\mathbb{E}\left[\sum_{j\geq xg_2(t)}\left(e^{-\lambda_0 t}S_j^B(t)\right)\right]-\sum_{j\geq xg_2(t)}\lim_{t\to\infty}\mathbb{E}\left[ e^{-\lambda_0 t}S_j^B(t)\right]\right|&=e^{\lambda_0 t}\sum_{j\geq xg_2(t)}\int_t^\infty e^{-\lambda_0 s}\mathbb{E}[P_j(s|B)]\,ds\\
    &=e^{\lambda_0 t}\int_t^\infty e^{-\lambda_0 s}\mathbb{E}[\mathbb{P}(Z_1(s)\geq xg_2(t)|B)]\,ds.
  \end{align*}
Let $s=t+u$, then:
\begin{align*}
    &=\int_0^\infty e^{-\lambda_0 u}\mathbb{E}[\mathbb{P}(Z_1(t+u)\geq g_2(t)|B)]\,du.
  \end{align*}
Notice that with probability $1$ that $B<b_{\max}$, so by dominated convergence the above goes to $0$. Hence to show the desired limit converges it suffices to show that:
  \[
  \lim_{t\to\infty} e^{\lambda_0 t}\sum_{j\geq xg_2(t)}\lim_{s\to\infty}e^{-\lambda_0 s}\mathbb{E}[S_j^B(s)]
  \]
  converges. Note that from Proposition \ref{prop:SFS_mean_asymptotics}, for any $\varepsilon>0$ there is $J>0$ such that $j\geq J$ implies:
  \[
  (1-\varepsilon)\frac{C}{j^{\frac{\lambda_0}{\lambda_1(b_{\max})}+1}\log j}\leq \lim_{s\to\infty}e^{-\lambda_0s}\mathbb{E}[S_j^B(s)]\leq (1+\varepsilon)\frac{C}{j^{\frac{\lambda_0}{\lambda_1(b_{\max})}+1}\log j}.
  \]
  Taking $T$ large enough such that $t\geq T$ implies $xg_2(t)\geq J$, in which case:
  \[\left|e^{\lambda_0 t}\sum_{j\geq xg_2(t)}\lim_{s\to\infty}e^{-\lambda_0 s}\mathbb{E}[S_j^B(s)]-e^{\lambda_0 t}\sum_{j\geq xg_2(t)} \frac{C}{j^{\frac{\lambda_0}{\lambda_1(b_{\max})}+1}\log j}\right|\leq \varepsilon e^{\lambda_0 t}\sum_{j\geq g_2(t)}\frac{C}{j^{\frac{\lambda_0}{\lambda_1(b_{\max})}+1}\log j}.\]
  
  We thus see that it suffices to examine convergence of:
  \[
  \lim_{t\to\infty} e^{\lambda_0 t}\sum_{j\geq x g_2(t)}\frac{C}{j^{\frac{\lambda_0}{\lambda_1(b_{\max})}+1}\log j}.
  \]
  Notice that $j\to \frac{1}{j^{\frac{\lambda_0}{\lambda_1(b_{\max})}+1}\log j}$ is monotonically decreasing in $j$, so by integral comparisons it suffices to examine convergence of:
  \begin{align*}
    \lim_{t\to\infty} e^{\lambda_0 t}\int_{xg_2(t)}^\infty\frac{C}{z^{\frac{\lambda_0}{\lambda_1(b_{\max})}+1}\log z}\,dz.
  \end{align*}
We can rewrite the integral in the previous display in terms of the exponential integral function as
\[
\int_{xg_2(t)}^\infty\frac{C}{z^{\frac{\lambda_0}{\lambda_1(b_{\max})}+1}\log z}\,dz=E_1\left(\frac{\lambda_0}{\lambda_1(b_{\max})}\log (xg_2(t))\right)
\]
where $E_1(t)$ is the exponential integral. Finally utilize the asymptotic, $E_1(t)\sim e^{-t}/t$ as $t\to\infty$ to conclude the result.
\end{proof}
\subsection{Relative SFS time limit}\label{app:frequency_limit}
Here we prove Proposition \ref{prop:frequency_limit} and Corollary \ref{cor:scale_sum_SFS}. But first we prove an important lemma:
\begin{lemma}
     Let $Y^{(i)}(t)=e^{-\lambda_1(t-T_i)}Z_1^{(i)}(t-T_i)$ and $Y_*^{(i)}=\sup_{T_i\leq t<\infty} Y^{(i)}(t)$. Then, $\lim_{i\to\infty} e^{-\lambda_1 T_i}Y_*^{(i)}=0$ a.s. If $i\geq \lim_{t\to\infty}M_t$, set for notation $Y^{(i)}(t)\equiv 0$\label{lem:Y_small}.
\end{lemma}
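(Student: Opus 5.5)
We need $e^{-\lambda_1 T_i}Y_*^{(i)}\to 0$ a.s. as $i\to\infty$. The plan is to show that the nonnegative series $\sum_i e^{-\lambda_1 T_i}Y_*^{(i)}$ has finite expectation. Then the series is a.s. finite, and so its terms tend to zero a.s. This is exactly the dominating bound already used in the proof of the representation of $W$ (Appendix Section \ref{app:W_representation}), so the lemma is essentially a corollary of that argument. Indices $i\geq\lim_t M_t$ contribute zero by convention, so on extinction of the mutant-producing process there is nothing to prove.

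First, $Y_*^{(i)}=\sup_{s\geq 0}e^{-\lambda_1 s}Z_1^{(i)}(s)$ depends only on the clone process $Z_1^{(i)}$. The clone processes are i.i.d. and independent of the initiation times $\{T_i\}$, which are $\mathcal{B}_\infty$-measurable. Conditioning on $\mathcal{B}_\infty$ therefore gives
\[
\mathbb{E}\Big[\sum_i e^{-\lambda_1 T_i}Y_*^{(i)}\Big]=\mathbb{E}[Y_*^{(1)}]\,\mathbb{E}\Big[\sum_i e^{-\lambda_1 T_i}\Big].
\]

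Second, I need $\mathbb{E}[Y_*^{(1)}]<\infty$. This is the Doob $L\log L$ (or simply $L^2$) maximal inequality for the martingale $e^{-\lambda_1 s}Z_1^{(1)}(s)$, as carried out in the proof of the lemma on $W$. Using $L^2$ directly is simpler: $\mathbb{E}[\sup_s (e^{-\lambda_1 s}Z_1^{(1)}(s))^2]\leq 4\sup_s\mathbb{E}[(e^{-\lambda_1 s}Z_1^{(1)}(s))^2]<\infty$, and Cauchy--Schwarz then gives a finite first moment.

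Third, I need $\mathbb{E}[\sum_i e^{-\lambda_1 T_i}]<\infty$. By Campbell's theorem, equivalently the compensator in Lemma \ref{lem:CenteredMuts_Martingale},
\[
\mathbb{E}\Big[\sum_i e^{-\lambda_1 T_i}\Big]=(av+\mu)\int_0^\infty e^{-\lambda_1 s}e^{\lambda_0 s}\,ds.
\]
This is finite because $\lambda_1=a+b-d>a(1-v)-d-\mu=\lambda_0$. This is the only point where selection ($b>0$) is used.

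Combining the three steps, the sum is a.s. finite, hence its terms tend to $0$ a.s. No step is a real obstacle. The one point of care is the independence of $Y_*^{(i)}$ from $T_i$, which holds by the model construction in Appendix Section \ref{app:alt_representation_model}.
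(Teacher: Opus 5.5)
Your proof is correct, but it controls the clone maxima differently from the paper. The paper fixes $\varepsilon>0$ and applies the weak-type Doob inequality, conditionally on $T_i$, to the mean-one nonnegative martingale $e^{-\lambda_1 s}Z_1^{(i)}(s)$. This gives $\mathbb{P}(e^{-\lambda_1 T_i}Y_*^{(i)}>\varepsilon)\le \varepsilon^{-1}\mathbb{E}[e^{-\lambda_1 T_i}]$, and the first Borel--Cantelli lemma then finishes via Campbell's theorem. You instead bound the first moment of the supremum with the strong-type $L^2$ maximal inequality, and show that the whole series $\sum_i e^{-\lambda_1 T_i}Y_*^{(i)}$ is integrable.

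Both routes rest on the same two facts: the independence of the clones from $\{T_i\}$, and $\sum_i\mathbb{E}[e^{-\lambda_1 T_i}]=(av+\mu)/(\lambda_1-\lambda_0)<\infty$.

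The paper's version uses only the martingale property of each clone, with no second-moment or $L\log L$ input. In exchange, it only shows that the terms are eventually small. Yours needs a uniform $L^2$ bound, which linear birth--death processes provide. In return you get a.s. (indeed $L^1$) summability, which is exactly the dominating series in the $W$-representation lemma. So, as you observe, the present lemma drops out of that proof, and your $L^2$ shortcut is simpler than the $L\log L$/Vitali argument the paper uses there.

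One small aside: $\lambda_1-\lambda_0=b+av+\mu$, so $\lambda_1>\lambda_0$ holds whenever mutations occur at positive rate, even with $b=0$.
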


\begin{proof}
    It suffices to show that for all $\varepsilon>0$ there is an $I$ for which $i\geq I$ implies $e^{-\lambda_1 T_i}Y_*^{(i)}\leq \varepsilon$ (a.s.).

    Define $A_{i,\varepsilon}=\{e^{-\lambda_1 T_i}Y_*^{(i)}>\varepsilon\}$. By Borel-Cantelli, it suffices to show that for each $\varepsilon>0$, $\sum_{i=1}^\infty \mathbb{P}(A_{i,\varepsilon})<\infty$. By Doob's Martingale inequality:
\[
\mathbb{P}(A_{i,\varepsilon})=\mathbb{E}[\mathbb{P}(Y_*^{(i)}>\varepsilon e^{\lambda_1 T_i}|T_i)]\leq \frac{1}{\varepsilon}\mathbb{E}[e^{-\lambda_1 T_i}].
\]
It thus suffices to show that $\sum_{i=1}^\infty \mathbb{E}\left[e^{-\lambda_1 T_i}\right]<\infty$, which is immediate from Campbell's theorem.
\end{proof}

\FrequencyLimit*
\begin{proof}
  Conditional on $\Omega_0^\infty$, there a.s exists a $\varepsilon>0$ such that $Wf>\varepsilon>0$. But from Lemma \ref{lem:Y_small}, for $i>I$ sufficiently large, $e^{-\lambda_1 T_i}Y_1^{(i)}<\varepsilon $, thus the proposed limit is a.s. finite.
  
  Recall that:
  \[
  R_f(t)=\sum_{1\leq i\leq M_t} \mathbf{1}_{Z_1^{(i)}(t-T_i)>Z_1(t)f}=\sum_{1\leq i\leq M_t} \mathbf{1}_{e^{-\lambda_1 T_i}e^{-\lambda_1(t-T_i)}Z_1^{(i)}(t-T_i)>e^{-\lambda_1 t}Z_1(t)f}.
  \]

  Let $\delta=\varepsilon/2$ and note there exists a time $T$ such that for all $t>T$, $|e^{-\lambda_1 t}Z_1(t)f-Wf|\leq \delta$. Without loss of generality, consider $T$ even larger such that $i>M_T$ implies that $|e^{-\lambda_1 T_i}Y_*^{(i)}|\leq \delta$, so:
  \begin{align*}
    \lim_{t\to\infty} R_f(t)&=\lim_{t\to\infty} \sum_{1\leq i\leq M_T} \mathbf{1}_{Z_1^{(i)}(t-T_i)>Z_1(t)f}+\sum_{M_T< i\leq M_t} \mathbf{1}_{Z_1^{(i)}(t-T_i)>Z_1(t)f}\\
    &=\sum_{1\leq i\leq M_T}\mathbf{1}_{e^{-\lambda_1 T_i}Y_1^{(i)}>Wf}+0\\
    &=\sum_{i\in \mathbb{N}}\mathbf{1}_{e^{-\lambda_1 T_i}Y_1^{(i)}>Wf}.
  \end{align*}
In the second equality, we are justified in taking the limit within the indicator in the first sum as long as $e^{-\lambda_1 T_i}Y_1^{(i)}/W\neq f$ (which is a probability 1 event), from continuous mapping. An analogous proof holds for $\widetilde{R}_x$.
\end{proof}

\ScaleSumSFS*
\begin{proof}
  The same proof showing the convergence of $R_f(t)$ in Proposition \ref{prop:frequency_limit} also shows that $\sum_{j>f(Z_1(t)+Z_0(t))}S_j(t)$ converges to the same limit as $R_f$. Thus, the middle row limits hold.

  On $\Omega_0^\infty$, $W>0$. Furthermore, infinitely many established type-1 clones will be formed, say the first one is $Z_1^{(n_1)},Z_1^{(n_2)}$. Therefore:
  \[
  \frac{Z_1^{(n_1)}(t-T_{n_1})}{Z_1(t)}=e^{-\lambda_1 T_{n_1}}\frac{e^{-\lambda_1(t-T_{n_1})}Z_1^{(n_r)}(t-T_{n_1})}{e^{-\lambda_1 t}Z_1(t)}\overset{t\to\infty}{\to} L_1>0.
  \]
So, the largest clone makes up a fraction $<1-\varepsilon$ of the population for some $\varepsilon>0$ and all sufficiently large times $t>T$ (for example, with $\varepsilon=L_1/2$ say). Take $n>N$ such that $\tau_n>T$ and such that $k(n)/n<\varepsilon/2$ and thus $\sum_{j=n-k(n)}^n S_j(\tau_n)=0$. For the right upper limit, eventually $G^+(t)>Z_1(t)$ for all $t>T$.

  The lower left and right limits follow by similar arguments.
\end{proof}

\subsection{Relative SFS Small Frequency Behavior}\label{app:relSFS}

Here we prove Theorem \ref{prop:ASRelativeSFS}.

\ASRelativeSFS*

It is proved by breaking it up into a lemma and one corollary. Let $Z_x=\sum_{i\in \mathbb{N}}\mathbf{1}_{Y_1^{(i)}> xe^{\lambda_1 T_i}}$, we then show that $\lim_{x\to 0^+}x^{\lambda_0/\lambda_1}Z_x$ converges. Using the change of variables $x=e^{-r}$ and defining $Z(r)=Z_{e^{-r}}$, it suffices to show $\lim_{r\to \infty}e^{-\frac{\lambda_0}{\lambda_1}r}Z(r)$ converges.

\begin{lemma}
  On $\Omega_0^\infty$:
  \[\lim_{x\to0^+}\lim_{t\to\infty}x^{\lambda_0/\lambda_1}\widetilde{R}_x(t)=\frac{(av+\mu)q_1^{1-\lambda_0/\lambda_1}}{\lambda_1}\Gamma\left(\frac{\lambda_0}{\lambda_1}\right)Y\]
  \label{lem:small_x_behavior_approx}
\end{lemma}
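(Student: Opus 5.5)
By Proposition \ref{prop:frequency_limit}, $\lim_{t\to\infty}\widetilde{R}_x(t)=Z_x=\sum_{i}\mathbf{1}_{Y_1^{(i)}>xe^{\lambda_1 T_i}}$. Substituting $x=e^{-r}$ and writing $Z(r)=Z_{e^{-r}}$, the claim becomes
\[
\lim_{r\to\infty}e^{-\frac{\lambda_0}{\lambda_1}r}Z(r)=\frac{(av+\mu)q_1^{1-\lambda_0/\lambda_1}}{\lambda_1}\Gamma\!\left(\tfrac{\lambda_0}{\lambda_1}\right)Y .
\]
Here $r\mapsto Z(r)$ is non-negative and non-decreasing, since raising $r$ lowers the threshold $e^{-r}e^{\lambda_1T_i}$. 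This is exactly the setting of Proposition \ref{prop:convergence_tool} with $\lambda=\lambda_0/\lambda_1$. The plan is therefore to build an approximating process $\widehat Z(r)$ with $e^{-\lambda r}\widehat Z(r)\to$ the claimed limit a.s., and then verify $\int_0^\infty e^{-\lambda r}\mathbb{E}|Z(r)-\widehat Z(r)|\,dr<\infty$.

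The approximants mirror the scheme of Proposition \ref{prop:L2_driver} and Lemma \ref{lem:pm_approx_via_Y}. Let $p_r(s)=\mathbb{P}(Y_1^{(1)}>e^{-r}e^{\lambda_1 s})=q_1\exp(-q_1e^{\lambda_1 s-r})$, and define
\[
\overline Z(r)=(av+\mu)\int_0^\infty Z_0(s)p_r(s)\,ds,\qquad \widehat Z(r)=(av+\mu)Y\int_0^\infty e^{\lambda_0 s}p_r(s)\,ds .
\]
Substituting $u=e^{\lambda_1 s-r}$ gives
\[
\widehat Z(r)=\frac{(av+\mu)q_1}{\lambda_1}Ye^{\lambda r}\int_{e^{-r}}^\infty u^{\lambda-1}e^{-q_1u}\,du ,
\]
so $e^{-\lambda r}\widehat Z(r)\to \frac{(av+\mu)q_1^{1-\lambda}}{\lambda_1}\Gamma(\lambda)Y$ a.s. This is the same computation as Part 1 of Proposition \ref{prop:scaling}.

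The error splits into three pieces.

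\textbf{First piece.} Conditional on $\mathcal{B}_\infty$, $Z(r)$ is a sum of independent indicators with success probabilities $p_r(T_i)$. Hence
\[
\mathbb{E}\big[(Z(r)-\mathbb{E}[Z(r)\mid\mathcal{B}_\infty])^2\big]\le (av+\mu)\int_0^\infty \mathbb{E}[Z_0(s)]\,p_r(s)\,ds .
\]
This equals $\mathbb{E}[\widehat Z(r)]\lesssim e^{\lambda r}$ by Campbell's theorem.

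\textbf{Second piece.} We have $\mathbb{E}[Z(r)\mid\mathcal{B}_\infty]-\overline Z(r)=\int_0^\infty p_r(s)\,dR_s$, with $R$ the martingale of Lemma \ref{lem:CenteredMuts_Martingale}. The Itô isometry gives the bound $\int_0^\infty e^{\lambda_0 s}p_r(s)^2\,ds\lesssim e^{\lambda r}$.

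\textbf{Third piece.} Cauchy--Schwarz in the form $\mathbb{E}\big|\int p_r(s)(Z_0(s)-Ye^{\lambda_0 s})\,ds\big|\le\int p_r(s)\,\|Z_0(s)-Ye^{\lambda_0 s}\|_2\,ds$, combined with Lemma \ref{lem:L2_distance}, gives at most $C\int_0^\infty e^{\lambda_0 s/2}p_r(s)\,ds\lesssim e^{\lambda r/2}$. For this last bound, note that $p_r(s)$ is essentially $q_1$ for $s\le r/\lambda_1$ and decays doubly exponentially beyond that.

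Combining the three pieces, $\mathbb{E}|Z(r)-\widehat Z(r)|\lesssim e^{\lambda r/2}$. Then $\int_0^\infty e^{-\lambda r}e^{\lambda r/2}\,dr<\infty$, and Proposition \ref{prop:convergence_tool} yields a.s.\ convergence. On $\Omega_0^\infty$ we have $Y>0$, which gives the statement. If preferred, one can also work with $\mathbf{1}_{\Omega_0^\infty}$ throughout, since off this event $Y=0$ and $Z(r)$ stays bounded.

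\textbf{Main obstacle.} I expect the delicate point to be the integrability of the error in $r$ over the full half-line. Unlike the fixed-$t$ setting, the integration in $s$ runs to $\infty$, so I must check that the tail $s>r/\lambda_1$ contributes negligibly both in the Itô and the Cauchy--Schwarz bounds. The doubly-exponential decay of $p_r$ should handle this. A second point is the justification that $r\mapsto Z(r)$ is jointly measurable and monotone, which holds pathwise.
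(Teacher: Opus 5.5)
Your proposal is correct and follows essentially the same route as the paper: you pass to $Z(r)=Z_{e^{-r}}$, approximate it successively by $\mathbb{E}[Z(r)\mid\mathcal{B}_\infty]$, $\overline{Z}(r)$ and $\widehat{Z}(r)$ as in Proposition \ref{prop:L2_driver} and Lemma \ref{lem:pm_approx_via_Y}, and conclude with Proposition \ref{prop:convergence_tool} at rate $\lambda_0/\lambda_1$. For the $\overline{Z}-\widehat{Z}$ term you bound $\int_0^\infty p_r(s)\,\|Z_0(s)-Ye^{\lambda_0 s}\|_2\,ds\lesssim e^{\lambda_0 r/(2\lambda_1)}$ directly in $L^1$, which justifies the infinite $s$-horizon more cleanly than the paper's terse $L^2$ claim, and your closed form for $\widehat{Z}(r)$, with lower integration limit $e^{-r}$, is the correct one.
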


\begin{proof}
  As per our discussion above, we show $e^{-\frac{\lambda_0}{\lambda_1}r}Z(r)$ converges to the above limit. Notice that $Z(r)$ is non-decreasing in $r$, so we aim to use the convergence tool directly. We use the same type of notation to define approximations of $Z(r)$ as in Table \ref{table:SFS_notation}. Following the same logic as Proposition \ref{prop:L2_driver} and using the intermediate approximation $e^{-\frac{\lambda_0}{\lambda_1}r}\mathbb{E}[Z(r)|\mathcal{B}_\infty]$, we get:

\[\mathbb{E}\left[\left(e^{-\frac{\lambda_0}{\lambda_1}r}Z(r)-e^{-\frac{\lambda_0}{\lambda_1}r}\overline{Z}(r)\right)^2\right]\lsimas{r}{\infty} e^{-\frac{\lambda_0}{\lambda_1}r},\]
where $\overline{Z}(r)=\int_0^\infty \mathbb{P}(Y_1^{(1)}\geq e^{-r}e^{\lambda_1 s})(av+\mu)Z_0(s)\,ds$.

Defining $\widehat{Z}(r)=\int_0^\infty \mathbb{P}(Y_1^{(1)}\geq e^{-r}e^{\lambda_1 s})(av+\mu)Ye^{\lambda_0 s}\,ds$, we have from above:
\[\widehat{Z}(r)=\frac{(av+\mu) q_1^{1-\lambda_0/\lambda_1}e^{\lambda_0/\lambda_1 r}}{\lambda_1}\Gamma(\frac{\lambda_0}{\lambda_1},q_1 e^{-\lambda_0/\lambda_1 r})Y.\]
Hence $e^{-\frac{\lambda_0}{\lambda_1}r}\widehat{Z}(r)\to \frac{(av+\mu) q_1^{1-\lambda_0/\lambda_1}}{\lambda_1}\Gamma(\frac{\lambda_0}{\lambda_1})Y$ as $r\to\infty$. We further have that $\mathbb{E}\left[\left(\widehat{Z}(r)-\overline{Z}(r)\right)^2\right]\lesssim \exp\left(\frac{\lambda_0}{\lambda_1}r\right)$ as $r\to\infty$.
Hence $\int_0^\infty e^{-\frac{\lambda_0}{\lambda_1}r}\mathbb{E}[|Z(r)-\widehat{Z}(r)|]<\infty$, so the limit follows from Proposition \ref{prop:convergence_tool}.
\end{proof}

\begin{corollary}
  On $\Omega_0^\infty$:
\[
\lim_{f\to0^+}\lim_{t\to\infty}f^{\lambda_0/\lambda_1}R_f(t)=\frac{(av+\mu)q_1^{1-\lambda_0/\lambda_1}}{\lambda_1}\Gamma\left(\frac{\lambda_0}{\lambda_1}\right)(W^{-\lambda_0/\lambda_1}Y).
\]
  
\end{corollary}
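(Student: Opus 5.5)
The plan is to reduce the statement to Lemma \ref{lem:small_x_behavior_approx} through the explicit form of the large-time limits in Proposition \ref{prop:frequency_limit}. By that proposition, on $\Omega_0^\infty$ we have almost surely
\[
\lim_{t\to\infty}R_f(t)=\sum_{i\in\mathbb{N}}\mathbf{1}_{e^{-\lambda_1 T_i}Y_1^{(i)}>Wf}=\sum_{i\in\mathbb{N}}\mathbf{1}_{Y_1^{(i)}>(Wf)e^{\lambda_1 T_i}}=Z_{Wf},
\]
where $Z_x=\sum_i\mathbf{1}_{Y_1^{(i)}>xe^{\lambda_1 T_i}}=\lim_{t\to\infty}\widetilde{R}_x(t)$ is the quantity appearing in the proof of the lemma. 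The identity holds pathwise: $W$ is a fixed positive random variable, so the two indicators coincide term by term.

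Next I fix a sample point in the full-measure subset of $\Omega_0^\infty$ on which three things hold: $W\in(0,\infty)$, the identity above holds for every rational $f$, and $\lim_{x\to0^+}x^{\lambda_0/\lambda_1}Z_x=cY$ with $c=\frac{(av+\mu)q_1^{1-\lambda_0/\lambda_1}}{\lambda_1}\Gamma(\lambda_0/\lambda_1)$.

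There is a technical point about taking $x\to0^+$ along all reals rather than along a countable set. The lemma's proof gives a limit in $r=-\log x$ along continuous $r$, because Proposition \ref{prop:convergence_tool} yields convergence along the whole continuum. If needed, this can also be secured from monotonicity: $x\mapsto Z_x$ is non-increasing, so a sandwich between rational points gives the full limit.

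For the identity itself we need it for all $f$, or at least for a dense set. The event $\{e^{-\lambda_1 T_i}Y_1^{(i)}=Wf\text{ for some }i\}$ is null for each fixed $f$, so the identity holds simultaneously for all rational $f$. To extend to real $f$, sandwich using monotonicity in $f$ of both sides.

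Then, writing $x=Wf$,
\[
f^{\lambda_0/\lambda_1}\lim_{t\to\infty}R_f(t)=W^{-\lambda_0/\lambda_1}\,x^{\lambda_0/\lambda_1}Z_x .
\]
As $f\to0^+$ we have $x\to0^+$, since $W$ is fixed and positive. The right-hand side therefore tends to $cYW^{-\lambda_0/\lambda_1}$, which gives the claim.

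The main obstacle is the uniformity in $f$, that is, justifying the interchange over uncountably many $f$. I will handle it by monotone sandwiching. For $f_1<f<f_2$ rational, $Z_{Wf_2}\le\lim_tR_f(t)\le Z_{Wf_1}$, which follows because $R_f(t)$ is monotone in $f$ for each $t$. Taking $f_1,f_2\to f$ with ratio tending to $1$ then makes the power-law prefactors agree in the limit.
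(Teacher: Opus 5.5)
Your proposal is correct and follows the paper's proof. Both arguments identify $\lim_{t\to\infty}R_f(t)=Z_{fW}$ via Proposition \ref{prop:frequency_limit}, apply Lemma \ref{lem:small_x_behavior_approx} along $x=fW\to0^+$, and multiply by $W^{-\lambda_0/\lambda_1}$. Your extra care about the uncountable family of $f$ (null sets, the pathwise limit in $x$, the monotone sandwich) is a refinement the paper omits. For non-rational $f$, the sandwich bounds $\liminf_t R_f(t)$ and $\limsup_t R_f(t)$ rather than showing the $t$-limit exists; that is enough once $f_2/f_1\to1$ as $f\to0^+$.
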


\begin{proof}
  Notice that $\lim_{t\to\infty}R_f(t)=\sum_{i\in \mathbb{N}}\mathbf{1}_{Y_1^{(i)}\geq fW e^{\lambda_1 T_i}}=Z_{fW}$ by Proposition \ref{prop:frequency_limit}. By the same argument as in Proposition \ref{prop:frequency_limit}, $\lim_{t\to\infty}\widetilde{R}_{fW}(t)=Z_{fW}$ a.s. Notice letting $x=fW$ and taking $x\to 0^+$, from Lemma \ref{lem:small_x_behavior_approx}:

  \[\lim_{f\to 0^+}(fW)^{\lambda_0/\lambda_1}Z_{fW}=\frac{(av+\mu)q_1^{1-\lambda_0/\lambda_1}}{\lambda_1}\Gamma\left(\frac{\lambda_0}{\lambda_1}\right)Y.\]
  Multiplying by $W^{-\lambda_0/\lambda_1}$ concludes the result.
\end{proof}
\subsection{Small mutation behavior of $(Y,W)$}\label{app:small_mutation_relative_SFS}
Here we prove Proposition \ref{prop:small_mutation_relative_SFS}. We first show that $(Y,v^{-\lambda_1/\lambda_0}W)$ converges in distribution as $v\to 0$.
\begin{proposition}
  As $v\to 0^+$, $(Y_v,v^{-\lambda_1/\lambda_0}W_v)|\Omega_{0,v}^\infty$ converges in distribution to a random vector with Laplace Transform:
  \begin{align}
    M_0(x,y)=\frac{q_{0,0}}{q_{0,0}+x+y^{\lambda_{0,0}/\lambda_1}\frac{aq_1^{1-\lambda_{0,0}/\lambda_1}}{\lambda_1}\frac{\pi}{\sin(\pi\lambda_{0,0}/\lambda_1)}}\label{eq:laplace_transform_limit}
  \end{align}
  The $v$-subscript in $Y_v,W_v,\Omega_{0,v}^\infty,q_{0,v},\lambda_{0,v}$ denotes the dependence on the mutation rate $v$.\label{prop:conv_dist}
\end{proposition}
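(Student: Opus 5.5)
We compute the joint Laplace transform $\mathbb{E}[e^{-xY_v-yv^{-\lambda_1/\lambda_0}W_v}\mid\Omega_{0,v}^\infty]$ and show it converges pointwise to $M_0(x,y)$ for $x,y\ge 0$. Lévy's continuity theorem for Laplace transforms then gives convergence in distribution, once we check that the limit is continuous at the origin, so the limiting law is a proper probability measure. Throughout, $\mu=0$ and $\lambda_1$ is fixed; we write $\alpha_v=\lambda_{0,v}/\lambda_1$, with $\alpha_v\to\alpha:=\lambda_{0,0}/\lambda_1$.

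\textbf{Step 1: conditioning on the type-0 path.} We use the representation $W=\sum_i e^{-\lambda_1T_i}Y_1^{(i)}$ (Appendix Section \ref{app:W_representation}). By the construction in Appendix Section \ref{app:alt_representation_model}, conditional on $\mathcal F_\infty$ the times $\{T_i\}$ form a Poisson process with intensity $avZ_0(s)\,ds$, and the $Y_1^{(i)}$ are i.i.d. with Laplace transform $\phi(u)=p_1+q_1^2/(q_1+u)$. The Poisson exponential formula then gives
\[
\mathbb{E}[e^{-yv^{-1/\alpha_v}W}\mid\mathcal F_\infty]=\exp\Big(-av\int_0^\infty Z_0(s)\big(1-\phi(yv^{-1/\alpha_v}e^{-\lambda_1 s})\big)\,ds\Big).
\]
We change variables $s=\sigma+\tau_v$ with $\tau_v:=-\frac{1}{\lambda_1\alpha_v}\log v$, which makes $v^{-1/\alpha_v}e^{-\lambda_1 s}=e^{-\lambda_1\sigma}$. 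Writing $Z_0(\sigma+\tau_v)=e^{\lambda_{0,v}(\sigma+\tau_v)}\big(Y_v+\epsilon_v(\sigma)\big)$ and using $e^{\lambda_{0,v}\tau_v}=v^{-1}$, the exponent becomes
\[
-a\int_{-\tau_v}^\infty e^{\lambda_{0,v}\sigma}\big(Y_v+\epsilon_v(\sigma)\big)\big(1-\phi(ye^{-\lambda_1\sigma})\big)\,d\sigma.
\]
The main term is $-aY_v\int_{-\infty}^\infty e^{\lambda_0\sigma}(1-\phi(ye^{-\lambda_1\sigma}))\,d\sigma$. Substituting $u=ye^{-\lambda_1\sigma}$, this equals
\[
-aY_v\,\frac{y^{\alpha}}{\lambda_1}\int_0^\infty u^{-\alpha-1}\frac{q_1u}{q_1+u}\,du=-aY_v\,y^{\alpha}\,\frac{q_1^{1-\alpha}}{\lambda_1}\,\frac{\pi}{\sin\pi\alpha},
\]
which is exactly the constant appearing in $M_0$. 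The integral converges since $0<\alpha<1$; this uses $\lambda_{0,0}<\lambda_1$, i.e. $b>0$.

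\textbf{Step 2: error control and the main obstacle.} We must show that the $\epsilon_v$-term and the truncation at $-\tau_v$ vanish in $L^1$, uniformly in $v$ small. The truncation is harmless: the integrand near $\sigma\to-\infty$ is $O(e^{\lambda_0\sigma})$ because $1-\phi\le 1$. For the $\epsilon_v$-term, Lemma \ref{lem:L2_distance} gives $\mathbb{E}[\epsilon_v(\sigma)^2]\le C e^{-\lambda_{0,v}(\sigma+\tau_v)}$. Hence, by Cauchy--Schwarz, its expected contribution is bounded by
\[
\int_{-\tau_v}^\infty e^{\lambda_0\sigma/2}\,e^{-\lambda_0\tau_v/2}\,(1-\phi)\,d\sigma .
\]
For $\sigma\ge0$ we use $1-\phi\le Cye^{-\lambda_1\sigma}$ together with $\lambda_1>\lambda_0/2$; for $\sigma<0$ we use $1-\phi\le1$, which yields a term of size $O(\tau_v^{\,}e^{-\lambda_0\tau_v/2})\to0$. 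This uniform-in-$v$ bookkeeping is the delicate point: every constant must be controlled as $v\to0$, since $\lambda_{0,v}$, $q_{0,v}$ and the law of $Y_v$ all depend on $v$. Once it is done, bounded convergence applied to $e^{-(\cdot)}\le1$ passes everything through the outer expectation.

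\textbf{Step 3: the law of $Y_v$.} It remains to evaluate
\[
\mathbb{E}\big[e^{-(x+c\,y^{\alpha})Y_v}\mid\Omega_{0,v}^\infty\big],\qquad c:=\frac{aq_1^{1-\alpha}}{\lambda_1}\frac{\pi}{\sin\pi\alpha}.
\]
Since $Y_v\mid\Omega_{0,v}^\infty\sim\mathrm{Exp}(q_{0,v})$, this equals $q_{0,v}/(q_{0,v}+x+cy^{\alpha_v})$, where we also replace $\alpha$ by $\alpha_v$ in the exponent, an error that vanishes as $v\to0$. Letting $v\to0$ and using $q_{0,v}\to q_{0,0}$ gives $M_0(x,y)$. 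The limit is continuous at $(0,0)$, so it is the Laplace transform of a probability law, which concludes the argument.
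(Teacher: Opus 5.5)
Your proposal is correct and follows essentially the same route as the paper: condition on the type-0 path, apply the Poisson exponential formula, shift time by $\tau_v=\lambda_{0,v}^{-1}\log(1/v)$ (this is the paper's substitution $r=e^{\lambda_1 s}v^{\lambda_1/\lambda_{0,v}}=e^{\lambda_1\sigma}$), replace $e^{-\lambda_{0,v}t}Z_{0,v}(t)$ by $Y_v$ via Lemma~\ref{lem:L2_distance} and Cauchy--Schwarz, and evaluate the $\pi/\sin(\pi\alpha)$ integral. The only difference is in the conditioning. You condition directly on the $\mathcal{F}_\infty$-measurable event $\Omega_{0,v}^\infty$ and use $Y_v\mid\Omega_{0,v}^\infty\sim\mathrm{Exp}(q_{0,v})$ explicitly, which is slightly cleaner than the paper's detour through the unconditional transform, Slutsky, and the vanishing symmetric difference $\Omega_{1,v}^\infty\,\Delta\,\Omega_{0,v}^\infty$.
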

\begin{proof}
  We also explicitly note wite $Z_{0,v}(t)=Z_0(t)$ to denote the type-0's dependence on $v$. When mutations are only attached to births, note that the mutation times $\{T_i\}=\Phi$ form a Cox process with respect to $\{Z_0(s);s\geq 0\}$. In particular, conditional on $\mathcal{F}_\infty$, $\widetilde{\Phi}=\sum_{1\leq i\leq \Phi(\mathbb{R}_{\geq 0})}\delta_{(T_i,Y_1^{(i)})}$ is a Poisson Point Process on $\mathbb{R}_{\geq 0}\times \mathbb{R}_{\geq 0}$. Let $f_y(t,\ell)=ye^{-\lambda_1 t}\ell$ and denote the distribution of $Y_1^{(i)}$ by $\mathbb{Y}$. Then:

\begin{align*}
  M(x,y)=\mathbb{E}\left[e^{-xY_v-yW_v}\right]&=\mathbb{E}[e^{-xY_v}\mathbb{E}[e^{-\int f_y(t,\ell)\widetilde{\Phi}(dt\times d\ell)}|\mathcal{F}_{\infty}]]\\
  &=\mathbb{E}[e^{-xY}e^{-\int_0^\infty \int_0^\infty (1-e^{-ye^{-\lambda_1 s}\ell})\,\mathbb{Y}(d\ell)avZ_{0,v}(s)\,ds}]\\
  &=\mathbb{E}[e^{-xY}e^{-\int_0^\infty q_1av (1-\frac{q_1}{q_1+ye^{-\lambda_1 s}})Z_{0,v}(s)\,ds}].
\end{align*}

First note that the exponent term in the expectation of $M(0,v^{-\lambda_1/\lambda_{0,v}}y)$ can be simplified as follows:

  \[
  \int_0^\infty q_1 av\left(1-\frac{q_1}{q_1+yv^{-\lambda_1/\lambda_{0,v}}e^{-\lambda_1 s}}\right)Z_{0,v}(s)\,ds=\int_0^\infty q_1 av\left(\frac{yv^{-\lambda_1/\lambda_{0,v}}}{q_1e^{\lambda_1 s}+yv^{-\lambda_1/\lambda_{0,v}}}\right)Z_{0,v}(s)\,ds,
  \]
letting $u=e^{\lambda_1 s}$ we then get
\begin{align*}
  &=\frac{q_1a}{\lambda_1}\int_{1}^\infty v\frac{yu^{-1}}{q_1 uv^{\lambda_1/\lambda_{0,v}}+y}Z_{0,v}\left(\frac{1}{\lambda_1}\log u\right)\,du
\end{align*}
we then make the substitution $r=uv^{\lambda_1/\lambda_{0,v}}$ to see
\begin{align}
  &=\frac{q_1 a}{\lambda_1}\int_{0}^\infty \mathbf{1}_{r\geq v^{\lambda_1/\lambda_{0,v}}}\frac{yr^{-1}}{q_1 r+y}vZ_{0,v}\left(\frac{1}{\lambda_1}\log\left(rv^{-\lambda_1/\lambda_{0,v}}\right)\right)\,dr\nonumber\\
  &=\frac{q_1 a}{\lambda_1}\int_{0}^\infty \mathbf{1}_{r\geq v^{\lambda_1/\lambda_{0,v}}}\frac{yr^{\frac{\lambda_{0,v}}{\lambda_1}-1}}{q_1 r+y}e^{-\lambda_0 t_v(r)}Z_{0,v}\left(t_v(r)\right)\,dr\nonumber
\end{align}
where $t_v(r)=\frac{1}{\lambda_1}\log\left(rv^{-\lambda_1/\lambda_{0,v}}\right)$. Now, we can approximate $e^{-\lambda_{0,v}t_v(r)}Z_{0,v}(t_v(r))$ via $Y_v$ in the above integral as follows:
\begin{align*}
    &\mathbb{E}\left[\left|\frac{q_1 a}{\lambda_1}\int_{0}^\infty \mathbf{1}_{r\geq v^{\lambda_1/\lambda_{0,v}}}\frac{yr^{\frac{\lambda_{0,v}}{\lambda_1}-1}}{q_1 r+y}\left(e^{-\lambda_0 t_v(r)}Z_{0,v}\left(t_v(r)\right)-Y_v\right)\,dr\right|\right]\\
    &\leq \frac{q_1 a}{\lambda_1}\int_{0}^\infty \mathbf{1}_{r\geq v^{\lambda_1/\lambda_{0,v}}}\frac{yr^{\frac{\lambda_{0,v}}{\lambda_1}-1}}{q_1 r+y}\mathbb{E}\left[\left|e^{-\lambda_0 t_v(r)}Z_{0,v}\left(t_v(r)\right)-Y_v\right|\right]\,dr\\
    &\leq \frac{q_1 a}{\lambda_1}\int_{0}^\infty \mathbf{1}_{r\geq v^{\lambda_1/\lambda_{0,v}}}\frac{yr^{\frac{\lambda_{0,v}}{\lambda_1}-1}}{q_1 r+y}\sqrt{\mathbb{E}\left[\left|e^{-\lambda_0 t_v(r)}Z_{0,v}\left(t_v(r)\right)-Y_v\right|^2\right]}\,dr\\
    &= \frac{q_1 a}{\lambda_1}\sqrt{\frac{a(1-v)+d+\mu}{\lambda_0}}\int_{0}^\infty \mathbf{1}_{r\geq v^{\lambda_1/\lambda_{0,v}}}\frac{yr^{\frac{\lambda_{0,v}}{\lambda_1}-1}}{q_1 r+y}e^{-\frac{\lambda_0}{2}t_v(r)}\,dr\\
    &\leq  v^{1/2}\frac{q_1 a}{\lambda_1}\sqrt{\frac{a(1-v)+d+\mu}{\lambda_0}}\int_{0}^\infty\frac{yr^{\frac{\lambda_{0,v}}{2\lambda_1}-1}}{q_1 r+y}\,dr\overset{v\to 0^+}{\to} 0.
\end{align*}
The equality follows from Lemma \ref{lem:L2_distance}. Remark that $Y_v$ is $0$ with probability $p_{0,v}$ and exponential with rate $q_{0,v}$ with probability $q_{0,v}$, so it is easy to deduce that $Y_v\Rightarrow Y_0$ in distribution as $v\to 0^+$. Thus:
\begin{align}
    -xY_v+\frac{q_1 aY_v}{\lambda_1}\int_{0}^\infty \mathbf{1}_{r\geq v^{\lambda_1/\lambda_{0,v}}}\frac{yr^{\frac{\lambda_{0,v}}{\lambda_1}-1}}{q_1 r+y}\,dr\Rightarrow -xY_0+\frac{q_1 a Y_0}{\lambda_1}\int_{0}^\infty \frac{yr^{\frac{\lambda_{0,0}}{\lambda_1}-1}}{q_1 r+y}\,dr.\label{eq:limit_laplace}
\end{align}
Thus by an application of Slutsky's theorem and remarking that for $z\geq 0$ and $X$ a non-negative random variable $e^{-zX}\leq 1$ is bounded, we obtain:
\begin{align}
  M(x,v^{-\lambda_1/\lambda_{0,v}}y)&\to \mathbb{E}\left[\exp\left(-Y_0x-Y_0\frac{q_1 a}{\lambda_1}\int_0^\infty\frac{yr^{\frac{\lambda_0}{\lambda_1}-1}}{q_1r+y}\,dr\right)\right]=\mathbb{E}\left[\exp\left(-Y_0x-Y_0\frac{q_1 a}{\lambda_1}\frac{y^{\lambda_0/\lambda_1}}{q_1^{\lambda_0/\lambda_1}}\frac{\pi}{\sin\left(\pi\lambda_0/\lambda_1\right)}\right)\right]\nonumber\\
  &=\frac{q_{0,0}}{q_0+x+y^{\lambda_{0,0}/\lambda_1}\frac{ aq_1 ^{1-\lambda_{0,0}/\lambda_1}}{\lambda_1}\frac{\pi}{\sin(\pi\lambda_{0,0}/\lambda_1)}}:=M_0(x,y).\nonumber
\end{align}
Also note that on $(\Omega_1^{\infty})^c$, $(Y,W)=(0,0)$ a.s. Hence:
\begin{align}
  \mathbb{E}[e^{-xY_v-yW_v}|\Omega_1^\infty]=\frac{1}{\mathbb{P}(\Omega_1^\infty)}\left(M(x,y)-\mathbb{P}(\Omega_1^{\infty,c})\right).\label{eq:trivial_cond}
\end{align}

Now, by Eq. \ref{eq:trivial_cond}, $(Y_v,v^{-\lambda_1/\lambda_{0,v}}W_v)|\Omega_{1,v}^\infty$ converges in distribution. But then since $\Omega_{0,v}^\infty\subseteq \Omega_{1,v}^\infty$ (up to a probability 0 set) and $\mathbb{P}(\Omega_{1,v}^\infty\Delta \Omega_{0,v}^\infty)\to 0$ as $v\to 0^+$ (where $\Delta$ denotes the symmetric difference) we obtain $(Y_v,v^{-\lambda_1/\lambda_{0,v}}W_v)|\Omega_{0,v}^\infty$ converges with the Laplace Transform specified in the proposition statement.

\end{proof}
\begin{remark}
  The marginal laplace transforms (setting $x=0$ or $y=0$ in Eq. \ref{eq:laplace_transform_limit}) agrees with the Mittag-Leffler distribution derived in a semi-deterministic version of our model in \cite{nicholson2023}.
\end{remark}
Then by the continuous mapping theorem $vY_v W_v^{-\lambda_{0,v}/\lambda_1}|\Omega_{0,v}^\infty$ converges in distribution as $v\to 0^+$. In order to establish Proposition \ref{prop:small_mutation_relative_SFS} we will need to show uniform integrability of  $\{vY_v W_v^{-\lambda_{0,v}/\lambda_1}|\Omega_{0,v}^\infty;0<v<\delta\}$ for $\delta\in (0,1).$

\begin{lemma}
  There is a $\varepsilon>0,\delta<1$ such that $\sup_{\delta>v>0}\mathbb{E}\left[\left(vY_vW^{-\lambda_{0,v}/\lambda_1}\right)^{1+\varepsilon}|\Omega_{0,v}^\infty\right]<\infty$\label{lem:uniform_integrability}
\end{lemma}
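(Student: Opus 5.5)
Since $\mathbb{P}(\Omega_{0,v}^\infty)=q_{0,v}\to q_{0,0}>0$, it suffices to bound the unconditional moment $\mathbb{E}[(vY_vW_v^{-\lambda_{0,v}/\lambda_1})^{1+\varepsilon}\mathbf{1}_{\Omega_{0,v}^\infty}]$ uniformly in $v\in(0,\delta)$. By H\"older with exponents $p,p'$, write
\[
\mathbb{E}\left[(vY_vW_v^{-\alpha_v})^{1+\varepsilon}\mathbf{1}_{\Omega_{0,v}^\infty}\right]\le \mathbb{E}\left[Y_v^{(1+\varepsilon)p}\right]^{1/p}\,\mathbb{E}\left[\left(v^{\lambda_1/\lambda_{0,v}}W_v^{-1}\right)^{\alpha_v(1+\varepsilon)p'}\mathbf{1}_{\Omega_{0,v}^\infty}\right]^{1/p'},
\]
with $\alpha_v=\lambda_{0,v}/\lambda_1$. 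Note that $v\,W_v^{-\alpha_v}=(v^{-\lambda_1/\lambda_{0,v}}W_v)^{-\alpha_v}$. The first factor is uniformly bounded, since $Y_v$ is a mixture of $0$ and an $\text{Exp}(q_{0,v})$ with $q_{0,v}$ bounded away from $0$. The task therefore reduces to a uniform bound on a small negative moment of the rescaled $\widetilde W_v:=v^{-\lambda_1/\lambda_{0,v}}W_v$ on $\Omega_{0,v}^\infty$. Here the exponent is $\gamma:=\alpha_v(1+\varepsilon)p'$, which is slightly above $\alpha_v<1$ for suitable $\varepsilon$ and $p'$ close to $1$.

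For the negative moment I use the Laplace-transform identity $\mathbb{E}[X^{-\gamma}\mathbf{1}_A]=\Gamma(\gamma)^{-1}\int_0^\infty y^{\gamma-1}\mathbb{E}[e^{-yX}\mathbf{1}_A]\,dy$. The region $y\le 1$ contributes at most $1/\gamma$, so only large $y$ matters. From the computation in Proposition~\ref{prop:conv_dist}, conditional on $\mathcal{F}_\infty$,
\[
\mathbb{E}[e^{-y\widetilde W_v}\mid\mathcal{F}_\infty]=\exp\Big(-\tfrac{q_1a}{\lambda_1}\int_{v^{\lambda_1/\lambda_{0,v}}}^\infty \tfrac{y r^{\alpha_v-1}}{q_1r+y}\,e^{-\lambda_{0,v}t_v(r)}Z_{0,v}(t_v(r))\,dr\Big).
\]
I lower bound $e^{-\lambda_{0,v}t}Z_{0,v}(t)\ge I_v:=\inf_{t\ge0}e^{-\lambda_{0,v}t}Z_{0,v}(t)$, and restrict the $r$-integral to $r\in[1,\infty)$, which lies inside the range once $v<1$. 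This gives the exponent bound $\ge c\,I_v\,y^{\alpha_v}$ for $y\ge1$, with a constant $c>0$ uniform in $v\in(0,\delta)$ because $\int_1^\infty \frac{y r^{\alpha-1}}{q_1r+y}dr\asymp y^{\alpha}$ for large $y$. Hence
\[
\int_1^\infty y^{\gamma-1}\mathbb{E}[e^{-cI_vy^{\alpha_v}}\mathbf{1}_{\Omega_0^\infty}]\,dy\lesssim \mathbb{E}\big[I_v^{-\gamma/\alpha_v}\mathbf{1}_{\Omega_0^\infty}\big],
\]
after substituting $u=I_vy^{\alpha_v}$.

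It remains to show $\mathbb{E}[I_v^{-\kappa}\mid\Omega_{0,v}^\infty]$ is finite uniformly in $v$ for $\kappa=\gamma/\alpha_v=(1+\varepsilon)p'$, which is slightly larger than $1$. This is the main obstacle. Lemma~\ref{lem:large_deviation_bound} only gives $\mathbb{P}(I\le x\mid\Omega_0^\infty)\lesssim\sqrt{x}$, which handles only $\kappa<1/2$. To get $\kappa>1$ I first sharpen the lower tail. On $\Omega_0^\infty$, $I_v$ is small mainly when the process lingers at low population early on. Decompose at the first time $Z_0$ reaches a level $m$: after that, $I$ is bounded below by $e^{-\lambda_0\sigma_m}$ times an infimum over $m$ independent surviving lineages. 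For $m$ large, the small-ball probability of that infimum decays like $x^{m/2}$ by independence and the $\sqrt{x}$ bound applied to each lineage. The hitting time $\sigma_m$ has exponential moments uniformly in $v$ small, since the rates are continuous in $v$. Combining these gives $\mathbb{P}(I_v\le x\mid\Omega_{0,v}^\infty)\lesssim x^{\kappa+\eta}$ uniformly, and hence the required moment.

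If this refinement proves awkward, the fallback is to avoid $I_v$ entirely. I would instead bound the exponent from below by $c\,y^{\alpha_v}\int_1^{2}e^{-\lambda_0t_v(r)}Z_{0,v}(t_v(r))\,dr$ and control the negative moments of that time-averaged quantity through $Y_v$ plus the $L^2$ error from Lemma~\ref{lem:L2_distance}. Finally, choose $\varepsilon$ and $p'$ close enough to $1$, and $\delta$ small, so that all constants are uniform.
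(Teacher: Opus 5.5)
Your H\"older step is where the argument breaks, and no sharpening of the tail of $I_v$ can repair it. Splitting off $Y_v$ reduces the problem to a uniform bound on $\mathbb{E}[\widetilde W_v^{-\gamma}\mathbf{1}_{\Omega_{0,v}^\infty}]$ with $\gamma=\alpha_v(1+\varepsilon)p'>\alpha_v$, and that bound is false. By Proposition~\ref{prop:conv_dist}, $\widetilde W_v$ conditioned on $\Omega_{0,v}^\infty$ converges in law to some $\widetilde W_0$ with $\mathbb{E}[e^{-y\widetilde W_0}]=q_{0,0}/(q_{0,0}+c\,y^{\alpha_0})$, where $\alpha_0=\lambda_{0,0}/\lambda_1$ and $c>0$ is explicit. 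Hence
\[
\mathbb{E}[\widetilde W_0^{-\gamma}]=\frac{1}{\Gamma(\gamma)}\int_0^\infty y^{\gamma-1}\frac{q_{0,0}}{q_{0,0}+c\,y^{\alpha_0}}\,dy=\infty\quad\text{whenever }\gamma\ge\alpha_0 .
\]
Skorokhod's representation and Fatou's lemma then give $\liminf_{v\to0^+}\mathbb{E}[\widetilde W_v^{-\gamma}\mid\Omega_{0,v}^\infty]=\infty$.

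Your intermediate target fails even more directly. Since $I_v\le Y_v$ and $Y_v\mid\Omega_{0,v}^\infty\sim\mathrm{Exp}(q_{0,v})$, you get $\mathbb{P}(I_v\le x\mid\Omega_{0,v}^\infty)\ge 1-e^{-q_{0,v}x}\sim q_{0,v}x$. So $\mathbb{E}[I_v^{-\kappa}\mid\Omega_{0,v}^\infty]=\infty$ for every $\kappa\ge 1$ and every $v$. The $x^{m/2}$ heuristic is also wrong: on $\Omega_0^\infty$ only one of the $m$ lineages has to survive, and extinct lineages contribute $0$ to the infimum. Your fallback has the same defect, because a time-average of $e^{-\lambda_{0,v}t}Z_{0,v}(t)$ is comparable to $Y_v$.

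The missing idea is that the moment is finite only because of a cancellation between $Y_v$ and $W_v$. When $Y_v$ is small, the type-0 population seeds few clones, so $W_v$ is small too. In the limit, conditionally on $Y$, $\widetilde W_0$ is $(cY)^{1/\alpha_0}$ times an independent positive stable variable, so $Y\widetilde W_0^{-\alpha_0}$ involves no negative power of $Y$. Any proof must therefore keep $Y_v$ and $W_v$ coupled.

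The paper does this as follows.
\begin{enumerate}
\item It bounds $W_v\ge e^{-\lambda_1 T_1^*}Y_1^{(1),*}$, where $T_1^*$ is the first established mutation.
\item It splits off the independent factor $(Y_1^{(1),*})^{-\alpha_v(1+\varepsilon)}$, which has finite mean since $\alpha_v(1+\varepsilon)<1$.
\item It controls $vY_ve^{\lambda_{0,v}T_1^*}$ through the Cox structure of $T_1^*$. After the time change $u(t)=\int_0^t avq_1Z_0(s)\,ds$, one gets $vY_ve^{\lambda_{0,v}t(u)}=vY_v+\frac{\lambda_{0,v}}{aq_1}\int_0^u Y_v/M(t(z))\,dz$.
\item The branching property at the stopping time $t(z)$ bounds $\mathbb{E}[(Y_v/M(t(z)))^{1+\varepsilon}\mathbf{1}_{\Omega_{0,v}^\infty}]$ by $\mathbb{E}[Y_v^{1+\varepsilon}]$.
\end{enumerate}
What gets estimated is always the ratio $Y_v/M$, never $M$ alone.

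Within your Laplace framework, the analogous repair would be to drop H\"older and keep the $\mathcal{F}_\infty$-measurable factor $Y_v^{1+\varepsilon}$ inside the expectation. Your bound $\exp(-c I_v y^{\alpha_v})$ on the conditional Laplace transform then reduces everything to a uniform bound on $\mathbb{E}[(Y_v/I_v)^{1+\varepsilon}\mid\Omega_{0,v}^\infty]$. That is a ratio estimate not implied by any tail bound on $I_v$ alone, and it would need its own argument.
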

\begin{proof}
  On $\Omega_{0,v}^\infty$, there is a first time $T_1^*$ of forming an established type-1 clone. Let the associated size of the scaled, established type-1 clone be $Y_1^{(1),*}$. Then on $\Omega_{0,v}^\infty$:
  \[\left(vY_vW^{-\lambda_{0,v}/\lambda_1}\right)^{1+\varepsilon}\leq \left(vY_ve^{\lambda_{0,v} T_1^*}(Y_1^{(1),*})^{-\lambda_{0,v}/\lambda_1}\right)^{1+\varepsilon}.\]
  Note that an exponentially distributed random variable has finite negative moment of power $-\alpha$ for $0<\alpha<1$
  \[
  \mathbb{E}\left[\left(vY_ve^{\lambda_{0,v} T_1^*}(Y_1^{(1),*})^{-\lambda_{0,v}/\lambda_1}\right)^{1+\varepsilon}|\Omega_{0,v}^\infty\right]=\mathbb{E}\left[\left(Y_1^{(1),*}\right)^{-\frac{\lambda_{0,v}}{\lambda_1}(1+\varepsilon)}\right]\mathbb{E}\left[(vY_v)^{1+\varepsilon}\mathbb{E}\left[e^{\lambda_{0,v}(1+\varepsilon)T_1^*}|\mathcal{F}_\infty,\Omega_{0,v}^\infty\right]|\Omega_{0,v}^\infty\right].
  \]
  Note that $T_1^*$ is the first point of a Cox process with rate $avq_1Z_0(s)$, thus denoting $K>\sup_{\delta>v>0}\mathbb{E}\left[\left(Y_1^{(1),*}\right)^{-\frac{\lambda_{0,v}}{\lambda_1}(1+\varepsilon)}\right]$:
\begin{align*}
  &\leq K\mathbb{E}\left[(vY_v)^{1+\varepsilon}\int_0^\infty e^{\lambda_{0,v}(1+\varepsilon)t}avq_1 Z_0(t)e^{-\int_0^t avq_1 Z_0(s)\,ds}\,dt|\Omega_{0,v}^\infty\right].\\
\end{align*}
Set $u(t)=\int_0^t avq_1 Z_0(s)\,ds$ and on $\Omega_{0,v}^\infty$, $u$ is strictly increasing with range $[0,\infty)$. We can then define $t(u)=u^{-1}(t)$, to rewrite the integral in the previous display as
\begin{align*}
  &=K\mathbb{E}\left[(vY_v)^{1+\varepsilon}\int_0^\infty e^{\lambda_{0,v}(1+\varepsilon)t(u)}e^{-u}\,du|\Omega_{0,v}^\infty\right].
\end{align*}
Let $M(t)=e^{-\lambda_0 t}Z_0(t)$ and notice that on $\Omega_{0,v}^\infty$:
\[
\frac{d}{du}e^{\lambda_{0,v}t(u)}=e^{\lambda_{0,v}t(u)}\frac{\lambda_{0,v}}{avq_1}\frac{1}{Z_0(t(u))}=\frac{\lambda_{0,v}}{avq_1 M(t(u))}.
\]
Hence:
\[\mathbb{E}\left[(vY_v)^{1+\varepsilon}\int_0^\infty e^{\lambda_{0,v}(1+\varepsilon)t(u)}e^{-u}\,du|\Omega_{0,v}^\infty\right]=\mathbb{E}\left[\int_0^\infty \left(vY_v\left(\frac{\lambda_{0,v}}{avq_1}\int_0^u \frac{1}{M(t(z))}\,dz+1\right)\right)^{1+\varepsilon}e^{-u}\,du|\Omega_{0,v}^\infty\right].\]
Then by Minkowsky's inequality:
\[\leq \int_0^\infty \left(\mathbb{E}\left[\left(\frac{\lambda_{0,v}}{aq_1}\int_0^u \frac{Y_v}{M(t(z))}\,dz\right)^{1+\varepsilon}|\Omega_{0,v}^\infty\right]^{1/(1+\varepsilon)}+\mathbb{E}\left[(vY_v)^{1+\varepsilon}|\Omega_{0,v}^\infty\right]^{1/(1+\varepsilon)}\right)^{1+\varepsilon}e^{-u}\,du.\]
Now, it suffices to examine:
\[\mathbb{E}\left[\mathbf{1}_{\Omega_{0,v}^\infty}\left(\frac{\lambda_{0,v}}{aq_1}\int_0^u \frac{Y_v}{M(t(z))}\,dz\right)^{1+\varepsilon}\right]^{1/(1+\varepsilon)}.\]
By Minkowsky's integral inequality:
\[\leq \frac{\lambda_{0,v}}{aq_1}\int_0^u \mathbb{E}\left[\mathbf{1}_{\Omega_{0,v}^\infty}\left(\frac{Y_v}{M(t(z))}\right)^{1+\varepsilon}\right]^{1/(1+\varepsilon)}\,du.\]
Observe that $t(z)$ is a $\mathcal{F}_t$-stopping time (since $\{t(z)\leq r\}=\{u(r)\geq z\}\in\mathcal{F}_r$). On $\Omega_{0,v}^\infty$, $t(z)<\infty$. Therefore, conditioning on $\mathcal{F}_{t(z)}$:

\begin{align*}
    \mathbb{E}\left[\mathbf{1}_{\Omega_{0,v}^\infty}\left(\frac{Y_v}{M(t(z))}\right)^{1+\varepsilon}\right]&\leq \mathbb{E}\left[\sum_{k=1}^\infty \mathbb{E}\left[\left(\frac{Y_v}{M(t(z))}\right)^{1+\varepsilon}|\mathcal{F}_{t(z)}\right]\mathbf{1}_{Z_0(t(z))=k,t(z)<\infty}\right]\\
    &=\mathbb{E}\left[\sum_{k=1}^\infty \mathbb{E}\left[\left(\frac{Y_v}{e^{-\lambda_{0,v}t(z)}k}\right)^{1+\varepsilon}|\mathcal{F}_{t(z)}\right]\mathbf{1}_{Z_0(t(z))=k,t(z)<\infty}\right]
\end{align*}

On $t(z)<\infty$, we may decompose $Y_v=e^{-\lambda_{0,v}t(z)}\sum_{j=1}^k Y_v^{(j)}$ for i.i.d. $Y_v^{(j)}$ (independent of $\mathcal{F}_{t(z)}$). Therefore:
\begin{align*}
    &=\mathbb{E}\left[\sum_{k=1}^\infty \mathbb{E}\left[\left(\sum_{j=1}^k Y_v^{(j)}\frac{1}{k}\right)^{1+\varepsilon}|\mathcal{F}_{t(z)}\right]\mathbf{1}_{Z_0(t(z))=k,t(z)<\infty}\right]\leq \mathbb{E}[(Y_v^{(1)})^{1+\varepsilon}].
\end{align*}

Thus, since positive moments of $Y_v$ are finite and uniformly bounded for $v$ in any compact set, we thus have uniform integrability.
\end{proof}

\SmallMutationRelativeSFS*
\begin{proof}
The first equality is just Theorem \ref{prop:ASRelativeSFS}. For the second equality, first remark that:
\[
\mathbb{E}[\lim_{v\to 0^+}Y_vvW_v^{-\lambda_{0,v}/\lambda_1}|\Omega_{0,v}^\infty]=-\frac{1}{\Gamma(\lambda_{0,0}/\lambda_1)\mathbb{P}(\Omega_{0,0}^\infty)}\int_0^\infty \partial_xM_0(0,\theta)\theta^{\lambda_{0,0}/\lambda_1-1}\,d\theta,
\]
  where $M_0$ is as in Eq. \ref{eq:laplace_transform_limit} and by using formulas for inverse moments of random variables. Here $\lim_v$ is taken in the distribution sense. But from the uniform integrability in Lemma \ref{lem:uniform_integrability}, we can exchange limit and expectation in the left-hand-side of the above equation. Then we can write:
  \begin{align*}
    \lim_{v\to 0^+}\mathbb{E}[Y_vvW_v^{-\lambda_{0,v}/\lambda_1}|\Omega_{0,v}^\infty]&=\frac{1}{\Gamma(\lambda_1/\lambda_{0,0})\mathbb{P}(\Omega_{0,0}^\infty)}\int_0^\infty \frac{1}{\left(1+y^{\lambda_{0,0}/\lambda_1}\frac{ aq_1 ^{1-\lambda_{0,0}/\lambda_1}}{\lambda_1 q_0}\frac{\pi}{\sin(\pi\lambda_{0,0}/\lambda_1)}\right)^2}y^{\lambda_{0,0}/\lambda_1-1}\,dy\\
    &=\frac{1}{\Gamma(\lambda_1/\lambda_{0,0})\mathbb{P}(\Omega_{0,0}^\infty)}\frac{\lambda_1}{\lambda_{0,0}\frac{ aq_1 ^{1-\lambda_{0,0}/\lambda_1}}{\lambda_1 q_0}\frac{\pi}{\sin(\pi\lambda_{0,0}/\lambda_1)}}.
  \end{align*}
The result then follows after simple algebraic manipulations.
\end{proof}

\subsection{Largest clone convergence in the large time limit}\label{app:largest_clone_convergence}
Here we prove Proposition \ref{prop:largest_clone_convergence}.
\LargestClone*
\begin{proof}
For any $K$ and sufficiently large $t$, we have that:
\[
\max_{T_i\leq t}\left( e^{-\lambda_1 T_i}Y_1^{(i)}(t)\right)\geq \max_{1\leq i\leq K}\left( e^{-\lambda_1 T_i}Y_1^{(i)}(t)\right).
\]
Taking the $\liminf$ on both sides and noting that $\lim_{t\to\infty}Y^{(i)}(t)=Y_1^{(i)}$, we thus have:
\[
\liminf_{t\to\infty}\max_{1\leq i\leq M(t)}\left( e^{-\lambda_1 T_i}Y_1^{(i)}(t)\right)\geq \max_{1\leq i\leq K}\left( e^{-\lambda_1 T_i}Y_1^{(i)}\right).
\]
This holds for any $K$, thus we may take the limit as $K\to\infty$:
\[
\liminf_{t\to\infty}\max_{1\leq i\leq M(t)}\left( e^{-\lambda_1 T_i}Y_1^{(i)}(t)\right)\geq \sup_{1\leq i<\infty}\left( e^{-\lambda_1 T_i}Y_1^{(i)}\right).
\]
Now, we use that $\lim_{i\to\infty}e^{-\lambda_1 T_i}Y_*^{(i)}=0$ (Lemma \ref{lem:Y_small}). For any $\varepsilon>0$, there is sufficiently large $I$ such that for all $i\geq I$, $e^{-\lambda_1 T_i}Y_*^{(i)}\leq \varepsilon$. Hence:
\begin{align*}
        \limsup_{t\to\infty}\max_{1\leq i\leq N(t)}\left( e^{-\lambda_1 T_i}Y_1^{(i)}(t)\right)&\leq \limsup_{t\to\infty}\max\{\max_{1\leq i\leq I}\left(e^{-\lambda_1 T_i}Y_1^{(i)}(t)\right),\varepsilon\}\\
        &=\max\{\max_{1\leq i\leq I}\left(e^{-\lambda_1 T_i}Y_1^{(i)}\right),\varepsilon\}\\
        &\leq \max\{\max_{1\leq i<\infty}\left(e^{-\lambda_1 T_i}Y_1^{(i)}\right),\varepsilon\}.
\end{align*}
Now take $\varepsilon\to0^+$, and we have our result. The large-detection size limit directly follows from taking the discrete time sequence $(\tau_n)_n$ and using Lemma \ref{lem:large_size_time} to observe:
    \[
    e^{-\lambda_1 \tau_n}=e^{-\lambda_1(\tau_n-t_n)}e^{-\lambda_1 t_n}\to \frac{W}{n}.
    \]
\end{proof}

\subsection{Largest clone convergence in small mutation limit}\label{app:largest_clone_small}
Here we prove Proposition \ref{prop:largest_clone_small}. Recall that  $\alpha=\lambda_{0,0}/\lambda_1$, $\gamma=\left(\frac{q_1^{1-\alpha}a\Gamma(\alpha)}{\lambda_1 q_0}\right)^{1/\alpha}$,  $V$ denotes the largest element of a Poisson-Dirichlet distribution with parameters $(\alpha,0)$ and $Z$ denotes a random variable with a log-logistic distribution with scale $\gamma$ and shape $\alpha$.
\LargestCloneSmall*
\begin{proof}
  Let $T_i'$ denote the time of forming the $i$th established mutant lineage. Let $Z_1^{(i)'}$ denote the corresponding established type-1 clone, and let $Y_1^{(i)'}=\lim_{t\to\infty}e^{-\lambda_1 t}Z_1^{(i)'}(t)$. Then $\sum \delta_{(T_i',Y_1^{(i)'})}$ conditional on $\mathcal{F}_\infty$ is a Poisson process with intensity $q_1 avZ_0(s)(q_1e^{-q_1 x}\,dx)\,ds$ by the marking theorem. Let $J_v^{(i)}=v^{-\lambda_1/\lambda_{0}}e^{-\lambda_1 T_i'}Y_1^{(i)'}$ and define $\mathcal{N}_v=\sum \delta_{J_v^{(i)}}$. We take this to be a point process over $((0,\infty],r)$ with the vague topology. This metric space is complete and separable using $r(x,y)=\left|\frac{1}{x}-\frac{1}{y}\right|$. In what follows we will first show that $\mathcal{N}_v$ converges weakly to a point process $\mathcal{N}_0$ as $v\to 0$, and then use the continuous mapping theorem to establish the convergence of the maximum and normalized maximum clone sizes. 
  
First note that $v^{-\lambda_1/\lambda_{0}}W=\sum J_v^{(i)}$. Let $f\in C_{0,b}((0,\infty])$ -- namely, $f:((0,\infty],r)\to \mathbb{R}_{\geq 0}$ is a continuous function which is bounded and has bounded support (that is, is $0$ close to $0$). We define further $h:(0,\infty]\to \mathbb{R}$ via $h(j)=\theta j+f(j)$. Call $L_v(\theta,f):[0,\infty)\times C_{0,b}((0,\infty])\to \mathbb{R}_{\geq 0}$ the joint laplace function of $(v^{-\lambda_1/\lambda_0}W,\mathcal{N}_v)|\Omega_{0,v}^\infty$:

\begin{align*}
  L_v(\theta,f)&=\mathbb{E}\left[\exp\left(-\theta v^{-\lambda_1/\lambda_{0}}W-\sum f(J_v^{(i)})\right)\middle| \Omega_{0,v}^\infty\right]=\mathbb{E}\left[\exp\left(-\sum h(J_v^{(i)})\right)\middle| \Omega_{0,v}^\infty\right]\\
  &=\mathbb{E}\left[\exp\left(-avq_1\int_0^\infty \int_0^\infty (1-e^{-h(v^{-\lambda_1/\lambda_0}e^{-\lambda_1 s}x)})q_1 e^{-q_1 x}\,dxZ_0(s)\,ds\right)\middle| \Omega_{0,v}^\infty\right]\\
  &=\mathbb{E}\left[\exp\left(-avq_1\int_0^\infty \int_0^\infty (1-e^{-h(v^{-\lambda_1/\lambda_0}e^{-\lambda_1 s}x)})Z_0(s)\,ds q_1 e^{-q_1 x}\,dx\right)\middle| \Omega_{0,v}^\infty\right].
\end{align*}
Let $r=v^{-\lambda_1/\lambda_0}e^{-\lambda_1 s}x$, hence $s(r)=\lambda_1^{-1}\log r^{-1}v^{-\lambda_1/\lambda_0}x$

\begin{align*}
  &=\mathbb{E}\left[\exp\left(-\lambda_1^{-1}avq_1\int_0^\infty \int_0^{v^{-\lambda_1/\lambda_0}x} (1-e^{-h(r)})e^{\lambda_0 s(r)}e^{-\lambda_0 s(r)}Z_0(s(r))\,\frac{dr}{r} q_1 e^{-q_1 x}\,dx\right)\middle| \Omega_{0,v}^\infty\right]\\
  &=\mathbb{E}\left[\exp\left(-\lambda_1^{-1}aq_1\int_0^\infty \int_0^{v^{-\lambda_1/\lambda_0}x} (1-e^{-h(r)})r^{-\lambda_0/\lambda_1}x^{\lambda_0/\lambda_1}e^{-\lambda_0 s(r)}Z_0(s(r))\,\frac{dr}{r} q_1 e^{-q_1 x}\,dx\right)\middle| \Omega_{0,v}^\infty\right]\\
  &\overset{v\to 0^+}{\to} \mathbb{E}\left[\exp\left(-\lambda_1^{-1}aq_1\int_0^\infty \int_0^{\infty} (1-e^{-h(r)})r^{-\lambda_0/\lambda_1}x^{\lambda_0/\lambda_1}e^{-\lambda_0 s(r)}Y\,\frac{dr}{r} q_1 e^{-q_1 x}\,dx\right)\middle| \Omega_{0,0}^\infty\right]\\
  &=\mathbb{E}\left[\exp\left(-C\cdot Y\int_0^\infty(1-e^{-h(r)})r^{-1-\lambda_0/\lambda_1}\,dr\right)\middle|\Omega_{0,0}^\infty\right],
\end{align*}
where the convergence is justified by similar arguments as in \ref{prop:conv_dist}. Notice that we have integrability near $0$ since $1-e^{-h(r)}\overset{r\to 0^+}{\sim} \theta r$ implying $(1-e^{-h(r)})r^{-\lambda_0/\lambda_1}\overset{r\to 0^+}{\sim}\theta r^{1-\lambda_0/\lambda_1}$. We have integrability near $\infty$ as $r^{-1-\lambda_0/\lambda_1}$ is integrable there. Hence $(v^{-\lambda_1/\lambda_0}W,\mathcal{N}_v)|\Omega_{0,v}^\infty$ converges in distribution. We thus conclude $\mathcal{N}_v$ converges to a point process $\mathcal{N}_0$. From the Laplace functional we conclude that $\mathcal{N}_0$ is doubly-stochastic, where conditional on $Y$, it is a Poisson point process with intensity of a $\lambda_{0,0}/\lambda_1$-subordinator. 

Let $N((0,\infty])$ denote the space of boundedly-finite counting measures with the vague topology. We define the function $G:\mathbb{R}_{>0}\times N((0,\infty])\to \mathbb{R}$ via:
\[
G(s,\mu)=\frac{\sup\text{supp}(\mu)}{s}.
\]
Let $\mu\in N((0,\infty])$ have a largest finite element of the support ($x_{\max}$) which is not a limit point. Let $\mu_n\to\mu$ vaguely. Note that for some $\varepsilon'>0$ with $x_{\max}>\varepsilon'$, $[x_{\max}-\varepsilon',x_{\max}]$ contains only one element in $\text{supp}(\mu)$. Then for any $0<\varepsilon<\varepsilon'$, and for large enough $N$ with $n\geq N$ implying $\mu_n([x_{\max}-\varepsilon,x_{\max}])=1$, we can conclude that $x_{\max}^n\geq x_{\max}-\varepsilon$. Here, $x_{\max}^n=\sup\text{supp }\mu_n$. Also $[x_{\max}+\varepsilon,\infty]$ (which is bounded under $r$) contains no element in $\text{supp}(\mu)$, thus $x_{\max}^n \leq x_{\max}+\varepsilon$. Hence $\mu$ is a continuity point of $\sup\text{supp}(\mu)$. For such $\mu$ and $s>0$, we have $(s,\mu)$ is a continuity point of $G$. By continuous mapping on $\Omega_0^\infty$:
\[
\frac{\sup\left(e^{-\lambda_1 T_i}Y_1^{(i)}\right)}{W}=\frac{\sup\left(v^{-\lambda_1/\lambda_0}e^{-\lambda_1 T_i}Y_1^{(i)}\right)}{v^{-\lambda_1/\lambda_0}W}\overset{v\to 0^+}{\Rightarrow} \sup\left(\frac{\text{supp}(\mathcal{N}_0)}{\sum_{x\in \mathcal{N}_0}x}\right)=V.
\]
By Pitman and Yor \cite{pitman1997}, the above is the largest point of a $\text{PD}(\alpha,0)$ distribution. The CDF of $V$ follows from Proposition 20 of Pitman and Yor.

To establish the limiting distribution of the unnormalized maximum we can again use the fact that the limiting point process is a point of continuity of $\mu\to \sup\text{supp }\mu$. Then by the continuous mapping theorem we get
\begin{align*}
\lim_{v\to 0}\mathbb{P}(\sup_i(e^{-\lambda_1 T_i}Y_1^{(i)})\leq x|\Omega_0^\infty)&=\mathbb{P}(\mathcal{N}_0((x,\infty))=0|\Omega_0^\infty)=\mathbb{E}\left[\exp\left(-CY\int_{x}^\infty y^{-1-\alpha}dy\right)|\Omega_0^\infty\right]\\
&=
\mathbb{E}\left[\exp\left(-\frac{C}{\alpha}Yx^{-\alpha}\right)|\Omega_0^\infty\right].
\end{align*}
On the event $\Omega_0^\infty$, $Y\sim \text{Exp}(q_0)$, and the result then follows.

\end{proof}

\subsection{Poisson-Dirichlet Distribution Factoids}\label{app:poisson_dirichlet}
Here we prove some facts about the largest point in the Poisson-Dirichlet distribution, specifically those stated in Remark \ref{rem:poisson_dirichlet}. Here we let $\alpha=\lambda_{0,0}/\lambda_1$ and mutations are only attached to births. $V$ denotes the largest point 
\begin{lemma}
    As long as $\frac{\sin(\pi \alpha)}{\pi\alpha}\geq \frac{1}{2}$, $(0<\alpha\lesssim 0.603$, that is $b$ is sufficiently large), the median of $V$ is:
\[\frac{1}{1+\left(\frac{\pi\alpha}{2\sin(\pi\alpha)}\right)^{1/\alpha}}.\]
\end{lemma}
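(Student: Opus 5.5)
We use the fact from Tung--Durrett (and Pitman--Yor) already quoted in the paper: for the normalized points of a $\mathrm{PD}(\alpha,0)$ sequence, the expected number of points exceeding $f$ equals the right-hand side of Eq.~\eqref{eq:tung_durrett},
\[
\mathbb{E}\bigl[\#\{i: V_i>f\}\bigr]=\frac{\sin(\pi\alpha)}{\pi\alpha}\Bigl(\frac1f-1\Bigr)^{\alpha},
\]
where the $V_i$ are the ranked points, so $V=V_1$. The strategy is to show that for $f\ge 1/2$ this expectation \emph{is} the probability $\mathbb{P}(V>f)$, and then to solve $\mathbb{P}(V>m)=1/2$ for $m$.

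\textbf{Step 1: reduce the tail of $V$ to an expected count.} The normalized points sum to one. Hence, for $f\ge 1/2$, at most one point can exceed $f$ (two such points would sum to more than $1$; equality at $f=1/2$ is a null event). Therefore $\#\{i:V_i>f\}=\mathbf{1}_{V>f}$ almost surely, and taking expectations gives
\[
\mathbb{P}(V>f)=\frac{\sin(\pi\alpha)}{\pi\alpha}\Bigl(\frac1f-1\Bigr)^{\alpha},\qquad f\in[1/2,1).
\]
To obtain the counting formula itself, I would rederive it from the Poisson representation in Appendix~\ref{app:largest_clone_small}. There the limit point process $\mathcal N_0$, conditional on $Y$, is Poisson with intensity $cx^{-1-\alpha}\,dx$. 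By Palm/Mecke,
\[
\mathbb{E}\,\#\{i: J_i>f\textstyle\sum_k J_k\}=\int_0^\infty c x^{-1-\alpha}\,\mathbb{P}\bigl(S<x(1/f-1)\bigr)\,dx,
\]
where $S$ is an independent positive $\alpha$-stable variable with Laplace transform $e^{-c\Gamma(1-\alpha)\lambda^\alpha/\alpha}$. Using $\mathbb{E}[S^{-\alpha}]=\alpha/(c\Gamma(1-\alpha)\Gamma(1+\alpha))$ together with the reflection formula, this integral reduces to $\frac{\sin\pi\alpha}{\pi\alpha}(1/f-1)^\alpha$. The mixing over $Y$ is harmless because the ratio is scale-invariant.

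\textbf{Step 2: solve for the median.} On $[1/2,1)$ the map $f\mapsto \mathbb{P}(V>f)$ is continuous and strictly decreasing to $0$. At $f=1/2$ it takes the value $\frac{\sin(\pi\alpha)}{\pi\alpha}$. So whenever $\frac{\sin(\pi\alpha)}{\pi\alpha}\ge 1/2$, the equation $\mathbb{P}(V>m)=1/2$ has a unique solution $m\in[1/2,1)$. Solving $(1/m-1)^\alpha=\frac{\pi\alpha}{2\sin(\pi\alpha)}$ gives
\[
m=\Bigl(1+\bigl(\tfrac{\pi\alpha}{2\sin\pi\alpha}\bigr)^{1/\alpha}\Bigr)^{-1}.
\]
Continuity, which comes from $V$ having a density, guarantees that this $m$ is the median. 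Finally, I would note numerically that $\frac{\sin(\pi\alpha)}{\pi\alpha}=1/2$ at $\alpha\approx 0.603$, and that the function is decreasing in $\alpha$, which gives the stated range.

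\textbf{Main obstacle.} The main difficulty is carrying out the Mecke integral in Step 1 cleanly, specifically the exchange of integrals and the negative moment of the stable law. If this becomes messy, I would instead cite Tung--Durrett's Eq.~\eqref{eq:tung_durrett} directly, together with Proposition~\ref{prop:largest_clone_small}, which identifies the limit as $\mathrm{PD}(\alpha,0)$.
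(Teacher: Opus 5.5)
Your proposal is correct. It reaches the key tail formula by a different route from the paper.

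The paper quotes Proposition 20 of Pitman and Yor for the density of $V$ on $(1/2,1)$,
\[
p_V(u)=\frac{(1-u)^{\alpha-1}u^{-\alpha-1}}{\Gamma(\alpha)\Gamma(1-\alpha)},
\]
and then solves $\int_m^1 p_V(u)\,du=1/2$. The substitution $w=1/u-1$ gives $\int_m^1 p_V=\frac{\sin(\pi\alpha)}{\pi\alpha}(1/m-1)^{\alpha}$, which is exactly your expression.

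You instead obtain $\mathbb{P}(V>f)$ for $f\ge 1/2$ from the first-moment formula for the ranked normalized points. You use that at most one point can exceed $1/2$, and you compute that first moment yourself from the conditionally Poisson representation via Mecke. Your constants check out: $\mathbb{E}[S^{-\alpha}]=1/(c\,\Gamma(\alpha)\Gamma(1-\alpha))$, and $c$ cancels, so mixing over $Y$ is indeed harmless.

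The two routes rest on the same fact. The Pitman--Yor density on $(1/2,1)$ is the mean intensity of the $\mathrm{PD}(\alpha,0)$ points, i.e.\ minus the $f$-derivative of the Tung--Durrett curve $\frac{\sin(\pi\alpha)}{\pi\alpha}(1/f-1)^{\alpha}$. Your one-point observation is precisely why the density has that simple closed form only above $1/2$.

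What each route buys:
\begin{itemize}
\item The paper's route is a two-line citation.
\item Yours is self-contained given the Poisson representation already established in the appendix.
\item Yours also shows that the hypothesis $\frac{\sin(\pi\alpha)}{\pi\alpha}\ge 1/2$ is just $\mathbb{P}(V>1/2)\ge 1/2$, i.e.\ the Tung--Durrett expected count at $f=1/2$ is at least $1/2$. The median is then the point where that curve crosses $1/2$.
\end{itemize}

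A minor point: no null-event caveat is needed at $f=1/2$. Two points each strictly exceeding $1/2$ is impossible outright.
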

\begin{proof}
    Let $p_V$ denote the probability density function of $V$. If $1>u>1/2$, then by Proposition 20 of Pitman and Yor:
    \[p_V(u)=\frac{1}{\Gamma(\alpha)\Gamma(1-\alpha)}\frac{(1-u)^{\alpha-1}}{u^{\alpha+1}}.\]
    Remark that $\int_{1/2}^1 p_V(u)\,du\geq \frac{1}{2}$ iff $\frac{\sin\pi\alpha}{\pi\alpha}\geq \frac{1}{2}$ and under this condition the median ($m$) satisfies $m\geq 1/2$ and can be solved via $\int_m^1 p_V(u)\,du=1/2$.
\end{proof}
\begin{lemma}
    $V$ is small very infrequently:
\[\mathbb{P}(V\leq x)\lsimas{x}{0^+}e^{-cx^{-1}}\]
Where $c$ can be explicitly determined.
\end{lemma}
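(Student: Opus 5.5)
The plan is to use the stable-subordinator representation of $\mathrm{PD}(\alpha,0)$ that appeared in the proof of Proposition \ref{prop:largest_clone_small}. There, $V=\max_i J_i/\sum_i J_i$, where, conditional on $Y$, $\{J_i\}$ is a Poisson point process with intensity $CY r^{-1-\alpha}\,dr$. Since the ratio is scale invariant, we may instead take $\{J_i\}$ to be a Poisson process with intensity $r^{-1-\alpha}\,dr$ (after rescaling by $(CY)^{1/\alpha}$). This is the standard Pitman--Yor representation: $V=M/S$, where $M=\max_i J_i$ and $S=\sum_i J_i$ is an $\alpha$-stable random variable.

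The key observation is that $\{V\le x\}=\{S\ge M/x\}$, and that conditional on $M=m$ the remaining points form a Poisson process on $(0,m)$ with intensity $r^{-1-\alpha}\,dr$. Their sum, call it $S_m$, has Laplace transform
\[
\mathbb{E}[e^{-\theta S_m}]=\exp\Bigl(-\int_0^m (1-e^{-\theta r})r^{-1-\alpha}\,dr\Bigr).
\]
We therefore get $\mathbb{P}(V\le x\mid M=m)=\mathbb{P}(m+S_m\ge m/x)\le \mathbb{P}(S_m\ge m(1/x-1))$. By scaling, $S_m\overset{d}{=}mS_1$, so this probability equals $\mathbb{P}(S_1\ge 1/x-1)$, independent of $m$. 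Integrating over $m$ gives $\mathbb{P}(V\le x)\le \mathbb{P}(S_1\ge x^{-1}-1)$.

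It remains to bound the right tail of $S_1$, the sum of a Poisson process on $(0,1)$ with intensity $r^{-1-\alpha}dr$ and bounded jumps. The Chernoff bound gives
\[
\mathbb{P}(S_1\ge y)\le \exp\Bigl(-\theta y+\int_0^1(e^{\theta r}-1)r^{-1-\alpha}\,dr\Bigr)\le \exp\bigl(-\theta y+K e^{\theta}\bigr),
\]
with $K$ explicit; for instance, bound $\int_0^1(e^{\theta r}-1)r^{-1-\alpha}dr\le e^\theta\int_0^1 \theta r^{-\alpha}dr$, absorbing polynomial factors. Choosing $\theta=\log y$ yields $\mathbb{P}(S_1\ge y)\le \exp(-y\log y+O(y))$, which is even better than $e^{-cy}$. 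For the stated form, simply fix $\theta=1$: this gives $\mathbb{P}(S_1\ge y)\le e^{K'}e^{-y}$. Taking $y=x^{-1}-1$ then yields $\mathbb{P}(V\le x)\lesssim e^{-x^{-1}}$, so $c=1$ (or any $c$ obtained from a fixed $\theta$, with $c=\theta$) works explicitly.

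The main obstacle is justifying the conditional structure given $M$. This follows from the restriction theorem for Poisson processes: the event $\{M\le m\}$ equals $\{\text{no points in }(m,\infty)\}$, and the process on $(0,m)$ is independent of the points in $[m,\infty)$. Since $M$ has a density, conditioning on $M=m$ leaves the points on $(0,m)$ Poisson with the same intensity. The scaling identity $S_m\overset{d}{=}mS_1$ follows from the homogeneity of the intensity $r^{-1-\alpha}$.
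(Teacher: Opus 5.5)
Your conditioning on $M$ is fine, but the scaling step $S_m\overset{d}{=}mS_1$ is false, and the argument breaks there. The intensity $r^{-1-\alpha}\,dr$ is homogeneous of degree $-1-\alpha$, not scale invariant. Pushing the points below $m$ forward under $r\mapsto r/m$ gives a Poisson process on $(0,1)$ with intensity $m^{-\alpha}u^{-1-\alpha}\,du$. So $S_m/m$ has mean $m^{-\alpha}/(1-\alpha)$, and $\mathbb{P}(V\le x\mid M=m)\to 1$ as $m\to 0$.

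Consequently $\mathbb{P}(V\le x)\le\mathbb{P}(S_1\ge x^{-1}-1)$ does not follow, and it is in fact false. Combined with your (correct) super-exponential tail for $S_1$, it would make $\mathbb{E}[e^{\theta/V}]$ finite for every $\theta$. But this transform blows up at the zero $y^*$ of the Pitman--Yor denominator $1+\alpha\int_0^1(1-e^{\theta u})u^{-1-\alpha}\,du$ (see below). In particular your choice $c=1$ fails in general. Since $e^u-1>u$, one has $\alpha\int_0^1(e^u-1)u^{-1-\alpha}\,du>\alpha/(1-\alpha)\ge 1$ for $\alpha\ge 1/2$, so $y^*<1$ there.

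The missing idea is that small values of $V$ come from $M$ being atypically small. Since $\mathbb{P}(M\le m)=e^{-m^{-\alpha}/\alpha}$, the intensity factor $\Lambda:=M^{-\alpha}$ is exponential with mean $\alpha$. It is this exponential variable, not the Poisson fluctuations of $S_1$, that produces a tail which is exactly exponential in $1/x$.

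The repair is short and gives a more elementary route than the paper's. Put $\psi(\theta):=\int_0^1(e^{\theta u}-1)u^{-1-\alpha}\,du$. Conditioning on $M$ as you do gives $\mathbb{E}[e^{\theta/V}\mid M]=e^{\theta}e^{\Lambda\psi(\theta)}$. Hence $\mathbb{E}[e^{\theta/V}]=e^{\theta}/(1-\alpha\psi(\theta))$ if $\alpha\psi(\theta)<1$, and $+\infty$ otherwise. This is exactly Pitman--Yor's Corollary 12 at $t=-i\theta$, which the paper quotes.

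Markov's inequality then gives $\mathbb{P}(V\le x)\le e^{\theta}(1-\alpha\psi(\theta))^{-1}e^{-\theta/x}$ for every $\theta<y^*$, where $\alpha\psi(y^*)=1$. So any $c<y^*$ works and no $c>y^*$ can. This is the same constant as in the paper.

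The paper gets there differently. It extends the characteristic function of $1/V$ analytically and identifies $y^*$ as the unique $y>0$ at which the denominator of the transform at $t=-iy$ vanishes. It then converts this abscissa of convergence into a tail bound via Widder's theorem. The corrected Chernoff argument needs neither the analytic continuation nor Widder's theorem, and it derives the transform from the Poisson representation rather than citing it.
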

\begin{proof}
    Remark that the characteristic function of $1/V$ is known (Corollary 12 of Pitman and Yor) to be:
    \[G(t)=\mathbb{E}\left[e^{it/V}\right]=\frac{e^{it}}{1+\alpha\int_0^1 \left(1-e^{itx}\right)x^{-\alpha-1}\,dx}\]
    The numerator and denominator are entire (the denominator is entire since for any disk $D\subseteq \mathbb{C}$ with its closure compact $\int_{\varepsilon}^1 \left(1-e^{itx}\right)x^{-\alpha-1}\,dx$ converges uniformly over $D$ as $\varepsilon\to 0^+$). Letting $t=-iy$, remark that $\int_0^1 (1-e^{yx})x^{-\alpha-1}\,dx$ is strictly decreasing, goes to $-\infty$ as $y\to\infty$, and is $1$ when $y=0$. In particular, there is a unique $y^*>0$ where the denominator of $G(-iy)$ is $0$. By Theorem 2.4b of \cite{vernon_laplace_1946}:
    \[-y^*=-\limsup_{x\to\infty}\frac{\mathbb{P}(1/V\geq x)}{x}\]
    The lemma thus follows with $c$ being $-y^*+\varepsilon$ for any $\varepsilon>0$.
\end{proof}

\subsection{Supplementary Figures}
\begin{figure}[H]
	\centering
	\includegraphics[width=\textwidth]{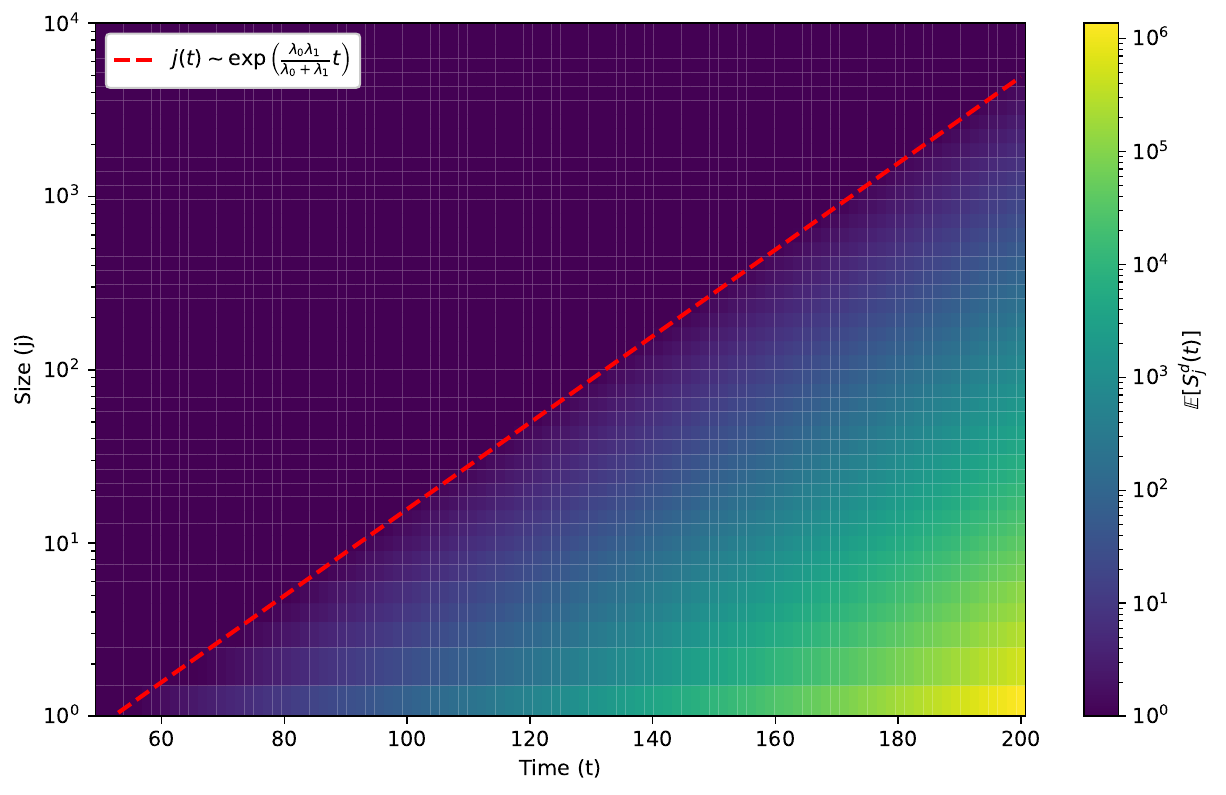}
	\caption{Mean site frequency spectrum heatmap at varying sizes $j$ and times $t$}\label{fig:SFS_heatmap}
\end{figure}

\end{document}